\numberwithin{equation}{section}
\newtheorem{Theorem}{Theorem}[section]
\newtheorem*{Theorem*}{Theorem}
\newtheorem*{Corollary*}{Corollary}
\newtheorem{Lemma}[Theorem]{Lemma}
\newtheorem{Proposition}[Theorem]{Proposition}
\newtheorem{Corollary}[Theorem]{Corollary}
\theoremstyle{definition}
\theoremstyle{remark}
\newtheorem{Remark}[Theorem]{Remark}
\newtheorem*{Remark*}{Remark}
\newtheorem*{proofoffirstmain}{Proof of Theorem~\ref{T:firstmain}}
\newtheorem*{proofoforbitmethod}{Proof of Theorem~\ref{T:Losevrefined}}
\newtheorem*{proofofsemiclassicalcomponentmap}{Proof of Theorem~\ref{T:semiclassicalcomponents}}
\newtheorem{Example}[Theorem]{Example}
\newbox\squ  % box character for ends of proofs
\newcommand{\C}{\mathbb{C}}
\newcommand{\Q}{\mathbb{Q}}
\newcommand{\Nbb}{\mathbb{N}}
\newcommand{\Z}{\mathbb{Z}}
\renewcommand{\k}{\mathbbm{k}}
\renewcommand{\Z}{\mathbb{Z}}
\newcommand{\g}{\mathfrak{g}}
\newcommand{\gl}{\mathfrak{gl}}
\renewcommand{\sl}{\mathfrak{sl}}
\newcommand{\so}{\mathfrak{so}}
\renewcommand{\sp}{\mathfrak{sp}}
\newcommand{\p}{\mathfrak{p}}
\newcommand{\cc}{\mathfrak{c}}
\newcommand{\n}{\mathfrak{n}}
\renewcommand{\r}{\mathfrak{r}}
\newcommand{\h}{\mathfrak{h}}
\renewcommand{\u}{\mathfrak{u}}
\newcommand{\af}{\mathfrak{a}}
\newcommand{\m}{\mathfrak{m}}
\renewcommand{\l}{\mathfrak{l}}
\newcommand{\z}{\mathfrak{z}}
\renewcommand{\v}{\mathfrak{v}}
\newcommand{\mi}{\mathfrak{i}}
\newcommand{\Pbb}{\mathfrak{P}}
\renewcommand{\P}{\mathcal{P}}
\renewcommand{\O}{\mathcal{O}}
\newcommand{\N}{\mathcal{N}}
\newcommand{\E}{\mathcal{E}}
\newcommand{\Fc}{\mathcal{F}}
\newcommand{\A}{\mathcal{A}}
\newcommand{\D}{\mathcal{D}}
\newcommand{\Sc}{\mathcal{S}}
\newcommand{\X}{\mathcal{X}}
\newcommand{\VA}{\mathcal{V\! A}}
\newcommand{\J}{\mathcal{J}}
\renewcommand{\L}{\mathcal{L}}
\newcommand{\ad}{\operatorname{ad}}
\newcommand{\Ad}{\operatorname{Ad}}
\newcommand{\HC}{\operatorname{HC}}
\newcommand{\Lie}{\operatorname{Lie}}
\newcommand{\Ker}{\operatorname{Ker}}
\renewcommand{\ker}{\operatorname{Ker}}
\newcommand{\Char}{\operatorname{char}}
\newcommand{\reg}{{\operatorname{reg}}}
\newcommand{\gr}{\operatorname{gr}}
\newcommand{\Spec}{\operatorname{Spec}}
\newcommand{\End}{\operatorname{End}}
\newcommand{\Mat}{\operatorname{Mat}}
\newcommand{\red}{{\operatorname{red}}}
\newcommand{\Aut}{\operatorname{Aut}}
\newcommand{\Ind}{\operatorname{Ind}}
\newcommand{\an}{{\operatorname{an}}}
\newcommand{\Comp}{\operatorname{Comp}}
\newcommand{\mSpec}{\operatorname{Max}}
\newcommand{\lmod}{\operatorname{-mod}}
\newcommand{\Ann}{\operatorname{Ann}}
\newcommand{\Rad}{\operatorname{Rad}}
\newcommand{\Prim}{\operatorname{Prim}}
\newcommand{\ab}{{\operatorname{ab}}}
\newcommand{\GL}{\operatorname{GL}}
\newcommand{\Tr}{\operatorname{Tr}}
\newcommand{\TC}{\operatorname{TC}}
\newcommand{\T}{\operatorname{T}}
\newcommand{\F}{\operatorname{F}}
\newcommand{\fd}{\operatorname{f.d.}}
\newcommand{\ve}{\varepsilon}
\newcommand{\PeN}{\mathcal{P}_\ve(N)}
\newcommand{\wt}{\widetilde}
\renewcommand{\h}{\widehat}
\renewcommand{\c}{{\!\wedge}}
\newcommand{\he}{\widehat{e}}
\newcommand{\hcc}{\widehat{\cc}}
\newcommand{\hB}{\widehat{B}}
\newcommand{\ce}{\widecheck{e}}
\newcommand{\cy}{\widecheck{y}}
\newcommand{\td}{\widetilde{d}}
\newcommand{\teta}{\widetilde{\eta}}
\newcommand{\otheta}{\overline{\theta}}
\newcommand{\oeta}{\bar{\eta}}
\renewcommand{\i}{{\bf i}}
\renewcommand{\j}{{\bf j}}
\newcommand{\tg}{\tilde{\g}}
\newcommand{\ttheta}{\dot{\theta}}
\newcommand{\od}{\dot{d}}
\newcommand{\lez}{\operatorname{\!<\!0}}
\newcommand{\geez}{\operatorname{\ge\!0}}
\newcommand{\lemw}{\operatorname{\!<\!--1}}
\newcommand{\isoto}{\overset{\sim}{\longrightarrow}}
\newcommand{\onto}{\twoheadrightarrow}
\newcommand{\into}{\hookrightarrow}
\newcommand{\onetoone}{\overset{1-1}{\longrightarrow}}
\DeclareRobustCommand\longtwoheadrightarrow
\mathchardef\mh="2D
\newcommand{\longhookrightarrow}{\lhook\joinrel\longrightarrow}
\title[]{\boldmath One dimensional representations of finite $W$-algebras,\\ Dirac reduction and the orbit method}
\author{Lewis Topley}
\thanks{\nonumber{\it Mathematics Subject Classification} (2000 {\it revision}).
Primary 17B35, 17B63. Secondary 17B20, 17B10.}
\begin{document}

\maketitle
\begin{center}
%\hspace{10pt}
{\it Dedicated to my teacher Sasha Premet, with admiration and gratitude.}
\end{center}

%\vspace{-30pt}

\begin{abstract}
In this paper we study the variety of one dimensional representations of a finite $W$-algebra attached to a classical Lie algebra, giving a precise description of the dimensions of the irreducible components. We apply this to prove a conjecture of Losev describing the image of his orbit method map. In order to do so we first establish new Yangian-type presentations of semiclassical limits of the $W$-algebras attached to distinguished nilpotent elements in classical Lie algebras, using Dirac reduction.
\end{abstract}
%\vspace{-20pt}
%\begin{center}
%\it Dedicated to my teacher Alexander Premet, with admiration.
%\end{center}

\newcounter{parno}%% numbered paragraph
\renewcommand{\theparno}{\thesection.\arabic{parno}}
\newcommand{\parn}{\refstepcounter{parno}\noindent\textbf{\theparno .} \space}

\section{Introduction}
\parn Let $G$ be a complex connected reductive algebraic group with Lie algebra $\g$, nilpotent cone $\N(\g)$. Identify $\g$ with $\g^*$ by a choice of non-degenerate $G$-invariant trace form on $\g$. The primitive spectrum $\Prim U(\g)$ is the set of all primitive ideals of the enveloping algebra, equipped with the Jacobson topology. These ideals are classically studied via their invariants, and the most important of these are the {\it associated variety} and {\it Goldie rank}. The associated variety $\VA(I)$ is defined to be the vanishing locus in $\g$ of the associated graded ideal $\gr I \subseteq S(\g) = \C[\g]$ with respect to the PBW filtration. A celebrated theorem of Kostant states that $\N(\g)$ is the vanishing locus of the positive degree invariant polynomials $S(\g)^G_+$ whilst Joseph's irreducibility theorem  states that $\VA(I)$ is irreducible (see \cite{Ja04} for a detailed survey). Together with Dixmier's lemma, these results show that the associated variety is the closure of a nilpotent orbit. The Goldie rank is defined to be the uniform dimension of the primitive quotient $U(\g)/I$.

It is natural to consider the decomposition $\Prim U(\g) = \bigcup_\O \Prim_\O U(\g)$, were the union is taken over all nilpotent $G$-orbits $\O$ and $\Prim_\O U(\g) = \{ I \in \Prim U(\g) \mid \VA(I) = \overline{\O}\}.$
Now fix an orbit $\O \subseteq \N(\g)$ and $e\in \O$, and let $U(\g,e)$ denote the finite $W$-algebra, first associated to $(\g,e)$ by Premet \cite{Pr02}. The reductive part of the centraliser $G^e(0)$ acts naturally on $U(\g,e)$ by algebra automorphisms, and this induces an action of  the component group $\Gamma = G^e(0)/G^e(0)^\circ$ on the category of finite dimensional modules. Losev famously gave a new construction of $U(\g,e)$ via deformation quantization \cite{Lo10a} and used this to show that $\Prim_\O U(\g)$ is in bijection with $U(\g,e)\lmod_{\fd}/\Gamma$ \cite{Lo11}. The one dimensional representations of $U(\g,e)$ play an especially important role here for two reasons: on one hand the images under Skryabin's equivalence are all completely prime, and therefore play a key role in Joseph's theory of Goldie rank polynomials \cite{Lo15}, and on the other hand they classify quantizations of $G$-equivariant coverings of $\O$ \cite{Lo10b}.
\vspace{8pt}

\parn The above narrative leads us to consider the affine scheme $\E(\g,e) := \Spec U(\g,e)^\ab$ associated to the maximal abelian quotient. By Hilbert's nullstellensatz the closed points classify the one dimensional representations of $U(\g,e)$. The work of Losev and Premet \cite{Lo10a, Pr14} shows that $\E(\g,e)$ is nonempty and in \cite{Pr10, PT14} the first steps were made towards a full description of the variety of closed points.

Recall that the sheets of $\g$ are the maximal irreducible subsets consisting of orbits of constant dimension. They are classified via the theory of decomposition classes which, in turn, are classified by the Lusztig--Spaltenstein induction data. One of the main themes of \cite{PT14}, which we build upon in this paper, is the interplay between the sheets of $\g$ and the structure of $\E(\g,e)$.

In the case where $\g$ is classical, by which we mean a simple Lie algebra of type {\sf A, B, C} or {\sf D}, we described a combinatorial procedure for enumerating the sheets of $\g$ containing a given orbit $\O$, and we named it the {\it Kempken--Spaltenstein (KS) algorithm}. This algorithm played a key role in \cite[Theorem~1]{PT14}, which states that for $\g$ classical the variety $\E(\g,e)$ is an affine space if and only if $e$ lies in a unique sheet of $\g$. The first goal of this paper is to elucidate the structure of $\E(\g,e)$ when $\g$ is classical and $e$ is {\it singular}, which means that it lies in multiple sheets.

Let $\Sc_1,...,\Sc_l$ be the set of all sheets containing $\O \in \N(\g)/G$. If $e + \g^f$ denotes the Slodowy slice to $\O$ at $e$ then we define the {\it Katsylo variety}
\begin{eqnarray}
\label{e:Katsylointrodef}
e + X := (e + \g^f) \cap \bigcup_{i=1}^l \Sc_i.
\end{eqnarray}
In \cite{Ka82} Katsylo used this variety to construct a geometric quotient of the variety $\bigcup_{i=1}^l \Sc_i$. Perhaps the first indication that $e+X$ should influence the representation theory of $U(\g,e)$ appeared in \cite{Pr10}. Premet used reduction modulo $p$ to show that there is a surjective map on the sets of irreducible components
\begin{eqnarray}
\label{e:Premetsmap}
\Comp \E(\g,e) \longtwoheadrightarrow \Comp (e + X)
\end{eqnarray}
which restricts to a dimension preserving bijection on some subset of $\Comp \E(\g,e)$. The following is our first main result. 
\begin{Theorem}
\label{T:main}
When $\g$ is a simple Lie algebra of classical type, the map \eqref{e:Premetsmap} is a dimension preserving bijection. 
\end{Theorem}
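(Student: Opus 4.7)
The strategy is to upgrade the surjection \eqref{e:Premetsmap} to a bijection by a counting-plus-dimension argument: since Premet's map is already dimension-preserving on some subset of $\Comp \E(\g,e)$ and surjects onto $\Comp(e+X)$, it suffices to establish that $|\Comp \E(\g,e)| = |\Comp(e+X)|$ and that every component of $\E(\g,e)$ has the same dimension as the sheet intersection it is sent to. No component-by-component identification is needed a priori; the required dimensional equalities, combined with surjectivity, then pin down Premet's map as the desired bijection.

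The new Yangian-type presentations promised in the abstract apply to distinguished nilpotent elements, so the first step is a reduction to that case. If $e$ is not distinguished, choose a minimal Levi $\l$ of $\g$ containing $e$, so that $e$ is distinguished in $\l$. Parabolic induction for finite $W$-algebras should then relate $\E(\g,e)$ to $\E(\l,e)$, while Lusztig--Spaltenstein induction parametrises the sheets of $\g$ through $G\cdot e$ by Levi subalgebras of $\g$ containing $\l$ together with the rigid orbit $L\cdot e$, compatibly with intersection with the Slodowy slice. Modulo this reduction we may assume that $e$ is distinguished.

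For distinguished $e$ in classical $\g$, the main input is the new Yangian-type presentation of the semiclassical limit of $U(\g,e)$ produced earlier in the paper via Dirac reduction. Passing to the abelianization quotient, this presentation gives an explicit description of (the graded version of) $\C[\E(\g,e)]$ in terms of Dirac-reduced Yangian generators subject to Poisson-commutativity-type relations, from which both the number of irreducible components and their dimensions can be read off directly. On the other side, the Kempken--Spaltenstein algorithm of \cite{PT14} enumerates the sheets through $\O$ combinatorially in terms of partition moves. The plan is to match the combinatorial data arising from the Yangian presentation with the KS output, obtaining equality of cardinalities, and then to match dimensions using Katsylo's formula $\dim(\Sc_i \cap (e+\g^f)) = \dim \Sc_i - \dim \O$.

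The main obstacle I expect to be the cleanest execution of the Dirac reduction: pushing it far enough that the resulting presentation of the abelianization is transparent enough to enumerate components, and then verifying that these components align with the KS algorithm term-by-term rather than merely in total count. Once this alignment is in place, the equal cardinalities and matched dimensions, together with Premet's surjectivity, force \eqref{e:Premetsmap} to be the dimension-preserving bijection claimed in Theorem~\ref{T:main}.
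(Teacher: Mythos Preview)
Your endgame is right: the paper also finishes by combining the inequality $\#\Comp\E(\g,e)\le\#\Comp(e+X)$ with Premet's surjection to conclude. But the middle of your plan diverges from the paper in two substantive ways, and one of them is a genuine gap.

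\textbf{The reduction step goes the wrong direction.} You propose to pass to a minimal Levi $\l$ in which $e$ is distinguished and relate $\E(\g,e)$ to $\E(\l,e)$ via parabolic induction for $W$-algebras. This does not work as stated: the sheets through $\Ad(G)e$ in $\g$ are indexed by \emph{all} rigid induction data inducing to $\O$, not just those coming from a single minimal Levi, so $\E(\l,e)$ (which sees only the sheets of $\l$) loses the information you need. The paper goes in the opposite direction: it embeds $\g$ as a factor of a Levi in a \emph{larger} classical Lie algebra $\tilde\g$ so that $\Ind_\l^{\tilde\g}(\O)$ is distinguished (Lemma~\ref{L:distinguishedinduction}), and then compares completions of $S(\g,e)^{\ab}$ and $S(\tilde\g,\tilde e)^{\ab}$ at suitable points using Weinstein's local splitting of transverse Poisson structures (Theorem~\ref{T:generalreduced}). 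The point is not to compute $\E(\g,e)$ from some $\E(\l,e)$, but to transfer the single property ``$S(\g,e)^{\ab}$ is reduced'' between the two settings.

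\textbf{You are missing the semiclassical and degeneration machinery that makes the counting argument go.} The paper never computes $\E(\g,e)$ or its components directly from the Yangian presentation. Instead it works with two degenerations: the asymptotic cone $\C\E(\g,e)=\Spec\gr U(\g,e)^{\ab}$ and the Poisson-abelian quotient $\Spec S(\g,e)^{\ab}$. A reduction-mod-$p$ argument (Theorem~\ref{T:semiclassicalcomponents}) gives a surjection $S(\g,e)^{\ab}\onto\gr U(\g,e)^{\ab}$ which is an isomorphism on reduced schemes; the reduced scheme underlying $\Spec S(\g,e)^{\ab}$ is exactly $e+X$ (Proposition~\ref{P:Katsyloproperties}). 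The counting inequality $\#\Comp\E(\g,e)\le\#\Comp\C\E(\g,e)$ then follows from a general lemma on filtered algebras (Lemma~\ref{L:reduceddichotomy}), \emph{provided} $\gr U(\g,e)^{\ab}$ is reduced. So the entire proof reduces to showing $S(\g,e)^{\ab}$ is reduced. In the distinguished case the Yangian presentation is used to compute $\gr_{\m_0}S(\g,e)^{\ab}$ (the coordinate ring of the tangent cone at $e$), not $\E(\g,e)$ itself; this is identified with the coordinate ring of an explicit ``combinatorial Katsylo variety'' $X_\lambda$, and matched with $\TC_e(e+X)$ using Bulois' transversality of sheets together with the KS algorithm (Theorem~\ref{T:distinguishedreduced}). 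Reducedness of $S(\g,e)^{\ab}$ then follows from reducedness of its $\m_0$-adic associated graded (Lemma~\ref{L:reducedlemma}).

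In short: your KS-matching instinct is correct for the distinguished case, but it is applied to the tangent cone of the \emph{semiclassical} abelianisation, not to $\E(\g,e)$; and the reduction to the distinguished case requires a completion/transversality argument in a larger Lie algebra, not parabolic induction to a smaller one.
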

\noindent Thanks to \cite[Corollary~3.2]{Pr10} the variety $\E(\g,e)$ is irreducible when $\g = \sl_n$, and so Theorem~\ref{T:main} follows in this case, using the properties of \eqref{e:Premetsmap} listed above. Hence we focus on types {\sf B}, {\sf C} and {\sf D} in this paper. For these classical types, the dimensions of the irreducible components of $e+X$ can be calculated from the KS algorithm, which depends only on the partition associated to $e$; see Proposition~\ref{P:Katsyloproperties}(2) and Proposition~\ref{P:KSclassifiessheets}. Thus Theorem~\ref{T:main} provides an effective method for computing dimensions of all components of $\E(\g,e)$. We note that these dimensions were calculated in low ranks in \cite{BG18}.
\vspace{8pt}

\parn In \cite{Lo22} Losev demonstrated that for every conic symplectic singularity the functor of filtered quantizations of Poisson deformations admits an initial object (see \cite{ACET20} for more detail). We call such an initial object a {\it universal quantization}. Using this result, he then showed that every coadjoint orbit uniquely gives rise to a quantization of the affinization of a certain cover of a nilpotent orbit, and that each such quantization give rise to a completely prime primitive ideal. Thus we have a map $\J : \g^* / G \to \Prim U(\g)$, which is known to be an embedding whenever $\g$ is classical \cite[Theorem~5.3]{Lo22}. The search for such a map is motivated by the orbit method of Kostant and Kirillov, and we will refer to the map as {\it Losev's orbit method map}. An introduction to the orbit method can be found in \cite{Vo94}, whilst Losev's construction is surveyed in Section~\ref{ss:birationalandinduction} of the current paper.

It is important to understand and characterise the primitive ideals appearing in the image of the orbit method map for $\g$. Losev has conjectured that they are precisely the annihilators of simple Whittaker modules coming from one dimensional representations of $W$-algebras. In the final Section of this paper we deduce his conjecture from Theorem~\ref{T:main}.
\begin{Theorem}
\label{T:Losevsmap}
For $\g$ classical, the image of $\J$ consists of primitive ideals obtained from one dimensional representations of $W$-algebras under Skryabin's equivalence.
\end{Theorem}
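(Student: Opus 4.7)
The strategy is to compare, stratum by stratum, the image of $\J$ with the set of primitive ideals produced from one-dimensional $W$-algebra modules under Skryabin's equivalence. Fix a nilpotent orbit $\O$ with base point $e\in\O$. By \cite{Lo11} there is a natural bijection $U(\g,e)\lmod_{\fd}/\Gamma \isoto \Prim_\O U(\g)$; under this bijection one-dimensional modules correspond (via Skryabin) to completely prime primitive ideals, and so produce a subset $\Pi_\O \subseteq \Prim_\O U(\g)$ canonically indexed by $\Gamma$-orbits on closed points of $\E(\g,e)$.

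On the orbit-method side, Losev's construction \cite{Lo16} shows that $\J(G\cdot\xi) \in \Prim_\O U(\g)$ holds exactly when the Jordan decomposition $\xi = s+n$ satisfies $\Ind_{\g^s}^\g (G^s\cdot n) = \O$; the closure of the locus of such coadjoint orbits inside $\g^*$ is the union of the sheets $\Sc_1, \ldots, \Sc_l$ through $\O$. Combined with \cite{Lo10b}, which classifies filtered quantizations of $G$-equivariant covers of $\O$ by $\Gamma$-orbits on $\E(\g,e)$, this yields a canonical inclusion $\J(\g^*/G) \cap \Prim_\O U(\g) \subseteq \Pi_\O$, so one side of the claimed equality is essentially formal.

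The substance of the theorem is the reverse inclusion, and for this Theorem~\ref{T:main} is essential. Given an irreducible component $\mathcal{C} \subseteq \E(\g,e)$, its image under \eqref{e:Premetsmap} is a component $(e+\g^f) \cap \Sc_i$ of $e+X$ of the same dimension as $\mathcal{C}$. Katsylo's construction \cite{Ka82} realises $(e+\g^f) \cap \Sc_i$ as a geometric quotient of $\Sc_i$ by $G$, so every closed point of this component of $e+X$ represents a coadjoint orbit whose image under $\J$ lies in $\Pi_\O$. Running over all components of $\E(\g,e)$, and therefore all sheets through $\O$, exhausts $\Pi_\O$ and yields the reverse inclusion. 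Letting $\O$ vary then completes the proof.

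The principal obstacle is a compatibility verification: one must check that the composition \{coadjoint orbit $\to$ $\O$-cover $\to$ universal quantization $\to$ one-dimensional representation $\to$ closed point of $\E(\g,e)$\} agrees, on the level of irreducible components, with the bijection arising from Theorem~\ref{T:main}. This requires matching Losev's universal quantization formalism against Premet's reduction-modulo-$p$ construction of \eqref{e:Premetsmap}. Once this identification is in place, the dimension-preserving bijection of Theorem~\ref{T:main} promotes automatically to an identification of closed points, and hence of primitive ideals.
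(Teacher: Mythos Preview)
Your outline correctly identifies the architecture of the argument --- stratify by nilpotent orbit, match components via Theorem~\ref{T:main}, and then push up to closed points --- but the substance is missing in exactly the place you flag as the ``principal obstacle,'' and your final sentence misrepresents what remains to be done.

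The map you describe in the chain \{coadjoint orbit $\to$ universal quantization $\to$ one-dimensional representation $\to$ closed point of $\E(\g,e)$\} does not come for free. The paper constructs it by applying Losev's quantum Hamiltonian reduction functor $\bullet_\dagger$ to the family $\A = \prod_i \A_{\Pbb_i}$ of universal quantizations and proving that $\A_\dagger$ is \emph{commutative} (Lemma~\ref{L:daggerlemma}); this is what produces a genuine morphism of schemes $\phi_1 : \bigcup_i \Pbb_i \to \E(\g,e)$. Your citation of \cite{Lo10b} does not supply this: that paper classifies quantizations of covers of $\O$ itself, whereas here one needs to relate fibres of a quantization over $\z(\l_i)$ to $U(\g,e)$-modules, and that passage goes through $\bullet_\dagger$.

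Second, Theorem~\ref{T:main} does not ``promote automatically to an identification of closed points.'' It only matches the number and dimensions of irreducible components. To obtain surjectivity of $\phi_1$ one must also know that (a) $\phi_1$ is finite, hence closed (this is Lemma~\ref{L:finitelemma}), and (b) the images $\phi_1(\Pbb_i)$ are not nested in one another. The paper handles (b) by passing to the Rees algebra $R_h U(\g,e)$ and degenerating to the semiclassical fibre $h=0$, where the map becomes the moment map onto $e+X$ and the components are visibly distinct; this has nothing to do with Premet's reduction modulo $p$ construction of \eqref{e:Premetsmap}, contrary to what you suggest. Only after (a) and (b) are in place does the dimension equality from Theorem~\ref{T:main} force $\phi_1(\Pbb_i) = C_i$ for each component $C_i$ of $\E(\g,e)$.

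Finally, the forward inclusion you call ``essentially formal'' also needs Proposition~\ref{P:imageofJ}, which reduces the image of $\J$ to the set $\mathcal{R}$ coming from rigid induction data; this step uses that the component group $\Gamma$ is abelian for classical $\g$, and is where the paper controls the various covers arising from non-birational Springer maps.
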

%The main technique in the proof is the application of Losev's to the universal quantizations of affinizations of nilpotent orbit covers (see \cite{Gi09} where this technique first appeared for Harish-Chandra bimodules).\vspace{8pt}

\parn It is worth commenting on Theorems~\ref{T:main} and \ref{T:Losevsmap} in the context of exceptional Lie algebras: there are 6 rigid orbits for which the finite $W$-algebra admits two 1-dimensional representations (see \cite[Table~1]{PT21} and \cite{Pr14} for more detail). In these cases $\E(\g,e)$ has two points and $e+X$ consists of a single point. This shows that Theorem~\ref{T:main} cannot hold outside classical types. Similarly, for each rigid orbit Losev's orbit method map attaches a unique primitive ideal of $U(\g)$ with associated variety equal to the closure of $G\cdot e$ (see Lemma~\ref{L:associatedvariety}) and so these same examples show that Theorem~\ref{T:Losevsmap} also fails outside classical types.
\vspace{8pt}

\parn There are several new tools involved in the proof of Theorem~\ref{T:main}, and we now briefly describe the most important ones. One of the basic ideas comes from deformation theory. The finite $W$-algebra is a filtered quantization of the transverse Poisson structure on the Slodowy slice $\C[e+\g^f]$ and so there are two natural degenerations associated to $\E(\g,e)$. On the one hand, we may degenerate $U(\g,e)$ to the classical finite $W$-algebra $S(\g,e) \cong \C[e+\g^f]$ and then abelianise, which leads to the spectrum of the maximal Poisson abelian quotient $\Fc(\g,e) := \Spec S(\g,e)^\ab$. On the other hand we may abelianise and then degenerate, which leads us to the {\it asymptotic cone of $\E(\g,e)$}, denoted $\C\E(\g,e) := \Spec (\gr U(\g,e)^\ab)$.
%We use the following notation:
%\begin{eqnarray}
%& &  \Fc(\g,e) := \Spec S(\g,e)^\ab \text{ and }  \C\E(\g,e) := \Spec \gr U(\g,e^\ab
%\end{eqnarray}

One of the general results of this paper states that there is a closed immersion of schemes inducing a bijection on closed points, which we prove using reduction to prime characteristic
\begin{eqnarray}
\label{e:coneimmersedinKatsylo}
\C\E(\g,e) \longhookrightarrow \Fc(\g,e).
\end{eqnarray}
Furthermore by considering the rank strata and symplectic leaves of the Poisson structure of $e + \g^f$ we see that the reduced subscheme associated to $\Fc(\g,e)$ is $e + X$ (Proposition~\ref{P:Katsyloproperties}). Therefore the main theorem will follow if we can show that $\Comp \E(\g,e)$ is no larger than $\Comp \C\E(\g,e)$. It is an elementary fact from commutative algebra that $\Comp \E(\g,e) \le\Comp \C\E(\g,e)$ provided $\C\E(\g,e)$ is reduced, and so our approach is to show that $\gr U(\g,e)^\ab$ has no nilpotent elements. By \eqref{e:coneimmersedinKatsylo} it suffices to show that $S(\g,e)^\ab$ is reduced.

In this paragraph we take $\g$ classical. By passing to the completion of $S(\g,e)^\ab$ at the maximal graded ideal and using the fact that the Slodowy slice is transverse to every point of $e + X$ we are able to reduce the problem of showing that $S(\g,e)^\ab$ is reduced to the case where $e$ is distinguished. To be more precise, $S(\g,e)^\ab$ is reduced if and only if the completion at the maximal graded ideal is so, and we show that this completion is reduced if and only if $(S(\tilde\g, \tilde e)_x^\c)^\ab$ is reduced, where $x$ is a point on the Slodowy slice attached to a distinguished element in a larger classical Lie algebra. For the proof we make use of the fact that transverse Poisson manifolds are locally diffeomorphic, which follows from Weinstein's splitting theorem \cite[Theorem~2.1]{We83}.\vspace{8pt}

\parn Now Theorem~\ref{T:main} will follow if we can show that $S(\g,e)^\ab$ is reduced for distinguished elements $e$. We introduce a new method to attack this problem. If $X$ is a complex Poisson scheme of finite type and $H$ is a reductive group acting rationally by Poisson automorphisms then the invariant subscheme $X^H$ can be equipped with a Poisson structure via Dirac reduction. The reduced Poisson algebra will be denoted $R(\C[X], H)$. Now if $\g = \Lie(G)$ for $G$ reductive and $H \subseteq \Aut(\g)$ is a reductive group fixing an $\sl_2$-triple $\{e,h,f\} \subseteq \g$ then we prove the following isomorphism of Poisson algebras
\begin{eqnarray}
\label{e:Diracreductionisomorphism}
R(S(\g,e), H) \isoto S(\g^H, e).
\end{eqnarray}
Some special cases of \eqref{e:Diracreductionisomorphism} were discovered by Ragoucy \cite{Ra01}.

We apply this isomorphism in the case where $\g = \gl_n$ and $H = \Z/2\Z$ is generated by some involution $\tau$. Then $\g^\tau = \so_n$ or $\g = \sp_n$. It follows from the work of Brundan and Kleshchev that $S(\g, e)$ is a quotient of a (semiclassical) shifted Yangian $y_n(\sigma)$ depending on $e$. If $e$ is distinguished then we can identify an involution $\tau$ on $y_n(\sigma)$ defined so that the Poisson homomorphism $y_n(\sigma) \onto S(\g,e)$ is $\tau$-equivariant. As a consequence we can apply the Dirac reduction procedure to the shifted Yangian $y_n(\sigma)$ and thus obtain a presentation of the Poisson structure on $S(\g,e)$. We mention that obtaining presentations of finite $W$-algebras outside of type {\sf A} is one of the key open problems in the field.

Let $\m_0$ be the maximal graded ideal. Finally we use the presentation of $S(\g,e)^\ab$ to calculate generators and certain relations of $\gr_{\m_0} S(\g,e)^\ab$. We see that the reduced algebra of $\gr_{\m_0} S(\g,e)^\ab$ is naturally identified with the coordinate ring on the tangent cone $\C[\TC_e(e+X)]$. Micha{\"e}l Bulois has recently demonstrated that the sheets of $\g$ containing $e$ are transversal at $e$ (work in preparation \cite{Bu}), which allows us to calculate the dimensions of the irreducible components of $\TC_e(e+X)$, once again they are determined by the KS algorithm. Using a combinatorial argument we then show that the relations mentioned above give a full presentation for $\gr_{\m_0}S(\g,e)^\ab$. It follows quickly that both $\gr_{\m_0}S(\g,e)^\ab$ and $S(\g,e)^\ab$ are reduced, which allows us to conclude the proof of Theorem~\ref{T:main}.
\vspace{8pt}

\parn Since the presentation of the distinguished semiclassical finite $W$-algebras in types {\sf B, C, D} is an important result in its own right we formulate it straight away. For the proof combine Theorem~\ref{T:PDyangian} and Propositions~\ref{P:Walgpresentation1} and \ref{P:Walgpresentation-1}.
\begin{Theorem}
\label{T:Slicepresentation}
Let $\g = \so_N$ or $\sp_N$ and let $e$ be a distinguished nilpotent element with partition $\lambda = (\lambda_1,...,\lambda_n)$. Then $S(\g,e)$ is generated as a Poisson algebra by elements
\begin{eqnarray}
\label{e:DYgensintro}
\{\eta_i^{(2r)} \mid 1\le i \le n, \ 0 < r\} \cup \{ \theta_i^{(r)} \mid 1\le i < n, \ \frac{\lambda_{i+1} - \lambda_i}{2} < r\}
\end{eqnarray}
together with the following relations
\begin{eqnarray}
%\label{e:dyrel1}
& & \{\eta_i^{(2r)}, \eta_j^{(2s)}\} = 0 \\ 
%\label{e:dyrel2}
& &\big\{\eta_i^{(2r)}, \theta_j^{(s)}\big\} = (\delta_{i,j} - \delta_{i,j+1}) \sum_{t=0}^{r-1} \eta_i^{(2t)} \theta_j^{(2r+s -1- 2t)}
\end{eqnarray}
\begin{eqnarray}
\label{e:dyrel34}
\{\theta_i^{(r)}, \theta_i^{(s)}\} = \frac{1}{2} \sum_{t=r}^{s-1} \theta_i^{(t)} \theta_i^{(r+s-1-t)} + (-1)^{s_{i}}\varpi_{r,s} \sum_{t=0}^{(r+s-1)/2} \eta_{i+1}^{(r+s-1-2t)} \teta_{i}^{(2t)} & \text{ for } & r < s.\\
%\label{e:dyrel3}
%\ \ \ \ \big\{\theta_i^{(r)}, \theta_i^{(s)}\big\} = 0 & & \text{ for } r+s-1 \text{ odd}\\
%\end{eqnarray}
%\begin{eqnarray}
%\label{e:dyrel4}
%\ \ \ \ \big\{\theta_i^{(r)}, \theta_i^{(s)}\big\} = \sum_{t=r}^{s-1} \frac{1}{2} \theta_i^{(t)} \theta_i^{(2m-t)} + (-1)^{s + s_{i,i+1}} \sum_{t=0}^{m} 2\eta_{i+1}^{(2m-2t)}\teta_i^{(2t)}	& &  \text{ for } r <s, r + s - 1 = 2m\\
%\{\he_i^{(r)}, \he_i^{(s)}\} = \left\{\begin{array}{cc}
%		\sum_{t=r}^{s-1} \frac{1}{2} \he_i^{(t)} \he_i^{(r+s-1-t)} & r-s \text{ even};\\
%		\sum_{t=r}^{s-1} \frac{1}{2} \he_i^{(t)} \he_i^{(r+s-1-t)} + (-1)^{s + s_{i,i+1} + 1} \sum_{t=0}^{\lfloor \frac{r+s-1-t}{2}\rfloor} d_{i+1}^{(r+s-1-2t)}\td_i^{(2t)}  & r-s\text{ odd}.
%\end{array}\right.
%\end{eqnarray}
%\begin{eqnarray}
%\label{e:dyrel5}
\big\{\theta_i^{(r+1)}, \theta_{i+1}^{(s)}\big\} - \big\{\theta_i^{(r)}, \theta_{i+1}^{(s+1)}\big\} = \frac{1}{2}\theta_i^{(r)} \theta_{i+1}^{(s)} & &
\end{eqnarray}
\begin{eqnarray}
%\label{e:dyrel6}
\big\{\theta_i^{(r)}, \theta_j^{(s)}\big\} = 0 & & \text{ for } |i - j| > 1\\
%\label{e:dyrel7}
\Big\{\theta_i^{(r)}, \big\{\theta_i^{(s)}, \theta_j^{(t)}\big\}\Big\} + \Big\{\theta_i^{(s)}, \big\{\theta_i^{(r)}, \theta_j^{(t)}\big\}\Big\} = 0 & & \text{ for } |i - j| =1 \text{ and } r + s \text{ odd}
\end{eqnarray}
\begin{eqnarray}
%\label{e:dyrel8}
& &\Big\{\theta_i^{(r)}, \big\{\theta_i^{(s)}, \theta_j^{(t)}\big\}\Big\} + \Big\{\theta_i^{(s)}, \big\{\theta_i^{(r)}, \theta_j^{(t)}\big\}\Big\} =  \\
& & \nonumber  \ \ \ \ \ \ \ \ \ \  2(-1)^{s+s_{i}-1} \delta_{i,j+1} \sum_{m_1=0}^{m-1} \sum_{m_2=0}^{m_1} \eta_{i+1}^{(2(m-m_1-1))}  \teta_i^{(2m_2)} \theta_j^{(2(m_1-m_2) + t)} \\
& & \nonumber \ \ \ \ \ \ \ \ \ \ \ \ \ \ \ \ \ \ \ \ + 2(-1)^{s+s_{i}-1} \delta_{i+1,j} \sum_{m_1=0}^{m-1} \sum_{m_2=0}^{m-m_1-1} \teta_i^{(2m_1)}\eta_{i+1}^{(2m_2)} \theta_j^{(2(m-m_1-m_2 - 1) + t)} \\
& & \nonumber \ \ \ \ \ \ \ \ \ \ \ \ \ \ \ \ \ \ \ \ \ \ \ \ \ \ \ \ \ \ \text{ for } |i - j| =1 \text{ and } r + s = 2m \text{ even}
\end{eqnarray}
\begin{eqnarray}
\eta_1^{(2r)} = 0 & \text{ for } & 2r > \lambda_1\\
\ttheta_i^{(\lambda_i + s_i + 1)} = 0 & \text{ for } & \ i=1,...,n-1 \text{ when } \g = \sp_N.
\end{eqnarray}
where we adopt the convention $\eta_i^{(0)} = \teta_i^{(0)} = 1$, the symbol $\varpi_{r,s} \in \{-2, 0, 2\}$ is defined in \eqref{e:definevarpi}, the elements $\ttheta_i^{(\lambda_i + s_{i} + 1)}$ are defined recursively in \eqref{e:additionalgenerators} and $s_{i} := |\lambda_i - \lambda_{i+1}|/2$. The elements $\{\teta_i^{2(r)} \mid  r\ge0\}$ are defined via the recursion
\begin{eqnarray}
\label{e:tetatwisteddefinition}
\teta_i^{(2r)} := -\sum_{t=1}^r \eta_i^{(2t)} \teta_i^{(2r-2t)}.
\end{eqnarray}
\end{Theorem}
\begin{Remark}
In fact the presentation holds for a slightly larger class of nilpotent elements than the distinguished nilpotent elements: in symplectic types we only require the sizes of Jordan blocks of $e$ to be even, and in orthogonal types we only require them to be odd.
\end{Remark}

\parn To conclude the introduction we describe the structure of the paper, which is divided into two parts. The first part is very algebraic, dealing with Dirac reduction and the presentation of semiclassical $W$-algebras in the distinguished case. The second part is more geometric, studying the degenerations of $\E(\g,e)$ using Lusztig--Spaltenstein induction and the tangent cone of the Katsylo variety.

{\bf Part I:} We begin Section~\ref{s:Diracreduction} by giving an elementary introduction to the version of Dirac reduction used in this paper. In Subsection~\ref{ss:quantumDiractheorem} we prove the isomorphism \eqref{e:Diracreductionisomorphism}, which should have independent interest. In Section~\ref{s:DracredcutionforshiftedYangians} we describe the semiclassical shifted Yangian $y_n(\sigma)$ by generators and relations. The main results on the Dirac reduction of $y_n(\sigma)$ are presented in Subsection~\ref{ss:PDreductionofyangians}, including the canonical grading, the loop filtration, the PBW theorem and the presentation by generators and relations. All of the results about $R(y_n(\sigma), \tau)$ are ultimately deduced from similar results on $y_n(\sigma)$. In Section~\ref{s:finiteWalgebrasforclassical} we recall the definition of Brundan--Kleshchev's isomorphism $y_n(\sigma) \onto S(\g,e)$ and show by an explicit calculation that this is $\tau$-equivariant for a suitable choice of involution on $y_n(\sigma)$. By \eqref{e:Diracreductionisomorphism} this leads to a surjection $R(y_n(\sigma), \tau) \onto S(\g^\tau, e)$ and in Subsection~\ref{ss:semiclassicalyangiansandWalgebras} we describe a full set of Poisson generators for the kernel. 

{\bf Part II:} In Section~\ref{s:poissonschemes} we gather together some important general facts about degenerations and completions of schemes, as well as reviewing the theory of rank stratification and symplectic leaves of a Poisson scheme. In Section~\ref{s:sheetsandinduction} we explain how $S(\g,e)^\ab$ is related to Katsylo variety $e + X$, and use the aforementioned results of Bulois to enumerate the irreducible components of $\TC_e(e+X)$ and calculate their dimensions. In Section~\ref{s:quantumabelianquotients} we introduce quantum finite $W$-algebras and prove the existence of the closed immersion \eqref{e:coneimmersedinKatsylo}. The proof of the latter uses a reduction modulo $p$ argument similar to Premet's construction of the component map \eqref{e:Premetsmap} in \cite[Theorem~1.2]{Pr10}, along with the identification of reduced schemes $\Fc(\g,e)_\red = \C[e+X]$ from Section~\ref{s:sheetsandinduction}. Finally in Section~\ref{s:abquotsection} we describe the Kempken--Spaltenstein algorithm, as well as its relationship with sheets, and then use this to construct an algebraic variety $X_\lambda$ associated to a distinguished nilpotent orbit $\O$ with partition $\lambda$, which we call the {\it combinatorial Katsylo variety}. In Theorem~\ref{T:distinguishedreduced} we use the presentation of $S(\g,e)^\ab$ obtained in part I of the paper to demonstrate that $\C[X_\lambda] \onto S(\g,e)^\ab \onto \C[\TC_e(e+X)]$, and we show that these are isomorphisms by comparing the dimensions of the irreducible components. In particular this implies that $S(\g,e)^\ab$ is reduced for $e$ distinguished. Theorem~\ref{T:generalreduced} reduces the general case to the distinguished case. Finally we conclude the proof of Theorem~\ref{T:main} in Subsection~\ref{ss:abelianisationviadeformation}, making use of deformation techniques gathered in Section~\ref{s:poissonschemes}.

In Subsection~\ref{ss:quantsanddefs} we recall the classification of Poisson deformations and their quantizations for conic symplectic singularities, due to Losev and Namikawa. In Subsection~\ref{ss:birationalandinduction} we recall Losev's theory of birational induction, describe the orbit method map and formulate a slight refinement of Theorem~\ref{T:Losevsmap}. In Subsection~\ref{ss:quantumHam} we recall some of the key properties of Losev's dagger functor from \cite{Lo11} and explain how they treat universal quantizations of affinisations of orbit covers. Finally in Subsection~\ref{ss:commquotandorbitmethod} we relate the orbit method map to $U(\g,e)^\ab$ and prove Theorem~\ref{T:Losevsmap}.

\tableofcontents

\subsection*{Notation and conventions}

The following notation will be used throughout the paper.
All algebras and vector spaces are defined over $\C$, except in Section~\ref{s:quantumabelianquotients} where we use reduction modulo a large prime. We use capital letters $G, H,...$ for algebraic groups and gothic script $\g, \mathfrak{h},...$ for their Lie algebras.

If $A$ is an algebra and $X\subseteq A$ then $(X)$ will denote the two-sided ideal of $A$ generated by $X$.

If $\g$ is a Lie algebra then $U(\g)$ denotes the enveloping algebra and $S(\g)$ the symmetric algebra equipped with its {\it Lie--Poisson structure}: this is the unique Poisson bracket on $S(\g)$ extending the Lie bracket on $\g \subseteq S(\g)$. The associated graded algebra of an almost commutative, filtered associative algebra is equipped with a Poisson structure in the usual manner. %, see \cite[\textsection 2.2]{ACET20} for example.
 If $A$ is a commutative algebra and $I \subseteq A$ an ideal, then we write $\gr_I A$ for the graded algebra with respect to the $I$-adic filtration.

Almost all schemes appearing in this paper will be affine schemes of finite type over $\C$, thus the reader may almost always think of complex affine varieties, except that the coordinate rings will often be non-reduced. In fact, the consideration of nilpotent elements will be vital to our main results. Occasionally we write $\mSpec(A)$ for the variety of closed points in the prime spectrum of a commutative algebra $A$. If $X$ is a noetherian scheme then we write $\Comp(X)$ for the set of irreducible components.

\vspace{8pt}

%\parn We take a moment to discuss related results and directions for future research. Quantum Dirac reduction? Twisted Yangians and Dirac reduction?

%\tableofcontents

\subsection*{Acknowledgements} I would like to offer thanks to Simon Goodwin, Ivan Losev, Dmytro Matvieievskyi, Sasha Premet and Matt Westaway for useful comments on the first version of this paper. I would also like to thanks Lukas Tappenier for careful reading, and spotting various typos. I'm especially grateful to Ivan for suggesting some of the constructions used in the proof of Theorem~\ref{T:Losevsmap}, and to Sasha to whom this paper is dedicated - his insightful teaching first introduced me to these fascinating problems. I would also like to thank the referees for their many comments which have improved the exposition of this paper significantly. I have also benefited from many interesting conversations and email correspondence with Jon Brundan, Micha{\"e}l Bulois, Paul Levy, Anne Moreau, and Daniele Valeri, and am grateful for all their help. Some of these results were announced at the conference ``Geometric and automorphic aspects of $W$-algebras'', Lille 2019. This research is supported by the UKRI Future Leaders Fellowship project ``Geometric representation theory and $W$-algebras'', grant numbers MR/S032657/1, MR/S032657/2, MR/S032657/3.

\part{ Presentations of classical $W$-algebras}

\section{Dirac reduction for classical finite $W$-algebras}
\label{s:Diracreduction}

\subsection{Invariants via Dirac reduction}
\label{ss:quantizingviaDirac}

Throughout this Section $H$ will be a reductive algebraic group, although in our later applications $H$ will be a cyclic group of order 2. When $H$ acts locally finitely and rationally on some vector space $A$ we write $A_+ := A^H$ for the invariants and write $A_-$ for the unique $H$-invariant complement to $A_+$ in $A$.

Let $X$ be an complex affine Poisson variety and suppose $z_1,...,z_n\in \C[X]$ such that the determinant of the matrix $(\{z_i, z_j\})_{1\le i,j\le n}$ is a unit. In his seminal paper \cite{Dir50} Dirac defined a new Poisson bracket on $\C[X]$, such that the $z_i$ are Casimirs, thus equipping $\C[X]/(z_1,...,z_n)$ with a Poisson structure. In fact this is a special case of the following procedure: say that $I \subseteq \C[X]$ is a {\it Dirac ideal} if $N_{\C[X]}(I)\onto \C[X]/I$ surjects where $N_{\C[X]}(I):=\{f\in \C[X] \mid \{f, I\} \subseteq I\}$ denotes the Poisson idealiser. Then the subscheme associated to $I$ inherits a Poisson structure from $X$, and we call this induced structure the {\it Dirac reduction} \cite[\textsection 5.4.3]{LPV13}.

Now let $H$ be a reductive group acting rationally on $X$ by Poisson automorphisms. We regard the set of invariants $X^H$ as a (not necessarily reduced) affine scheme such that the structure sheaf has global sections $\C[X^H] := \C[X] / I_H$ and $I_H := (h\cdot f - f \mid h\in H, f\in \C[X])$. Although $I_H$ is usually not a Poisson ideal, it is always a Dirac ideal (this is a corollary of Lemma~\ref{L:Poissonreduction}) so that $X^H$ acquires the structure of a Poisson scheme.

The following suggests an alternative approach of the Poisson structure on $X^H$, better-suited to calculations. 
\begin{Lemma}
\label{L:Poissonreduction}
The map $\C[X]^H \to \C[X^H]$ is surjective and
\begin{eqnarray}
\C[X]^H / \C[X]^H \cap I_H \isoto \C[X^H]
\end{eqnarray}
is a Poisson isomorphism. 
\end{Lemma}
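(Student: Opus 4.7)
The plan is to use linear reductivity of $H$ to produce a Reynolds-type decomposition, which will reduce both claims to formal checks. Linear reductivity gives an $H$-stable splitting $\C[X] = \C[X]^H \oplus M$, where $M$ is the sum of all non-trivial isotypic components, and the corresponding projection onto $\C[X]^H$ is the Reynolds operator. For the surjectivity of $\C[X]^H \to \C[X^H]$ it suffices to prove that $M \subseteq I_H$. The linear span
\begin{equation*}
J := \operatorname{span}_\C\{h \cdot f - f : h \in H,\ f \in \C[X]\}
\end{equation*}
is an $H$-submodule of $M$ (this is immediate from $g \cdot (h\cdot f - f) = (gh\cdot f - f) - (g\cdot f - f)$), and $M/J$ carries the trivial $H$-action by construction. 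Since $M$ is a direct sum of non-trivial isotypic components, every $H$-module quotient of $M$ has the same property; hence $M/J = 0$, giving $M = J \subseteq I_H$. Consequently $\C[X] = \C[X]^H + I_H$, and the first isomorphism theorem produces the asserted ring isomorphism $\C[X]^H / (\C[X]^H \cap I_H) \isoto \C[X]/I_H = \C[X^H]$.

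For the Poisson statement I would first observe that $\C[X]^H$ sits inside the Poisson normaliser $N_{\C[X]}(I_H)$. Indeed, for $g \in \C[X]^H$ and a generator $a(h \cdot b - b)$ of $I_H$, the Leibniz rule combined with the $H$-equivariance of the bracket gives
\begin{equation*}
\{g,\, a(h \cdot b - b)\} = \{g, a\}(h \cdot b - b) + a\bigl(h \cdot \{g, b\} - \{g, b\}\bigr),
\end{equation*}
which visibly lies in $I_H$. Combined with the surjectivity already established, this also confirms the assertion made in the preceding paragraph of the paper that $I_H$ is Dirac. More importantly, it shows that $\C[X]^H \cap I_H$ is a Poisson ideal of the Poisson subalgebra $\C[X]^H$, so the quotient inherits a canonical Poisson bracket. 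To see that the ring isomorphism intertwines this bracket with the Dirac bracket on $\C[X]/I_H$, I would lift classes $\bar f, \bar g \in \C[X]/I_H$ to invariants $f, g \in \C[X]^H$ (possible by the first part); since $g \in N_{\C[X]}(I_H)$, the Dirac bracket of $\bar f$ with $\bar g$ is represented by the ambient bracket $\{f, g\}_{\C[X]}$, which is already $H$-invariant and hence computes the bracket in $\C[X]^H/(\C[X]^H \cap I_H)$ as well.

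The only non-formal ingredient is the equality $J = M$ in the first paragraph; this is where linear reductivity is genuinely used, since otherwise one cannot rule out the possibility of a trivial quotient of $M$. Once that is established, everything else is a routine bookkeeping exercise with the Leibniz rule and the two descriptions of the Poisson structure on the quotient. I do not expect any further obstacle, since the existence of invariant lifts afforded by surjectivity cleanly reconciles the Poisson structure from Dirac reduction with the one inherited from the Poisson subalgebra $\C[X]^H$.
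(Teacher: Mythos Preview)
Your proof is correct and follows essentially the same approach as the paper: both use the Reynolds decomposition $\C[X]=\C[X]^H\oplus M$ from linear reductivity, establish that $M$ coincides with the linear span of the elements $h\cdot f-f$ (the paper argues via irreducible submodules, you via the quotient $M/J$ being trivial --- these are dual formulations of the same semisimplicity fact), and verify $\C[X]^H\subseteq N_{\C[X]}(I_H)$ via $H$-equivariance of the bracket. Your treatment of the Poisson compatibility is actually more explicit than the paper's, which leaves that step largely implicit.
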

\begin{proof}
If $f\in \C[X]^H$, $g\in \C[X]$ and $h\in H$ then $\{f, h\cdot g - g\} = h\cdot\{f, g\} - \{f,g\}$ and so $\C[X]^H \subseteq N_{\C[X]}(I_H)$ is a Poisson subalgebra. Since $H$ is reductive and acts rationally we can decompose $\C[X] = \C[X]_+ \oplus \, \C[X]_-$ into its $H$-invariant part, and its complementary $H$-submodule. If $V \subseteq \C[X]_-$ is an irreducible $H$-submodule then the vector space spanned by $\{h\cdot v - v\mid h\in H, v\in V\} \subseteq V$ is $H$-stable and nonzero hence equal to $V$, and it follows that $I_H = (\C[X]_-)$. Therefore the composition $\C[X]^H \into N_{\C[X]}(I_H) \to \C[X^H] = \C[X]/I_H$ is surjective.
\end{proof}

Now let $A$ be any Poisson algebra with a locally finite, rational action of $H$ by Poisson automorphisms. Let $I_H$ denote the ideal generated by $A_-$ and define {\it the Dirac reduction of $A$ by $H$} by
\begin{eqnarray}
\label{e:definePDreduction}
R(A, H) := A^H / A^H \cap I_H
\end{eqnarray}
If $\tau\in \Aut(A)$ is a semisimple Poisson automorphism of finite order, then we often abuse notation writing $R(A, \tau)$ for $R(A, H)$ where $H$ is the group generated by $\tau$.

Since $H$ is reductive the functor of $H$-invariants is exact on the category of locally finite, rational $H$-modules. This implies that whenever $A$ is a Poisson algebra as above and $I \subseteq A$ is an $H$-stable Poisson ideal we have
\begin{eqnarray}
\label{e:notrightexact}
R(A, H) \longtwoheadrightarrow R(A/I, H).
\end{eqnarray}

\begin{Lemma}
\label{L:PBWforDiracreduction}
Suppose the following:
\begin{enumerate}
\item $V$ is a direct sum of locally finite $H$-modules with $H$-stable decomposition $V = V^H \oplus V_-$.
\item $S(V)$ is a Poisson algebra with $H$ acting by Poisson automorphisms.
\end{enumerate}
Then the natural map $S(V^H) \to R(S(V), H)$ is an isomorphism of commutative algebras.
\end{Lemma}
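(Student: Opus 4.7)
The plan is to reduce the statement to a clean tensor-product/coinvariants computation by first identifying $I_H$ explicitly as a subspace of $S(V)$. First I would use the $H$-stable splitting $V = V^H \oplus V_1$ to identify $S(V) = S(V^H) \otimes S(V_1)$ as an $H$-equivariant algebra, with $H$ acting trivially on the first factor. A key preliminary observation is that $V_1$ contains no nonzero $H$-invariant: any such vector would lie in $V^H \cap V_1 = 0$.

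The heart of the argument is to show that $I_H = S(V^H) \otimes S(V_1)_+$ as a subspace of $S(V)$, where $S(V_1)_+$ denotes the augmentation ideal. For the inclusion $\subseteq$, I would write an arbitrary $a \in S(V)$ as $\sum a_i \otimes b_i$ with $a_i \in S(V^H)$ and $b_i \in S(V_1)$; then
\begin{equation*}
h \cdot a - a = \sum a_i \otimes (h \cdot b_i - b_i),
\end{equation*}
and each $h \cdot b_i - b_i$ lies in $S(V_1)_+$ because $H$ preserves the natural grading on $S(V_1)$ and acts trivially in degree zero. For the reverse inclusion, I would recycle the isotypic-component argument used in the proof of Lemma~\ref{L:Poissonreduction}: for every nontrivial irreducible $H$-submodule $W$ of $S(V_1)$ the set $\{h \cdot w - w : h \in H,\, w \in W\}$ is a nonzero $H$-stable subspace of $W$, hence equal to $W$ by irreducibility, and therefore $W \subseteq I_H$. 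Since $V_1$ is a sum of such nontrivial irreducibles we obtain $V_1 \subseteq I_H$, from which $S(V_1)_+ = (V_1) \subseteq I_H$ and finally $S(V^H) \otimes S(V_1)_+ \subseteq I_H$ since $I_H$ is an ideal of $S(V)$.

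Once this identification is in hand, the conclusion is formal. Linear reductivity of $H$ gives $S(V)^H = S(V^H) \otimes S(V_1)^H$, and intersecting with $I_H = S(V^H) \otimes S(V_1)_+$ yields $S(V)^H \cap I_H = S(V^H) \otimes S(V_1)^H_+$. Consequently
\begin{equation*}
R(S(V), H) = \frac{S(V^H) \otimes S(V_1)^H}{S(V^H) \otimes S(V_1)^H_+} \cong S(V^H) \otimes \bigl(S(V_1)^H / S(V_1)^H_+\bigr) = S(V^H),
\end{equation*}
and tracing through the construction one sees this isomorphism is induced by the natural map $S(V^H) \into S(V)^H \onto R(S(V),H)$.

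The only genuinely substantive step is the computation of $I_H$, and even this is essentially a direct invocation of the isotypic-component observation already used in Lemma~\ref{L:Poissonreduction}; the remainder is purely formal tensor algebra, so I do not anticipate any serious obstacle.
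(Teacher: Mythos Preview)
Your proof is correct and follows essentially the same route as the paper. Both arguments hinge on the identification $I_H = (V_1) = S(V^H) \otimes S(V_1)_+$, established via the isotypic-component observation from Lemma~\ref{L:Poissonreduction}; the paper then reads off the conclusion from the decomposition $S(V) = S(V^H) \oplus (V_1)$ together with Lemma~\ref{L:Poissonreduction}'s identification $R(S(V),H) \cong S(V)/I_H$, whereas you carry out the same computation explicitly through the tensor factorisation and the quotient $S(V_1)^H/S(V_1)^H_+ \cong \C$.
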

\begin{proof}
Since $S(V) = S(V^H) \oplus (V_-)$ and $S(V^H)$ is $H$-fixed we must have $S(V)_- \subseteq (V_-)$. Combining with $V_- \subseteq S(V)_-$ we deduce that $I_H = (V_-)$ which proves the map $S(V^H) \to R(S(V), H)$ is a commutative algebra isomorphism.
\end{proof}

\begin{Remark}
\label{R:invariants}
Let $\g$ be a Lie algebra and $H$ a reductive group of automorphisms of $\g$ acting locally finitely. In this case the composition $S(\g^H) \to S(\g)^H \to R(S(\g), H)$ is a Poisson homomorphism and so Lemma~\ref{L:PBWforDiracreduction} shows that $S(\g^H) \isoto R(S(\g), H)$ as Poisson algebras. 
\end{Remark}

\subsection{Classical finite $W$-algebras}
\label{ss:classicalfiniteWalg}
For the rest of the section we fix a connected reductive algebraic group $G$ such that the derived subgroup is simply connected, and write $\g = \Lie(G)$. Let $\kappa : \g\times\g \to \C$ be a choice of non-degenerate trace form on $\g$ which is preserved by $\Aut(\g)$. Pick a nilpotent element $e\in \g$ and write $\chi := \kappa(e, \cdot) \in \g^*$. Pick an $\sl_2$-triple $\{e,h,f\}$ and write $\g = \bigoplus_{i\in \Z} \g(i)$ for the grading by $\ad(h)$-eigenspaces.  Throughout the paper we use the notation $\g(\le \! i) = \bigoplus_{j \le i} \g(j)$ and similar for $\g(\!<\!i)$. Since $e\in \g(2)$ we see that $\chi$ restricts to a character on $\g(\lemw)$. Make the following notation $\g(\lemw)_\chi := \{ x - \chi(x) \mid x\in \g(\lemw)\} \subseteq S(\g)$. The nilpotent Lie algebra $\g(\lez)$ is algebraic and we write $\g(\lez) = \Lie G(\lez)$.

The (classical) finite $W$-algebra associated to $(\g,e)$ is a Poisson reduction of $S(\g)$
\begin{eqnarray}
S(\g,e) := (S(\g) / S(\g)\g(\lemw)_\chi)^{G(<0)}.
\end{eqnarray}
In more detail, the Poisson normaliser $N = \{f \in S(\g) \mid \{f, \g(\lemw)_\chi \} \subseteq S(\g)\g(\lemw)_\chi$ is a Poisson subalgebra of $S(\g)$ with $N \cap S(\g)\g(\lemw)_\chi$ embedded as a Poisson ideal, and $(S(\g) / S(\g)\g(\lemw)_\chi)^{\g(<0)}$ is equipped with a Poisson structure via the isomorphism $N / N\cap S(\g)\g(\lemw)_\chi \cong (S(\g) / S(\g)\g(\lemw)_\chi)^{\g(<0)}$. %The group $G(<0)$ acts via Poisson automorphisms and so $S(\g,e)$ is a Poisson subalgebra.

The {\it Kazhdan grading} is defined on $S(\g)$ by placing $\g(i)$ in Kazhdan degree $i + 2$. Notice that $\g(\lemw)_\chi$ generates a homogeneous ideal of $S(\g)$ and that $S(\g) / S(\g)\g(\lemw)_\chi$ inherits a connected grading in non-negative degrees, with $S(\g,e)$ embedded as a graded subalgebra, with Poisson bracket in degree $-2$.% By \cite[Lemma~2.1]{GG02} the coordinate ring on the Slodowy slice identifies naturally with $S(\g,e)$. 

Since the grading is good for $e$ the map $\g^e \to S(\g)/ S(\g)\g(\lemw)_\chi$ is injective.
\begin{Theorem}
\label{T:PBW}
Let $\m_0 \subseteq S(\g)/ S(\g)\g(\lemw)_\chi$ denote the unique maximal graded ideal.
\begin{enumerate}
\item There exists a Kazhdan graded map $\theta : \g^e \to S(\g,e)$ such that \begin{eqnarray}
\label{e:thetaproperty}
\theta(x) - x \in \m_0^2.
\end{eqnarray}
Furthermore $\theta$ can be chosen to be equivariant with respect to any reductive group of Poisson automorphisms acting rationally on $S(\g,e)$ by graded automorphisms.
\item If $\theta$ is any map satisfying \eqref{e:thetaproperty} then the induced map $S(\g^e) \to S(\g,e)$ is an isomorphism of commutative algebras.
%\item If $\Theta$ is a $G^e(0)$-equivariant filtered lift of $\theta$ and $x_1,...,x_r$ is a basis for $\g^e$ then $U(\g,e)$ has a basis consisting of ordered monomials
%\begin{eqnarray}
%\{\Theta(x_{i_1}) \Theta(x_{i_2}) \cdots \Theta(x_{i_k}) \mid 1 \le i_1 \le i_2\le  \cdots \le i_k \le r\}.
%\end{eqnarray}
\end{enumerate}
\end{Theorem}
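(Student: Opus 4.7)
The plan is to leverage the classical (Poisson) analogue of Gan--Ginzburg's transverse slice theorem: the multiplication map
\[G(\lez) \times (e+\g^f) \isoto \Spec\bigl(S(\g)/S(\g)\g(\lemw)_\chi\bigr)\]
is a $G(\lez)$-equivariant isomorphism of affine varieties, so passing to $G(\lez)$-invariants on coordinate rings yields a Kazhdan-graded algebra isomorphism $S(\g,e) \isoto \C[e+\g^f]$. Let $\m_0' := \m_0 \cap S(\g,e)$ denote the maximal graded ideal of $S(\g,e)$. The inclusion $S(\g,e) \hookrightarrow S(\g)/S(\g)\g(\lemw)_\chi$ induces a map $\m_0'/(\m_0')^2 \hookrightarrow \m_0/\m_0^2$, whose image coincides with that of the natural map $\g^e \to \m_0/\m_0^2$ (the latter is injective since $\g^e \subseteq \g(\geez)$ projects onto a linearly independent set of generators of the quotient). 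This produces a canonical Kazhdan-graded isomorphism $\g^e \isoto \m_0'/(\m_0')^2$. In particular, the Hilbert series of $S(\g^e)$ and $S(\g,e)$ coincide, since both become polynomial algebras on graded vector spaces of the same graded dimensions (equivalently, $\kappa$ yields a perfect pairing $\g^e \times \g^f \to \C$ via $\sl_2$-representation theory applied to each isotypic component of $\g$).

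For Part~(1), I would choose any Kazhdan-graded $\C$-linear section $\theta\colon \g^e \to S(\g,e)$ of the projection $S(\g,e)\twoheadrightarrow \m_0'/(\m_0')^2$ that lifts the identification $\g^e \isoto \m_0'/(\m_0')^2$ above. Such a section exists because every graded piece of $S(\g,e)$ is finite-dimensional. By construction $\theta(x)$ and $x$ (viewed ambiently in $S(\g)/S(\g)\g(\lemw)_\chi$) have the same image in $\m_0/\m_0^2$, and therefore $\theta(x) - x \in \m_0^2$. For the equivariance clause, any reductive group $H$ acting on $S(\g,e)$ by graded Poisson automorphisms acts on $\m_0'/(\m_0')^2$; transporting through the canonical iso this matches the natural $H$-action on $\g^e$. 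Averaging an arbitrary choice of $\theta$ with the Reynolds operator for $H$ yields an $H$-equivariant section, and the equation $\theta(x)-x \in \m_0^2$ is preserved because $H$ acts compatibly on both sides of the identification.

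For Part~(2), any $\theta$ satisfying \eqref{e:thetaproperty} extends uniquely to a Kazhdan-graded commutative algebra homomorphism $\bar\theta\colon S(\g^e) \to S(\g,e)$. The condition $\theta(x) \equiv x \pmod{\m_0^2}$ together with the canonical identification $\g^e \isoto \m_0'/(\m_0')^2$ forces $\bar\theta$ to induce an isomorphism on augmentation ideals modulo their squares, so by graded Nakayama $\bar\theta$ is surjective. The matching Hilbert series then force $\bar\theta$ to be injective, hence an isomorphism of graded commutative algebras.

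The main obstacle is verifying the key identification $\g^e \isoto \m_0'/(\m_0')^2$: one must show that the image of $\g^e \to \m_0/\m_0^2$ equals that of $\m_0'/(\m_0')^2$ as Kazhdan-graded subspaces. This is essentially the content of the classical transverse slice theorem combined with careful tracking of Kazhdan degrees through the perfect pairing $\kappa\colon \g^e \times \g^f \to \C$ coming from the $\sl_2$-triple. Once this identification is in place, both parts reduce to standard graded commutative algebra: a graded lifting problem in Part~(1), and graded Nakayama plus a Hilbert series comparison in Part~(2).
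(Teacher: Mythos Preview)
Your proposal is correct and follows essentially the same strategy as the paper: both hinge on the Gan--Ginzburg transverse slice isomorphism $S(\g,e) \cong \C[e+\g^f] \cong S(\g^e)$ to produce the map $\theta$ and to establish that $S(\g^e)$ and $S(\g,e)$ have matching graded dimensions. The paper constructs $\theta$ directly as the restriction to $\g^e$ of the inverse slice isomorphism, while you phrase it as a graded section of the quotient $S(\g,e)\twoheadrightarrow \m_0'/(\m_0')^2$; these are the same map once one tracks the identification $\g^e\cong \m_0'/(\m_0')^2$ through the slice. For Part~(2), the paper invokes \cite[Lemma~7.1]{Ja04} to show that the images $\theta(x_i)$ of a homogeneous basis generate the maximal graded ideal (hence generate $S(\g,e)$), whereas your graded Nakayama argument is a cleaner route to the same surjectivity; both then appeal (the paper implicitly) to equality of Hilbert series for injectivity. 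The equivariance arguments---Reynolds averaging versus projecting onto isotypic components---are equivalent.
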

\begin{proof}
It follows from \cite[Lemma~2.1]{GG02} that the restriction homomorphism $S(\g)/ S(\g)\g(\lemw)_\chi \cong \C[\chi + \g(\lemw)^\perp] \to \C[\chi + \g^f] \cong S(\g^e)$ gives a $G^e(0)$-equivariant isomorphism $S(\g,e) \to S(\g^e)$ of commutative algebras, where $\g(\lemw)^\perp := \{\eta\in \g^* \mid \eta(\g(\lemw)) = 0\}$. Taking the inverse isomorphism restricted to $\g^e \subseteq S(\g^e)$ gives the desired map $\theta$. If $H \subseteq G^e(0)$ is any reductive group of Poisson isomorphisms then $\theta$ can be replaced with an $H$-equivariant map using the standard trick of projecting onto isotypic components of $S(\g,e)$ for the $H$-action.

Now if $\theta$ is any map satisfying \eqref{e:thetaproperty} then $S(\g^e) \to S(\g,e)$ and it suffices to show that this map is surjective. If $x_1,...,x_r \in \g^e$ is a homogenous basis then \cite[Lemma~7.1]{Ja04} shows that $\theta(x_1),...,\theta(x_r)$ generate a graded radical ideal of finite codimension. The only such ideal is the maximal graded ideal of $S(\g,e)$ and this implies that $\theta(x_1),...,\theta(x_r)$ generate $S(\g,e)$ as a commutative algebra.
%The augmentation ideal $S_+ \subseteq S(\g^e)$ is mapped to $J = I \cap S(\g,e)$, an ideal of codimension 1. Therefore we have $\gr \theta : \gr_{S_+} S(\g^e) \to \gr_J S(\g,e)$, which is an isomorphism thanks to condition \eqref{e:thetaproperty}. Therefore $\theta$ is an isomorphism.
\end{proof}

\subsection{Dirac reduction of classical $W$-algebras}  
\label{ss:quantumDiractheorem}

Now let $H \subseteq \Aut(\g)$ a reductive subgroup which fixes our choice of $\sl_2$-triple pointwise. Make the notation $\g_+ := \g^H$ and let $\g_-$ denote an $H$-invariant complement to $\g_+$ in $\g$. Also write $G_+ \subseteq G$ for the connected component of the subgroup consisting of elements $g\in G$ such that $\Ad(g)$ commutes with the action of $H$.

Our next goal is to relate the Dirac reduction $R(S(\g,e), H)$ with the classical $W$-algebra $S(\g_+,e)$. Later in the paper we will only exploit this relationship in the case where $\g = \gl_N$ and $H$ is a cyclic group of order 2. For the sake of simplicity, we state and prove our results in a much greater generality.

\begin{Lemma}
\label{L:kapparestrictsandtheinvariantgroup}
\begin{enumerate}
\item The restriction of $\kappa$ to $\g_+$ is non-degenerate and $\g_+^\perp = \g_-$.
\item $G_+$ is reductive and $\g_+ = \Lie(G_+)$.
\end{enumerate}
\end{Lemma}
\begin{proof}
Since $\g_-$ is spanned by elements $\{h \cdot x - x \mid h\in H, x\in \g\}$ (see Lemma~\ref{L:Poissonreduction}) it follows from a short calculation that $\kappa(\g_+, \g_-) = 0$. Since $\kappa$ is non-degenerate and $\g = \g_+ \oplus \g_-$, part (1) follows.

Since $\kappa$ is a non-degenerate trace form, we may pick a representation $\rho : \g\to \gl(V)$ such that $\kappa(x,y) = \Tr(\rho(x) \rho(y))$. If $\n\subseteq \g_+$ is a nilpotent ideal then by Engel's theorem there exists $k>0$ such that $\rho(\n)^k = 0$. Hence $(\rho(x) \rho(n))^k = 0$ for all $x\in \g_+$ and $n\in \n$, so $\kappa(x, n) = 0$. Since $\kappa$ is non-degenerate the nilradical of $\g_+$ is trivial.

If $\Ad_G : G \to \GL(\g)$ is the adjoint representation then we consider $\rho_+ :=  \Ad_{\GL(\g)} \circ \Ad_G : G \to \GL(\End(\g))$ and let $\rho : G \to \GL(W)$ be any faithful representation admitting $\rho_+$ as a direct summand. If we identify $G$ (resp. $\g$) with its image in $\GL(W)$ (resp. $\gl(W)$) via $\rho$ (resp. $d_0 \rho$), and identify $H$ with a subset of $\End(\g) \subseteq W$, then $G_+$ is precisely the subgroup of $\GL(W)$ fixing $H$, and similar for $\g_+$. Now apply \cite[Theorem~13.2]{Hum75} to see that $\g_+ = \Lie(G_+)$. Since the nilradical of $\g_+$ is trivial, $G_+$ is reductive thanks to \cite[Theorem~13.5]{Hum75}.
\end{proof}

Since $\g_+$ is a reductive subalgebra of $\g$ containing $(e,h,f)$ we can consider $S(\g_+, e)$. Furthermore $H$ preserves $\g(\lemw)_\chi$ and the induced action on $S(\g)/S(\g) \g(\lemw)_\chi$ stabilises the $G(\lez)$-invariants, so that $H$ acts by Poisson automorphisms on $S(\g,e)$. 

We are now ready to formulate one of our first main theorems, stated in \eqref{e:Diracreductionisomorphism}.
\begin{Theorem}
\label{T:firstmain}
Let $\g$ be the Lie algebra of a reductive group. The projection map $\phi$ defined in \eqref{e:theprojectionmap} descends to a Poisson isomorphism
\begin{eqnarray}
\label{e:semiclassicalreductioniso}
R(S(\g,e), H) \isoto S(\g_+, e).
%\label{e:quantumreductioniso}
%\R(U(\g,e), H) \isoto U(\g^H, e)
\end{eqnarray}
\end{Theorem}

The proof of Theorem~\ref{T:firstmain} will be given at the end of the current section. First we prepare for the proof with two Lemmas and a Proposition.

Thanks to Lemma~\ref{L:kapparestrictsandtheinvariantgroup}(1) we know that $\g_-$ is the orthogonal complement of $\g_+$ with respect to $\kappa$. This implies
\begin{equation}
\label{e:chigone}
\chi(\g_-) = 0.
\end{equation}
\begin{Lemma}
\label{L:idealcaps}
\begin{enumerate}
\setlength{\itemsep}{4pt}
\item $S(\g_+) \cap S(\g) \g(\lemw)_\chi = S(\g_+) \g_+(\lemw)_\chi$;
\item $S(\g) \g_- \cap S(\g) \g(\lemw)_\chi = S(\g) (\g_-(\lemw) + \g_-\g_+(\lemw)_\chi)$. 
\end{enumerate}
\end{Lemma}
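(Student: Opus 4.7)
My plan is to exploit the direct sum decomposition $\g = \g_0 \oplus \g_1$ compatibly with the PBW theorem, together with the crucial fact \eqref{e:chigone} that $\chi$ vanishes on $\g_1$. Since $H$ fixes the $\sl_2$-triple it preserves the $\ad(h)$-grading, so $\g_0$ and $\g_1$ are graded subspaces and
\[
\g(\lemw)_\chi \;=\; \g_0(\lemw)_\chi \oplus \g_1(\lemw) \quad\text{inside } S(\g),
\]
using \eqref{e:chigone} to see that the $\g_1$ summand requires no shift by $\chi$. PBW then gives a vector space decomposition $S(\g) = S(\g_0) \oplus S(\g)\g_1$. Let $\pi \colon S(\g) \to S(\g_0)$ be the projection along this decomposition. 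A one-line check shows $\pi$ is $S(\g_0)$-linear: for $a \in S(\g_0)$ and $c \in S(\g)$ one has $ac - a\pi(c) \in S(\g)\g_1$.

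For (1), the inclusion $\supseteq$ is immediate. Given $f$ in the left-hand side, expand $f = \sum_i a_i y_i + \sum_j b_j z_j$ with $a_i, b_j \in S(\g)$, $y_i \in \g_0(\lemw)_\chi$ and $z_j \in \g_1(\lemw)$. Applying $\pi$, the terms $b_j z_j$ lie in $S(\g)\g_1$ and so vanish under $\pi$, while $\pi(a_i y_i) = \pi(a_i)\,y_i$ by $S(\g_0)$-linearity. Since $f \in S(\g_0)$ we have $\pi(f) = f$, so $f = \sum_i \pi(a_i) y_i \in S(\g_0)\g_0(\lemw)_\chi$.

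For (2), the inclusion $\supseteq$ is again routine: $\g_1(\lemw) \subseteq \g(\lemw)_\chi$ by \eqref{e:chigone}, and $\g_1 \g_0(\lemw)_\chi$ is plainly in both ideals. For the reverse inclusion, take $f$ in the left-hand side and expand as before. Decompose each $a_i = \pi(a_i) + (a_i - \pi(a_i))$ with the second summand in $S(\g)\g_1$. Then
\[
f - \sum_i \pi(a_i) y_i \;=\; \sum_i (a_i - \pi(a_i))\, y_i + \sum_j b_j z_j \;\in\; S(\g)\bigl(\g_1\g_0(\lemw)_\chi + \g_1(\lemw)\bigr).
\]
The left-hand side therefore lies in $S(\g)\g_1$, since $f$ does. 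But $\sum_i \pi(a_i) y_i$ lies in $S(\g_0)$, and the direct sum decomposition forces $S(\g_0) \cap S(\g)\g_1 = 0$, so this residual term is zero. This gives $f \in S(\g)\bigl(\g_1(\lemw) + \g_1 \g_0(\lemw)_\chi\bigr)$, as required.

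No step looks genuinely difficult: the entire argument is bookkeeping around the PBW splitting, with \eqref{e:chigone} doing the essential work by ensuring the $\chi$-shift is concentrated in the $\g_0$-direction. The only point demanding a small care is verifying $S(\g_0)$-linearity of $\pi$ and checking that one can rearrange the expansion so as to isolate the $S(\g_0)$-valued piece; both are straightforward.
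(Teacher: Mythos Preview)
Your argument is correct and follows essentially the same route as the paper: both exploit the PBW splitting $S(\g)=S(\g_0)\oplus S(\g)\g_1$ together with the decomposition $\g(\lemw)_\chi=\g_0(\lemw)_\chi\oplus\g_1(\lemw)$ coming from \eqref{e:chigone}, and then isolate the $S(\g_0)$-component. Your use of the projection $\pi$ makes the passage from single generators $f(x-\chi(x))$ to arbitrary sums slightly more transparent than the paper's term-by-term phrasing, but the underlying mechanism is identical.
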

\begin{proof}
Observe that $\g_+$ and $\g_-$ are graded subspaces of $\g$. In both parts (1) and (2) it is easy to see that the right hand side is contained in the left (for part (2) one should use \eqref{e:chigone}). Therefore to prove (1) and (2) it remains to show that we have the reverse inclusion.

Let $x\in \g(\lemw)$ and $f \in S(\g)$, and let $f = f_+ + f_-$ and $x = x_+ + x_-$ be the decompositions over $S(\g) = S(\g_+) \oplus S(\g) \g_-$ and $\g = \g_+ \oplus \g_-$ respectively. By \eqref{e:chigone} the projections of $f(x-\chi(x))$ to $S(\g_+)$ and $S(\g)\g_-$ are $f_+(x_+ - \chi(x_+))$ and $f_+x_- + f_-(x-\chi(x))$ respectively. Therefore if $f(x-\chi(x))$ lies in $S(\g_+)$ we must have $f_+x_- + f_-(x-\chi(x)) = 0$ and $f(x-\chi(x)) = f_+ (x_+ - \chi(x_+))$, proving (1).

As a step towards proving (2) we claim that the projection of $S(\g)$ onto $S(\g) \g_-$ across the decomposition $S(\g) = S(\g_+) \oplus S(\g) \g_-$ preserves $S(\g) \g(\lemw)_\chi$. To see this observe that $S(\g) \g(\lemw)_\chi$ is spanned by elements of the form $f (x-\chi(x))$ with $f \in S(\g)$ and $x\in \g(\lemw)$. It suffices to show that the projection of such an element lies in $S(\g)\g(\lemw)_\chi$. Expanding $f = f_+ + f_-$ across $S(\g) = S(\g_+) \oplus S(\g) \g_-$ and writing $x = x_+ + x_- \in \g_+ \oplus \g_-$ we see that the required projection is $fx_- + f_-(x_+ - \chi(x_+)) \in S(\g)\g(\lemw)_\chi$ thanks to \eqref{e:chigone}, which proves the claim.

Let $F \in S(\g) \g_- \cap S(\g) \g(\lemw)_\chi$. Then $F = \sum_i f_i(x_i - \chi(x_i))$ for some elements $f_i \in S(\g)$ and $x_i \in \g(\lemw)$. The fact that $F \in S(\g) \g_-$ implies that the projection of $F$ to $S(\g_+)$ is zero. Therefore if we replace each $f_i(x_i - \chi(x_i))$ with its projection to $S(\g) \g_-$ then we leave the equality $F = \sum_i f_i(x_i - \chi(x_i))$ unchanged. It follows that every element of $S(\g) \g_- \cap S(\g) \g(\lemw)_\chi$ can be written as a sum of elements $f(x- \chi(x)) \in S(\g)\g_-$ with $f \in S(\g)$, $x\in \g(\lemw)$ (here we use the claim from the previous paragraph). This observation reduces (2) to showing that every such element $f (x-\chi(x)) \in S(\g) \g_- \cap S(\g) \g(\lemw)_\chi$ lies in $S(\g) (\g_-(\lemw) + \g_-\g_+(\lemw)_\chi)$. We let $f(x-\chi(x))$ lie in the required space and write $x = x_+ + x_-$ where $x_\pm \in \g_\pm(\lemw)$. Then $f(x_- - \chi(x_-)) \in S(\g)\g_-(\lemw)$ by \eqref{e:chigone}, whilst $f(x_+ - \chi(x_+)) \in S(\g)\g_-$ implies that $f\in S(\g)\g_-$ so that $f(x_+ - \chi(x_+)) \in S(\g)(\g_-\g_+(\lemw)_\chi)$. This proves (2).
\end{proof}

\begin{Lemma}
\label{L:decompprop}
%\begin{enumerate}
%\item
The ideal $S(\g)\g(\lemw)_\chi$ is the direct sum of its intersections with $S(\g_+)$ and $S(\g)\g_-$. Therefore $S(\g) = S(\g_+) \oplus S(\g) \g_-$ gives a $G_+(\lez)$-module decomposition
\begin{eqnarray}
\label{e:decomppropS}
S(\g) / S(\g) \g(\lemw)_\chi = S(\g_+) / S(\g_+) \g_+(\lemw)_\chi \oplus S(\g) \g_- / S(\g) (\g_-(\lemw) + \g_-\g_+(\lemw)_\chi).
\end{eqnarray} 
\end{Lemma}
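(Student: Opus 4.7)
The plan is to deduce the decomposition of the ideal $S(\g)\g(\lemw)_\chi$ from its generators and Lemma~\ref{L:idealcaps}, and then pass to quotients. The $G_0(\lez)$-equivariance will be a formal consequence of the fact that $G_0$ commutes with $H$, so $\g_0$ and $\g_1$ are both $G_0$-stable subspaces.

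Step 1: Decompose the generators. Take any $x \in \g(\lemw)$ and write $x = x_0 + x_1$ with $x_i \in \g_i(\lemw)$ (which makes sense because $\g_0$ and $\g_1$ are graded subspaces). By \eqref{e:chigone}, $\chi(x_1) = 0$, so
\begin{eqnarray*}
x - \chi(x) \;=\; (x_0 - \chi(x_0)) + x_1,
\end{eqnarray*}
where the first term lies in $S(\g_0)\g_0(\lemw)_\chi$ and the second in $S(\g)\g_1(\lemw)$. Multiplying by an arbitrary $f = f_0 + f_1 \in S(\g_0) \oplus S(\g)\g_1$ and collecting terms I obtain
\begin{eqnarray*}
f(x-\chi(x)) \;=\; f_0(x_0-\chi(x_0)) \;+\; \big[f_0 x_1 + f_1(x-\chi(x))\big],
\end{eqnarray*}
where the first summand belongs to $S(\g_0)\g_0(\lemw)_\chi$ and, by Lemma~\ref{L:idealcaps}(2) applied to the expansion of the bracketed term, the second summand belongs to $S(\g)(\g_1(\lemw) + \g_1\g_0(\lemw)_\chi)$. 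Summing over such products and using linearity, every element of $S(\g)\g(\lemw)_\chi$ is the sum of an element in $S(\g_0) \cap S(\g)\g(\lemw)_\chi$ and an element in $S(\g)\g_1 \cap S(\g)\g(\lemw)_\chi$.

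Step 2: Directness of the sum and the two intersections. The ambient decomposition $S(\g) = S(\g_0) \oplus S(\g)\g_1$ forces $S(\g_0) \cap S(\g)\g_1 = 0$, hence the sum in Step 1 is direct. Combined with Lemma~\ref{L:idealcaps}, this gives
\begin{eqnarray*}
S(\g)\g(\lemw)_\chi \;=\; S(\g_0)\g_0(\lemw)_\chi \;\oplus\; S(\g)(\g_1(\lemw) + \g_1\g_0(\lemw)_\chi).
\end{eqnarray*}

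Step 3: Pass to the quotient. Since both $S(\g) = S(\g_0) \oplus S(\g)\g_1$ and the ideal itself split compatibly, taking the quotient summand by summand yields the displayed decomposition \eqref{e:decomppropS}. Finally, both $\g_0$ and $\g_1$ are $H$-stable, hence $G_0$-stable since $G_0$ centralises $H$; this shows that $S(\g_0)$ and $S(\g)\g_1$ are $G_0(\lez)$-submodules, and the quotient decomposition is therefore $G_0(\lez)$-equivariant.

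The only subtle point is Step 1, and the subtlety is already entirely encoded in the identity $\chi(\g_1)=0$ together with Lemma~\ref{L:idealcaps}(2); there is no genuine obstacle once those are in hand. I expect the write-up to be short and mechanical.
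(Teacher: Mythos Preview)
Your argument is correct and follows essentially the same route as the paper's proof: decompose a generator $f(x-\chi(x))$ along $S(\g)=S(\g_0)\oplus S(\g)\g_1$ using $\chi(\g_1)=0$, observe that the two pieces land in the two intersections, and then invoke Lemma~\ref{L:idealcaps} to identify those intersections explicitly. The only cosmetic difference is that the paper treats the cases $x\in\g_0$ and $x\in\g_1$ separately rather than writing $x=x_0+x_1$ at once; your invocation of Lemma~\ref{L:idealcaps}(2) in Step~1 is also not strictly needed there (membership in $S(\g)\g_1\cap S(\g)\g(\lemw)_\chi$ is immediate), but it does no harm.
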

\begin{proof}
%The proofs of (1) and (2) are very similar, so we only give the details for (1).
We begin by proving the claim
\begin{eqnarray}
\label{e:decompeq}
S(\g) \g(\lemw)_\chi = S(\g_+) \cap S(\g) \g(\lemw)_\chi \oplus S(\g) \g_- \cap S(\g) \g(\lemw)_\chi.
\end{eqnarray}
The $G_+(<0)$-decomposition \eqref{e:decomppropS} is an immediate consequence.

The right hand side of \eqref{e:decompeq} is clearly contained in the left hand side and so our proof will focus on the reverse inclusion. Let $f \in S(\g)$ and $x- \chi(x) \in \g(\lemw)_\chi$. Since $\g(\lemw) = \g_+(\lemw) \oplus \g-(\lemw)$ we can consider two cases: (i) if $x\in \g_-$ then by \eqref{e:chigone} we have $f(x-\chi(x)) \in S(\g) \g_- \cap S(\g) \g(\lemw)_\chi$; (ii) if $x \in \g_+$ then we can write $f = f_+ + f_- \in S(\g_+) \oplus S(\g)\g_-$, in which case $f_+(x-\chi(x)) \in S(\g_+) \g_+(\lemw)_\chi \subseteq S(\g_+) \cap S(\g) \g(\lemw)_\chi$ and $f_- (x - \chi(x)) \in  S(\g) \g_- \cap S(\g) \g(\lemw)_\chi$. 
Now the Lemma follows from Lemma~\ref{L:idealcaps}.
\end{proof}

In what follows we will use the notation $S(\g, e) = S(\g, e)_+ \oplus S(\g, e)_-$ for the decomposition into trivial and non-trivial $H$-modules, generalising our notation for $S(\g)$. Consider the two sets
\begin{eqnarray}
\label{e:introduceN}
N & := & \{f \in S(\g) \mid g \cdot f - f \in S(\g)\g(\lemw)_\chi \text{ for all } g\in G(\lez)\};\\
N_+ & := & \{f \in S(\g_+) \mid g \cdot f - f \in S(\g_+) \g_+(\lemw)_\chi \text{ for all } g\in G_+(\lez)\}.
\end{eqnarray}
By differentiating the locally finite actions on $G(\lez)$ on $S(\g)/S(\g)\g(\lemw)_\chi$ and of $G_+(\lez)$ on $S(\g_+) / S(\g_+) \g_+(\lemw)_\chi$ we see that
\begin{eqnarray}
\label{e:glemwstability}
\begin{array}{l}
\{\g(\lemw), N \cap S(\g)_+\} \subseteq N \cap S(\g)\g(\lemw)_\chi; \vspace{4pt}\\ 
\{\g_+(\lemw), N_+\} \subseteq N_+ \cap S(\g_+)\g_+(\lemw)_\chi.\end{array}
\end{eqnarray}

We consider the projection
\begin{eqnarray}
\label{e:theprojectionmap}
\phi : S(\g) \longrightarrow S(\g_+)
\end{eqnarray}
across the decomposition $S(\g) = S(\g_+) \oplus S(\g) \g_-$. Although this is not a Poisson homomorphism the next Proposition shows that $\phi$ descends to a Poisson homomorphism $ R(S(\g,e), H) \onto S(\g_+, e)$.

\begin{Proposition}
\label{P:Nispoisson}
\begin{enumerate}
\setlength{\itemsep}{4pt}
\item[(i)] $N \cap S(\g)_+ \subseteq S(\g)$ and $N_+ \subseteq S(\g_+)$ are Poisson subalgebras;
\item[(ii)] $\phi(N\cap S(\g)_+) \subseteq N_+$ and the map $\phi : N\cap S(\g)_+ \to N_+$ is a Poisson homomorphism;
\item[(iii)] The map $\pi : S(\g) \to S(\g)/S(\g)\g(\lemw)_\chi$ restricts to a surjective Poisson  homomorphism $$N\cap S(\g)_+ \twoheadrightarrow S(\g,e)_+;$$
\item[(iv)] The map $\pi_+ : S(\g_+) \to S(\g_+)/S(\g_+)\g_+(\lemw)_\chi$ restricts to a surjective Poisson  homomorphism $$N_+ \twoheadrightarrow S(\g_+,e);$$
\item[(v)] The kernel of the map $N\cap S(\g)_+ \onto S(\g,e)_+$ is contained in the kernel of the map $\pi_+\circ \phi$. Thus $\phi$ induces a Poisson homomorphism $S(\g,e)_+ \onto S(\g_+, e)$ which we also denote by $\phi$;
\item[(vi)] $S(\g, e)_+ \cap S(\g,e)S(\g,e)_-$ is contained in the kernel of $S(\g,e)_+ \onto S(\g_+, e)$, inducing a surjective Poisson homomorphism
\begin{eqnarray}
\phi : R(S(\g,e), H) \onto S(\g_+, e).
\end{eqnarray}
\end{enumerate}
\end{Proposition}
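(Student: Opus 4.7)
Parts (i)--(iv) are essentially bookkeeping. For (i), $N$ is a Poisson subalgebra by the standard argument for Poisson normalisers, and $S(\g)_0 = S(\g)^H$ is Poisson since $H$ acts by Poisson automorphisms, so $N \cap S(\g)_0$ is Poisson; the same applies to $N_0$. For (iii), $\pi|_N : N \twoheadrightarrow S(\g,e)$ is by definition a surjective Poisson homomorphism, and since $H$ is linearly reductive the functor of $H$-invariants is exact, so $\pi$ restricts to a surjective Poisson homomorphism $N \cap S(\g)_0 \twoheadrightarrow S(\g,e)^H = S(\g,e)_0$. Item (iv) is the same argument applied to $\g_0$.

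For (ii), the key observation is that $G_0$ commutes with $H$ by definition, hence stabilises both $\g_0$ and the orthogonal complement $\g_1 = \g_0^\perp$, and therefore commutes with $\rho$. For $f \in N \cap S(\g)_0$ and $g \in G_0(\lez) \subseteq G(\lez)$ one then has
\begin{equation*}
\rho(g \cdot f - f) = g \cdot \rho(f) - \rho(f) \in \rho(S(\g) \g(\lemw)_\chi) \subseteq S(\g_0) \g_0(\lemw)_\chi
\end{equation*}
by Lemma~\ref{L:idealcaps}(1), whence $\rho(f) \in N_0$. That $\rho|_{S(\g)^H}$ is a Poisson homomorphism follows from Remark~\ref{R:invariants}: the kernel $\ker\rho = S(\g)\g_1$ equals $I_H$, so $\rho|_{S(\g)^H}$ realises the Poisson surjection $S(\g)^H \twoheadrightarrow S(\g)^H/(S(\g)^H \cap I_H) \isoto R(S(\g), H) \cong S(\g_0)$.

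For (v), the kernel of $\pi|_{N\cap S(\g)_0}$ is $N \cap S(\g)_0 \cap S(\g)\g(\lemw)_\chi$, and $\rho$ sends this into $S(\g_0)\g_0(\lemw)_\chi \subseteq \ker \pi_0$ by Lemma~\ref{L:idealcaps}(1), so $\pi_0 \rho$ descends to a Poisson homomorphism $\phi : S(\g,e)_0 \to S(\g_0, e)$. Surjectivity of $\phi$ is the main obstacle, and the plan is to deduce it from Theorem~\ref{T:PBW}. Under the algebra decomposition $S(\g) = S(\g_0) \otimes S(\g_1)$, the projection $\rho$ is $\mathrm{id} \otimes \epsilon$, hence a ring homomorphism, and it descends to a graded ring homomorphism $p : S(\g)/S(\g)\g(\lemw)_\chi \twoheadrightarrow S(\g_0)/S(\g_0)\g_0(\lemw)_\chi$ sending the maximal graded ideal $\m_0$ onto its counterpart $\m_0'$, so $p(\m_0^2) \subseteq (\m_0')^2$. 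Pick an $H$-equivariant PBW map $\theta : \g^e \to S(\g,e)$ using Theorem~\ref{T:PBW}(1); since $(\g^e)^H = \g_0^e$ (as $H$ fixes $e$), $\theta$ restricts to a map into $S(\g,e)_0$. Because $\phi$ and $p$ agree on $S(\g,e)_0$ by construction, the composition $\tilde\theta_0 := \phi \circ \theta|_{\g_0^e}$ satisfies $\tilde\theta_0(x) - x \in (\m_0')^2$, and Theorem~\ref{T:PBW}(2) then forces $\tilde\theta_0(\g_0^e)$ to generate $S(\g_0, e)$, giving surjectivity of $\phi$.

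Finally for (vi), given $x \in S(\g,e)_0 \cap S(\g,e)S(\g,e)_1$, lift $x = \pi(\sum \tilde u_i \tilde v_i)$ with $\tilde u_i \in N$ and $\tilde v_i \in N \cap S(\g)_1$. Since $S(\g)_1 \subseteq S(\g) \g_1 = \ker \rho$ (as in the proof of Lemma~\ref{L:Poissonreduction}), the sum $\sum \tilde u_i \tilde v_i$ lies in $\ker \rho$. Splitting it as $y_0 + y_1$ along $N = (N\cap S(\g)_0) \oplus (N\cap S(\g)_1)$ (via the $H$-stability of $N$) yields $\pi(y_0) = x$ and $\rho(y_0) = -\rho(y_1) = 0$, since $y_1 \in S(\g)_1 \subseteq \ker \rho$. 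Hence $\phi(x) = \pi_0 \rho(y_0) = 0$, so $\phi$ descends through $R(S(\g,e), H)$ to give the desired surjective Poisson homomorphism.
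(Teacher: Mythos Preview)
Your proof is correct and follows essentially the same route as the paper's. A few remarks on the differences:

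For (ii), you deduce that $\rho|_{S(\g)^H}$ is Poisson by identifying it with the Dirac reduction map via Remark~\ref{R:invariants}; the paper instead argues directly that $S(\g)_0 \cap S(\g)\g_1$ is a Poisson ideal of $S(\g)_0$ by computing $\{S(\g)_0, \g_1\} \subseteq S(\g)\g_1$. Both are fine.

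For (v), you supply the PBW argument for surjectivity (via $p(\m_0^2)\subseteq(\m_0')^2$ and Theorem~\ref{T:PBW}(2)) inside the proof of the Proposition. The paper's proof of (v) only shows the kernel containment and tacitly defers surjectivity to the proof of Theorem~\ref{T:firstmain}, where exactly your argument appears. So you have merely relocated that step.

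For (vi), your method---lifting $x$ through products, using $N\cap S(\g)_1 \twoheadrightarrow S(\g,e)_1$, and then splitting along $N=(N\cap S(\g)_0)\oplus(N\cap S(\g)_1)$---works but is slightly more elaborate than the paper's. The paper observes directly from Lemma~\ref{L:decompprop} that $S(\g,e)_1$ (and hence the ideal it generates) lies in the image of $S(\g)\g_1$ in the quotient, so any lift $f\in N\cap S(\g)_0$ of $x$ already satisfies $f\in S(\g)\g_1 + S(\g)\g(\lemw)_\chi$, giving $\rho(f)\in S(\g_0)\g_0(\lemw)_\chi$ immediately. Your detour through an explicit product lift and $H$-splitting of $N$ reaches the same conclusion.

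One small caution on (i): the $N$ in this Proposition is the group-theoretic version \eqref{e:introduceN}, not literally the Poisson normaliser of Section~\ref{ss:classicalfiniteWalg}; the argument that it is a Poisson subalgebra (closure under brackets) is indeed standard but does use \eqref{e:glemwstability}, which is worth citing.
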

\begin{proof}
Since $H$ acts on $S(\g)$ by Poisson automorphisms the invariant subspace $S(\g)_+$ is a Poisson subalgebra. Therefore the proofs of the two claims in (i) are identical, and we will only prove that $N_+\subseteq S(\g_+)$ is a Poisson subalgebra. It is evidently closed under multiplication so we only need to show that it is closed under the bracket. Suppose that $f_1, f_2\in N_+$, that $g\in G_+(\lez)$ and that $g \cdot f_i - f_i = h_i \in S(\g_+) \g_+(\lemw)_\chi$. We have $g\cdot \{f_1, f_2\} = \{g\cdot f_1, g\cdot f_2\}= \{f_1 + h_1, f_2 + h_2\}$ and so (i) will follow if we can show that $\{f_1, h_2\}, \{h_1, f_2\}, \{h_1, h_2\} \in S(\g_+) \g_+(\lemw)_\chi$. Since $\chi$ vanishes on $\g(\!<\!\!-2)$ we have $[\g_+(\lemw)_\chi, \g_+(\lemw)_\chi] \subseteq \g_+(\lemw)_\chi$, therefore $\{h_1, h_2\} \in S(\g_+) \g_+(\lemw)_\chi$ by the Leibniz rule. To complete the proof of (i) we observe that $N_+ \cap S(\g_+) \g_+(\lemw)_\chi$ is a Poisson ideal of $N_+$, which follows quickly from \eqref{e:glemwstability}.

We now address (ii). Let $f \in N\cap S(\g)_+$ and write $f = f_+ + f_-$ according to the decomposition $S(\g) = S(\g_+) \oplus S(\g)\g_-$. If $g\in G_+(\lez)$ then $g \cdot f_+ - f_+ \in S(\g_+) \cap S(\g) \g(\lemw)_\chi = S(\g_+)\g_+(\lemw)_\chi$ by Lemma~\ref{L:idealcaps} and Lemma~\ref{L:decompprop}. This shows that $\phi(f) = f_+ \in N$. 

Finally, to see that $\phi : N\cap S(\g)_+ \to N_+$ is a Poisson homomorphism it suffices to show that $N \cap S(\g)_+ \cap S(\g)\g_-$ is a Poisson ideal of $N\cap S(\g)_+$. Recall that $\g_- = \{h\cdot x - x \mid h\in H, x\in \g\}$ and $S(\g)_- = \{h\cdot x - x \mid h\in H, x\in S(\g)\}$. If $f\in S(\g)_+$ then $\{f,h\cdot x - x\} = h\cdot \{f,x\} - \{f,x\} \in S(\g)_-$ for any $x\in S(\g)_-$. Lemma~\ref{L:PBWforDiracreduction} shows that the ideals generated by $\g_-$ and $S(\g)_-$ coincide, hence $\{S(\g)_+, \g_-\} \subseteq S(\g) \g_-$, which shows that $S(\g)_+ \cap S(\g) \g_-$ is a Poisson ideal of $S(\g)_+$. This completes the proof of (ii).

The map $\pi$ restricts to a surjection $N \onto S(\g,e)$ by definition. Since the latter map is $H$-equivariant we get $N \cap S(\g)_+ \to S(\g,e)_+$, which proves (iii), whilst (iv) is proven similarly. 

We move on to (v). The kernel of $N\cap S(\g)_+ \to S(\g,e)_+$ is $N \cap S(\g)_+ \cap S(\g) \g(\lemw)_\chi$. This is mapped to $S(\g_+) \cap S(\g) \g(\lemw)_\chi = S(\g_+) \g_+(\lemw)_\chi$ by $\phi$, thanks to Lemma~\ref{L:idealcaps} and Lemma~\ref{L:decompprop}. Finally $S(\g_+) \g_+(\lemw)_\chi$ lies in the kernel of $\pi_+$, which proves (v). Now we take $f \in N \cap S(\g)_+$ such that $\pi(f) \in S(\g,e)S(\g,e)_-$. Using Lemma~\ref{L:decompprop} again we see that $S(\g) / S(\g)\g(\lemw)_\chi$ decomposes as the direct sum of the image of $S(\g_+)$ and the image of $S(\g)\g_-$. Therefore $S(\g,e)_-$, and the ideal which it generates, are contained in the image of $S(\g)\g_-$. It follows immediately that 
\begin{eqnarray*}
& & f \in S(\g) S(\g)_- + S(\g) \g(\lemw)_\chi   \\
& & \phi(f) \in S(\g) \g(\lemw)_\chi \cap S(\g_+) = S(\g_+) \g_+(\lemw)_\chi 
\end{eqnarray*}
Hence $\phi(f)$ is in the kernel of $\pi_+$, completing (vi). This concludes the proof.
\end{proof}

\begin{proofoffirstmain}
Thanks to Proposition~\ref{P:Nispoisson},(vi) we have $\phi : R(S(\g,e), H) \onto S(\g_+, e)$.

Pick an $H$-equivariant map $\theta : \g^e\to S(\g,e)$ satisfying the properties of Theorem~\ref{T:PBW}, and define $\theta_+ : \g_+^e \to S(\g_+, e)$ via %projecting $S(\g,e)_0 \to R(S(\g,e),H)$ and then applying $\phi$
$$\theta_{+}(x) := \phi (\theta(x) + S(\g,e)_+ \cap S(\g,e) S(\g,e)_-)$$
%If $x\in \g_0^e$ then $\theta_0(x)$ is an element of $S(\g_0, e)$ with linear term equal to $x$.
Property \eqref{e:thetaproperty} for $\theta$ implies \eqref{e:thetaproperty} for $\theta_{+}$. By Theorem~\ref{T:PBW}(2) this implies that $\phi$ is surjective. Applying Proposition~\ref{L:PBWforDiracreduction} we see that $R(S(\g,e), H)$ is a polynomial algebra generated by the image of $\theta(\g^e_+)$ under the map $S(\g,e)_+ \to R(S(\g,e),H)$. It follows that $\phi$ maps a basis of $R(S(\g,e), H)$ to a basis of $S(\g_+, e)$, hence it is an isomorphism. $\hfill\qed$
\end{proofoffirstmain}

\begin{Remark}
It is natural to search for an analogue of Dirac reduction for quantum finite $W$-algebras. Although the na{i}ve approach to quantising $S(\g,e)$ works extremely well (see Section~\ref{ss:finiteWalgebras}), the na{i}ve approach to quantising the Dirac reduction fails.
\end{Remark}

\section{Dirac reduction for shifted Yangians}
\label{s:DracredcutionforshiftedYangians}

\subsection{Poisson algebras by generators and relations}
\label{ss:universalconstructions}
Let $X$ be a set. % and $\k\langle X\rangle$ the free associative $\k$-algebra on $X$.
The free Lie algebra $L_X$ on $X$ is the initial object in the category of (complex) Lie algebras generated by $X$ and can be constructed as the Lie subalgebra of the free algebra $\C\langle X \rangle$ generated by the vector space spanned by $X$. %It is known to be a free $\k$-module (when $\k$ is any ring). 
When $L$ is a Lie algebra generated by $X$ we say that $L$ has relations $Y\subseteq L_X$ if $Y$ generates the kernel of $L_X \onto L$.

The {\it free Poisson algebra generated by $X$} is the initial object in the category of (complex) Poisson algebras generated by $X$. It can be constructed as the symmetric algebra $S(L_X)$ together with its Poisson structure. If there is a Poisson surjection $S(L_X) \onto A$ then we say that {\it $A$ is Poisson generated by $X$}. It is important to distinguish this from $A$ being generated by $X$ as a commutative algebra, as both notions will occur frequently.

We say that a (complex) Poisson algebra $A$ has Poisson generators $X$ and relations $Y \subseteq S(L_X)$  if there is a surjective Poisson homomorphism $S(L_X) \onto A$ and the kernel is the Poisson ideal generated by $Y$.

Let $X$ be a set and $Y \subseteq L_X \subseteq S(L_X)$. Write $I$ (resp. $J$) for the ideal of $S(L_X)$ (resp. $L_X$) generated by $Y$ . It is easy to see that the natural map $S(L_X)\to S(L_X)/I$ induces an isomorphism
\begin{eqnarray}
\label{e:poissonvsliegeneration}
S(L_X/J) \isoto S(L_X)/I.
\end{eqnarray}

\subsection{Chevalley--Serre presentations for shifted current Lie--Poisson algebras} 
\label{ss:CSpresentationsforshifts}
Throughout this section we fix an integer $n  > 0$. Following \cite{BK06, BG07} a {\it shift matrix} is an $n \times n$ array $\sigma = (s_{i,j})_{1\le i,j\le n}$ of non-negative integers with zero on the diagonal, satisfying
\begin{eqnarray}
\label{e:shiftmatrixdefn}
s_{i,k} = s_{i,j} + s_{j,k}
\end{eqnarray}
whenever $i \le j \le k$ or $k \le j \le i$. A shift matrix is said to be {\it symmetric} if it is equal to its transpose. These shift matrices serve two key purposes: they classify certain good gradings for nilpotent elements in general linear Lie algebras \cite{BG07}, and they provide one of the ingredients in the definition of {\it shifted Yangians} which give presentations of finite $W$-algebras in type {\sf A} \cite{BK06}. The symmetric shift matrices correspond to Dynkin gradings.

All examples of shift matrices used in the applications of our results are symmetric, and so for convenience we assume that every shift matrix in this paper is symmetric. Theorems~\ref{T:CSshiftedcurrents1} and \ref{T:shiftedyangianPBW} do not require this hypothesis.

The {\it current algebra} is the Lie algebra $\cc_n := \gl_n \otimes \C[t]$. It has a basis consisting of elements $\{e_{i,j} t^r \mid 1\leq i , j \leq n, \ r \ge 0\}$ where we write $x\otimes t^r = xt^r$ for $x\in \gl_n$ and $r\ge 0$.  For any shift matrix $\sigma = (s_{i,j})_{1\le i,j\le n}$ we define the {\it shifted current algebra} $\cc_n(\sigma)$ to be the subalgebra spanned by 
\begin{eqnarray}
\label{e:shiftedcurrentgens}
\{e_{i,j}t^r \mid 1\le i,j\le n, \ s_{i,j} \le r \}.
\end{eqnarray}
\begin{Lemma}
\label{L:utgenslemma}
%Let $\u_n \subseteq \cc_n(\sigma)$ be 
Let $\sigma$ be a symmetric shift matrix. The Lie subalgebra $\u_n(\sigma) \subseteq \cc_n(\sigma)$ spanned by elements \eqref{e:shiftedcurrentgens} with $i < j$ is generated as a complex Lie algebra by
\begin{eqnarray}
\label{e:utcurrentsgens}
\{e_{i; r}  \mid 1\le i < n, \ s_{i,i+1} \le r \}
\end{eqnarray}
subject to the relations
\begin{eqnarray}
\label{e:utcurrentrels1}
\big[e_{i; r}, e_{j; s}\big] &=& 0 \text{ \ \ \ for \ \ \ } |i-j| \ne 1,\\
\label{e:utcurrentrels2}
{}\big[e_{i; r+1}, e_{i+1; s}\big] - \big[e_{i; r}, e_{i+1; s+1}\big] &=&0,\\
\label{e:utcurrentrels3}
{}\Big[e_{i; r_1} \big[e_{i; r_2}, e_{j;r_3} \big]\Big] + \Big[e_{i; r_2} \big[e_{i; r_1}, e_{j;r_3} \big]\Big] &=& 0 \text{ \ \ \ for all } |i-j| = 1.
\end{eqnarray}
\end{Lemma}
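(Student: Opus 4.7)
The plan is to define the natural Lie algebra surjection from the abstract algebra $\widetilde\u$ presented by the generators \eqref{e:utcurrentsgens} and relations \eqref{e:utcurrentrels1}--\eqref{e:utcurrentrels3} onto $\u_n(\sigma)$, and then prove injectivity by constructing a spanning set of $\widetilde\u$ that matches the standard basis of $\u_n(\sigma)$.

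First I would directly verify that \eqref{e:utcurrentrels1}--\eqref{e:utcurrentrels3} hold in $\u_n(\sigma) \subseteq \gl_n \otimes \C[t]$: relation \eqref{e:utcurrentrels1} is the trivial commutation $[e_{i,i+1}, e_{j,j+1}] = 0$ for $|i-j| \ne 1$; \eqref{e:utcurrentrels2} follows from the identity $t^{r+1}t^s = t^r t^{s+1}$ since both sides equal $e_{i,i+2} t^{r+s+1}$; and \eqref{e:utcurrentrels3} is the $\C[t]$-analogue of the classical Serre relation $(\operatorname{ad} e_{i,i+1})^2 e_{j,j+1} = 0$ for $|i-j|=1$. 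This yields a surjective Lie algebra homomorphism $\pi\colon \widetilde\u \onto \u_n(\sigma)$, and the work lies in proving its injectivity.

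For this I would introduce, for each $1\le i < j \le n$ and $r \ge s_{i,j}$, the nested bracket
$$\widetilde e_{i,j;r} := \bigl[\widetilde e_{i;s_{i,i+1}}, \bigl[\widetilde e_{i+1;s_{i+1,i+2}},\ldots, \bigl[\widetilde e_{j-2;s_{j-2,j-1}}, \widetilde e_{j-1;r-s_{i,j-1}}\bigr]\cdots\bigr]\bigr] \in \widetilde\u,$$
where the shift-matrix identity \eqref{e:shiftmatrixdefn} ensures $r - s_{i,j-1} \ge s_{j-1,j}$, so the right-most generator exists. A short direct calculation gives $\pi(\widetilde e_{i,j;r}) = e_{i,j} t^r$, so these map bijectively onto the standard basis of $\u_n(\sigma)$ and are in particular linearly independent in $\widetilde\u$. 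To conclude that $\pi$ is an isomorphism it remains to show that the $\widetilde e_{i,j;r}$ span $\widetilde\u$. I would proceed by induction on the depth of nested brackets of generators: \eqref{e:utcurrentrels1} disposes of brackets of generators of disjoint support; repeated application of \eqref{e:utcurrentrels2} (combined with Jacobi) allows the $t$-powers within any nested bracket to be shifted to the right-most position, collapsing it to the canonical form above; and \eqref{e:utcurrentrels3} enters precisely when Jacobi rearrangements produce terms of the form $[\widetilde e_{i;r},[\widetilde e_{i;s},\widetilde e_{j;t}]]$, where the symmetrised Serre relation together with \eqref{e:utcurrentrels1} in the case $|i-j|=0$ forces the offending summand to vanish.

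The main obstacle is this spanning step: verifying that every Jacobi rearrangement really does reduce to the canonical nested form without requiring generators $\widetilde e_{i;r}$ with $r < s_{i,i+1}$. Here the telescoping identity \eqref{e:utcurrentrels2} and the symmetrised Serre relation \eqref{e:utcurrentrels3} must interact consistently with the shift-matrix condition \eqref{e:shiftmatrixdefn}, and this compatibility is what allows the induction to go through within the shifted range.
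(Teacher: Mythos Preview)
Your approach is essentially the same as the paper's: construct the surjection from the abstractly presented algebra to $\u_n(\sigma)$, define nested-bracket elements mapping to the standard basis, and prove they span. But the paper makes two simplifications that dissolve precisely the obstacle you flag at the end.

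First, the paper opens by observing that $e_{i,j}t^r \mapsto e_{i,j}t^{r - s_{i,j}}$ is a Lie algebra isomorphism $\u_n(\sigma) \isoto \u_n(\underline{0})$ (this uses \eqref{e:shiftmatrixdefn}), so one may take $\sigma = \underline{0}$ from the outset. This immediately removes your worry about Jacobi rearrangements forcing generators $e_{i;r}$ with $r < s_{i,i+1}$: once $\sigma = \underline{0}$, all $r \ge 0$ are available and there is nothing to check about compatibility with the shifted range.

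Second, rather than reducing general nested brackets to canonical form by direct manipulation of \eqref{e:utcurrentrels1}--\eqref{e:utcurrentrels3} as you sketch, the paper cites steps (1)--(7) in the proof of \cite[Lemma~5.8]{BK05} to establish the full commutator formula
\[
[e_{i,j;r}, e_{k,l;s}] = \delta_{j,k}\, e_{i,l;r+s} - \delta_{i,l}\, e_{k,j;r+s}
\]
inside the abstract algebra. With this in hand, the spanning argument is a one-line induction on the filtration by bracket length: the formula shows $[\F_{d_1}\h\u_n, \F_{d_2}\h\u_n]$ is already contained in the span of the $e_{i,j;r}$ with $j - i \le d_1 + d_2$. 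Your spanning sketch is correct in spirit, but making it rigorous would amount to reproving this commutator formula; the reduction to $\sigma = \underline{0}$ together with the citation to \cite{BK05} is what the paper's route buys.
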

\begin{proof}
%Let $\ut_n$ denote the upper triangular $n\times n$-matrices.
Write $\underline{0}$ for the $n\times n$ zero matrix. It follows from \eqref{e:shiftmatrixdefn} that the linear map $\u_n(\sigma) \to \u_n(\underline{0})$ defined by $e_{i,j}t^r \mapsto e_{i,j}t^{r-s_{i,j}}$ is a Lie algebra isomorphism and so it suffices to prove the current lemma when $\sigma = \underline{0}$.

Let $\h\u_n$ be the Lie algebra with generators \eqref{e:utcurrentsgens} and relations \eqref{e:utcurrentrels1}--\eqref{e:utcurrentrels3}, with $\sigma = \underline{0}$. We inductively define elements $e_{i,j;r} \in \h\u_n$ by setting $e_{i,i+1;r} := e_{i;r}$ and $e_{i,j;r} := [e_{i,j-1; r}, e_{j-1, j; 0}]$ for $1\le i < j \le n$. There is a homomorphism $\h\u_n \onto \u_n(\underline 0)$ given by $e_{i;r} \mapsto e_{i,i+1}t^r$ and, in order to show that it is an isomorphism, we show that $\h\u_n$ is spanned by the elements $\{e_{i,j;r} \mid 1\le i<j\le n, \ 0 \le r\}$.

Following (1)--(7) in the proof of \cite[Lemma~5.8]{BK05} verbatim %(which works in characteristic $p \ne 2$)
we have for all $i,j,k,l,r,s$
\begin{eqnarray}
\label{e:haturelations}
[e_{i,j;r} , e_{k,l;s}] = \delta_{j,k} e_{i,l;r+s} - \delta_{i,l} e_{k,j;r+s}.
\end{eqnarray}

Define an ascending filtration on $\h\u_n = \bigcup_{d > 0} \F_d \h\u_n$ satisfying $\F_{d} \h\u_n = \sum_{\substack{d_1 + d_2 = d}} [\F_{d_1} \h\u_n, \F_{d_2} \h\u_n]$  by placing $e_{i;r}$ in degree 1. We prove by induction that $\F_d\h\u_n$ is spanned by elements $e_{i,j;r}$ with $j-i \le d$. The base case $d = 1$ holds by definition. For $d_1 + d_2 = d > 1$ we know by the inductive hypothesis that $\F_{d_1} \h\u_n$ and $\F_{d_2} \h\u_n$ are spanned by elements $e_{i,j;r}$. Using \eqref{e:haturelations} we complete the induction, which finishes the proof. 
\end{proof}

\begin{Theorem}
\label{T:CSshiftedcurrents1}
Let $\sigma$ be a symmetric shift matrix and let $\cc_n(\sigma)$ denote the shifted current algebra. Then $S(\cc_n(\sigma))$ is Poisson generated by
\begin{eqnarray}
\label{e:shiftedcurrentfreegens}
\begin{array}{c}
\{d_{i;r} \mid 1\le i \le n, \ 0\le r\}\cup \{e_{i;r} \mid 1\le i < n, \ s_{i,i+1} \le r\} \\ \cup \{f_{i;r} \mid 1\le i < n, \ s_{i+1,i} \le r\} \end{array}
\end{eqnarray}
subject to relations
\begin{eqnarray}
\setlength{\itemsep}{4pt}
\label{e:shiftedcurrentfreerels1}
& & \big\{d_{i;r}, d_{j;s}\big\} = 0,\\
\label{e:shiftedcurrentfreerels3.5}
& & \big\{e_{i; r}, f_{j;s}\big\} = -\delta_{i,j}(d_{i+1; r+s-1} - d_{i; r+s-1})\\
\label{e:shiftedcurrentfreerels2}
& & \big\{d_{i;r}, e_{j;s}\big\} = (\delta_{i,j} - \delta_{i,j+1}) e_{j; r+s},\\
\label{e:shiftedcurrentfreerels3}
& & \big\{d_{i;r}, f_{j;s}\big\} = (\delta_{i,j+1} - \delta_{i,j}) f_{j; r+s},\\
\label{e:shiftedcurrentfreerels4}
&  &\big\{e_{i; r_1}, e_{i+1; r_2 + 1}\big\} - \big\{e_{i; r_1+1}, e_{i+1; r_2}\big\} = 0,\\
\label{e:shiftedcurrentfreerels5}
& & \big\{f_{i; r_1}, f_{i+1; r_2 + 1}\big\} - \big\{f_{i; r_1+1}, f_{i+1; r_2}\big\} = 0,\\
\label{e:shiftedcurrentfreerels6}
& & \big\{e_{i;r}, e_{j;s}\big\} = 0 \text{ for } |i-j| \ne 1,\\
\label{e:shiftedcurrentfreerels7}
& & \big\{f_{i;r}, f_{j;s}\big\} = 0\text{ for } |i-j| \ne 1,\\
\label{e:shiftedcurrentfreerels8}
& & \Big\{e_{i; r_1}, \big\{e_{i; r_2}, e_{j;r_3}\big\}\Big\} + \Big\{e_{i; r_2}, \big\{e_{i; r_1}, e_{j;r_3}\big\}\Big\} = 0 \text{ for } |i-j| = 1,\\
\label{e:shiftedcurrentfreerels9}
& & \Big\{f_{i; r_1}, \big\{f_{i; r_2}, f_{j; r_3}\big\}\Big\} + \Big\{f_{i; r_2}, \big\{f_{i; r_1}, f_{j; r_3}\big\}\Big\} = 0 \text{ for } |i-j| = 1.
\end{eqnarray}
\end{Theorem}

\begin{proof}
By \eqref{e:poissonvsliegeneration} it suffices to show that $\cc_n(\sigma)$ is generated as a Lie algebra by \eqref{e:shiftedcurrentfreegens} subject to relations \eqref{e:shiftedcurrentfreerels1}--\eqref{e:shiftedcurrentfreerels5}. Let $(\hcc_n(\sigma), \{\cdot, \cdot\})$ denote the Lie algebra with these generators and relations. Define a map from the set \eqref{e:shiftedcurrentfreegens} to $\cc_n(\sigma)$ by $d_{i;r} \mapsto e_{i,i}t^r, \ e_{i;r} \mapsto e_{i,i+1}t^r, \ f_{i;r} \mapsto e_{i+1, i} t^r$.
One can easily verify using \eqref{e:haturelations} that this extends to a surjective Lie algebra homomorphism $\h\cc_n(\sigma) \onto \cc_n(\sigma)$. To show that this is an isomorphism it suffices to show that the elements
\begin{eqnarray}
\label{e:hSgens}
\{e_{i,j;r} \mid 1 \le i,j\le n, \ s_{i,j}\le r\} \subseteq \hcc_n(\sigma)
\end{eqnarray}
defined inductively by setting $e_{i,i+1; r} := e_{i,r}$, $e_{i+1, i; r} := f_{i;r}$ and
\begin{eqnarray}
\label{e:shiftedcurrenthighereij}
& e_{i,j;r} := \{e_{i;s_{i,i+1}}, e_{i+1, j; r-s_{i,i+1}}\}  & \text{ for } i < j;\\
\label{e:shiftedcurrenthigherfij}
& e_{i,j;r} := \{f_{i-1; s_{i,i-1}}, f_{i-1, j; r-s_{i,i-1}}\}  & \text{ for } i > j.
\end{eqnarray}
form a spanning set. Using \eqref{e:shiftedcurrentfreerels1},\eqref{e:shiftedcurrentfreerels2}, \eqref{e:shiftedcurrentfreerels3} and a simple inductive argument one can see that $\h\cc_n(\sigma)$ is a direct sum of three subalgebras: the diagonal subalgebra, spanned by the elements $d_{i;r}$, and the upper and lower triangular subalgebras $\u_n^+(\sigma)$ and $\u_n^-(\sigma)$ generated by the elements $e_{i; r}$, respectively by the elements $f_{i; r}$.

In order to complete the proof it suffices to show that $\u_n^+(\sigma)$ and $\u_n^-(\sigma)$ are spanned by the elements defined in \eqref{e:shiftedcurrenthighereij} and \eqref{e:shiftedcurrenthigherfij} respectively. %There is an involution of $\h\cc_n$ defined by $e_{i,i+1; r} \mapsto -e_{i+1,i;r}$ and $e_{i,i;r} \mapsto e_{i,i;r}$ we see that it is sufficient to consider $\u_n^+$.
Since the argument is identical for $\u_n^+(\sigma)$ and $\u_n^-(\sigma)$ we only need to consider the former, where the claim follows from Lemma~\ref{L:utgenslemma} 
\end{proof}

The following theorem is one of the key stepping stones for understanding the Dirac reduction of the shifted Yangian. The algebra described here is the {\it twisted shifted current Lie--Poisson algebra}.

\begin{Theorem}
\label{T:CSshiftedcurrents2}
If $\sigma$ is any symmetric shift matrix then $S(\cc_n(\sigma))$ admits a Poisson automorphism
\begin{eqnarray}
\label{e:tauonshiftedcurrents}
\tau : e_{i,j}t^r \mapsto (-1)^{r - 1 + s_{i,j}} e_{j,i}t^r.
\end{eqnarray}
The Dirac reduction $R(S(\cc_n(\sigma), \tau) = S(\cc_n(\sigma)^\tau)$ is Poisson generated by
\begin{eqnarray}
\label{e:shiftedcurrentinvariantsfreegens}
\begin{array}{l} \{\eta_{i; 2r-1} \mid 1\le i \le n, \ 0 < r\}  \cup \{\theta_{i; r} \mid 1\le i\le n, \ s_{i,i+1} \le r \} \end{array}
\end{eqnarray}
subject to the relations% {\bf in the second rel there is a sign which doesn't occur in the presentation for $R(y_n(\sigma), \tau)$. sort it out!}
\begin{eqnarray}
\label{e:shiftedcurrentinvariantsfreerels1}
\big\{\eta_{i;2r-1}, \eta_{j;2s-1}\big\} & = & 0, \vspace{4pt} \\
\label{e:shiftedcurrentinvariantsfreerels1.5}
\big\{\eta_{i;2r-1}, \theta_{j;s}\big\} & = & (\delta_{i,j} - \delta_{i,j+1})\theta_{j; 2r-1 + s}, \vspace{4pt} \\
\label{e:shiftedcurrentinvariantsfreerels2}
 \big\{\theta_{i; r}, \theta_{j;s}\big\} & = & 0 \text{ if } |i-j| > 1, \vspace{4pt} \\
 \label{e:shiftedcurrentinvariantsfreerels3}
\big\{\theta_{i ; r}, \theta_{i+1; s+1}\big\} &=& \big\{\theta_{i; r+1}, \theta_{i+1; s}\big\},\\
 \label{e:shiftedcurrentinvariantsfreerels4}
 \big\{\theta_{i;r}, \theta_{i; s}\big\} & = & \left\{ \begin{array}{ll} 	2(-1)^{s - 1 + s_{i,i+1}} (\eta_{i;r+s} - \eta_{i+1; r+s}) & \text{ if } r + s \text{ is odd} \\ 0 & \text{ if } r + s \text{ is even}, \end{array} \right.
 \end{eqnarray}
 \begin{eqnarray} 
 \label{e:shiftedcurrentinvariantsfreerels4.5}\Big\{\theta_{i; r_1}, \big\{ \theta_{i; r_2} , \theta_{j;r_3}\big\}\Big\} + \Big\{\theta_{i; r_2}, \big\{ \theta_{i; r_1} , \theta_{j;r_3}\big\}\Big\} = 0 \text{ for } |i-j| = 1, \ r_1 + r_2 \text{ odd}.\\
\nonumber \Big\{\theta_{i; r_1}, \big\{ \theta_{i; r_2} , \theta_{j;r_3}\big\}\Big\} + \Big\{\theta_{i; r_2}, \big\{ \theta_{i; r_1} , \theta_{j;r_3}\big\}\Big\} = \ \ \ \ \ \ \ \ \ \ \ \ \ \ \ \ \ \ \ \ \ \ \ \ \ \ \ \ \ \ \ \ \ \  \\
\label{e:shiftedcurrentinvariantsfreerels5}
2(-1)^{r_2-1+s_{i,i+1}}(\delta_{i+1, j} + \delta_{i,j+1}) \theta_{j; r_1 + r_2 + r_3} \ \ \ \ \ \ \ \ \ \ \ \ \ \ \\  %\nonumber + 2\delta_{i,j+1} (-1)^{r_2 + r_3 -1 + s_{i,i+1} + s_{j,j+1}} \theta_{j; r_1 + r_2 + r_3 }  \ \ \ \ \ \ \ \ \ \
\nonumber \text{ for } | i - j| = 1,\  r_1 + r_2 \text{ even}.
\end{eqnarray} 
\end{Theorem}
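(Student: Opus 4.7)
The plan is to first verify that $\tau$ is a Poisson automorphism of $S(\cc_n(\sigma))$: since $\tau$ acts linearly on the Lie algebra $\cc_n(\sigma)$, this reduces to checking that it preserves the Lie bracket, a direct computation using the structure constants $[e_{i,j}t^r, e_{k,l}t^s] = \delta_{jk}e_{i,l}t^{r+s} - \delta_{il}e_{k,j}t^{r+s}$ together with the symmetry of $\sigma$ (which makes $\tau$ well defined on the shifted piece) and its evenness (which gives $(-1)^{s_{i,j}} = 1$ so that $\tau^2 = 1$). Once this is established, Remark~\ref{R:invariants} immediately identifies $R(S(\cc_n(\sigma)), \tau)$ with $S(\cc_n(\sigma)^\tau)$, and by \eqref{e:poissonvsliegeneration} the Poisson presentation will follow from the analogous presentation of the Lie algebra $\cc_n(\sigma)^\tau$.

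Next I would identify an explicit basis of $\cc_n(\sigma)^\tau$. Since $\tau$ acts semisimply on the basis $\{e_{i,j}t^r\}$, the fixed subspace is spanned by the diagonal odd-degree elements $e_{i,i}t^{2r-1}$ together with the off-diagonal symmetrizations $e_{i,j}t^r + (-1)^{r-1}e_{j,i}t^r$ for $i < j$, $r \ge s_{i,j}$. Match these with the abstract generators by $\eta_{i;2r-1} := e_{i,i}t^{2r-1}$ and $\theta_{i;r} := e_{i,i+1}t^r + (-1)^{r-1}e_{i+1,i}t^r$, and verify each of \eqref{e:shiftedcurrentinvariantsfreerels1}--\eqref{e:shiftedcurrentinvariantsfreerels5} directly, either from the structure constants of $\cc_n(\sigma)$ or from Theorem~\ref{T:CSshiftedcurrents1}. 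As a representative example, relation \eqref{e:shiftedcurrentinvariantsfreerels4} for $r+s$ odd will follow from expanding $[\theta_{i;r}, \theta_{i;s}]$ and applying $[e_{i,i+1}, e_{i+1,i}] = e_{i,i} - e_{i+1,i+1}$, with the parities of $r$ and $s$ producing the sign $2(-1)^{s-1+s_{i,i+1}}$.

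The final step is to show that no further relations are needed. For this, let $\h L$ be the abstract Lie algebra defined by the generators \eqref{e:shiftedcurrentinvariantsfreegens} and the bracket versions of \eqref{e:shiftedcurrentinvariantsfreerels1}--\eqref{e:shiftedcurrentinvariantsfreerels5}, with the obvious surjection $\phi : \h L \onto \cc_n(\sigma)^\tau$. I would inductively introduce elements $\theta_{i,j;r} \in \h L$ for $i < j$ by $\theta_{i,i+1;r} := \theta_{i;r}$ and $\theta_{i,j;r} := [\theta_{i;s_{i,i+1}}, \theta_{i+1,j;\,r-s_{i,i+1}}]$ for $j > i+1$, so that a direct expansion gives $\phi(\theta_{i,j;r}) = e_{i,j}t^r + (-1)^{r-1}e_{j,i}t^r$. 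Following the filtration argument of Lemma~\ref{L:utgenslemma}, place each generator in filtration degree one and show by induction on $d$ that the $d$-th filtered piece of $\h L$ is spanned by the $\eta_{i;2r-1}$ together with those $\theta_{i,j;r}$ satisfying $j - i \le d$; the quadratic, Serre-type and compatibility relations will be precisely what allows one to reduce extraneous iterated brackets to elements of this spanning set. Since $\phi$ then sends a spanning set of $\h L$ bijectively to the basis of $\cc_n(\sigma)^\tau$ exhibited earlier, $\phi$ is an isomorphism, and \eqref{e:poissonvsliegeneration} converts this into the desired Poisson presentation.

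The hardest part will be the inhomogeneous quadratic Serre relation \eqref{e:shiftedcurrentinvariantsfreerels5} with $r_1+r_2$ even, whose right-hand side is a bilinear expression in the $\eta$'s and $\theta$'s with intricate signs depending on $(-1)^{s_{i,i+1}}$ and on which of $\delta_{i+1,j}, \delta_{i,j+1}$ is nonzero. Both confirming this identity in $\cc_n(\sigma)^\tau$ and using it to control the filtration will involve a delicate nested computation of $[\theta_{i;r_1}, [\theta_{i;r_2}, \theta_{j;r_3}]]$ via the higher Chevalley--Serre relations \eqref{e:shiftedcurrentfreerels4}--\eqref{e:shiftedcurrentfreerels9}; the evenness of $\sigma$ is exactly what keeps these quadratic correction terms expressible in the stated form, since without it the sign $(-1)^{s_{i,j}}$ would reappear throughout and split the relations into additional cases.
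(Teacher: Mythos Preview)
Your approach matches the paper's almost step for step: verify $\tau$ via the structure constants, apply Remark~\ref{R:invariants} and \eqref{e:poissonvsliegeneration} to reduce to a Lie-algebra presentation, build the abstract Lie algebra $\h L$ on the generators \eqref{e:shiftedcurrentinvariantsfreegens}, construct the surjection onto $\cc_n(\sigma)^\tau$, introduce the higher $\theta_{i,j;r}$ by iterated brackets, and run a filtration argument to show these span $\h L$.

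The one place where the paper is more efficient is the spanning step. Rather than inducting directly inside $\h L$ as you propose (which would force you to rederive an analogue of \eqref{e:haturelations} in the presence of the inhomogeneous right-hand sides of \eqref{e:shiftedcurrentinvariantsfreerels4} and \eqref{e:shiftedcurrentinvariantsfreerels5}), the paper passes to the associated graded $\gr\h L$ for the same filtration. There the inhomogeneous terms drop out, the $\eta$'s become central, and the top pieces of the $\theta$-relations are exactly \eqref{e:utcurrentrels1}--\eqref{e:utcurrentrels3}; hence there is a surjection $\af \oplus \u_n(\sigma) \onto \gr\h L$ and Lemma~\ref{L:utgenslemma} gives the spanning set for free. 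Your direct induction would work, but this shortcut is why the paper's proof of completeness is only a paragraph.

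Two small slips worth correcting: $\tau^2 = 1$ already follows from symmetry alone, since the composite sign is $(-1)^{2(r-1)+2s_{i,j}} = 1$ regardless of parity; and the right-hand side of \eqref{e:shiftedcurrentinvariantsfreerels5} is the single element $2(-1)^{r_2-1+s_{i,i+1}}\theta_{j;r_1+r_2+r_3}$, not a bilinear expression in $\eta$'s and $\theta$'s.
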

\begin{proof}
Relation \eqref{e:haturelations} implies that $\tau$ gives a Poisson automorphism. By Remark~\ref{R:invariants} we can identify $R(S(\cc_n(\sigma), \tau)$ with $S(\cc_n(\sigma)^\tau)$, so it suffices to check that $S(\cc_n(\sigma)^\tau)$ has the stated Poisson presentation. Use \eqref{e:poissonvsliegeneration} to reduce the claim to a statement about the presentation of $\cc_n(\sigma)^\tau$.

Consider the Lie algebra $\hcc_n(\sigma)^\tau$ which is generated by the set \eqref{e:shiftedcurrentinvariantsfreegens} subject to relations \eqref{e:shiftedcurrentinvariantsfreerels1}--\eqref{e:shiftedcurrentinvariantsfreerels5}. Make the notation $\theta_{i,i+1}t^r := e_{i,i+1}t^r + \tau(e_{i,i+1}t^r) \in \cc_n(\sigma)^\tau$. We define a map from the set \eqref{e:shiftedcurrentinvariantsfreegens} to $\cc_n(\sigma)^\tau$ by sending $\eta_{i;2r-1} \mapsto e_{i,i}t^{2r-1} \in\cc_n(\sigma)^\tau$, and sending $\theta_{i;r} \mapsto \theta_{i,i+1}t^r \in\cc_n(\sigma)^\tau$. One can check that this determines a surjective Lie algebra homomorphism
%\begin{eqnarray}
%\label{e:anisomorpism}
$\hcc_n(\sigma)^\tau \onto \cc_n(\sigma)^\tau$, indeed, checking that relations \eqref{e:shiftedcurrentinvariantsfreerels1}--\eqref{e:shiftedcurrentinvariantsfreerels5} hold amongst the corresponding elements of $\cc_n(\sigma)^\tau$ is a routine calculation using \eqref{e:haturelations}.

In order to complete the proof of (2) it is sufficient to show that this map is an isomorphism. For $1\le i < j \le n$ and $r \ge s_{i,j}$ we inductively define elements
\begin{eqnarray}
\label{e:someotherlabel}
\theta_{i,j;r} := \{\theta_{i,i+1;s_{i,i+1}}, \theta_{i+1, j; r-s_{i,i+1}}\} \in \hcc_n(\sigma)^\tau. 
\end{eqnarray}
where $\theta_{i,i+1;r} := \theta_{i;r}$. It remains to check that $\hcc_n(\sigma)^\tau$ is spanned by the elements
\begin{eqnarray}
\label{e:shiftedcurrentinvariantsfreegensproof}
\begin{array}{l} \{\eta_{i; 2r-1} \mid 1\le i \le n, \ r  > 0\}  \cup \{\theta_{i,j; r} \mid \ 1\le i < j \le n, \ r \ge s_{i,j}\} \end{array}
\end{eqnarray}

We define a filtration $\hcc_n(\sigma)^\tau = \bigcup_{i > 0} \F_i \hcc_n(\sigma)^\tau$ by placing the generators \eqref{e:shiftedcurrentinvariantsfreegens} in degree 1 and satisfying $\F_{d} \h\cc_n(\sigma)^\tau = \sum_{d_1 + d_2 = d} \{\F_{d_1} \hcc_n(\sigma)^\tau, \F_{d_2} \hcc_n(\sigma)^\tau \}$. By convention $\F_{0} \hcc_n(\sigma)^\tau = 0$. The associated graded Lie algebra $\gr \hcc_n(\sigma)^\tau = \bigoplus_{i > 0} \F_i \hcc_n(\sigma)^\tau / \F_{i-1} \hcc_n(\sigma)^\tau$ is generated by elements 
\begin{eqnarray}
& & \overline{\eta}_{i; 2r-1} := \eta_i t^{2r-1} + \F_{0} \hcc_n(\sigma)^\tau \text{ for } 1\le i \le n, \ 0 \le r \\
\label{e:somelabel}
& & \overline{\theta}_{i;r} := \theta_i t^r + \F_0 \hcc_n(\sigma)^\tau \text{ for } 1\le i < n, \ s_{i,i+1} \le r.
\end{eqnarray}
These generators of $\gr \hcc_n(\sigma)^\tau$ satisfy the top graded components of the relations \eqref{e:shiftedcurrentinvariantsfreerels1}--\eqref{e:shiftedcurrentinvariantsfreerels5}. 

Let $\af$ be an abelian Lie algebra with basis $\{d_i t^r \mid 1\le i \le n, \ 0 < r\}$. Let $\u_n(\sigma)$ be the Lie subalgebra of $\cc_n(\sigma)$ described in Lemma~\ref{L:utgenslemma}. Then $\af \oplus \u_n(\sigma)$ is a Lie algebra with $\af$ an abelian ideal. Comparing the top components \eqref{e:shiftedcurrentinvariantsfreerels1}--\eqref{e:shiftedcurrentinvariantsfreerels5} to \eqref{e:utcurrentrels1}--\eqref{e:utcurrentrels3} we see that there is surjective Lie algebra homomorphism $\af \oplus \u_n(\sigma) \onto \gr \hcc_n(\sigma)^\tau$ defined by $d_i t^{r} \mapsto \overline{\eta}_{i; 2r-1}$ and $e_{i,i+1}t^r \mapsto \overline{\theta}_{i,r}$. The algebra $\af \oplus \u_n(\sigma)$ has basis consisting of element $d_it^r, e_{j,k}t^s$ where $i=1,...,n, \ r >0, \ 1\le j < k \le n, \ s \ge s_{j,k}$. It follows that $\gr \hcc_n(\sigma)^\tau$ is spanned by elements $\overline{\eta}_{i; 2r-1}, \overline{\theta}_{j,k;s}$ where the indexes vary in the ranges specified in \eqref{e:shiftedcurrentinvariantsfreegensproof}, and $\overline{\theta}_{j,k;s}$ defined inductively from \eqref{e:somelabel}, analogously to \eqref{e:someotherlabel}. We deduce that $\hcc_n(\sigma)^\tau$ is spanned by the required elements, which completes the proof.
\end{proof}

\subsection{The semiclassical shifted Yangian}

In this section we fix $n > 0$, a shift matrix $\sigma$ of size $n$ and $\ve \in \{\pm 1\}$. The {\em (semiclassical) shifted Yangian} $y_n(\sigma)$ is the Poisson algebra generated by the set
\begin{equation}\label{e:Ybasicgens}
\begin{array}{c} \{d_i^{(r)} \mid 1\leq i \leq n, \ 0 < r\} \cup \{e_{i}^{(r)} \mid 1\leq i < n, \ s_{i,i+1} < r \} \\ \cup \, \{f_{i}^{(r)} \mid 1\leq i < n, \ s_{i+1,i} < r\} \end{array}
\end{equation}
subject to the following relations
\begin{eqnarray}
\setlength{\itemsep}{4pt}
 & & \big\{d_i^{(r)}, d_j^{(s)}\big\} =  0,\label{e:r2}\\
& & \big\{e_i^{(r)},f_j^{(s)}\big\} = -\delta_{i,j}
\sum_{t=0}^{r+s-1} d_{i+1}^{(r+s-1-t)}\widetilde d_{i}^{(t)},\label{e:r3}
\end{eqnarray}
\begin{eqnarray}
\big\{d_i^{(r)}, e_j^{(s)}\big\} &=& (\delta_{i,j}-\delta_{i,j+1})
\sum_{t=0}^{r-1} d_i^{(t)} e_j^{(r+s-1-t)},\label{e:r4}\\
\big\{d_i^{(r)}, f_j^{(s)}\big\} &=& (\delta_{i,j+1}-\delta_{i,j})
\sum_{t=0}^{r-1} f_j^{(r+s-1-t)}d_i^{(t)} ,\label{e:r5}
\end{eqnarray}
\begin{eqnarray}
\big\{e_i^{(r)}, e_i^{(s)}\big\} &=&
\sum_{t=r}^{s-1} e_i^{(t)} e_i^{(r+s-1-t)}
\hspace{11mm}\text{if $r < s$},\label{e:r6}\\
\big\{f_i^{(r)}, f_i^{(s)}\big\} &=&
\sum_{t=s}^{r-1}
f_i^{(r+s-1-t)} f_i^{(t)}\hspace{11mm}\text{if $r > s$},\label{e:r7}
\end{eqnarray}
\begin{eqnarray}
\big\{e_i^{(r+1)}, e_{i+1}^{(s)}\big\}&-& \big\{e_i^{(r)}, e_{i+1}^{(s+1)}\big\}=
e_i^{(r)} e_{i+1}^{(s)},\label{e:r8}\\
\big\{f_i^{(r)}, f_{i+1}^{(s+1)}\big\}&-& \big\{f_i^{(r+1)}, f_{i+1}^{(s)}\big\} =
f_{i+1}^{(s)} f_i^{(r)},\label{e:r9}
 \end{eqnarray}
 \begin{eqnarray}
\big\{e_i^{(r)}, e_j^{(s)}\big\} &=& 0 \quad \text{ if }|i-j|> 1,\label{e:r10}\\
\big\{f_i^{(r)}, f_j^{(s)}\big\} &=& 0 \quad\text{ if }|i-j|>
1,\label{e:r11}\\
\Big\{e_i^{(r)}, \big\{e_i^{(s)}, e_j^{(t)}\big\}\Big\} +
\Big\{e_i^{(s)}, \big\{e_i^{(r)}, e_j^{(t)}\big\}\Big\} &=& 0 \quad\text{ if }|i-j|=1,\label{e:r12}\\
\Big\{f_i^{(r)}, \big\{f_i^{(s)}, f_j^{(t)}\big\}\Big\} +
\Big\{f_i^{(s)}, \big\{f_i^{(r)}, f_j^{(t)}\big\}\Big\} &=& 0 \quad\text{ if
}|i-j|=1.\label{e:r13}%\\
%\Big\{e_i^{(r)}, \big\{e_i^{(r)}, e_j^{(t)}\big\}\Big\} &=& 0 \quad\text{ if } |i - j| = 1, \label{e:rel15}\\
%\Big\{f_i^{(r)}, \big\{f_i^{(r)}, f_j^{(t)}\big\}\Big\} &=& 0 \quad\text{ if } |i - j| = 1,\label{e:rel16}
\end{eqnarray}
for all admissible $i,j,r,s,t$. In these relations, the notation $d_i^{(0)} = \widetilde{d}_i^{(0)} := 1$ is used, and the elements $\widetilde{d}_i^{(r)}$ for $r > 0$ are defined recursively by
\begin{eqnarray}
\label{e:dtildedefinition}
\widetilde{d}_i^{(r)} := -\sum_{t=1}^r d_i^{(t)} \widetilde d_i^{(r-t)}
\end{eqnarray}

In order to describe the structure of $y_n(\sigma)$ as a commutative algebra we make the notation
\begin{eqnarray}
e_{i,i+1}^{(r)} := e_i^{(r)} \text{ for } 1\le i < n, \ s_{i,i+1} < r ;\\ [3pt]
f_{i,i+1}^{(r)} := f_i^{(r)} \text{ for } 1\le i \le n, \ s_{i+1,i} < r ,
\end{eqnarray}
and inductively define
\begin{eqnarray}
\label{e:eijrels}
e_{i,j}^{(r)} := \{e_{i,j-1}^{(r-s_{j-1, j})}, e_{j-1}^{(s_{j-1, j} + 1)}\} \text{ for } 1\le i < j\le n, \ s_{i,j} < r ;\\ [3pt]
\label{e:fijrels}
f_{i,j}^{(r)} := \{f_{j-1}^{(s_{j,j-1} + 1)}, f_{i,j-1}^{(r-s_{j,j-1})}\}  \text{ for } 1\le i < j\le n, \ s_{j,i} < r.
\end{eqnarray}

The shifted Yangian admits a Poisson grading $y_n(\sigma) = \bigoplus_{r \ge 0} y_n(\sigma)_r$, which we call the {\it canonical grading}. It places $d_i^{(r)}, e_{i}^{(r)}, f_{i}^{(r)}$ in degree $r$ and the bracket lies in degree $-1$, meaning
%\begin{eqnarray}
$\{\cdot, \cdot\} : y_n(\sigma)_r \times y_n(\sigma)_s \to y_n(\sigma)_{r+s-1}$.
%\end{eqnarray}
There is also an important Poisson filtration $y_n(\sigma) = \bigcup_{i \ge 0} \F_i y_n(\sigma)$, called the {\it loop filtration}, which places $d_i^{(r)}, e_{i}^{(r)}, f_{i}^{(r)}$ in degree $r-1$. With respect to this filtration, the bracket is in degree $0$ so that $\F_r y_n(\sigma) \times \F_s y_n(\sigma) \to \F_{r+s}y_n(\sigma)$. The associated graded Poisson algebra is denoted $\gr y_n(\sigma)$.

The following theorem is a semiclassical analogue of \cite[Theorem~2.1]{BK06}, which we ultimately deduce from the noncommutative setting.
\begin{Theorem}
\label{T:shiftedyangianPBW}
Let $\sigma$ be a symmetric shift matrix. There is a Poisson isomorphism $S(\cc_n(\sigma)) \isoto \gr y_n(\sigma)$ defined by
\begin{eqnarray}
\label{e:loopiso}
\begin{array}{rcl}
e_{i;r-1} \longmapsto e_i^{(r)} + \F_{r-2}y_n(\sigma) & \text{ for } & 1\le i<n, \ s_{i,i+1} < r;\\
f_{i;r-1} \longmapsto f_i^{(r)} + \F_{r-2}y_n(\sigma)& \text{ for } &1\le i<n, \ s_{i+1,i} < r;\\
d_{i;r-1} \longmapsto d_i^{(r)} + \F_{r-2}y_n(\sigma) & \text{ for } & 1\le i \le n, \ r<0.
\end{array}
\end{eqnarray}
As a consequence $y_n(\sigma)$ is isomorphic to the polynomial algebra on infinitely many variables
\begin{equation}\label{e:Ygens}
\begin{array}{c} \{d_i^{(r)} \mid 1\leq i \leq n, \ 0 < r\} \cup \{e_{i,j}^{(r)} \mid 1\leq i < j\le n, \ s_{i,j} < r \} \\ \cup \, \{f_{i,j}^{(r)} \mid 1\leq i < j \le n, \ s_{j,i} < r\}.
\end{array}
\end{equation}
\end{Theorem}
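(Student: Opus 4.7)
The plan is to deduce the semiclassical PBW theorem from its noncommutative counterpart \cite[Theorem~2.1]{BK06}. I would proceed in three steps.

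First, I would show that the prescription \eqref{e:loopiso} extends to a well-defined Poisson homomorphism $\phi: S(\cc_n(\sigma)) \to \gr y_n(\sigma)$. By Theorem~\ref{T:CSshiftedcurrents1} this reduces to checking that the images of the generators satisfy the relations \eqref{e:shiftedcurrentfreerels1}--\eqref{e:shiftedcurrentfreerels9} in $\gr y_n(\sigma)$. For each of the defining relations \eqref{e:r2}--\eqref{e:r13} of $y_n(\sigma)$, the terms on the right hand side have varying loop-degree; passing to the associated graded retains only the terms of top loop-degree, and a direct inspection shows that these leading terms reproduce exactly the relations of the shifted current Lie--Poisson algebra. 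Surjectivity of $\phi$ is then clear, since the images of $d_i^{(r)}, e_i^{(r)}, f_i^{(r)}$ manifestly generate $\gr y_n(\sigma)$ as a Poisson algebra, and the elements $e_{i,j}^{(r)}, f_{i,j}^{(r)}$ defined in \eqref{e:eijrels}--\eqref{e:fijrels} arise from iterated Poisson brackets of these.

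The main step is injectivity, which I would deduce from Brundan--Kleshchev's PBW theorem for the quantum shifted Yangian $Y_n(\sigma)$: their result asserts that $Y_n(\sigma)$ is a free module on ordered monomials in the generators $\{D_i^{(r)}, E_{i,j}^{(r)}, F_{i,j}^{(r)}\}$ indexed as in \eqref{e:Ygens}. I would transfer this basis to the semiclassical setting by introducing a Rees-type $\C[\hbar]$-deformation $Y_n^\hbar(\sigma)$ obtained by rescaling the generators with appropriate powers of $\hbar$, calibrated so that the $\hbar = 1$ specialization recovers $Y_n(\sigma)$ and the $\hbar = 0$ specialization is a commutative algebra carrying the Poisson bracket $\{\bar a, \bar b\} := \overline{\hbar^{-1}[a,b]}$. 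Flatness of $Y_n^\hbar(\sigma)$ over $\C[\hbar]$ follows from the noncommutative PBW basis, and the specialization at $\hbar = 0$ can be identified with $y_n(\sigma)$ by inspecting the relations \eqref{e:r2}--\eqref{e:r13} term-by-term. The PBW basis therefore descends to a polynomial basis of $y_n(\sigma)$ on the generators \eqref{e:Ygens}, which in turn induces a basis of $\gr y_n(\sigma)$ matching the polynomial basis of $S(\cc_n(\sigma))$ under $\phi$; this forces $\phi$ to be an isomorphism.

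The main obstacle will be the careful setup of the flat $\C[\hbar]$-form $Y_n^\hbar(\sigma)$ --- in particular, calibrating the rescaling of the defining commutator relations of $Y_n(\sigma)$ so that the $\hbar = 0$ specialization genuinely recovers the abstract Poisson presentation of $y_n(\sigma)$, rather than some other commutative degeneration obtainable from $Y_n(\sigma)$. Once this identification is in place, everything else is essentially formal: the second conclusion of the theorem, namely that $y_n(\sigma)$ is a polynomial algebra on \eqref{e:Ygens}, then follows by lifting a monomial basis through the isomorphism $\gr y_n(\sigma) \cong S(\cc_n(\sigma))$ via the standard PBW-type argument.
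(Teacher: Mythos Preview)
Your proposal is correct and follows the same overall strategy as the paper: both deduce the semiclassical result from Brundan--Kleshchev's PBW theorem for the noncommutative shifted Yangian $Y_n(\sigma)$. The implementation differs slightly. Instead of building a Rees $\hbar$-form, the paper works directly with the canonical filtration on $Y_n(\sigma)$ (placing $D_i^{(r)}, E_i^{(r)}, F_i^{(r)}$ in degree $r$). By comparing relations it produces a surjective Poisson homomorphism $\pi : y_n(\sigma) \twoheadrightarrow \gr Y_n(\sigma)$, then uses \cite[Theorem~2.3(iv)]{BK06} to see that ordered monomials in the images of the $e_{i,j}^{(r)}, f_{i,j}^{(r)}, d_i^{(r)}$ are linearly independent in $\gr Y_n(\sigma)$, hence in $y_n(\sigma)$. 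This bypasses the obstacle you flag --- calibrating the $\hbar$-form --- since the filtration is already fixed in \cite{BK06}. (One genuine technical point the paper does handle is that $Y_n(\sigma)$ is presented as a quotient of the free algebra $\C\langle X\rangle$ rather than an enveloping algebra, so to make sense of the canonical filtration it first identifies $\C\langle X\rangle \cong U(L_X)$ via the free Lie algebra $L_X$ and its PBW theorem.) Your Rees construction is equivalent --- its $\hbar=0$ fibre is exactly $\gr Y_n(\sigma)$ --- but the paper's direct route with the associated graded is a little leaner.
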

\begin{proof}
Comparing  the top graded components of the relations \eqref{e:r2}--\eqref{e:r13} with respect to the loop filtration, with relations \eqref{e:shiftedcurrentfreerels1}--\eqref{e:shiftedcurrentfreerels9}, it is straightforward to see that \eqref{e:loopiso} gives a surjective Poisson homomorphism. To prove the theorem we demonstrate that the ordered monomials in the elements \eqref{e:Ygens} are linearly independent.

Consider the set
$$X := \{E_i^{(r)}, F_i^{(s)}, D_j^{(t)} \mid 1 \le i < n, \ 1\le j \le n,\ s_{i,i+1} < r, \ s_{i+1, i}< s, \ 0< t\}.$$
In  \cite[\textsection 2]{BK06} the shifted Yangian $Y_n(\sigma)$ is defined as a quotient of the free algebra $\C\langle X \rangle$ by the ideal generated by the relations \cite[(2.4)--(2.15)]{BK06}. Let $L := L_X$ be the free Lie algebra on $X$ and define a grading $L = \bigoplus_{i \ge 0} L_i$ by placing $E_i^{(r)}, F_i^{(r)}, D_i^{(r)}$ in degree $r-1$. Then we place a filtration on the enveloping algebra $U(L)$ so that $L_i$ lies in degree $i+1$. By the PBW theorem for $U(L)$ we see that $\gr U(L) \cong S(L)$.
 
The universal property of $U(L)$ ensures that there is a surjective algebra homomorphism $U(L) \onto \C\langle X \rangle$ and by \cite[I, Ch. IV, Theorem 4.2]{Ser06} this is an isomorphism. Identifying these algebras, the filtration on $U(L)$ descends to $Y_n(\sigma) = \bigcup_{i\ge 0} \F'_i Y_n(\sigma)$, and this resulting filtration is commonly referred to as the canonical filtration \cite[\textsection 5]{BK06}. The associated graded algebra $\gr Y_n(\sigma)$ is equipped with a Poisson structure in the usual manner. Comparing relations \eqref{e:r2}--\eqref{e:r13} with the top graded components of relations \cite[(2.4)--(2.15)]{BK06} we see that the Poisson surjection $S(L) \onto \gr Y_n(\sigma)$ factors through $S(L) \to y_n(\sigma)$. As a result there is a surjective Poisson homomorphism $\pi: y_n(\sigma) \onto \gr Y_n(\sigma)$ given by $e_i^{(r)} \mapsto E_i^{(r)} + \F'_{r-1} Y_n(\sigma), f_i^{(r)} \mapsto F_i^{(r)} + \F'_{r-1} Y_n(\sigma), d_i^{(r)} \mapsto D_i^{(r)} + \F'_{r-1} Y_n(\sigma)$. Following \cite[(2.18), (2.19)]{BK06} we introduce elements $E_{i,j}^{(r)}, F_{i,j}^{(r)}$ of $Y_n(\sigma)$ lying in filtered degree $r$. By the definition of the filtration and the elements \eqref{e:eijrels}, \eqref{e:fijrels} we have
 \begin{eqnarray*}
 \label{e:piandthegenerators}
 \begin{array}{rcl}
 \pi(e_{i,j}^{(r)}) & = & E_{i,j}^{(r)} + \F'_{r-1} Y_n(\sigma);\\
 \pi(f_{i,j}^{(r)}) & = & F_{i,j}^{(r)}  + \F'_{r-1} Y_n(\sigma).\\
 \end{array}
 \end{eqnarray*}
By \cite[Theorem~2.3(iv)]{BK06} the ordered monomials in $E_{i,j}^{(r)}, F_{i,j}^{(r)}, D_i^{(r)}$
are linearly independent in $Y_n(\sigma)$ and so we deduce that the images of these monomials in $\gr Y_n(\sigma)$ are linearly independent. This completes the proof.
\end{proof}

We record two formulas for future use, which are semiclassical analogues of \cite[(4.32)]{BT18}
\begin{Lemma}
The following hold for $i=1,...,n$, $j = 1,...,n-1$, $r > 0$ and $s > s_{j,j+1}$:
\begin{eqnarray}
\label{e:tildedone}
\{\td_i^{(r)}, e_j^{(s)}\} = (\delta_{i,j+1}-\delta_{i,j}) \sum_{t=0}^{r-1} \td_i^{(t)} e_j^{(r+s-1-t)},\\
\label{e:tildedonf}
\{\td_i^{(r)}, f_j^{(s)}\big\} = (\delta_{i,j}-\delta_{i,j+1}) \sum_{t=0}^{r-1} f_j^{(r+s-1-t)}\td_i^{(t)}.
\end{eqnarray}
\end{Lemma}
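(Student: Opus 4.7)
The plan is to prove both identities by induction on $r$, using the recursion \eqref{e:dtildedefinition} defining $\td_i^{(r)}$ together with the Leibniz rule for the Poisson bracket. Since the argument for \eqref{e:tildedonf} is entirely parallel to that for \eqref{e:tildedone}, I would concentrate on the latter and state that the former is proved by the same method, using \eqref{e:r5} in place of \eqref{e:r4} and respecting the reversed product ordering.

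For the base case $r=1$ one has $\td_i^{(1)} = -d_i^{(1)}$, so \eqref{e:r4} immediately yields
\[
\{\td_i^{(1)}, e_j^{(s)}\} \;=\; -\{d_i^{(1)}, e_j^{(s)}\} \;=\; -(\delta_{i,j}-\delta_{i,j+1})\, e_j^{(s)} \;=\; (\delta_{i,j+1}-\delta_{i,j})\,\td_i^{(0)} e_j^{(s)},
\]
which matches the right-hand side of \eqref{e:tildedone}. For the inductive step, I would apply the Leibniz rule to \eqref{e:dtildedefinition} and write
\[
\{\td_i^{(r)}, e_j^{(s)}\} \;=\; -\sum_{t=1}^{r}\{d_i^{(t)}, e_j^{(s)}\}\,\td_i^{(r-t)} \;-\; \sum_{t=1}^{r-1} d_i^{(t)}\,\{\td_i^{(r-t)}, e_j^{(s)}\},
\]
where the $t=r$ term of the second sum vanishes because $\td_i^{(0)}=1$. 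Substituting \eqref{e:r4} into the first sum and the inductive hypothesis into the second (valid since $r-t<r$) produces a sum of double sums, all carrying the common sign factor $(\delta_{i,j+1}-\delta_{i,j})$.

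The remaining combinatorial task is to verify that these double sums telescope to the target $(\delta_{i,j+1}-\delta_{i,j})\sum_{t=0}^{r-1}\td_i^{(t)}e_j^{(r+s-1-t)}$. The key observation is to isolate the contribution of $d_i^{(0)}=1$ from the double sum arising from \eqref{e:r4}: after reindexing $l=r-t$ this contribution is exactly $\sum_{l=0}^{r-1}\td_i^{(l)}e_j^{(r+s-1-l)}$, which is the required answer. One then checks that the remaining terms, involving $d_i^{(k)}$ for $k\geq 1$, coincide exactly with the double sum coming from the inductive hypothesis, so they cancel. This cancellation is a routine Fubini-style reindexing, straightforward because $y_n(\sigma)$ is a commutative Poisson algebra so the order of factors in the products is immaterial.

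There is no substantive obstacle here: the only thing to be careful about is the bookkeeping of the ranges of summation and the sign flip $\delta_{i,j+1}-\delta_{i,j}=-(\delta_{i,j}-\delta_{i,j+1})$ when passing between the relations for $d_i^{(r)}$ and $\td_i^{(r)}$. A conceptually cleaner—though computationally equivalent—alternative would be to package the identities using the generating series $d_i(u)=\sum_{r\ge 0}d_i^{(r)}u^{-r}$ and $\td_i(u)=\sum_{r\ge 0}\td_i^{(r)}u^{-r}$, noting that the recursion \eqref{e:dtildedefinition} amounts to $d_i(u)\td_i(u)=1$ and hence, by the Leibniz rule, $\{\td_i(u),e_j^{(s)}\}=-\td_i(u)^2\{d_i(u),e_j^{(s)}\}$; extracting coefficients then yields \eqref{e:tildedone}, and the analogue for $f_j^{(s)}$ follows in the same way from \eqref{e:r5}.
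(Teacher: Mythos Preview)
Your proof is correct and follows essentially the same approach as the paper: induction on $r$ using the recursion \eqref{e:dtildedefinition}, the Leibniz rule, relation \eqref{e:r4}, and the inductive hypothesis, with the verification that the remaining quadratic terms cancel. The generating-series remark you add at the end is a pleasant repackaging but not a different argument.
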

\begin{proof}
We only sketch \eqref{e:tildedone}, as the proof of \eqref{e:tildedonf} is almost identical. The argument is by induction based on \eqref{e:dtildedefinition}. Note that $\td_i^{(1)} = -d_i^{(1)}$ and so \eqref{e:tildedone} is equivalent to \eqref{e:r4} in this case. We have $\{\td_i^{(r)}, e_j^{(s)}\} = \{\sum_{t=0}^{r-1} d_i^{(r-t)} \td_i^{(t)}, e_j^{(s)}\}$ and relation \eqref{e:r4} together with the inductive hypothesis imply that the coefficient of $e_j^{(s+t)}$ is $(\delta_{i,j+1}- \delta_{i,j}) (d_i^{(r-1-t)} + \sum_{m=t+1}^{r-1} (\td_i^{(r-m)} d_i^{(m-t-1)} - d_i^{(r-m)} \td_i^{(m-t-1)}))$. Using $d_i^{(0)} = \td_i^{(0)} = 1$ we see that all quadratic terms cancel, whilst the linear term equates to $(\delta_{i,j+1} - \delta_{i,j})\td_i^{(r-1-t)}$, which concludes the induction.
\end{proof}

\subsection{The Dirac reduction of the shifted Yangian}
\label{ss:PDreductionofyangians}
In this section we suppose that $\sigma$ is symmetric. Examining the relations \eqref{e:r2}--\eqref{e:r13} we see that there is unique involutive Poisson automorphism $\tau$ of $y_n(\sigma)$ determined by
\begin{eqnarray}
\label{e:tauon}
\begin{array}{rcl}
\tau(d_i^{(r)})& := & (-1)^{r} d_{i}^{(r)}; \\ [3pt]
\tau(e_{i}^{(r)}) & := & (-1)^{r + s_{i,i+1}} f_i^{(r)}; \\ [3pt]
\tau(f_i^{(r)}) & := & (-1)^{r + s_{i+1, i}} e_i^{(r)} .
\end{array}
\end{eqnarray}
Our present goal is to give a complete description of the Dirac reduction $R(y_n(\sigma), \tau)$.

For $i=1,...,n$ and $r>s_{i,i+1}$ we write
\begin{eqnarray}
\label{e:ehatandcheck}
\begin{array}{rcl}
\he_i^{(r)} & := & e_i^{(r)} + (-1)^{r + s_{i,i+1}} f_i^{(r)} \in y_n(\sigma); \vspace{4pt}\\
\ce_i^{(r)} & := & e_i^{(r)} - (-1)^{r + s_{i,i+1}} f_i^{(r)} \in y_n(\sigma),
\end{array}
\end{eqnarray}
so that $\he_i^{(r)}$ and $d_j^{(2s)}$ are $\tau$-invariants, whilst $\ce_i^{(r)}$ and $d_j^{(2s-1)}$ each span a non-trivial representation of the cyclic group of order 2 generated by $\tau$. 
We can recover $e_i^{(r)}$ and $f_i^{(r)}$ from $\he_i^{(r)}$ and $\ce_i^{(r)}$ via
\begin{eqnarray}
\label{e:recoverhatcup}
\begin{array}{l}
e_i^{(r)} = \frac{1}{2} (\he_i^{(r)} + \ce_i^{(r)});\\ f_i^{(r)} = \frac{1}{2}(-1)^{r + s_{i,i+1}} (\he_i^{(r)} - \ce_i^{(r)}).
\end{array}
\end{eqnarray}
Now  let $\cy_n(\sigma)$ be the ideal of $y_n(\sigma)$ generated by
$$\{\ce_i^{(r)}, d_j^{(2s-1)} \mid i=1,...,n-1,\ j=1,...,n, \ s_{i,i+1}  < r, \ 0 < s\}.$$
Also write $\cy_n(\sigma)^\tau := \cy_n(\sigma) \cap y_n(\sigma)^\tau$.
By Lemma~\ref{L:PBWforDiracreduction} we see that $R(y_n(\sigma), \tau) = y_n(\sigma)^\tau / \cy_n(\sigma)^\tau$ is Poisson generated by elements
\begin{eqnarray}
\label{e:thetaandeta}
\begin{array}{rcl}
\theta_i^{(r)} &:=& \he_i^{(r)} + \cy_n(\sigma)^\tau \text{ for } i=1,...,n-1, \ s_{i,i+1} < r ,\vspace{4pt}\\
\eta_j^{(2r)} &:=& d_j^{(2r)} + \cy_n(\sigma)^\tau \text{ for } i=1,...,n, \ 0 < r ,
\end{array}
\end{eqnarray}
with Poisson brackets induced by the bracket on $y_n(\sigma)$. Furthermore $R(y_n(\sigma), \tau)$ is generated as a commutative algebra by elements\begin{eqnarray}
\label{e:theRgenerators}
\{\eta_i^{(2r)} \mid 1\le i \le n, \ 0 < r\} \cup \{\theta_{i,j}^{(r)} \mid 1\le i < j \le n, s_{i,j} < r\}
\end{eqnarray}
where the $\theta_{i,j}^{(r)} := e_{i,j}^{(r)} + (-1)^{r+s_{i,j}} f_{i,j}^{(r)} + \cy_n(\sigma)^\tau \in R(y_n(\sigma), \tau)$. Using an inductive argument and \eqref{e:r3} we see the elements $\theta_{i,j}^{(r)}$ can also be defined via the following recursion
\begin{eqnarray}
\label{e:higherthetas}
\begin{array}{rcl}
\theta_{i,i+1}^{(r)} := \theta_i^{(r)} & \text{ for } & 1\le i < n, \ 0 < r \vspace{4pt} \\
\theta_{i,j}^{(r)} := \{\theta_{i,j-1}^{(r-s_{j-1, j})}, \theta_{j-1}^{(s_{j-1, j} + 1)}\} & \text{ for } & 1\le i < j\le n, \ s_{i,j} < r.\end{array}
\end{eqnarray}
The next lemma can be deduced from \eqref{e:r4}, \eqref{e:r5}, \eqref{e:tildedone}, \eqref{e:tildedonf}.
\begin{Lemma}
The following equalities hold in $R(y_n(\sigma),\tau)$:
\begin{eqnarray}
\label{e:dcemodideal}
\{d_i^{(2r-1)}, \ce_j^{(s)}\} + \cy_n(\sigma)^\tau = (\delta_{i,j} - \delta_{i,j+1})\sum_{t = 0}^{r-1} \eta_i^{(2t)} \theta_j^{(2(r-t-1)+s)}\\
\label{e:tdcemodideal}
\{\td_i^{(2r-1)}, \ce_j^{(s)}\} + \cy_n(\sigma)^\tau = (\delta_{i,j+1} - \delta_{i,j})\sum_{t = 0}^{r-1} \teta_i^{(2t)} \theta_j^{(2(r-t-1)+s)}
\end{eqnarray}
\end{Lemma}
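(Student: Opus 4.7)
The strategy is to compute both brackets explicitly in $y_n(\sigma)$ using the Poisson relations of the shifted Yangian, then identify which terms survive in the quotient $R(y_n(\sigma),\tau) = y_n(\sigma)^\tau/\cy_n(\sigma)^\tau$. The essential observation is that $\cy_n(\sigma)$ is an ideal containing both $\ce_i^{(r)}$ and the odd-indexed $d_j^{(2s-1)}$, so any product having one of these as a factor already lies in $\cy_n(\sigma)$ and dies in the reduction.

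For \eqref{e:dcemodideal}, I would first expand $\ce_j^{(s)} = e_j^{(s)} - (-1)^{s+s_{j,j+1}} f_j^{(s)}$, apply \eqref{e:r4} and \eqref{e:r5} (with $r$ replaced by $2r-1$), and use commutativity of $y_n(\sigma)$ as an algebra (so $f_j^{(u)} d_i^{(t)} = d_i^{(t)} f_j^{(u)}$) to combine the two resulting sums into
\begin{eqnarray*}
\{d_i^{(2r-1)}, \ce_j^{(s)}\} = (\delta_{i,j}-\delta_{i,j+1})\sum_{t=0}^{2r-2} d_i^{(t)}\bigl(e_j^{(u_t)} + (-1)^{s+s_{j,j+1}} f_j^{(u_t)}\bigr),
\end{eqnarray*}
where $u_t := 2r+s-2-t$. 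The parity identity $u_t+t \equiv s \pmod{2}$ yields $(-1)^{s+s_{j,j+1}} = (-1)^t (-1)^{u_t+s_{j,j+1}}$, so the bracketed expression equals $\he_j^{(u_t)}$ when $t$ is even and $\ce_j^{(u_t)}$ when $t$ is odd. The odd-$t$ summands lie in $\cy_n(\sigma)^\tau$ and vanish in $R(y_n(\sigma),\tau)$; the even summands with $t=2k$ project to $\eta_i^{(2k)}\theta_j^{(2(r-k-1)+s)}$, giving exactly the right-hand side of \eqref{e:dcemodideal}.

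The proof of \eqref{e:tdcemodideal} is structurally identical, using \eqref{e:tildedone} and \eqref{e:tildedonf} in place of \eqref{e:r4} and \eqref{e:r5}. The only additional input required is that the image of $\td_i^{(2k)}$ in $R(y_n(\sigma),\tau)$ agrees with $\teta_i^{(2k)}$ as defined by the recursion \eqref{e:tetatwisteddefinition}, and that $\td_i^{(2k+1)} \in \cy_n(\sigma)$. Both follow by a straightforward parity induction on \eqref{e:dtildedefinition}: in the defining sum $-\sum_t d_i^{(t)}\td_i^{(r-t)}$, for odd $r$ one of $t$ or $r-t$ must be odd so the sum lies in $\cy_n(\sigma)$, while for even $r$ only the purely-even terms survive modulo $\cy_n(\sigma)^\tau$, and this reduced recursion matches \eqref{e:tetatwisteddefinition} term-by-term. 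The only real obstacle in the whole argument is the sign bookkeeping: once the parity relation between $s$, $t$ and $u_t$ is tracked carefully, the identification of the even-$t$ summands with products $\eta_i^{(2k)}\theta_j^{(\cdot)}$ (respectively $\teta_i^{(2k)}\theta_j^{(\cdot)}$) is immediate and both formulas follow without further input.
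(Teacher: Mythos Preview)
Your proof is correct and follows exactly the approach the paper intends: the paper merely states that the lemma ``can be deduced from \eqref{e:r4}, \eqref{e:r5}, \eqref{e:tildedone}, \eqref{e:tildedonf}'' without giving details, and you have supplied precisely those details, including the parity bookkeeping and the inductive identification of $\td_i^{(2k)}$ with $\teta_i^{(2k)}$ modulo $\cy_n(\sigma)^\tau$ (the latter is handled in the paper within the proof of Theorem~\ref{T:PDyangian} via a grading argument, but your direct parity induction on \eqref{e:dtildedefinition} is equally valid).
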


Thanks to Theorem~\ref{T:shiftedyangianPBW} we see that $R(y_n(\sigma), \tau)$ comes equipped with the {\it canonical grading} $R(y_n(\sigma), \tau) = \bigoplus_{i \ge 0} R(y_n(\sigma), \tau)_i$ which places $\theta_i^{(r)}, \eta_i^{(r)}$ in degree $r$ and the Poisson bracket in degree $-1$. Another crucial feature is the {\it loop filtration} $R(y_n(\sigma), \tau) = \bigcup_{i\ge 0} \F_iR(y_n(\sigma), \tau)$ which places $\theta_i^{(r)}, \eta_i^{(r)}$ in degree $r-1$ and the bracket in degree $0$. Both of these structures are naturally inherited from $y_n(\sigma)$.

We define the following useful symbol for $r,s \in \Z$
\begin{eqnarray}
\label{e:definevarpi}
\varpi_{r,s} := (-1)^r - (-1)^s = \left\{
\begin{array}{cl} 2 & \text{ if } r \text{ even}, \ s\text{ odd},\\
0 & \text{ if } r + s \text{ even},\\
-2 &  \text{ if } r\text{ odd}, \ s \text{ even}. \end{array} \right.
\end{eqnarray}

The following is our main structural result on the Dirac reduction of the shifted Yangian.
\begin{Theorem}
\label{T:PDyangian}
Let $\sigma$ be a symmetric shift matrix, let $y_n(\sigma)$ denote the corresponding semiclassical shifted Yangian and let $\tau$ denote the automorphism of $y_n(\sigma)$ introduced in \eqref{e:tauon}. The Dirac reduction $R(y_n(\sigma), \tau)$ is Poisson generated by elements
\begin{eqnarray}
\label{e:DYgens}
\{\eta_i^{(2r)} \mid 1\le i \le n, \ 0 < r \} \cup \{ \theta_i^{(r)} \mid 1\le i < n, \ s_{i,i+1} < r \}
\end{eqnarray}
together with the following relations
\begin{eqnarray}
\label{e:dyrel1}
& & \{\eta_i^{(2r)}, \eta_j^{(2s)}\} = 0 \\ 
\label{e:dyrel2}
& &\big\{\eta_i^{(2r)}, \theta_j^{(s)}\big\} = (\delta_{i,j} - \delta_{i,j+1}) \sum_{t=0}^{r-1} \eta_i^{(2t)} \theta_j^{(2r+s -1- 2t)}
\end{eqnarray}
\begin{eqnarray}
\label{e:dyrel34}
\{\theta_i^{(r)}, \theta_i^{(s)}\} = \frac{1}{2} \sum_{t=r}^{s-1} \theta_i^{(t)} \theta_i^{(r+s-1-t)} + (-1)^{s_{i,i+1}}\varpi_{r,s} \sum_{t=0}^{(r+s-1)/2} \eta_{i+1}^{(r+s-1-2t)} \teta_{i}^{2t} & \text{ for } & r < s.\\
%\{\he_i^{(r)}, \he_i^{(s)}\} = \left\{\begin{array}{cc}
%		\sum_{t=r}^{s-1} \frac{1}{2} \he_i^{(t)} \he_i^{(r+s-1-t)} & r-s \text{ even};\\
%		\sum_{t=r}^{s-1} \frac{1}{2} \he_i^{(t)} \he_i^{(r+s-1-t)} + (-1)^{s + s_{i,i+1} + 1} \sum_{t=0}^{\lfloor \frac{r+s-1-t}{2}\rfloor} d_{i+1}^{(r+s-1-2t)}\td_i^{(2t)}  & r-s\text{ odd}.
%\end{array}\right.
%\end{eqnarray}
%\begin{eqnarray}
\label{e:dyrel5}
\big\{\theta_i^{(r+1)}, \theta_{i+1}^{(s)}\big\} - \big\{\theta_i^{(r)}, \theta_{i+1}^{(s+1)}\big\} = \frac{1}{2}\theta_i^{(r)} \theta_{i+1}^{(s)} & &
\end{eqnarray}
\begin{eqnarray}
\label{e:dyrel6}
\big\{\theta_i^{(r)}, \theta_j^{(s)}\big\} = 0 & & \text{ for } |i - j| > 1\\
\label{e:dyrel7}
\Big\{\theta_i^{(r)}, \big\{\theta_i^{(s)}, \theta_j^{(t)}\big\}\Big\} + \Big\{\theta_i^{(s)}, \big\{\theta_i^{(r)}, \theta_j^{(t)}\big\}\Big\} = 0 & & \text{ for } |i - j| =1 \text{ and } r + s \text{ odd}
\end{eqnarray}
\begin{eqnarray}
\label{e:dyrel8}
& &\Big\{\theta_i^{(r)}, \big\{\theta_i^{(s)}, \theta_j^{(t)}\big\}\Big\} + \Big\{\theta_i^{(s)}, \big\{\theta_i^{(r)}, \theta_j^{(t)}\big\}\Big\} =  \\
& & \nonumber  \ \ \ \ \ \ \ \ \ \  2(-1)^{s+s_{i,i+1}-1} \delta_{i,j+1} \sum_{m_1=0}^{m-1} \sum_{m_2=0}^{m_1} \eta_{i+1}^{(2(m-m_1-1))}  \teta_i^{(2m_2)} \theta_j^{(2(m_1-m_2) + t)} \\
& & \nonumber \ \ \ \ \ \ \ \ \ \ \ \ \ \ \ \ \ \ \ \ + 2(-1)^{s+s_{i,i+1}-1} \delta_{i+1,j} \sum_{m_1=0}^{m-1} \sum_{m_2=0}^{m-m_1-1} \teta_i^{(2m_1)}\eta_{i+1}^{(2m_2)} \theta_j^{(2(m-m_1-m_2 - 1) + t)} \\
& & \nonumber \ \ \ \ \ \ \ \ \ \ \ \ \ \ \ \ \ \ \ \ \ \ \ \ \ \ \ \ \ \ \text{ for } |i - j| =1 \text{ and } r + s = 2m \text{ even}
\end{eqnarray}
where we adopt the convention $\eta_i^{(0)} = \teta_i^{(0)} = 1$ and the elements $\teta_i^{(2r)} $ are defined via the recursion
\begin{eqnarray}
\label{e:tetatwisteddefinition}
\teta_i^{(2r)} := -\sum_{t=1}^r \eta_i^{(2t)} \teta_i^{(2r-2t)}.
\end{eqnarray}
\end{Theorem}
\begin{proof}
%Thanks to Lemma~\ref{L:PBWforDiracreduction} and Theorem~\ref{T:shiftedyangianPBW} we know that $R(y_n(\sigma), \tau)$ is a polynomial algebra generated by the elements 
First we show that the elements $\teta_i^{(2r)} := \td_i^{(2r)} + \cy_n(\sigma)^\tau \in R(y_n(\sigma), \tau)$ satisfy the recursion \eqref{e:tetatwisteddefinition}. By Theorem~\ref{T:shiftedyangianPBW} the subalgebra of $y_n(\sigma)$ generated by $\{d_i^{(r)} \mid i=1,...,n, \ r > 0\}$ is a graded polynomial ring with $d_i^{(r)}$ in degree $r$. By induction $\td_i^{(r)}$ lies in degree $r$. This forces $\td_i^{(2r-1)} \in \cy_n(\sigma)$ for all $r > 0$ and so \eqref{e:dtildedefinition} equals \eqref{e:tetatwisteddefinition} modulo $\cy_n(\sigma)^\tau$, confirming the claim.

We now deduce relations \eqref{e:dyrel1}--\eqref{e:dyrel8} from \eqref{e:r2}--\eqref{e:r13}. First of all \eqref{e:dyrel1} follows immediately from \eqref{e:r2}. By \eqref{e:r4} and \eqref{e:r5} we have
\begin{eqnarray*}
\{d_i^{(2r)}, \he_j^{(s)} \} & = & (\delta_{i,j} - \delta_{i, j+1})\sum_{t=0}^{2r-1} d_i^{(t)} \Big(e_j^{(2r+s - 1 -t)} - (-1)^{s+s_{j,j+1}} f_j^{(2r+s-1-t)}\Big)\\
& = & (\delta_{i,j} - \delta_{i, j+1}) \Big( \sum_{t=0}^{r-1} d_i^{(2t)} \he_j^{(2r+s-1-2t)} + \sum_{t=0}^{r-1} d_i^{(2t+1)} \ce_j^{(2r + s - 2t-2)} \Big).
\end{eqnarray*}
Projecting to $R(y_n(\sigma), \tau) = y_n(\sigma)^\tau / \cy_n(\sigma)^\tau$ we see that the elements \eqref{e:thetaandeta} satisfy \eqref{e:dyrel2}.

Note that \eqref{e:r3} implies that
\begin{eqnarray}
\label{e:swaptheindeceseandf}
\{e_i^{(r)}, f_i^{(s)}\} = \{e_i^{(s)}, f_i^{(r)}\} \text{ for } r, s > s_{i,i+1}.
\end{eqnarray}
Together with \eqref{e:r6} and \eqref{e:r7} we deduce for all $r < s$
\begin{eqnarray*}
\{\he_i^{(r)}, \he_i^{(s)}\} &=& \{e_i^{(r)}, e_i^{(s)}\} + (-1)^{r + s} \{f_i^{(r)}, f_i^{(s)}\}
+(-1)^{s+s_{i,i+1}}\{e_i^{(r)}, f_i^{(s)}\} - (-1)^{r+s_{i,i+1}}\{e_i^{(s)}, f_i^{(r)}\} 
\end{eqnarray*}
Using relations \eqref{e:r6} and \eqref{e:r7} and substituting in \eqref{e:recoverhatcup} we see that
\begin{eqnarray}
\label{e:someballs}
\{e_i^{(r)}, e_i^{(s)}\} + (-1)^{r+s}\{f_i^{(r)}, f_i^{(s)}\} = \frac{1}{2} \sum_{t=r}^{s-1} \he_i^{(t)} \he_i^{(r+s-1-t)} \mod \cy_n(\sigma)^\tau.
\end{eqnarray}
Similarly applying \eqref{e:r3} and substituting \eqref{e:recoverhatcup} we deduce
\begin{eqnarray}
(-1)^{s+s_{i,i+1}}\{e_i^{(r)}, f_i^{(s)}\} - (-1)^{r+s_{i,i+1}}\{e_i^{(s)}, f_i^{(r)}\} = (-1)^{s_{i,i+1}}\varpi_{r,s} \sum_{t=0}^{r+s-1} d_{i+1}^{(r+s-1-2t)} \td_{i}^{2t}.
\end{eqnarray}
Projecting the right hand side into $y_n(\sigma)^\tau / \cy_n(\sigma)^\tau$, we obtain the second summation on the right hand side of \eqref{e:dyrel34}. Combining with \eqref{e:someballs} we have now checked relation \eqref{e:dyrel34}.

Using \eqref{e:shiftmatrixdefn}, \eqref{e:r3}, \eqref{e:r8}, \eqref{e:r9} we calculate
\begin{eqnarray*}
\{\he_i^{(r+1)}, \he_{i+1}^{(s)}\} - \{\he_i^{(r)}, \he_{i+1}^{(s+1)}\} & = & \{e_i^{(r+1)}, e_{i+1}^{(s)}\} - \{e_i^{(r)}, e_{i+1}^{(s+1)}\} \\ & & + (-1)^{r+s+1+s_{i,i+2}}(\{f_{i}^{(r+1)}, f_{i+1}^{(s)}\} - \{f_i^{(r)}, f_{i+1}^{(s+1)} \})\\
&=& e_i^{(r)} e_{i+1}^{(s)} + (-1)^{r+s+s_{i,i+2}} f_{i+1}^{(s)} f_i^{(r)}
\end{eqnarray*}
Substituting in \eqref{e:recoverhatcup} we see that this is equal to the right hand side of \eqref{e:dyrel5} modulo $\cy_n(\sigma)^\tau$.

Finally take $|i-j| = 1$. Expanding in terms of generators \eqref{e:Ybasicgens} and applying relations \eqref{e:r3}, \eqref{e:r12}, \eqref{e:r13} together with the Jacobi identity and \eqref{e:swaptheindeceseandf} we obtain
\begin{eqnarray*}
& & \{\he_i^{(r)}, \{\he_i^{(s)}, \he_j^{(t)}\}\} +\{\he_i^{(s)}, \{\he_i^{(r)}, \he_j^{(t)}\}\} \\
& & \ \ \ \ = (-1)^{s+t + s_{i,i+1} + s_{j,j+1}} \{e_i^{(r)}, \{f_i^{(s)}, f_j^{(t)}\}\} + (-1)^{r + t + s_{i,i+1} + s_{j,j+1}} \{e_i^{(s)}, \{f_i^{(r)}, f_j^{(t)}\}\} \\
& & \ \ \ \ \ \ \ \ \  + (-1)^{r + s_{i,i+1}} \{f_i^{(r)}, \{e_i^{(s)}, e_j^{(t)}\}\} + (-1)^{s + s_{i,i+1}} \{f_i^{(s)}, \{e_i^{(r)}, e_j^{(t)}\}\}\\
& & \ \ \ \ = (-1)^{s+t + s_{i,i+1} + s_{j,j+1}}(1 + (-1)^{r+s}) \{\{e_i^{(r)}, f_i^{(s)}\}, f_j^{(t)}\}- (-1)^{r + s_{i,i+1}}(1 + (-1)^{r+s}) \{\{e_i^{(r)}, f_i^{(s)}\}, e_j^{(t)}\}\\
%& & \ \ \ \ \ \ \ \ \  + (-1)^{r + s_{i,i+1}}(1 + (-1)^{r+s}) \{\{f_i^{(r)}, e_i^{(s)}\}, e_j^{(t)}\}
%& & \ \ \ \ = (-1)^{s + s_{j,j+1}}(1 + (-1)^{t + s_{i,i+1}}) \{\{e_i^{(r)}, f_i^{(s)}\}, f_j^{(t)}\}\}  \\
%& & \ \ \ \ \ \ \ \ \  + (-1)^{r + s_{i,i+1}} \{\{f_i^{(r)}, e_i^{(s)}\}, e_j^{(t)}\}\}
\end{eqnarray*}
If $r + s$ is odd this vanishes, which proves \eqref{e:dyrel7}. Now assume that $r + s=2m$ is even. Using \eqref{e:r3} and Jacobi the last line of the previous equation reduces to
\begin{eqnarray*}
& & 2(-1)^{s+s_{i,i+1} -1}\big\{\{e_i^{(r)}, f_i^{(r)}\}, \ce_j^{(t)}\big\}  = 2(-1)^{s+s_{i,i+1} -1}\big\{\sum_{m_1=0}^{2m-1} d_{i+1}^{(2m-1-m_1)} \td_{i}^{(m_1)}, \ce_j^{(t)}\big\} \\
& & \ \ \ \ = 2(-1)^{s + s_{i,i+1}-1} \sum_{m_1=0}^{2m - 1} \big(d_{i+1}^{(2m-1-m_1)} \{\td_{i}^{(m_1)}, \ce_j^{(t)}\} + \{d_{i+1}^{(2m-1-m_1)}, \ce_j^{(t)}\} \td_{i}^{(m_1)}\big).
\end{eqnarray*}
Using the fact that $d_i^{(2r-1)} \in \cy_n(\sigma)^\tau$ for all $i, r$ we simplify this expression modulo $\cy_n(\sigma)^\tau$ to get
\begin{eqnarray*}
2(-1)^{s + s_{i,i+1}-1} \left( \sum_{m_1=0}^{m-1} d_{i+1}^{(2m-2m_1-2)} \{\td_{i}^{(2m_1+1)}, \ce_j^{(t)}\} + \sum_{m_1=0}^{m-1}\{d_{i+1}^{(2(m-m_1)-1)}, \ce_j^{(t)}\} \td_{i}^{(2m_1)}\right) + \cy_n(\sigma)^\tau
\end{eqnarray*}
Finally using \eqref{e:dcemodideal} and \eqref{e:tdcemodideal} this expression coincides with the right hand side of \eqref{e:dyrel8}.

We have shown that the generators \eqref{e:DYgens} satisfy \eqref{e:dyrel1}--\eqref{e:dyrel8}, and it remains to show that these are a complete set of relations.

Let $\h R(y_n(\sigma), \tau)$ denote the Poisson algebra with generators \eqref{e:DYgens} and relations \eqref{e:dyrel1}--\eqref{e:dyrel8}. We have shown that there is a Poisson homomorphism $\h R(y_n(\sigma), \tau)\onto R(y_n(\sigma), \tau)$ sending the elements \eqref{e:DYgens} to the elements \eqref{e:thetaandeta} with the same names. To complete the proof we show that this map sends a spanning set to a basis.

We define a loop filtration on $\h R(y_n(\sigma),\tau) = \bigcup_{i\ge 0} \F_i \h R(y_n(\sigma),\tau)$ by placing $\theta_i^{(r)}, \eta_i^{(r)}$ in degree $r-1$ and the bracket in degree $0$. Examining the top filtered degree pieces of the relations \eqref{e:dyrel1}--\eqref{e:dyrel8} with respect to the loop filtration, we see from Theorem~\ref{T:CSshiftedcurrents2} that there is a surjective Poisson homomorphism $S(\cc_n(\sigma)^\tau) \onto \gr \h R(y_n(\sigma), \tau)$. We deduce that $\gr \h R(y_n(\sigma), \tau)$ is generated as a commutative algebra by elements $\theta_{i,j}^{(r)} + \F_{r-2} \h R(y_n(\sigma), \tau), \eta_i^{(2r)} + \F_{2r-2} \h R(y_n(\sigma), \tau)$ with indexes varying in the same ranges as \eqref{e:theRgenerators}. Again the elements $\theta_{i,j}^{(r)} \in \h R(y_n(\sigma), \tau)$ are defined via the recursion \eqref{e:higherthetas}. By a standard filtration argument we deduce that the elements of $\h R(y_n(\sigma), \tau)$ with the same names as \eqref{e:theRgenerators} generate $\h R(y_n(\sigma), \tau)$ as a commutative algebra. We have shown that $\h R(y_n(\sigma), \tau)\onto R(y_n(\sigma), \tau)$ maps a spanning set to a basis, which concludes the proof.
\end{proof}

Let $\gr R(y_n(\sigma), \tau)$ denote the graded algebra for the loop filtration.
In the last paragraph of the proof of Theorem~\ref{T:PDyangian} we obtained the following important result.
\begin{Corollary}
\label{C:loopforcurrent}
If $\sigma$ is a symmetric shift matrix and $y_n(\sigma)$ is the semiclassical shifted Yangian then there is a Poisson isomorphism $S(\cc_n(\sigma)^\tau) \isoto \gr R(y_n(\sigma), \tau)$ given by
\begin{eqnarray}
\label{e:PDloopiso}
\begin{array}{lcl}
\theta_{i;r-1} \longmapsto \theta_i^{(r)} + \F_{r-2}y_n(\sigma) & \text{ for } & 1\le i<n, \ s_{i,i+1} < r ;\\
\eta_{i;2r-1} \longmapsto \eta_i^{(2r)} + \F_{2r-2}y_n(\sigma) & \text{ for } & 1\le i \le n, \ 0 < r.\vspace{-18pt}
\end{array}
\end{eqnarray}
$\hfill \qed$
\end{Corollary}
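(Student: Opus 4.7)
The plan is to extract the assertion from what was already constructed inside the proof of Theorem~\ref{T:PDyangian} and assemble the pieces. That proof built the abstract Poisson algebra $\h R(y_n(\sigma), \tau)$ presented by generators \eqref{e:DYgens} and relations \eqref{e:dyrel1}--\eqref{e:dyrel8}, and verified that the canonical Poisson surjection $\h R(y_n(\sigma), \tau) \onto R(y_n(\sigma), \tau)$ is an isomorphism. Equipping both sides with the loop filtration (with $\theta_i^{(r)}, \eta_i^{(2r)}$ in filtered degree $r-1$), this is a filtered isomorphism, hence induces an isomorphism on associated graded algebras. So it remains to identify $\gr \h R(y_n(\sigma), \tau)$ with $S(\cc_n(\sigma)^\tau)$ via the rule \eqref{e:PDloopiso}.

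For the surjection $S(\cc_n(\sigma)^\tau) \onto \gr R(y_n(\sigma), \tau)$ I would extract the top filtered components of \eqref{e:dyrel1}--\eqref{e:dyrel8}. After inspection, the top components reproduce exactly the defining relations \eqref{e:shiftedcurrentinvariantsfreerels1}--\eqref{e:shiftedcurrentinvariantsfreerels5} of the twisted shifted current Lie--Poisson algebra. Theorem~\ref{T:CSshiftedcurrents2} together with Remark~\ref{R:invariants} then provides the surjective Poisson homomorphism with the prescribed values on generators.

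For injectivity I would run a polynomial-ring comparison. By Theorem~\ref{T:shiftedyangianPBW} the Yangian $y_n(\sigma)$ is freely generated by \eqref{e:Ygens}. Passing to $\tau$-invariants and quotienting by $\cy_n(\sigma)^\tau$ via Lemma~\ref{L:PBWforDiracreduction}, one sees that $R(y_n(\sigma), \tau)$ is itself polynomial on the elements \eqref{e:theRgenerators}, each of which is homogeneous for the loop filtration. Therefore $\gr R(y_n(\sigma), \tau)$ is polynomial on the loop-graded classes of the same generators. On the current algebra side, $S(\cc_n(\sigma)^\tau)$ is polynomial on a basis of $\cc_n(\sigma)^\tau$ (namely the elements $\eta_i t^{2r-1}$ and $\theta_{i,j} t^r$) which corresponds under \eqref{e:PDloopiso} and the recursion \eqref{e:higherthetas} to the same set of generators with matching graded degree. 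Hence the surjection is a graded map between polynomial rings with identical Hilbert series, and must be an isomorphism.

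The only substantive step is the filtered degree estimate needed to isolate the leading components of the long Serre-type relation \eqref{e:dyrel8}; every other ingredient is already assembled in the proof of Theorem~\ref{T:PDyangian} together with Theorems~\ref{T:shiftedyangianPBW} and \ref{T:CSshiftedcurrents2}, so the corollary follows with essentially no further work.
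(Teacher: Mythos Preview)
Your proposal is correct and follows essentially the same route as the paper: the corollary is extracted from the last paragraph of the proof of Theorem~\ref{T:PDyangian}, where the surjection $S(\cc_n(\sigma)^\tau)\onto \gr \h R(y_n(\sigma),\tau)$ is obtained by comparing top loop-filtered pieces of \eqref{e:dyrel1}--\eqref{e:dyrel8} with the presentation in Theorem~\ref{T:CSshiftedcurrents2}, and injectivity is then forced because both sides are polynomial on generators \eqref{e:theRgenerators} (via Lemma~\ref{L:PBWforDiracreduction} and Theorem~\ref{T:shiftedyangianPBW}) with matching graded pieces. Your write-up makes the Hilbert-series comparison for injectivity more explicit than the paper does, but the argument is the same.
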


\section{Finite $W$-algebras for classical Lie algebras}
\label{s:finiteWalgebrasforclassical}

In this section we continue to work over $\C$ and we fix the following notation:
\begin{itemize}
\setlength{\itemsep}{4pt}
\item $N > 0$ is an integer and $\ve \in \{\pm 1\}$ such that $\ve^N = 1$.
\item $\g = \gl_N(\C)$ and $\g_+ \subseteq \g$ is a classical Lie subalgebra such that
$$\g_+ \cong \left\{ \begin{array}{rc} \so_N & \text{ if } \ve = 1, \\ \sp_N & \text{ if } \ve = -1.\end{array}\right.$$
\item $G = \GL_N(\C)$ and $G_+ \subseteq G$ is the connected algebraic subgroup satisfying $\g_+ = \Lie(G_+)$.
\item $\kappa : \g_+ \times \g_+ \to \C$ is the trace form associated to the natural representation $\C^N$ of $G$.
\end{itemize} 

%Fix notation $N, \ve, etc$.

\subsection{The symmetric shift matrix and the centraliser of a nilpotent element}
\label{ss:Dynkinandcentraliser}
Choose a partition $\lambda \vdash N$ and assume that: 
%\begin{itemize}
%\item
all parts are even when $\varepsilon = -1$ and
%\item 
all parts are odd when $\varepsilon = 1$.
%\end{itemize}

We recalled the notion of a shift matrix in Section~\ref{ss:CSpresentationsforshifts}, following \cite{BK06}. The {\it symmetric shift matrix for} $\lambda$ is the shift matrix $\sigma = (s_{i,j})_{1\leq i,j\leq n}$ defined by
\begin{eqnarray}
\label{e:symmetricalshiftmatrix}
s_{i,j} := \frac{|\lambda_i - \lambda_j|}{2} \in \Z.
\end{eqnarray}

Introduce symbols $\{b_{i,j} \mid 1\le i \le n , \ 1\le j \le \lambda_i\}$ and we identify $\C^N$ with the vector space spanned by the $b_{i,j}$. The general linear Lie algebra $\g = \gl_N$ has a basis 
\begin{equation}
\{e_{i,j;k,l} \mid 1\le i,k \le n,\ 1\le j\le \lambda_i, \ 1\le l \le \lambda_k\}
\end{equation}
where $e_{i,j; k,l} b_{r,s} = \delta_{k,r} \delta_{l, s} b_{i,j}$, so that
\begin{eqnarray}
[e_{i_1,j_1;k_1,l_1}, e_{i_2,j_2;k_2,l_2}] = \delta_{k_1, i_2} \delta_{l_1, j_2} e_{i_1, j_1; k_2, l_2} - \delta_{k_2, i_1}\delta_{l_2, j_1} e_{i_2, j_2; k_1, l_1}
\end{eqnarray}
We pick the nilpotent element in $\g$ by the rule
\begin{eqnarray}
\label{e:edefinition}
e := \sum_{i=1}^n \sum_{j = 1}^{\lambda_i-1} e_{i,j;i,j + 1}
\end{eqnarray}
It has Jordan blocks of sizes $\lambda_1,...,\lambda_n$. We also define a semisimple element $h\in \gl_N$ by
\begin{eqnarray}
\label{e:hdefinition}
h := \sum_{i=1}^n \sum_{j=1}^{\lambda_i} (\lambda_i-1 - 2j)e_{i,j;i,j}
\end{eqnarray}
It is easy to see that the pair $\{e,h\}$ can be completed to an $\sl_2$ triple $\{e,h,f\}$. We refer to the grading $\g = \bigoplus_{i\in \Z} \g(i)$ induced by $\ad(h)$ as the {\it Dynkin grading for $e$.} It is well-known that this is a good grading in the sense of \cite{BG07}. It satisfies
\begin{eqnarray}
\label{e:gooddegrees}
\deg(e_{i,j;k,l}) = 2(l-j) + \lambda_i - \lambda_k.
\end{eqnarray}
and, in particular, $e \in \g(2)$.

For $1\le i, k\le n$ and $r = s_{i,k}, s_{i,k}+ 1,..., s_{i,k} + \min(\lambda_i, \lambda_k) - 1$, we define elements
\begin{eqnarray}
\label{e:cijrdefinition}
c_{i,k}^{(r)} = \sum_{ 2r = 2(l-j) + \lambda_i - \lambda_k} e_{i,j;k,l}
\end{eqnarray}

The following fact is well known. See \cite[Lemma~7.3]{BK06} where the notation $c_{i,j}^{(r)}$ differs from ours by a shift in $r$.
\begin{Lemma}
\label{L:centraliserlemma}
The centraliser $\g^e$ has basis
\begin{eqnarray}
\label{e:gebasis}
\{c_{i,k}^{(r)} \mid 1\le i,k \le n, \ s_{i,k} \le r < s_{i,k} + \min(\lambda_i, \lambda_k)\}
\end{eqnarray}
with Lie bracket
\begin{equation}
[c_{i,j}^{(r)}, c_{k,l}^{(s)}] = \delta_{j,k}c_{i,k}^{(r + s)} - \delta_{i,l} c_{k, j}^{(r+s)}.
\end{equation}
Furthermore $\g^e$ is a Dynkin graded Lie subalgebra with $c_{i,k}^{(r)}$ lying in degree $2r$. $\hfill \qed$
\end{Lemma}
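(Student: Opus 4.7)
The plan is to verify the three assertions---commutation with $e$, correct dimension, and bracket/grading formulas---by direct calculation with the matrix-unit basis $\{e_{i,j;k,l}\}$, since no deep input is required beyond careful index bookkeeping.

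First I would show that each $c_{i,k}^{(r)}$ lies in $\g^e$. Using the bracket formula for matrix units,
\[
[e, e_{i,j;k,l}] = \Big[\sum_{a,b} e_{a,b;a,b+1},\, e_{i,j;k,l}\Big] = e_{i,j-1;k,l} - e_{i,j;k,l+1},
\]
with the convention that $e_{i,0;k,l}$ and $e_{i,j;k,\lambda_k+1}$ are zero. Summing over the diagonal $\{(j,l) : 2(l-j)+\lambda_i-\lambda_k = 2r\}$, the two telescoping families cancel in pairs (the boundary terms vanish because the range of $r$ has been chosen precisely so that the outermost $(j,l)$ exits the pyramid), giving $[e, c_{i,k}^{(r)}] = 0$.

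Next I would pin down the grading and linear independence. Since each summand in $c_{i,k}^{(r)}$ has the same $(i,k)$ and lives in Dynkin degree $2(l-j)+\lambda_i-\lambda_k = 2r$ by \eqref{e:gooddegrees}, we get homogeneity in degree $2r$. The different $c_{i,k}^{(r)}$ are supported on disjoint sets of matrix units (distinct $(i,k,r)$ give disjoint diagonals), so they are linearly independent. Counting, the number of listed elements is $\sum_{i,k}\min(\lambda_i,\lambda_k)$, which equals $\sum_{j\ge 1}(\lambda^t_j)^2 = \dim \g^e$ by the standard formula for the centraliser of a nilpotent in $\gl_N$; combined with the inclusion just established, this forces equality.

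Finally, for the Lie bracket, I would expand directly:
\[
[c_{i,j}^{(r)}, c_{k,l}^{(s)}] = \sum_{\substack{2r=2(q-p)+\lambda_i-\lambda_j\\ 2s=2(q'-p')+\lambda_k-\lambda_l}} \big(\delta_{j,k}\delta_{q,p'}\, e_{i,p;l,q'} - \delta_{i,l}\delta_{p,q'}\, e_{k,p';j,q}\big).
\]
When $j=k$, collecting the nonzero terms with $p'=q$ yields precisely the constraint $2(r+s)=2(q'-p)+\lambda_i-\lambda_l$, i.e.\ the sum defining $c_{i,l}^{(r+s)}$; the symmetric analysis gives $-\delta_{i,l}\,c_{k,j}^{(r+s)}$. (One also checks the admissible range of $r+s$ against $\min(\lambda_i,\lambda_l)$ so the result is either a basis element or zero.) The main obstacle, such as it is, is just keeping the index conventions straight and confirming that the boundary terms in the telescoping sum for $[e,c_{i,k}^{(r)}]$ vanish exactly at the endpoints of the stated range of $r$.
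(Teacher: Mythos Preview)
Your direct verification is correct and is the standard argument; the paper itself gives no proof here, simply citing \cite[Lemma~7.3]{BK06} for this well-known fact. In passing, note that your computation correctly yields $\delta_{j,k}\,c_{i,l}^{(r+s)}$ in the first term of the bracket, matching \eqref{e:haururelations} earlier in the paper---the $c_{i,k}^{(r+s)}$ in the displayed statement is a typo.
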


\subsection{Symplectic and orthogonal subalgebras}
\label{ss:symplecticandorthogonal}
Consider the matrix
\begin{eqnarray}
\label{e:Jdefinition}
J:= \sum_{i=1}^n \sum_{j=1}^{\lambda_i} (-1)^{j}  e_{i,j; i, \lambda_i + 1 - j}
\end{eqnarray}
This block diagonal matrix can be described as follows. For each index $i = 1,...,n$ there is a block of size $\lambda_i$, each of these blocks has alternating entries $\pm 1$ on the antidiagonal and zeroes elsewhere. Thus when $\ve = 1$ the matrix $J$ is symmetric and when $\ve = -1$ it is anti-symmetric. It follows that 
\begin{eqnarray}
\label{e:Jinverse}
J^{-1} = \ve J.
\end{eqnarray}

Now define an involution of $\g = \gl_N$ by the rule
\begin{eqnarray}
\label{e:taudefinition}
\tau : X \mapsto - J^{-1} X^\top J
\end{eqnarray}

\begin{Lemma}
\label{L:tauaction}
For all admissible indexes we have:
\begin{enumerate}
\setlength{\itemsep}{4pt}
%\begin{eqnarray}
\item[(i)] $\tau(e_{i,j;k,l}) = (-1)^{j - l - 1}  e_{k, \lambda_k + 1 - l; i, \lambda_i + 1 - j}$;
\item[(ii)] $\tau(e) = e$ and $\tau(h)=h$;
\item[(iii)] $\tau(c_{i,k}^{(r)}) = (-1)^{r-\frac{\lambda_k - \lambda_i}{2} - 1}c_{k, i}^{(r)}.$
%\end{eqnarray}
%where 
%\begin{eqnarray}
%\upsilon(i,j;k,l) = (k', \lambda_k + 1 - l; i', \lambda_i + 1 - j).
%\end{eqnarray}
\end{enumerate}
\end{Lemma}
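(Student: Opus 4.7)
The three parts form a cascade: (i) is a direct matrix calculation, and then (ii) and (iii) follow by applying (i) termwise to the defining sums and reindexing.

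For (i), my plan is to substitute into $\tau(X) = -J^{-1}X^\top J$ and use the matrix-unit multiplication rule $e_{a,b;c,d}\, e_{c',d';e,f} = \delta_{c,c'}\delta_{d,d'} e_{a,b;e,f}$. Starting from $e_{i,j;k,l}^\top = e_{k,l;i,j}$, right-multiplication by $J = \sum_{r,s}(-1)^s e_{r,s;r,\lambda_r+1-s}$ picks out the unique summand of $J$ that contracts with the last indices of $e_{k,l;i,j}$, producing $(-1)^j e_{k,l;i,\lambda_i+1-j}$. Then left-multiplication by $J^{-1} = \ve J$ and one more application of the same rule gives $\ve (-1)^{j+\lambda_k+1-l}\, e_{k,\lambda_k+1-l;\,i,\lambda_i+1-j}$. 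The trivial involution hypothesis, combined with Lemma~\ref{L:involutionforlambda}, pins down $(-1)^{\lambda_k} = -\ve$ (each single-row Jordan block must itself support a form of the correct symmetry type), so $\ve(-1)^{\lambda_k} = -1$, and together with the overall sign from $-J^{-1}$ this collapses to $(-1)^{j-l-1}$, as required.

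Part (ii) is then immediate from (i). For $e = \sum_{i,j} e_{i,j;i,j+1}$ each term picks up sign $(-1)^{j-(j+1)-1} = 1$ and is sent to $e_{i,\lambda_i-j;i,\lambda_i+1-j}$; the substitution $k = \lambda_i - j$ rearranges the sum back into $e$. For $h$, (i) gives $\tau(e_{i,j;i,j}) = -e_{i,\lambda_i+1-j;i,\lambda_i+1-j}$, and under $k = \lambda_i+1-j$ the diagonal coefficient flips sign, so the two minus signs cancel and $\tau(h)=h$. For (iii), I apply (i) termwise to the defining sum $c_{i,k}^{(r)} = \sum e_{i,j;k,l}$. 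The index constraint $2r = 2(l-j) + \lambda_i - \lambda_k$ rearranges to $j - l = -r - (\lambda_k - \lambda_i)/2$, so $(-1)^{j-l-1}$ is the single global sign $(-1)^{r - (\lambda_k-\lambda_i)/2 - 1}$, using that $(\lambda_k - \lambda_i)/2 \in \Z$ when both indices share parity, as forced by the trivial involution. The change of variables $(j,l) \mapsto (j',l') := (\lambda_k + 1 - l,\ \lambda_i + 1 - j)$ is a bijection between the defining index set of $c_{i,k}^{(r)}$ and that of $c_{k,i}^{(r)}$, and (iii) follows.

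The only real pitfall is sign management, and the crucial ingredient is precisely the equality $(-1)^{\lambda_i} = -\ve$ forced by the trivial involution: it is what makes $J^2 = \ve I$ so that $J^{-1} = \ve J$ as stated in \eqref{e:Jinverse}, and it is what makes the sign in (i) come out to $(-1)^{j-l-1}$. Once (i) is in hand, the remaining parts are pure bookkeeping.
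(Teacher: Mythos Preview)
Your proof is correct and follows exactly the same route as the paper: a direct matrix-unit computation for (i) using $J^{-1}=\ve J$ and the parity condition $\ve(-1)^{\lambda_k}=-1$ forced by the trivial involution, then applying (i) termwise with a reindexing for (ii) and (iii). The paper's own proof is a two-line sketch of precisely this argument, so your write-up is simply a more fully worked-out version of the same idea.

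One small remark: the condition you use, $(-1)^{\lambda_k}=-\ve$, is indeed the one that makes $J^{-1}=\ve J$ hold and matches how Lemma~\ref{L:involutionforlambda}(2) is invoked throughout the paper (e.g.\ in the proof of Lemma~\ref{L:shiftedcurrentandcentraliser}), even though the displayed condition there reads $\epsilon(-1)^{\lambda_i}=1$; your reading is the intended one.
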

\begin{proof}
Using \eqref{e:Jinverse} and multiplying matrices we have
$\tau(e_{i,j;k,l}) = -\ve (-1)^{\lambda_k + j - l}  e_{k, \lambda_k + 1 - l; i, \lambda_i + 1 - j}$ and so (i) follows from the fact that $\varepsilon(-1)^{\lambda_i} = 1$ under our assumptions on $\varepsilon$ and $\lambda$ fixed at the start of Section~\ref{ss:Dynkinandcentraliser}. %from Lemma~\ref{L:involutionforlambda}(2). 
Now (ii) and (iii) follow by applying (i) to \eqref{e:edefinition}, \eqref{e:hdefinition} and \eqref{e:cijrdefinition}.
\end{proof}
We decompose $\g = \g_+ \oplus \g_-$ into eigenspaces for $\tau$ so that $\g_+$ is the space of $\tau$-invariants. Then we have
\begin{eqnarray}
\label{e:fixedpointLietype}
\g_+ \cong \left\{\begin{array}{cl} \so_N & \text{ if } \ve = 1; \\
															\sp_N & \text{ if } \ve = -1.

\end{array}\right.
\end{eqnarray}
By Lemma~\ref{L:tauaction}(ii) we have $\{e,h\}\subseteq \g_+$, and $\g_-$ is a $\g_+$-module. Write $G_+$ for the connected component of the group of elements $g\in \GL_N$ such that $gJg^\top = J$. This is a classical group satisfying $\Lie(G_+) = \g_-$.

As $\tau$ preserves $\g(-2)$ we also have that $\g(-2)=\g_+(-2)\oplus \g_-(-2)$. Writing $f=f_1+f_2$ with $f_1\in \g_+(-2)$ and $f_2\in \g_-(-2)$ and using the fact that $h\in \g_+$ we deduce that $h=[e,f_1]$ and $[e,f_2]=0$. Since the Dynkin grading is good for $e$ in $\g$ this yields $f_2=0$. We conclude that the $\sl_2$-triple $\{e,h,f\}$ is contained  in $\g_+$.

Our next result explains the relationship between the shifted current algebras and the centralisers described above. We refer the reader to Theorem~\ref{T:CSshiftedcurrents2} for a recap of notation.
\begin{Lemma}
\label{L:shiftedcurrentandcentraliser}
\begin{enumerate}
\setlength{\itemsep}{4pt}
\item There is a surjective Lie algebra homomorphism
\begin{eqnarray*}
\cc_n(\sigma) \longtwoheadrightarrow \g^e ; \\
e_{i,j}t^r \longmapsto c_{i,j}^{(r)}
\end{eqnarray*}
 where $c_{i,j}^{(r)} := 0$ for $r \ge s_{i,j} + \min(\lambda_i, \lambda_j)$.
The kernel is Poisson generated by $e_{1,1}t^r$ with $r\ge \lambda_1$.
\item If $\sigma$ is symmetric the homomorphism from (1) restricts to $\cc_n(\sigma)^\tau \onto \g^e_+$.
\begin{itemize}
\item[(i)] For $\ve = 1$ the kernel is Poisson generated  by $\{\eta_{1; 2r-1} \mid 2r-1 \ge \lambda_1\}$.
\item[(ii)] For $\ve=-1$ the kernel is Poisson generated by those same elements as the $\ve = 1$ case, along with $$\{\theta_{i; \lambda_i + s_{i,i+1}} \mid i=1,...,n-1\}.$$
\end{itemize} 
\end{enumerate}
\end{Lemma}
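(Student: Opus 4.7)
The plan is to first establish part (1) by a direct bracket comparison, then promote the argument to the $\tau$-invariant setting for part (2). For part (1), I would define $\phi : \cc_n(\sigma) \to \g^e$ by $e_{i,j}t^r \mapsto c_{i,j}^{(r)}$, extending $c_{i,j}^{(r)}$ by zero whenever the defining sum \eqref{e:cijrdefinition} is empty; the stated bound $r \ge s_{i,j}+\lambda_1$ then forces $\phi(e_{i,j}t^r) = 0$ because $\lambda_1 \le \min(\lambda_i, \lambda_j)$. That $\phi$ is a Lie algebra homomorphism follows by comparing the bracket $[e_{i,j}t^r, e_{k,l}t^s] = \delta_{j,k}e_{i,l}t^{r+s} - \delta_{i,l}e_{k,j}t^{r+s}$ inherited from $\gl_n \otimes \C[t]$ with the bracket formula $[c_{i,j}^{(r)}, c_{k,l}^{(s)}] = \delta_{j,k}c_{i,l}^{(r+s)} - \delta_{i,l}c_{k,j}^{(r+s)}$ from Lemma~\ref{L:centraliserlemma}. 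Surjectivity is immediate from the explicit basis of $\g^e$, and the kernel $K$ has basis $\{e_{i,j}t^r : r \ge s_{i,j}+\min(\lambda_i,\lambda_j)\}$.

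To establish the generation claim in part (1), let $\mi$ be the Lie ideal in $\cc_n(\sigma)$ generated by $\{e_{1,1}t^r : r \ge \lambda_1\}$. The containment $\mi \subseteq K$ is clear. For the reverse inclusion, I would argue by iterated bracketing: using the identity $s_{k,1}+\lambda_1 = (\lambda_k + \lambda_1)/2$, the bracket $[e_{k,1}t^{s_{k,1}}, e_{1,1}t^{\lambda_1}]$ already realizes the threshold kernel element $e_{k,1}t^{(\lambda_k+\lambda_1)/2}$; higher $t$-degrees come from further brackets with diagonal elements, and a symmetric argument produces $e_{1,l}t^r$ for all relevant $r$. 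Then the identity $[e_{k,1}t^{s'}, e_{1,l}t^{s''}] = e_{k,l}t^{s'+s''} - \delta_{k,l}e_{1,1}t^{s'+s''}$, combined with the triangle inequality $s_{k,l} \le s_{k,1}+s_{1,l}$ for shift matrices, yields the remaining basis elements $e_{k,l}t^r$ of $K$.

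For part (2), the key observation is that $\phi$ is $\tau$-equivariant, which follows by comparing \eqref{e:tauonshiftedcurrents} with Lemma~\ref{L:tauaction}(iii) and noting that $s_{i,k}$ and $(\lambda_k - \lambda_i)/2$ have the same parity. Exactness of the $\tau$-invariants functor gives the restricted surjection $\phi^\tau : \cc_n(\sigma)^\tau \twoheadrightarrow (\g^e)^\tau = \g_0^e$ with kernel $K^\tau := K \cap \cc_n(\sigma)^\tau$. The elements $\eta_{1;2r-1}$, together with the $\theta_{i;\lambda_i + s_{i,i+1}}$ when $\epsilon = -1$, lie in $K^\tau$ by direct inspection; generation is then proved by replaying the bracket construction of the previous paragraph inside the invariant subalgebra, using the Chevalley--Serre relations of Theorem~\ref{T:CSshiftedcurrents2} to ensure every intermediate bracket remains $\tau$-invariant.

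The main obstacle will be the parity analysis that distinguishes the two cases. Bracketing the diagonal seeds $\eta_{1;2r-1}$, which have $t$-degree at least $\lambda_1$ and a fixed parity in the loop variable, against the low-degree $\tau$-invariant generators of $\cc_n(\sigma)^\tau$ produces kernel elements only in a restricted parity class. A careful accounting driven by the interplay between $r$, $s_{i,j}$, and the common parity of the $\lambda_i$ should show that when $\epsilon = 1$ the $\eta$-seeds alone exhaust $K^\tau$, whereas when $\epsilon = -1$ the threshold antidiagonal elements $\theta_{i;\lambda_i + s_{i,i+1}}$ fall outside the reachable parity class and must be supplied separately. Carrying out this parity bookkeeping precisely, and verifying that the augmented generating set closes up under the iterated bracket construction, is the delicate core of the argument.
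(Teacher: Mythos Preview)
Your proposal is correct and follows essentially the same strategy as the paper. For part~(1) the paper simply cites \cite[Lemma~2.6]{GT19c}, while your explicit bracketing argument through the first column is a valid reconstruction of that result. For part~(2) both you and the paper establish $\tau$-equivariance by comparing \eqref{e:tauonshiftedcurrents} with Lemma~\ref{L:tauaction}(iii), and both argue that the listed seeds generate $K^\tau$ by iterated brackets driven by the Chevalley--Serre relations of Theorem~\ref{T:CSshiftedcurrents2}.

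The one organisational difference worth noting is that the paper does not ``replay the column-1 construction'' inside the invariant subalgebra; instead it runs a rank induction. From $\eta_{1;\lambda_1}$ (or $\eta_{1;\lambda_1+1}$ when $\ve=-1$) it uses \eqref{e:shiftedcurrentinvariantsfreerels1.5} to produce the high-degree $\theta_1$ elements, then uses \eqref{e:shiftedcurrentinvariantsfreerels4} to extract $\eta_{1;r}-\eta_{2;r}$ and hence the high-degree $\eta_2$ elements, and then observes that the subalgebra on indices $\ge 2$ is again a twisted shifted current algebra of smaller rank. This packaging makes the parity dichotomy transparent: when $\ve=1$ the part $\lambda_1$ is odd so the seed $\eta_{1;\lambda_1}$ already reaches the threshold $\theta_{1;\lambda_1+s_{1,2}}$, whereas when $\ve=-1$ the smallest available seed is $\eta_{1;\lambda_1+1}$ and one lands at $\theta_{1;\lambda_1+1+s_{1,2}}$, missing exactly the single element $\theta_{1;\lambda_1+s_{1,2}}$ that must be added by hand at each step of the induction. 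Your parity analysis is heading to the same conclusion; adopting the rank-induction framing will make it cleaner to execute than a literal replay of the column-1 argument, since the natural $\tau$-invariant generators are the near-diagonal $\eta_i,\theta_i$ rather than column-type elements.
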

\begin{proof}
Part (1) is explained in \cite[Lemma~2.6]{GT19c}. Compare \eqref{e:tauonshiftedcurrents} with Lemma~\ref{L:tauaction}(iii) to see that the map is $\tau$-equivariant, hence it is a well-defined map $\cc_n(\sigma)^\tau \onto \g^e_+$.

We go on to describe the kernel in (2). First of all observe that the elements listed there are $\tau$-fixed elements of the kernel of $\cc_n(\sigma)\onto \g^e$; for $\ve = -1$ we use \eqref{e:shiftedcurrentfreerels2}, \eqref{e:shiftedcurrentfreerels3}. Now observe that $\g_+^e$ has a spanning set consisting of elements of the form $c_{i,j}^{(r)} + \tau(c_{i,j}^{(r)})$. By Lemma~\ref{L:centraliserlemma} we can show that the elements in (2) generate the kernel by checking that the ideal $\mi$ which they generate contains
\begin{eqnarray}
\label{e:inthekernel}
\{\eta_{i;r} \mid i=1,...,n, \ r \ge \lambda_i\} \cup \{\theta_{i;r} \mid i=1,..,n-1, \ r \ge s_{i,i+1} + \lambda_i \}.
\end{eqnarray}
First take $\ve = 1$ so that $\lambda_1$ is odd. Using relation \eqref{e:dyrel2} we see that $\mi$ contains $\{\eta_{1;\lambda_1}, \theta_{1;r}\} = \theta_{1; \lambda_1 + r}$ for $r \ge s_{1,2}$, whilst \eqref{e:dyrel5} shows that $\eta_{1; \lambda_2 + r} - \eta_{2; \lambda_2 + r} \in \mi$ for all $r \ge 0$. In particular $\eta_{2;\lambda_2+r} \in \mi$ for $r\ge 0$. By Theorem~\ref{T:CSshiftedcurrents2} the subalgebra of $\cc_n(\sigma)^\tau$ generated by $\theta_{i;r}, \eta_{i;r}$ with $2\le i$ is isomorphic to a current Lie algebra of smaller rank so the description of the kernel follows by induction.

For $\ve = -1$, $\lambda_1$ is even and the ideal $\mi$ contains $\{\eta_{1;\lambda_1+1}, \theta_{1; s}\} = \theta_{1;\lambda_1 + 1 + s}$ for $s \ge s_{1,2}$. Together with the additional generator $\theta_{1;\lambda_1 + s_{1,2}}$ this gives all elements of the form $\theta_{1;r}$ listed in \eqref{e:inthekernel}. Now the induction proceeds in the same way as the case $\ve = 1$.
\end{proof}

\subsection{Generators of the $W$-algebra}
\label{ss:generatorsWalgebra}
%{\bf One thing we could consider is doing the combinatorics first, and then the definition of generators.}

Here we recall formulas for generators of $S(\gl_N, e)$, due to Brundan and Kleshchev \cite[\textsection 9]{BK06}, see also \cite[\textsection 3.3]{BK08}. Their notation is slightly different, however it is a simple exercise to translate between the two settings.  In this section we continue to assume that all parts of $\lambda$ are odd when $\ve  = 1$ and all parts are even when $\ve  = -1$.

Now for $1 \le i,k \le n$, $0 \le x < n$ and $r > 0$, we let
\begin{equation}\label{e:invaraintsdef}
t_{i,k;x}^{(r)} := \sum_{s=1}^r (-1)^{r-s} \sum_{\substack{(i_m, j_m, k_m, l_m) \\ m =1,...,s}} (-1)^{\hash \{q=1,\dots,s-1 \mid k_q \le x\}}
e_{i_1, j_1; k_1, l_1} \cdots e_{i_s, j_s; k_s, l_s} \in S(\g(\geez))
\end{equation}
where the sum is taken over all indexes such that for $m=1,...,s$
\begin{eqnarray}
\label{e:indexbounds}
1 \le i_1,...,i_s, k_1,...,k_s \le n, 1\le j_m \le \lambda_{i_m} \text{ and } 1\le l_m \le \lambda_{k_m}
\end{eqnarray}
satisfying the following six conditions:% {\bf these conditions need to be edited}
\begin{itemize}
\setlength{\itemsep}{4pt}
\item[(a)] $\sum_{m=1}^s (2l_m - 2j_m + \lambda_{i_m} - \lambda_{k_m}) = 2(r - s)$;
\item[(b)] $2l_m- 2j_m + \lambda_{i_m} - \lambda_{k_m} \ge 0$ for each $m = 1,\dots,s$;
\item[(c)] if $k_m > x$, then $l_m < j_{m+1}$ for each $m = 1,\dots,s-1$;
\item[(d)] if $k_m \le x$ then $l_m \ge j_{m+1}$ for each $m = 1,\dots,s-1$; %{\bf the zeroes appear thanks to (f)}.
\item[(e)] $i_1 = i$, $k_s = k$;
\item[(f)] $k_m = i_{m+1}$ for each $m = 1,\dots,s-1$.
\end{itemize}

Keeping $r > 0$ fixed and choosing $s \in \{1,...,r\}$ we make the notation $\X_{i,k}^{(r, s)}$ for the set of ordered sets $(i_m, j_m, k_m, l_m)_{m = 1}^s$ in the range \eqref{e:indexbounds} satisfying the conditions (a)--(f) and consider the map
\begin{eqnarray}
\upsilon(i_m,j_m,k_m,l_m) := (k_{s+1-m}, \lambda_{k_{s+1-m}} + 1 - l_{s+1-m}, i_{s+1-m}, \lambda_{i_{s+1-m}} + 1 - j_{s+1-m})
\end{eqnarray}
defined on ordered sets in the range \eqref{e:indexbounds}.
\begin{Lemma}
\label{L:upsilonbijects}
The map $\upsilon$ defines a bijection 
%\begin{eqnarray}
$\X_{i,k}^{(r,s)} \overset{1\text{-}1}{\longrightarrow} \X_{k, i}^{(r,s)}.$
%\end{eqnarray}
\end{Lemma}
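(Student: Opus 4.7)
The plan is to verify that $\upsilon$ takes a tuple in $\X_{i,k}^{(r,s)}$ to one in $\X_{k,i}^{(r,s)}$ and then to observe that $\upsilon^2 = \mathrm{id}$, which will immediately yield bijectivity. Writing $(i'_m, j'_m, k'_m, l'_m) := \upsilon(i_m, j_m, k_m, l_m)_{m=1}^s$, so
\begin{eqnarray*}
i'_m = k_{s+1-m}, \quad j'_m = \lambda_{k_{s+1-m}} + 1 - l_{s+1-m}, \quad k'_m = i_{s+1-m}, \quad l'_m = \lambda_{i_{s+1-m}} + 1 - j_{s+1-m},
\end{eqnarray*}
the fact that the new tuple lies in the range \eqref{e:indexbounds} is clear, since if $1\le j \le \lambda_i$ then $1\le \lambda_i+1-j\le \lambda_i$.

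Next I would verify conditions (a)--(f). A direct substitution gives $2l'_m - 2j'_m + \lambda_{i'_m} - \lambda_{k'_m} = 2l_{s+1-m} - 2j_{s+1-m} + \lambda_{i_{s+1-m}} - \lambda_{k_{s+1-m}}$, from which (a) and (b) follow immediately. Condition (e) becomes $i'_1 = k_s = k$ and $k'_s = i_1 = i$ as required, while (f) translates into the identity $k'_m = i_{s+1-m}$ versus $i'_{m+1} = k_{s-m}$, whose equality is exactly the original (f) at index $s-m$.

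The main (and mildly delicate) point is that conditions (c) and (d), which depend on $x$, should also transfer. Using (f) for the original tuple, one has $k'_{m} = i_{s+1-m} = k_{s-m}$ for $1\le m\le s-1$, and similarly $j'_{m+1} = \lambda_{k_{s-m}} + 1 - l_{s-m}$, $l'_m = \lambda_{i_{s+1-m}} + 1 - j_{s+1-m} = \lambda_{k_{s-m}} + 1 - j_{s+1-m}$. Therefore the inequality $l'_m < j'_{m+1}$ reduces to $l_{s-m} < j_{s+1-m} = j_{(s-m)+1}$, and similarly $l'_m \geq j'_{m+1}$ reduces to $l_{s-m} \geq j_{(s-m)+1}$. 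Setting $m'' := s-m$, conditions (c) and (d) for the image tuple become precisely conditions (c) and (d) for the original tuple at index $m''$, so they are preserved.

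Finally, a direct computation using $\lambda_{i_m} + 1 - (\lambda_{i_m} + 1 - j_m) = j_m$ shows $\upsilon^2 = \mathrm{id}$, and hence $\upsilon \colon \X_{i,k}^{(r,s)} \to \X_{k,i}^{(r,s)}$ is an involutive bijection. The only genuine obstacle is the index bookkeeping for (c) and (d); it is important to use (f) to re-express $k'_m$ in terms of $k_{s-m}$ (rather than $i_{s+1-m}$) so that the comparison against $x$ matches the original condition.
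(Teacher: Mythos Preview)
Your proposal is correct and follows exactly the same approach as the paper: verify that $\upsilon$ is well-defined $\X_{i,k}^{(r,s)}\to\X_{k,i}^{(r,s)}$ by checking conditions (a)--(f), and conclude bijectivity from $\upsilon^2=\mathrm{id}$. In fact you supply more detail than the paper does, which only works out condition (a) explicitly and leaves (b)--(f) to the reader; your handling of (c) and (d) via the substitution $k'_m=i_{s+1-m}=k_{s-m}$ is precisely the bookkeeping one needs.
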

\begin{proof}
The index bounds \eqref{e:indexbounds} ensure that $\X_{i,k}^{(r,s)}$ is finite. Since $\upsilon^2$ is the identity map it will suffice to show that $\upsilon$ is well defined $\X_{i,k}^{(r,s)} \to \X_{k, i}^{(r,s)}$. We fix $(i_m, j_m, k_m, l_m)_{m = 1}^s \in \X_{i,k}^{(r,s)}$ and check that conditions (a)--(f) hold for $\upsilon(i_m, j_m, k_m, l_m)$. For example, 
\begin{eqnarray*}
\sum_{m=1}^s \big(2(\lambda_{i_{s+1-m}} + 1 - j_{s+1-m}) - 2(\lambda_{l_{s+1-m}} + 1 - k_{s+1-m}) + (\lambda_{k_{s+1-m}} - \lambda_{i_{s+1-m}})\big)\ \ \ \ \ \ \ \ \\
\ \ \ \ \ \ \ \ \ \ = \sum_{m=1}^s (2l_m - 2j_m + \lambda_{i_m} - \lambda_{k_m})   = 2(r - s)
\end{eqnarray*}
This shows that (a) for $(i_m, j_m, k_m, l_m)_{m = 1}^s$ implies (a) for $\upsilon(i_m, j_m, k_m, l_m)_{m = 1}^s$. Conditions (b)--(f) can be checked similarly.
\end{proof}

Now we define a map $\theta : \g^e \to S(\g(\geez))$ by
\begin{eqnarray}
\label{e:intermediatemap}
\begin{array}{rcl}
c_{i,i}^{(r)}  \longmapsto (-1)^{r-1}t_{i,i;i-1}^{(r-1)} & & \text{ for } i=1,...,n \text{ and } 0 \le r < \lambda_i;\\
c_{i,k}^{(r)} \longmapsto (-1)^{r-1}t_{i,k; i}^{(r-1)} & & \text{ for } 1\le i < k \le n \text{ and } r-s_{i,k} = 0,...,\min(\lambda_i, \lambda_k) -1;\\
c_{k, i}^{(r)} \longmapsto (-1)^{r-1}t_{k,i;i}^{(r-1)} & & \text{ for } 1\le i < k \le n \text{ and } r-s_{j,k} = 0,...,\min(\lambda_i, \lambda_k) -1.
\end{array}
\end{eqnarray}

Comparing \eqref{e:gooddegrees} with Lemma~\ref{L:centraliserlemma} and Lemma~\ref{L:tauaction} we see that $\tau$ induces involutions on $\g^e$ and $S(\g(\geez))$, and these will all be denoted $\tau$. The hardest part of the following result is the assertion that the image of the map $\theta$ described in \eqref{e:intermediatemap} lies in $S(\g,e)$, which follows from a result of Brundan and Kleshchev. We add to this the observation that $\theta$ is $\tau$-equivariant.
\begin{Proposition}
\label{P:tauequivariance}
$\theta : \g^e \to S(\g,e)$ is $\tau$-equivariant. In particular we have
\begin{eqnarray}
%\label{e:tauclaim1}
%\tau(t_{i,i;i-1}^{(r-1)}) = (-1)^{r-1} t_{i, i;i-1}^{(r-1)} & & \text{ for }  i=1,...,n \text{ and all } r; \\
\tau(t_{i,k;m}^{(r-1)}) = (-1)^{r-\frac{\lambda_k - \lambda_i}{2} - 1} t_{k, i; m}^{(r-1)} & & \text{ for }  1\le i \le k \le n \text{ and all } m, r. \label{e:tauclaim2}
%& & \tau(t_{k,i;i}^{(r-1)}) = \vp_{k \le k'} \vp_{i \le i'} (-1)^{r-\frac{\lambda_i - \lambda_k}{2} } t_{i',k'; i}^{(r-1)}
\end{eqnarray}
\end{Proposition}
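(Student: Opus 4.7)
The Brundan--Kleshchev theorem \cite{BK06} guarantees that $\theta$ lands in $S(\g,e)$, so what requires proof is the sign identity \eqref{e:tauclaim2}, which specialises in the diagonal case ($i=k$, $m=i-1$) to $\tau(t_{i,i;i-1}^{(r-1)}) = (-1)^{r-1} t_{i,i;i-1}^{(r-1)}$. The plan is to apply $\tau$ termwise to the defining sum \eqref{e:invaraintsdef} using Lemma~\ref{L:tauaction}(i) and then to reindex via the bijection $\upsilon \colon \X_{i,k}^{(r-1,s)} \to \X_{k,i}^{(r-1,s)}$ of Lemma~\ref{L:upsilonbijects}. Because $S(\g(\geez))$ is commutative, reversing the order of the transposed factors is free, so the monomial appearing in $\tau(t_{i,k;m}^{(r-1)})$ from an index tuple $(i_1,j_1,k_1,l_1;\dots;i_s,j_s,k_s,l_s)$ is, up to signs, exactly the monomial in $t_{k,i;m}^{(r-1)}$ indexed by $\upsilon$ of that tuple.

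The heart of the argument is a three-fold sign computation. The outer coefficient $(-1)^{r-1-s}$ depends only on $r$ and $s$ and is invariant under $\upsilon$. The bookkeeping sign $(-1)^{\#\{q\le s-1\mid k_q\le m\}}$ is preserved as well: the new sequence satisfies $k'_q = i_{s+1-q}$, which by condition (f) equals $k_{s-q}$, so the substitution $q \mapsto s-q$ identifies the two counted sets and at the same time shows that conditions (c) and (d) transfer correctly. The extra sign from $\tau$ acting on each of the $s$ factors is $(-1)^{\sum_m(j_m-l_m)-s}$, and combining condition (a) with the telescoping $\sum_m(\lambda_{i_m}-\lambda_{k_m}) = \lambda_i - \lambda_k$ (a consequence of (e) and (f)) yields $\sum_m(j_m-l_m)-s = -(r-1) - (\lambda_k-\lambda_i)/2$.

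Under the standing hypothesis that the involution on $\{1,\dots,n\}$ is trivial, Lemma~\ref{L:involutionforlambda}(2) forces all parts of $\lambda$ to share the same parity, so $\lambda_k - \lambda_i$ is an even integer; consequently $-(r-1) - (\lambda_k-\lambda_i)/2$ and $r-(\lambda_k-\lambda_i)/2-1$ agree modulo $2$, and the total sign matches the one prescribed by \eqref{e:tauclaim2}, which in turn is precisely $\theta$ applied to the sign in Lemma~\ref{L:tauaction}(iii). The main obstacle is this bookkeeping: one must verify that each of the conditions (a)--(f) defining $\X_{i,k}^{(r-1,s)}$ transfers correctly under $\upsilon$ and that the three sign contributions combine to the predicted answer. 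Once this is done, the bijectivity of $\upsilon$ immediately yields \eqref{e:tauclaim2} summand by summand, handling both the diagonal case (with $k=i$ and $x = i-1$) and the off-diagonal case (with $i<k$ and $x=i$) uniformly.
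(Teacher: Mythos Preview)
Your proof is correct and follows essentially the same approach as the paper: apply Lemma~\ref{L:tauaction}(i) termwise to \eqref{e:invaraintsdef}, reindex via the bijection $\upsilon$ of Lemma~\ref{L:upsilonbijects}, and check that the three sign contributions (outer $(-1)^{r-1-s}$, the counting sign, and the $\tau$-sign) combine to the required global sign. Your sign bookkeeping is in fact slightly more explicit than the paper's, which computes $\sum_m(j_m-l_m+1)$ only modulo~$2$ and leaves the verification of (c), (d) under $\upsilon$ to the reader.
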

\begin{proof}
The assertion that the image of $\theta$ lies in $S(\g,e)$ is an immediate consequence of \cite[Corollary~9.4]{BK06}, upon taking the top graded term with respect to the Kazhdan filtration of the finite $W$-algebra (see also \cite[\textsection 4]{GT19b} for a short summary).
Thanks to Lemma~\ref{L:tauaction}(iii) the $\tau$-equivariance will follow from \eqref{e:tauclaim2}.  Fix indexes $1\le i\le k \le n, \ 0\le x < n$ and $0 < r$. Choose $s\in \{1,...,r\}$. Let $(i_m, j_m, k_m,l_m)_{m=1}^s \in \X_{i,i}^{(r,s)}$. Thanks to Lemma~\ref{L:tauaction}(i) we have
\begin{eqnarray}
\label{e:tauclaim3}
\tau(\prod_{m=1}^s e_{i_m, j_m; k_m, l_m}) &=& (-1)^{\sum_{m=1}^s (j_m - l_m + 1)} \prod_{t=1}^s e_{\upsilon(i_m, j_m, k_m, s_m)}
%& = & 
\end{eqnarray}
Conditions (a), (e) (f) imply that
$\sum_m (j_m - l_m + 1) = \frac{\lambda_{i_1} - \lambda_{k_s}}{2} - r.$
Hence the sign on the right hand side of \eqref{e:tauclaim3} is equal to $(-1)^{r-\frac{\lambda_k - \lambda_i}{2}}$. Furthermore condition (f) ensures that
%\begin{eqnarray*}
$\hash \{ m =1,...,s-1 \mid k_m \le x\} = \hash \{m =2,...,s \mid i_m \le x\}.$
%\end{eqnarray*}
Combining these observations together with Lemma~\ref{L:upsilonbijects} we have
\begin{eqnarray*}
\tau(t_{i,k;x}^{(r)}) &=& \sum_{s=1}^r (-1)^{r-s} \sum_{(i_m, j_m, k_m, l_m)_m \in \X_{i,k}^{(r,s)}} (-1)^{\hash\{ m =1,...,s-1 \mid k_m \le x\}}\tau (\prod_m e_{i_m, j_m, k_m, l_m}) \\
%& = & (-1)^r \sum_{s=1}^r (-1)^{r-s} \sum_{(i_t, j_t, k_t, l_t)_t \in \X_{i,i}^{(r,s)}} (-1)^{|\{ t =2,...,s \mid i_t \le x\}|}\prod_t e_{\upsilon(i_t, j_t, k_t, l_t)}\\
& = & (-1)^{r-\frac{\lambda_k - \lambda_i}{2}} \sum_{s=1}^r (-1)^{r-s} \sum_{(i_m, j_m, k_m, l_m)_m \in \X_{k,i}^{(r,s)}} (-1)^{\hash\{ m =1,...,s-1 \mid k_m \le x\}}\prod_m e_{i_m, j_m, k_m, l_m}\\
& = & (-1)^{r-\frac{\lambda_k - \lambda_i}{2}} t_{k,i;x}^{(r)}
\end{eqnarray*}
This completes the proof.
\end{proof}
\begin{Remark}
\label{R:explicitPBWmap}
Comparing  the linear terms appearing in \eqref{e:invaraintsdef} with the basis \eqref{e:cijrdefinition} for $\g^e$ one can check that $\theta$ satisfies the properties of Theorem~\ref{T:PBW} (Cf. \cite[Lemma~4.2]{GT19b}).
\end{Remark}

\subsection{The semiclassical Brundan--Kleshchev homomorphism}
\label{ss:semiclassicalyangiansandWalgebras}

Recall that $\g_+ \subseteq \g = \gl_N$ is a classical Lie subalgebra constructed in Section~\ref{ss:symplecticandorthogonal}, that $e\in \g_+$ is a nilpotent element, and $\sigma$ is a choice of symmetric shift matrix introduced in \eqref{e:symmetricalshiftmatrix}. In Section~\ref{ss:generatorsWalgebra} we observed that the automorphism $\tau$ on $\g$ which fixes $\g_+$ induces a natural automorphism on $S(\g,e)$. Furthermore in \eqref{e:tauon} we defined a Poisson involution of $y_n(\sigma)$ which is also denoted $\tau$.

The relationship between these objects is described by the following result.
\begin{Proposition}
\label{P:semiclassicalBK}
There is a $\tau$-equivariant surjective Poisson homomorphism
\begin{eqnarray}
\label{e:yangiantofwa}
\varphi : y_{n}(\sigma) \longtwoheadrightarrow S(\g,e)
\end{eqnarray}
determined by
\begin{eqnarray}
\label{e:dgenerators}
\begin{array}{rcl}
d_i^{(r)} \longmapsto  t_{i,i;i-1}^{(r)} & \text{ for } & 1 \le i \le n, \ r >0;  \\ [3pt] 
e_i^{(r)} \longmapsto t_{i,i+1;i}^{(r)} & \text{ for } & 1 \leq i < j \leq n, \ r >s_{i,j} ;  \\ [3pt] 
f_i^{(r)} \longmapsto t_{i+1, i; i}^{(r)} & \text{ for } & 1 \leq i < j \leq n, \ r > s_{j,i}.
\end{array}
\end{eqnarray}
The kernel is the Poisson ideal generated by $\{d_1^{(r)} \mid r > \lambda_1\}$. If we equip $y_n(\sigma)$ with the canonical grading doubled (see Theorem~\ref{T:shiftedyangianPBW}) and equip $S(\g,e)$ with the Kazhdan grading then this homomorphism is graded.
\end{Proposition}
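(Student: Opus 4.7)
The plan is to obtain $\varphi$ as the semiclassical limit of the Brundan--Kleshchev isomorphism \cite{BK06}. Their main theorem furnishes a surjective filtered algebra homomorphism $\Phi : Y_n(\sigma) \twoheadrightarrow U(\g,e)$ compatible with the canonical filtration on $Y_n(\sigma)$ and the Kazhdan filtration on $U(\g,e)$, sending the generators $D_i^{(r)}, E_i^{(r)}, F_i^{(r)}$ to explicit $G(\lez)$-invariants $T_{i,i;i-1}^{(r)}, T_{i,i+1;i}^{(r)}, T_{i+1,i;i}^{(r)}$ in $U(\g(\geez))$, whose leading Kazhdan degree terms are precisely the polynomials $t_{i,i;i-1}^{(r)}, t_{i,i+1;i}^{(r)}, t_{i+1,i;i}^{(r)}$ of \eqref{e:invaraintsdef}. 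Combining the identification $\gr Y_n(\sigma) \cong y_n(\sigma)$ established during the proof of Theorem~\ref{T:shiftedyangianPBW} with the Poisson identification $\gr U(\g,e) \cong S(\g,e)$ of \cite{GG02}, the associated graded map $\gr \Phi$ will be the desired Poisson surjection $\varphi$ satisfying \eqref{e:dgenerators}. Uniqueness is automatic because $y_n(\sigma)$ is Poisson generated by \eqref{e:Ybasicgens}.

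For the kernel, I would let $J \subseteq y_n(\sigma)$ denote the Poisson ideal generated by $\{d_1^{(r)} \mid r > \lambda_1\}$, which lies in $\ker\varphi$ by construction and induces a surjection $y_n(\sigma)/J \twoheadrightarrow S(\g,e)$. Theorem~\ref{T:shiftedyangianPBW} provides an explicit polynomial basis of $y_n(\sigma)$; quotienting by $J$ eliminates precisely the generators $d_1^{(r)}$ with $r > \lambda_1$, leaving a polynomial basis of $y_n(\sigma)/J$ whose Kazhdan graded dimensions agree degree by degree with those of the polynomial algebra $S(\g^e) \cong S(\g,e)$ afforded by Theorem~\ref{T:PBW} and the basis \eqref{e:gebasis} of $\g^e$. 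This forces the surjection to be an isomorphism. The grading compatibility then follows from condition (a) in \eqref{e:invaraintsdef}, which forces each monomial occurring in $t_{i,k;x}^{(r)}$ to have Kazhdan degree $2r$, while $d_i^{(r)}, e_i^{(r)}, f_i^{(r)}$ have canonical degree $r$, so $\varphi$ scales graded degree by a factor of two.

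For the $\tau$-equivariance it suffices to check that $\varphi \circ \tau = \tau \circ \varphi$ on the generators \eqref{e:Ybasicgens}. Specialising Proposition~\ref{P:tauequivariance} and recalling $s_{i,i+1} = (\lambda_{i+1}-\lambda_i)/2$: the case $i=k$ gives $\tau(t_{i,i;i-1}^{(r)}) = (-1)^r t_{i,i;i-1}^{(r)}$, matching $\tau(d_i^{(r)}) = (-1)^r d_i^{(r)}$ from \eqref{e:tauon}; the case $k=i+1$ gives $\tau(t_{i,i+1;i}^{(r)}) = (-1)^{r+s_{i,i+1}} t_{i+1,i;i}^{(r)}$, matching $\tau(e_i^{(r)}) = (-1)^{r+s_{i,i+1}} f_i^{(r)}$; and the case of $f_i^{(r)}$ follows by swapping the roles of $i$ and $i+1$. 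The main obstacle I anticipate is the PBW/graded-dimension count used to identify the kernel, since one must track the canonical, loop, and Kazhdan filtrations simultaneously through both the Yangian and $W$-algebra sides; but this amounts to a routine transfer of \cite[Theorem~10.1]{BK06} to the semiclassical setting once Proposition~\ref{P:tauequivariance} is in hand.
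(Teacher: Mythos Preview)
Your overall approach matches the paper's: obtain $\varphi$ by taking the associated graded of the Brundan--Kleshchev map with respect to the canonical/Kazhdan filtrations, verify $\tau$-equivariance on generators via Proposition~\ref{P:tauequivariance}, and identify the kernel by a generator count. The existence, uniqueness, grading, and equivariance parts are fine.

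There is, however, a genuine gap in your kernel argument. You write that ``quotienting by $J$ eliminates precisely the generators $d_1^{(r)}$ with $r > \lambda_1$, leaving a polynomial basis of $y_n(\sigma)/J$ whose Kazhdan graded dimensions agree \dots\ with those of $S(\g^e)$''. Taken literally this is false: removing only the elements $d_1^{(r)}$ for $r>\lambda_1$ from the PBW generators \eqref{e:Ygens} still leaves infinitely many generators in each Kazhdan degree (for instance all $d_2^{(r)}$, $e_{1,2}^{(r)}$, $f_{1,2}^{(r)}$ with $r$ arbitrarily large), while $S(\g^e)$ is finite-dimensional in each degree by \eqref{e:gebasis}. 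The content of the kernel claim is precisely that the \emph{Poisson} ideal $J$, via relations \eqref{e:r4}, \eqref{e:r5} and iterated brackets, contains enough elements to force $d_i^{(r)}, e_{i,j}^{(r)}, f_{i,j}^{(r)}$ to vanish for all $r$ above the cutoff $s_{i,j}+\lambda_{\min(i,j)}$. This is not automatic and must be argued.

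The paper handles this by passing to the loop filtration: under $\gr y_n(\sigma)\cong S(\cc_n(\sigma))$ (Theorem~\ref{T:shiftedyangianPBW}), the associated graded ideal $\gr J$ contains $\{e_{1,1}t^r\mid r\ge \lambda_1\}$, and Lemma~\ref{L:shiftedcurrentandcentraliser}(1) shows that these elements Poisson-generate the entire kernel of $S(\cc_n(\sigma))\twoheadrightarrow S(\g^e)$. Hence $\gr(y_n(\sigma)/J)$ is generated by the truncated set \eqref{e:truncatedgens}, and a standard filtered-to-graded argument lifts this to $y_n(\sigma)/J$ itself, after which the surjection onto $S(\g,e)$ is forced to be an isomorphism. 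Your closing remark that one must ``track the canonical, loop, and Kazhdan filtrations simultaneously'' is exactly right, but the loop filtration step is the substance of the argument, not routine bookkeeping: it is the mechanism that propagates the single relation $d_1^{(r)}=0$ to all the higher generators.
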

\begin{proof}
In \cite[\textsection 2]{BK06} Brundan and Kleshchev introduced the shifted Yangian $Y_n(\sigma)$, of which $y_n(\sigma)$ is the semiclassical limit under the canonical filtration. In \cite[Theorem~10.1]{BK06} they construct a homomorphism to the quantum finite $W$-algebra (see also \cite[Theorem~4.3]{GT19b} where their result is recalled in notation similar to that of the present paper). It is straightforward to see that the doubled canonical filtration lines up with the Kazhdan filtration. The map defined by \eqref{e:dgenerators} is just the semiclassical limit of this filtered algebra homomorphism, and this proves that the map is a surjective Poisson homomorphism. The $\tau$-equivariance can be checked by comparing Proposition~\ref{P:tauequivariance} with formulas \eqref{e:tauon}.

Another consequence of \cite[Theorem~10.1]{BK06} is that the elements $\{d_1^{(r)} \mid r > \lambda_1\}$ lie in the kernel of $\varphi$. Write $I$ for the Poisson ideal generated by these elements. Combining Theorem~\ref{T:PBW} and Lemma~\ref{L:centraliserlemma} we can show that $I = \ker \varphi$ by demonstrating that the quotient $y_n(\sigma) / I$ is generated as a commutative algebra by
\begin{eqnarray}
\label{e:truncatedgens}
\begin{array}{rcl}
& \{e_{i,j}^{(r)} \mid 1\le i < j \le n, \ s_{i,j} < r \le s_{i,j} + \min(\lambda_i ,\lambda_j) \} \cup \{d_i^{(r)} \mid 1\le i \le n, \ 0 < r \le \lambda_{i} \} & \\ & \cup \{f_{i,j}^{(r)} \mid 1\le i < j \le n, \ s_{j,i} < r \le s_{j,i} + \min(\lambda_i ,\lambda_j) \}. &
\end{array}
\end{eqnarray}
By Theorem~\ref{T:shiftedyangianPBW} we can identify $S(\cc_n(\sigma)) = \gr y_n(\sigma)$ as Poisson algebras, with respect to the loop filtration. The associated graded ideal $\gr I$ contains elements $\{d_{i;r} \mid r \ge \lambda_1\}$, and so applying Lemma~\ref{L:shiftedcurrentandcentraliser}(1) we see that $\gr y_n(\sigma)/I$ is generated by by the top graded components of the elements \eqref{e:truncatedgens}. Applying a standard filtration argument we deduce that $y_n(\sigma)/ I$ is generated by the requisite elements.
\end{proof}

\subsection{Dirac reduction of the semiclassical Brundan--Kleshchev homomorphism}
\label{ss:symplecticrelations}

Recall that $\sigma$ is a symmetric shift matrix defined in \eqref{e:symmetricalshiftmatrix}. Thanks to \eqref{e:notrightexact}, Theorem~\ref{T:firstmain} and Proposition~\ref{P:semiclassicalBK} we have a Poisson homomorphism
\begin{eqnarray}
\label{e:varphiforclassical}
\varphi : R(y_n(\sigma), \tau) \longtwoheadrightarrow S(\g_+, e).
\end{eqnarray}
The purpose of this section is to describe the kernel. Combined with Theorem~\ref{T:PDyangian} this will complete the proof of Theorem~\ref{T:Slicepresentation}. First we deal with orthogonal types.

\begin{Proposition}
\label{P:Walgpresentation1}
When $\ve = 1$ the kernel of \eqref{e:varphiforclassical} is Poisson generated by $$\{\eta_1^{(2r)} \mid \ 2r > \lambda_1\}.$$
\end{Proposition}
\begin{proof}
The elements $\eta_1^{(2r)}$ with $2r > \lambda_1$ are certainly in the kernel of $\varphi$, thanks to Proposition~\ref{P:semiclassicalBK}. Let $I$ denote the Poisson ideal of $R(y_n(\sigma), \tau)$ which they generate. It follows from Lemma~\ref{L:shiftedcurrentandcentraliser}(2)(i) that the associated graded algebra $\gr (R(y_n(\sigma), \tau)/I)$ with respect to the loop filtration is generated (as a commutative algebra) by the top filtered component of the following elements
\begin{eqnarray}
\label{e:orthtruncatedgens}
%\begin{array}{rcl}
\{\theta_{i,j}^{(r)} \mid 1\le i < j \le n, \ s_{i,j} < r \le s_{i,j} + \lambda_i \} \cup \{\eta_i^{(2r)} \mid 1\le i \le n, \ 0 < r \le \lambda_{i} \}.
%\end{array}
\end{eqnarray}
Hence $R(y_n(\sigma), \tau)/I$ is generated by these elements \eqref{e:orthtruncatedgens}. Using Theorem~\ref{T:PBW} and Remark~\ref{R:explicitPBWmap} we see that $S(\g_+, e)$ is a polynomial algebra generated by the images of these elements under $\varphi$. It follows that the induced map $R(y_n(\sigma), \tau) / I \onto S(\g_+, e)$ is an isomorphism, which completes the proof.
\end{proof}

Now we fix $\ve = -1$ and we introduce a family of elements of $R(y_n(\sigma), \tau)$; they lie in the kernel of \eqref{e:varphiforclassical} and provide filtered lifts of the elements appearing in Lemma~\ref{L:shiftedcurrentandcentraliser}(2)(ii).

Define elements  $\{ \ttheta_i^{(s_{i,i+1} + \lambda_i)} \mid i=1,...,n-1\} \subseteq R(y_n(\sigma),\tau)$
as follows: first let $\od_1^{(\lambda_1+1)} := d_1^{(\lambda_1+1)} \in y_n(\sigma)$ and then inductively define 
\begin{eqnarray}
\od_i^{\lambda_i+1} := \{\{\od_i^{(\lambda_i + 1)}, \he_i^{(s_{i,i+1} + 1)}\}, \he_i^{(s_{i,i+1}+1)}\} \in y_n(\sigma).
\end{eqnarray}
Note that $\tau(\he_i^{(s_{i,i+1}+1)}) = \he_i^{(s_{i,i+1}+1)}$ and so $\tau(\od^{(\lambda_i+1)}) = -\od^{(\lambda_i+1)}.$
Therefore we may define
\begin{eqnarray}
\label{e:additionalgenerators}
\ttheta_i^{(\lambda_i + s_{i,i+1} + 1)} := \{\od_i^{(\lambda_i+1)}, \ce_i^{(s_{i,i+1} + 1)}\} + \cy_n(\sigma)^\tau \in y_n(\sigma)^\tau/\cy_n(\sigma)^\tau = R(y_n(\sigma), \tau)
\end{eqnarray}

\begin{Example}
\label{Ex:firstbadrel}
The first occurrence of one of these new elements is easy to calculate. We have
\begin{eqnarray}
\ttheta_1^{(\lambda_1+s_{1,2} + 1)} = \sum_{t=0}^{\lambda_1/2} \eta_1^{(2t)} \theta_1^{(\lambda_1 - 2t + s_{1,2} + 1)}.
\end{eqnarray}
For $i > 1$ the expressions are more complicated, but it would be interesting to obtain a closed formula. We will not pursue this in the present article.
\end{Example}

\begin{Proposition}
\label{P:Walgpresentation-1}
For $\ve = -1$ the kernel of \eqref{e:varphiforclassical} is Poisson generated by
\begin{eqnarray}
\label{e:extrakernel}
\Big\{\eta_1^{(2r)} \mid 2r > \lambda_1\Big\}\cup \Big\{ \ttheta_i^{(\lambda_i + s_{i,i+1} + 1)} \mid i=1,...,n-1\Big\}.
\end{eqnarray}
\end{Proposition}

\begin{proof}
Let $I$ be the Poisson ideal of $R(y_n(\sigma), \tau)$ generated by \eqref{e:extrakernel}. Recall that the kernel of the map $y_n(\sigma) \to S(\g,e)$ is generated by $d_1^{(r)}$ with $r > \lambda_1$ (Proposition~\ref{P:semiclassicalBK}). It follows that the elements \eqref{e:extrakernel} are projections to $R(y_n(\sigma), \tau)$ of certain elements of $\ker(y_n(\sigma) \to S(\g,e))^\tau$, and so these elements all lie in the kernel of the map \eqref{e:varphiforclassical}. In other words $\varphi$ factors through $R(y_n(\sigma),\tau) \to R(y_n(\sigma), \tau)/I$.

Similar to the proof of Proposition~\ref{P:Walgpresentation1} it suffices to show that $R(y_n(\sigma), \tau)/I$ is generated as a commutative algebra by the elements \eqref{e:orthtruncatedgens}. An easy calculation in $y_n(\sigma)$, using Corollary~\ref{C:loopforcurrent}, shows that the top filtered component of $\ttheta_i^{(s_{\lambda_i + s_{i,i+1} + 1})}$ with respect to the loop filtration is $\theta_{i; \lambda_i + s_{i,i+1}}$. Now we can apply Lemma~\ref{L:shiftedcurrentandcentraliser}(2)(ii) to see that $\gr R(y_n(\sigma), \tau)$ is generated by the top filtered components of the elements \eqref{e:orthtruncatedgens}. The argument concludes in exactly the same manner as Proposition~\ref{P:Walgpresentation1}.
\end{proof}

Thanks to Lemma~\ref{L:centraliserlemma} the Lie algebra $\g_+^e$ is spanned by elements $c_{i,j}^{(r)} + \tau(c_{i,j}^{(r)})$; note that $\tau$ here refers to the automorphism of $\gl_N$ given in \eqref{e:taudefinition}. Using \eqref{e:invaraintsdef} along with Theorem~\ref{T:PBW} and Propositions~\ref{P:Walgpresentation1} and \ref{P:Walgpresentation-1} we see that $S(\g_+,e)$ is generated as a commutative algebra by the images under $\varphi$ of elements \eqref{e:orthtruncatedgens}. 
%\begin{eqnarray}
%\label{e:generatesSg0e}
%\{\eta_i^{(2r)} \mid 1\le i \le n, 1\le 2r \le \lambda_i\} \cup \{\theta_{i,j}^{(r)} \mid 1\le i < j \le n, s_{i,j} < r \le s_{i,j} + \lambda_i\}
%\end{eqnarray}
By slight abuse of notation we denote the images by the same names.
\begin{Corollary}
\label{C:madic}
Let $\ve = \pm 1$. If $\m_0$ is the unique graded maximal ideal of $S(\g_+, e)$ then we have a Poisson isomorphism $S(\g_+^e) \isoto \gr_{\m_0} S(\g_+,e)$ to the $\m_0$-adic graded algebra defined by
\begin{eqnarray}
\label{e:anotherlabel}
\setlength{\itemsep}{4pt}
\begin{array}{rcl}
c_{i,i+1}^{(r)} + \tau(c_{i,i+1}^{(r)})\longmapsto \theta_{i}^{(r+1)} + \m_0^2 & \text{ for } & 1\le i < n, \ s_{i,j} \le r < s_{i,j} + \lambda_i;\\
c_{i,i}^{(r)} + \tau(c_{i,i}^{(r)})\longmapsto \eta_{i}^{(r+1)} + \m_0^2 & \text{ for } & 1\le i \le n, \ 0 \le r < \lambda_i.
\end{array}
\end{eqnarray}
\end{Corollary}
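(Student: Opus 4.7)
Proof plan. The approach is to deduce this from Theorem~\ref{T:PBW} applied to $\g_0$, combined with Proposition~\ref{P:semiclassicalBK} and Proposition~\ref{P:tauequivariance} to pin down the explicit correspondence on generators. The first step is to construct the underlying commutative isomorphism $S(\g_0^e) \isoto S(\g_0, e)$: apply Theorem~\ref{T:PBW}(1) directly to the pair $(\g_0,e)$ to obtain a Kazhdan graded PBW map $\theta_0 : \g_0^e \to S(\g_0,e)$ satisfying $\theta_0(x) - x \in \m_0^2$. (Equivalently, one can take the $\tau$-equivariant PBW map $\theta$ from \eqref{e:intermediatemap}--Proposition~\ref{P:tauequivariance}, restrict to $(\g^e)^\tau = \g_0^e$ by Remark~\ref{R:invariants}, and compose with the quotient map $S(\g,e)^\tau \onto R(S(\g,e),\tau) \cong S(\g_0,e)$ provided by Theorem~\ref{T:firstmain}.) By Theorem~\ref{T:PBW}(2) this extends to a Kazhdan graded isomorphism of commutative algebras $S(\g_0^e) \isoto S(\g_0,e)$.

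Next I would pass to the $\m_0$-adic associated graded. The map $\theta_0$ sends the augmentation ideal $\m$ of $S(\g_0^e)$ bijectively onto $\m_0$, so the induced filtered map is strict and yields an isomorphism $S(\g_0^e) = \gr_\m S(\g_0^e) \isoto \gr_{\m_0} S(\g_0,e)$ of graded commutative algebras. For the Poisson structure, one uses the general fact that the transverse Poisson bracket on the Slodowy slice satisfies $\{\m_0^i, \m_0^j\} \subseteq \m_0^{i+j-1}$, so $\gr_{\m_0}$ inherits a graded Poisson bracket. The induced Lie bracket on $\m_0/\m_0^2$ must coincide with the Kirillov--Kostant bracket on $\g_0^e$, since the transverse Poisson structure at $e$ linearises (by Weinstein's splitting theorem, invoked elsewhere in the paper) to the coadjoint Poisson structure on $(\g_0^e)^* \cong \g_0^f$. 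Alternatively this compatibility can be extracted directly from the relations \eqref{e:dyrel1}--\eqref{e:dyrel8} after projecting to top degree in the $\m_0$-adic filtration.

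Finally, to identify the explicit images of the generators in \eqref{e:anotherlabel}, I would read off the $s=1$ term of \eqref{e:invaraintsdef}, which is the only term contributing to the linear part. One computes $t_{i,k;x}^{(r)} \equiv (-1)^{r-1} c_{i,k}^{(r-1)} \pmod{\m_0^2}$ by direct comparison with the basis \eqref{e:cijrdefinition} of $\g^e$. Using Proposition~\ref{P:semiclassicalBK}, the element $\theta_i^{(r+1)}$ lifts to $\he_i^{(r+1)} \in y_n(\sigma)^\tau$, whose image in $S(\g,e)^\tau$ is $t_{i,i+1;i}^{(r+1)} + (-1)^{r+1+s_{i,i+1}} t_{i+1,i;i}^{(r+1)}$. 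Evaluating the linear part and invoking Lemma~\ref{L:tauaction}(iii) together with the fact that $\sigma$ is even, one sees that this equals a nonzero scalar multiple of $c_{i,i+1}^{(r)} + \tau(c_{i,i+1}^{(r)})$, proving the correspondence for the $\theta$-generators. The analogous computation for $\eta_i^{(r+1)}$ uses $d_i^{(r+1)} \mapsto t_{i,i;i-1}^{(r+1)}$ and the observation that for $r$ odd the element $c_{i,i}^{(r)}$ is already $\tau$-invariant. The main technical obstacle is bookkeeping the signs and verifying the Poisson compatibility on the generators; the scalars appearing can be absorbed into the choice of PBW map $\theta_0$, so the spanning statement together with the linear-part computations forces the map to be the claimed Poisson isomorphism.
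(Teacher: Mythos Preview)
Your proposal is correct and closely parallels the paper's proof. The paper argues more tersely: it checks directly, using the relations of Theorem~\ref{T:PDyangian}, that \eqref{e:anotherlabel} defines a Lie algebra homomorphism $\g_0^e \to \m_0/\m_0^2$ (this is precisely your ``alternative'' route via \eqref{e:dyrel1}--\eqref{e:dyrel8} projected to top $\m_0$-adic degree), then extends by the universal property of $S(\g_0^e)$ and appeals to Theorem~\ref{T:PBW} for bijectivity. Your primary route---first extracting the commutative isomorphism from Theorem~\ref{T:PBW} and then arguing Poisson compatibility via the linearisation of the transverse Poisson structure at $e$---is a valid and somewhat more conceptual alternative, though the paper's direct verification from the presentation is more self-contained here; your explicit tracking of the linear part of $t_{i,k;x}^{(r)}$ through Proposition~\ref{P:semiclassicalBK} makes precise what the paper leaves as ``straightforward to check''.
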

\begin{proof}
The graded algebra $\gr_{\m_0} S(\g_+, e)$ is Poisson, and it is straightforward to check, using Theorem~\ref{T:PDyangian}, that \eqref{e:anotherlabel} gives a Lie algebra homomorphism $\g_+^e \onto \gr_{\m_0} S(\g_+, e)$. By the universal property of the Lie--Poisson algebra we get a Poisson homomorphism $S(\g_+^e) \onto \gr_{\m_0} S(\g_+, e)$. By Theorem~\ref{T:PBW} we see that this map is an isomorphism.
\end{proof}

\part{One dimensional representations of $W$-algebras}

\section{Generalities on Poisson schemes}
\label{s:poissonschemes}
\subsection{Conic degenerations of affine schemes}
\label{ss:degenerationsofschemes}
We begin the second Part of the paper by recording two basic results, allowing us to compare the number of irreducible components of certain complex schemes of finite type. The first records an obvious bound on the dimensions of components of subschemes, whilst the second allows us to bound the number of components of a deformation of a reduced conic affine scheme.

We define $\P$ be the set of all finite sequences of all non-negative integers of arbitrary length. We order $\P$ lexicographically as follows: if $d = (d_1,...,d_n)$ and $d' = (d_1',...,d_m')$ then we say that $d > d'$ if there is an index $j \in \{1,...,\min(n,m)\}$ such that $d_i = d'_i$ for $i=1,...,j$ and $d_{j+1} > d'_{j+1}$. Here we adopt the convention $d_{m+1}' = d_{n+1} = 0$

If $X$ is a complex scheme of finite type then we write $X = \bigcup_{i=1}^l X_i$ for the decomposition into irreducible components, ordered so that $\dim X_i \ge \dim X_j$ for $i < j$. We define {\it the dimension vector of $X$} to be the sequence $d(X) := (\dim X_1, \dim X_2, ..., \dim X_l) \in \P$. 

The following useful fact will be used in several later arguments.
\begin{Lemma}
\label{L:closedembeddings}
Let $X, Y$ be complex schemes of finite type with a closed embedding $Y \to X$. Then $d(X)\ge d(Y)$ with equality if and only if the underlying reduced schemes are isomorphic.
\end{Lemma}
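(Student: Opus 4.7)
My plan is to induct on $k := \dim X = \max_i \dim X_i$, using two elementary observations about a closed embedding $Y\hookrightarrow X$: every irreducible component $Y_i$ of $Y$ is closed and irreducible in $X$, hence contained in some irreducible component $X_{\sigma(i)}$; and if in addition $\dim Y_i = \dim X_{\sigma(i)}$, then the two must coincide as reduced subschemes, because an irreducible closed subset of an irreducible variety of the same dimension equals it. The base case $X = \emptyset$ forces $Y = \emptyset$ and $d(X)=d(Y)$, with the reduced schemes trivially isomorphic.

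For the inductive step, I would isolate the top-dimensional components: let $X^{(1)},\dots,X^{(q)}$ be the components of $X$ of dimension $k$, and $Y^{(1)},\dots,Y^{(p)}$ the components of $Y$ of dimension $k$ (necessarily $\dim Y \le k$). By the two observations each $Y^{(i)}$ equals some $X^{(j)}$ as a reduced subscheme, and distinct $Y^{(i)}$'s give distinct $X^{(j)}$'s, so $p\le q$. If $p<q$, then the sorted sequences agree for the first $p$ entries and at position $p+1$ we have $d(X)_{p+1}=k$ while $d(Y)_{p+1}<k$, giving $d(X)>d(Y)$; furthermore $X_{\mathrm{red}}\not\cong Y_{\mathrm{red}}$ as $X$ contains an extra $k$-dimensional component absent from $Y$.

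If $p=q$, after relabelling $Y^{(i)}=X^{(i)}$ I would pass to the closed subschemes $X' := \overline{X \setminus \bigcup_{i=1}^{q} X^{(i)}}$ and $Y' := \overline{Y\setminus\bigcup_{i=1}^{p} Y^{(i)}}$, whose components are precisely those of $X$ (respectively $Y$) of dimension strictly less than $k$. The key check is that the embedding restricts to $Y'\hookrightarrow X'$: any component $Y_j$ of $Y'$ sits inside some component $X_r$ of $X$, and if $X_r$ coincided with one of the $X^{(i)}=Y^{(i)}$ we would have $Y_j \subseteq Y^{(i)}$, contradicting that distinct irreducible components of $Y$ are never nested. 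Since $\dim X' < k$, the inductive hypothesis applies and gives $d(X')\ge d(Y')$ with equality iff $X'_{\mathrm{red}}\cong Y'_{\mathrm{red}}$; reassembling $d(X)=(k^{q},d(X'))$ and $d(Y)=(k^{p},d(Y'))$ with $p=q$ finishes the inductive step, and the equality case recovers $X_{\mathrm{red}}\cong Y_{\mathrm{red}}$ from the matched top components together with the inductively obtained isomorphism at the lower levels.

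The main obstacle I anticipate is the verification that $Y'\hookrightarrow X'$ remains a closed embedding in the $p=q$ step, which hinges on the non-nesting of distinct irreducible components; a secondary subtlety is a careful unpacking of the dominance order on $\mathcal{P}$, in particular the role of the conventions $d_{n+1}'=d_{m+1}=0$, to make the comparison of the prepended sequences $(k^{q},d(X'))$ and $(k^{p},d(Y'))$ fully rigorous. Beyond these points, the proof is a clean dimension-counting induction.
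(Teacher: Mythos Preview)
Your proof is correct: the induction on $\dim X$, peeling off the top-dimensional components and using that an irreducible closed subset of an irreducible variety of the same dimension must coincide with it, is exactly the kind of check the paper has in mind when it says the lemma ``can be checked by considering the dimension vector of all proper closed subschemes of a scheme $X$'' (the paper gives no further argument). Your anticipated obstacle --- that $Y' \hookrightarrow X'$ --- is resolved precisely as you sketch, since a lower-dimensional component $Y_j$ contained in some $X^{(i)} = Y^{(i)}$ would violate non-nesting of components of $Y$; the residual wrinkle about the dominance order is just a harmless off-by-one in the paper's stated range for $j$.
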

%\begin{proof}
%Since no affine scheme of dimension $l$ can be covered by a finite union of schemes of dimension strictly less than $l$, the lemma may be proven by induction on the number of components of $Y$.
%\end{proof}

Let $A$ be a finitely generated $\C$-algebra. We say that $A = \bigcup_{i\ge 0} \Fc_i A$ is a {\it standard filtration of $A$} if there is a surjection $\C[x_1,...,x_n] \onto A$ for some $n$, along with integers $m_1,...,m_n \ge 0$ such that $\Fc_m A$ is spanned by the image of $\{x_1^{k_1} \cdots  x_n^{k_n} \mid k_1 m_1 + \cdots + k_n m_n \le m\}$. If $X = \Spec(A)$ is the associated affine scheme the we refer to $\C X = \Spec(\gr A)$ as the {\it asymptotic cone of $X$}.
\begin{Lemma}
\label{L:reduceddichotomy} 
If $\C X$ is reduced then $\hash \Comp(X) \le \hash \Comp(\C X)$.
\end{Lemma}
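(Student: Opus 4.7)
The approach is to realise $A$ and $\gr A$ as two distinguished fibres of a single flat family via the Rees construction, and then to exploit the reducedness hypothesis through Cohen--Macaulay commutative algebra.

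First I would form the Rees algebra $R := \bigoplus_{i\ge 0} \Fc_i A\cdot t^i \subseteq A[t]$, a graded finitely generated $\C$-algebra which is flat over $\C[t]$ (since $t$ is a non-zero-divisor on $A[t]$) and satisfies $R/tR \cong \gr A$ and $R/(t-1)R \cong A$. The grading corresponds to a $\mathbb{G}_m$-action on $\Spec R$ covering the scaling action on $\mathbb{A}^1 = \Spec \C[t]$. Next I would verify that $R$ is reduced: any homogeneous nilpotent $r\in R$ has image $0$ in $\gr A$, so $r = ts$ with $s$ of strictly smaller degree and (since $t$ is a non-zero-divisor on $R$) again nilpotent, and degree induction forces $r=0$. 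In particular $A$ is also reduced.

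I would next count components of $X$. Let $P_1,\ldots,P_m$ denote the (homogeneous) minimal primes of $R$; none contains $t$. For each $i$, the open subset $V(P_i)\setminus V(t)$ is irreducible and $\mathbb{G}_m$-equivariantly isomorphic to $X_i \times \mathbb{G}_m$, where $X_i := V(P_i)\cap V(t-1)$, so $X_i$ is irreducible; and an inclusion $X_i\subseteq X_j$ forces $V(P_i)\subseteq V(P_j)$ upon taking $\mathbb{G}_m$-closures, whence $i=j$. This exhibits $\Comp(X) = \{X_1,\ldots,X_m\}$, so $\#\Comp(X) = m$.

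The crux is a local analysis at each component of $\C X$. Let $Q$ be a minimal prime of $R$ over $(t)$; by Krull's Hauptidealsatz $Q$ has height one, so $R_Q$ is a one-dimensional reduced Noetherian local ring, hence Cohen--Macaulay. The quotient $R_Q / t R_Q \cong (\gr A)_Q$ is reduced of dimension zero, i.e.\ a field; so $QR_Q = tR_Q$ is principal, and a Cohen--Macaulay local ring of dimension one with principal maximal ideal is a DVR, and thus an integral domain. Consequently $R_Q$ contains a unique minimal prime of $R$, giving a well-defined map $f$ from the minimal primes of $(t)$ in $R$ to $\{P_1,\ldots,P_m\}$ sending $Q$ to the unique $P_i \subseteq Q$. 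To finish, one checks that $f$ is surjective: for each $i$ the ideal $P_i + (t)$ is proper (the special fibre of $V(P_i)$ is non-empty, as $V(P_i)$ dominates $\mathbb{A}^1$), so it lies in some minimal prime $Q$ over $(t)$ with $f(Q) = P_i$. This yields $\#\Comp(\C X) \geq m = \#\Comp(X)$.

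The main obstacle is the DVR step: it is precisely the Cohen--Macaulay conclusion which prevents two distinct global components $V(P_i), V(P_j)$ from sharing a component of the special fibre, and it is where the two reducedness inputs (on $R$, which we derive, and on $R/tR$, the hypothesis) are combined.
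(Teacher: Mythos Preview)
Your Rees-algebra approach is sound and genuinely different from the paper's direct contrapositive argument, but the surjectivity of $f$ is inadequately justified. You assert that $P_i + (t)$ being proper implies it ``lies in some minimal prime $Q$ over $(t)$'', yet properness only gives some prime containing $P_i + (t)$; such a prime contains a minimal prime over $(t)$, but that minimal prime need not contain $P_i$. For a (non-graded) witness that more is required, take $R = \C[x,y,z]/(xy,xz)$, which is reduced with minimal primes $(x)$ and $(y,z)$, and let $t = x-1$, a non-zero-divisor. Then $R/tR \cong \C$ is a field, the unique minimal prime over $(t)$ is $(x-1)$, and in $R$ one has $y = -y(x-1)$, $z = -z(x-1)$, so $(y,z) \subseteq (x-1)$ while $(x) \not\subseteq (x-1)$. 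Thus your map $f$ is not surjective in this example, so the ring-theoretic facts you have established up to that point do not suffice.

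The gap is reparable, but requires a second use of the reducedness of $\gr A$. Writing $\q_i$ for the minimal prime of $A$ corresponding to $P_i$ under $R[t^{-1}] \cong A[t,t^{-1}]$, one has $R/(P_i + (t)) \cong \gr(A/\q_i)$, and the question becomes whether $\gr \q_i := \ker(\gr A \to \gr(A/\q_i))$ lies in some minimal prime of $\gr A$. If not, then by prime avoidance $\gr\q_i$ contains a non-zero-divisor $\bar a$ of the reduced ring $\gr A$, say $\bar a = a + \Fc_{d-1}A$ with $a \in \q_i$. Choosing $b \in \bigcap_{j\ne i}\q_j \setminus \q_i$ (possible since the $\q_j$ are incomparable) gives $ab \in \bigcap_j \q_j = 0$ in the reduced ring $A$, whence $\bar a\,\bar b = 0$ in $\gr A$ with $\bar b \ne 0$, contradicting that $\bar a$ is a non-zero-divisor. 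This repair is close in spirit to the paper's own proof, which exhibits a nilpotent element of $\gr A$ by a similar product construction when $\#\Comp(X) > \#\Comp(\C X)$; your DVR analysis, by contrast, is a more structural way of seeing why components cannot merge in the special fibre.
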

\begin{proof}
If $\gr A$ is reduced then so is $A$. We begin by choosing a presentation for the standard filtration. Let $R = \C[x_1,...,x_n] = \bigoplus_{i\ge 0} R_i$ be a graded  polynomial ring with  $x_i$ in degree $m_i$. Write $\F_i R = \bigoplus_{j=0}^i R_j$.
We let $\phi : R\to A$ be the homomorphism inducing the standard filtration, set $I := \Ker (\phi)$ and denote the minimal prime ideals over $I$ by $\p_1,...,\p_m \subseteq A$, so that $I = \bigcap_i \p_i$. Using \cite[Proposition~7.6.13]{MR01}, for example, we see that there is a natural isomorphism $\gr A \cong R / \gr I$ and we view $X$ and $\C X$ as subschemes of $\Spec R = \mathbb A^n$.

For $g \in \F_i R$ we write $\bar g = g + \F_{i-1} R \in \gr R$. For any ideal $J$ of $R$ we write $V(J) \subseteq \mathbb A^n$ for the corresponding variety of closed points. We have inclusions $\gr (J) \gr (K) \subseteq \gr (JK) \subseteq \gr(J) \cap \gr(K)$ for any ideals $J, K \subseteq R$ (see \cite[\textsection 5.3]{Pr02}), and so 
\begin{eqnarray}
\label{e:Vdecomposes}
V(\gr I) = \bigcup_{i=1}^m V(\gr \p_i).
\end{eqnarray}

We prove the contrapositive of the lemma, so assume that $m = \hash\Comp(X) > \hash\Comp(\C X)$. Then, a fortiori, we must have $V(\gr \p_j) \subseteq \bigcup_{i\neq j} V(\gr \p_i)$ for some $j$, say $j = 1$. Equivalently $\bigcap_{i\neq 1} \sqrt{\gr \p_i} \subseteq \sqrt{\gr \p_1}$, which implies $\bigcap_{i\neq 1} {\gr \p_i} \subseteq \sqrt{\gr \p_1}$. Since $I \subsetneq \p_2 \p_3 \cdots \p_m$ it follows that $\gr I \subsetneq \gr(\p_2 \cdots \p_m)$ and so we may choose $g_2,...,g_m$ with $g_i \in \p_i$ such that $g := g_2\cdots g_m$ satisfies $\bar g \notin \gr I$. On the other hand, we have $\bar g \in \gr(\p_2\cdots \p_m) \subseteq \bigcap_{i=2}^m \gr (\p_i) \subseteq \bigcap_{i=1}^m \sqrt{\gr (\p_i)} = \sqrt{\gr (I)}$, where the last equality follows from \eqref{e:Vdecomposes}. Thus we conclude that $\bar g \in \sqrt{\gr (I)} \setminus \gr I$, so that $\C X$ is not reduced. This completes the proof.
\end{proof}

\subsection{Completions and nilpotent elements of graded algebras}
If $\p\in \Spec(A)$ then, as usual, $A_\p$ denotes the localisation and $A_\p^\c$ the completion at the maximal ideal of $A_\p$. The kernel of $A\to A_\p$ is the set of elements annihilating some element of $A\setminus \p$, whilst the kernel of $A \to A_\p^\c$ is $\bigcap_{k>0} \p^k$. When $\p$ is a maximal ideal it follows from Krull's intersection theorem that these kernels are equal (see the remark following \cite[Theorem~10.17]{AM69}).

Now let $A = \bigoplus_{i\ge 0} A_i$ be a finitely generated, connected graded algebra with unique maximal graded ideal $\m_0$. The connected grading induces a one parameter family of automorphisms of $A$, and a $\C^\times$-action on $\mSpec(A)$ contracting to the unique fixed point $\m_0$. For $a\in A$ make the notation
$$\Xi_a := \Big\{\m \in \mSpec(A) \mid a\notin \ker(A \to A_\m^\c) \Big\}.$$
%The following two basic facts are well-known, but we provide a brief proof.
\begin{Lemma}
\label{L:technicallemma}
The following hold:
\begin{enumerate}
\item $\bigcap_\m \bigcap_{i> 0} \m^i = 0$ where the intersection is taken over all maximal ideals of $A$.
\item If $a \in A_i$ for some $i$ then $\Xi_a$ is closed and conic for the contracting $\C^\times$-action.
\end{enumerate}
\end{Lemma}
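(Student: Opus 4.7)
For part (1), it suffices to use the single maximal ideal $\m_0$ as a witness. Since $A$ is connected graded with $\m_0 = \bigoplus_{i > 0} A_i$, a product of $k$ positive-degree elements has degree at least $k$, so $\m_0^k \subseteq \bigoplus_{i \ge k} A_i$ for every $k$. Any element of $A$ has finite graded support, hence $\bigcap_{k > 0} \m_0^k = 0$. The intersection $\bigcap_\m \bigcap_i \m^i$ is contained in $\bigcap_i \m_0^i$, and is therefore zero.

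For part (2), the central step is to rewrite $\Xi_a$ as an affine subvariety of $\mSpec(A)$. The remark preceding the lemma asserts that for a maximal ideal $\m$ of the Noetherian ring $A$, the kernel of $A \to A_\m^\c$ coincides with that of the localisation map $A \to A_\m$. Since the localisation kernel consists of those $b \in A$ with $\Ann(b) \not\subseteq \m$, I obtain
\[\Xi_a = \{\m \in \mSpec(A) \mid \Ann(a) \subseteq \m\} = V(\Ann(a)) \cap \mSpec(A),\]
which is manifestly closed.

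To establish the conic property, I would use that for homogeneous $a \in A_i$ the annihilator $\Ann(a)$ is a homogeneous ideal: if $r = \sum_j r_j$ with $r_j \in A_j$ satisfies $ra = 0$, then comparing graded components in $\sum_j r_j a$ (with summands of pairwise distinct degree $i + j$) forces each $r_j a = 0$, so $r_j \in \Ann(a)$ for all $j$. A homogeneous ideal is stable under the $\C^\times$-action induced by the grading, so its vanishing locus is $\C^\times$-stable, giving the conic condition. The only potentially non-trivial point is the identification of $\bigcap_i \m^i$ with the kernel of the localisation map, but this is already supplied by the excerpt via Krull's intersection theorem; the rest of the argument is routine commutative algebra.
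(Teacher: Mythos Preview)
Your proof is correct and follows essentially the same approach as the paper. For part (1) both you and the paper observe that $\bigcap_k \m_0^k = 0$ (since homogeneous elements of degree $i$ cannot lie in $\m_0^{i+1}$), which already contains the full intersection; for part (2) both arguments identify $\Xi_a$ with $V(\Ann(a))$ via the localisation-kernel description and then use that $\Ann(a)$ is a graded ideal when $a$ is homogeneous.
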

\begin{proof}
Since $A$ is finitely generated over $\C$ the intersection $\bigcap_{\m \in \mSpec(A)} \m = \Rad(A)$ is a graded ideal, however for every $i > 0$ we have $A_i \cap \m_0^{i+1} = 0$ and so (1) follows.

If $\m \in \mSpec(A)$, we remarked above that the kernel of $A \to A_\m^\c$ is equal to the kernel of $A \to A_\m$. Writing $\Ann(a) = \{b\in A \mid ab = 0\}$, it follows that $\ker(A \to A_\m^\c) = \{a \in A \mid ab = 0 \text{ for some } b\notin \m\} = \{a\in A\mid \Ann(a) \nsubseteq \m\}$. We have shown that $\Xi_a = \{\m\in \mSpec(A) \mid \Ann(a) \subseteq \m\}$, i.e. $\Xi_a$ is the subvariety of $\mSpec(A)$ cut out by $\Ann(a)$, hence closed. Finally it remains to see that $\Xi_a$ is stable under the $\C^\times$-action, and this follows directly from two facts: $\Ann(a)$ is a graded ideal whenever $a$ is homogeneous, and $\C^\times$-stable sets are precisely those defined by graded ideals.
\end{proof}

\begin{Lemma}
\label{L:reducedlemma}
Suppose that $A = \bigoplus_{i>0} A_i$ is a finitely generated, connected graded algebra with unique maximal graded ideal $\m_0$. The following are equivalent:
\begin{enumerate}
\setlength{\itemsep}{4pt}
\item[(i)] $A$ is reduced.
\item[(ii)] $A_{\m_0}^\c$ is reduced.
\item[(iii)] $A_\m^\c$ is reduced for every maximal ideal $\m\in \Spec(A)$.
\end{enumerate}
Furthermore if $\gr_{\m_0} A$ is reduced then these equivalent conditions hold.
\end{Lemma}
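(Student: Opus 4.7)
My plan is to prove the equivalences by establishing the cycle (iii) $\Rightarrow$ (ii) $\Rightarrow$ (i) $\Rightarrow$ (iii), and then handle the supplementary claim separately. The implication (iii) $\Rightarrow$ (ii) will be immediate on taking $\m = \m_0$.

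For (ii) $\Rightarrow$ (i), the strategy will be to show that the canonical map $A \to A_{\m_0}^\c$ is injective, whereupon reducedness of the target automatically descends to $A$. I will factor it as $A \to A_{\m_0} \to A_{\m_0}^\c$: the second arrow is injective by Krull's intersection theorem. For the first, as recalled just before Lemma~\ref{L:technicallemma}, its kernel is $\{a \in A \mid ab = 0 \text{ for some } b \notin \m_0\}$. If $a = \sum_i a_i \ne 0$ lies in this kernel, I will pick the smallest index $j$ with $a_j \ne 0$ and decompose $b = b_0 + b_+$ with $b_0 \in \C^\times$ and $b_+ \in \m_0$; then the degree $j$ component of $ab$ is $a_j b_0$, forcing $a_j = 0$, a contradiction. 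This uses the graded hypothesis in an essential way.

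The harder step will be (i) $\Rightarrow$ (iii). Here I will invoke the fact that every finitely generated $\C$-algebra is excellent (and so a G-ring): the formal fibres $A_\m^\c \otimes_{A_\m} k(\q)$ for $\q \in \Spec A_\m$ are geometrically regular, hence reduced, so that reducedness of $A_\m$ propagates to $A_\m^\c$. This appeal to excellence is the main (and essentially only) nontrivial input in the proof; I expect it to be the principal obstacle only insofar as locating a clean statement in the literature goes.

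For the supplementary assertion I will show directly that reducedness of $\gr_{\m_0} A$ implies (ii). Since $\m_0$ is maximal, one has $A/\m_0^n \isoto A_{\m_0}/\m_0^n A_{\m_0} \isoto A_{\m_0}^\c/\m_0^n A_{\m_0}^\c$, giving a canonical identification $\gr_{\m_0} A \cong \gr_{\m_0 A_{\m_0}^\c}\, A_{\m_0}^\c$; by Krull the $\m_0 A_{\m_0}^\c$-adic filtration on $A_{\m_0}^\c$ is separated. Any nilpotent $b \in A_{\m_0}^\c$ then has a well-defined nonzero leading symbol in $\gr_{\m_0} A$, obtained by picking the largest $k$ with $b \in \m_0^k A_{\m_0}^\c$, and this symbol is itself nilpotent, contradicting reducedness of the associated graded. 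Hence (ii) holds, and the chain of equivalences concludes the argument.
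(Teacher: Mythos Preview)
Your proof is correct and, for (ii) $\Rightarrow$ (i) and the supplementary claim, takes a more direct route than the paper. The paper argues (ii) $\Rightarrow$ (i) by contraposition: given a homogeneous nilpotent $f\in A$, it uses Lemma~\ref{L:technicallemma} to show that the set $\Xi_f$ of maximal ideals at which $f$ survives completion is a nonempty closed cone, hence contains $\m_0$. You instead observe directly that $A \hookrightarrow A_{\m_0}^\c$ because any element outside $\m_0$ has invertible degree-zero part and is therefore a non-zerodivisor; this bypasses Lemma~\ref{L:technicallemma} entirely. Likewise, for the supplementary statement the paper works inside $A$ and again invokes Lemma~\ref{L:technicallemma} to exclude the possibility $f \in \bigcap_i \m_0^i$, whereas you pass to the completion via the identification $\gr_{\m_0} A \cong \gr_{\m_0 A_{\m_0}^\c} A_{\m_0}^\c$ and use separatedness of the maximal-adic filtration on a complete Noetherian local ring. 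For (i) $\Rightarrow$ (iii) the two arguments coincide in substance: the paper cites \cite[\S 32, Remark~1]{Ma86}, which is exactly the statement that completion preserves reducedness for excellent rings. Your approach is a little more self-contained; the paper's has the minor advantage of routing through Lemma~\ref{L:technicallemma}, whose contracting $\C^\times$-action argument is reused elsewhere.
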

\begin{proof}
We prove (ii) $\Rightarrow$ (i) by contraposition. So suppose that $0 \ne f \in \Rad(A)$ is a nonzero element. Without loss of generality we may assume $f$ is homogeneous. By Lemma~\ref{L:technicallemma}(1) we have $\ker(A \to \prod_{\m\in \mSpec(A)} A_\m^\c) = 0$, which means that $f$ maps to a nonzero element of $A_\m^\c$ for some $\m$, implying $\m \in \Xi_f$. Using Lemma~\ref{L:technicallemma}(2) we see $\Xi_f$ is conic and closed, so it must contain $\m_0$, which implies that $A_{\m_0}^\c$ has nilpotent elements. This proves (ii) $\Rightarrow$ (i).

To see (i) $\Rightarrow$ (iii) we first of all observe that the property of being reduced is preserved by localisation for any commutative ring, and then apply \cite[\textsection 32, Remark~1]{Ma86}. Clearly (iii) $\Rightarrow$ (ii).% is trivial.

We prove that if $A$ is non-reduced then $\gr_{\m_0}A$ is non-reduced. Let $0 \ne f \in \Rad(A)$ and suppose (without loss of generality) that $f$ is homogeneous. There are two cases: either $f \in \m_0^i \setminus \m_0^{i+1}$ for some $i > 0$, or $f \in \bigcap_{i>0} \m_0^i$. In the first case $f + \m_0^{i+1}$ is a nonzero nilpotent element of $\gr_{\m_0} A$, which proves that $\gr_{\m_0} A$ is non-reduced.

We now show that the second case cannot happen. Suppose $f\in \bigcap_{i > 0} \m_0^i = \ker(A \to A_{\m_0}^\c)$, or equivalently, $\m_0 \notin \Xi_f$. By Lemma~\ref{L:technicallemma}(2) we must have $\Xi_f = \emptyset$ which implies that $f\in \ker(A \to A_\m^\c)$ for all maximal ideals $\m$. By Lemma~\ref{L:technicallemma}(1) we see $f \in \bigcap_{\m} \bigcap_{i> 0} \m^i = 0$. This contradiction completes the proof.
\end{proof}

\subsection{Commutative quotients of Poisson schemes}
\label{ss:poissonschemesandabeliansations}

If $A$ is a Poisson algebra then the {\it abelianisation $A^\ab$} is the largest Poisson commutative quotient of $A$. It is constructed by factoring by the ideal generated by $\{A, A\}$. 

When $X$ is a complex manifold it can be stratified into immersed submanifolds known as symplectic leaves (see \cite{We83} for an introduction). Therefore the closed points of a regular complex Poisson scheme can be decomposed into a disjoint union of symplectic leaves. More generally the closed points of a singular affine Poisson scheme can be decomposed into symplectic leaves by an iterative procedure \cite[\textsection 3.5]{BG03}.

The stratification by leaves can be coarsened to a stratification by rank, which is especially transparent in the case of affine schemes: if $A$ is finitely generated by elements $x_1,...,x_n$ and carries a Poisson structure, then we can form the matrix $\pi = (\{x_i, x_j\})_{1\leq i,j\leq n}$ and consider the subschemes $X_0 \subseteq X_1 \subseteq \cdots \subseteq X$ where $X_k$ is defined by the ideal of $A$ generated by all $(k+1) \times (k+1)$ minors of $\pi$.

It follows from \cite[Proposition~3.6]{BG03} that the locally closed set $X_k \setminus X_{k-1}$ is the union of all symplectic leaves of dimension $k$. In particular $X_0$ is the union of symplectic leaves of dimension zero. Note that $X_0 = \Spec(A^\ab)$ by definition. These observations prove the following lemma.

%{\bf technically we don't need leaves of singular varieties.} More generally, if $X$ is any complex scheme then (the closed points of) the smooth locus $X_\sm$ can be decomposed into symplectic leaves, whilst the singular locus $X_\sing$ inherits the structure of a Poisson variety. Setting $X^{(0)} = X$ and $X^{(k) = X^{(k-1)}_\sing$ for $k \ge 1$ we can iteratively decompose $X^{(k)} \setminus X^{(k-1)}$ into symplectic leaves, obtaining a decomposition of $X$.

\begin{Lemma}
\label{L:0dleavesandabelianspec}
The following reduced subschemes of $X$ coincide:
\begin{enumerate}
\setlength{\itemsep}{4pt}
\item $\Spec(A^\ab)_\red$;
\item The union of all zero dimensional symplectic leaves of $\Spec(A)$.  $\hfill \qed$
\end{enumerate}
\end{Lemma}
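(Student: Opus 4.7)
The plan is to identify both subschemes described in (1) and (2) with the zeroth rank stratum $X_0$ introduced in the paragraph preceding the lemma, and then invoke the results already recalled from \cite{BG03}.

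First I would verify the assertion $X_0 = \Spec(A^\ab)$. By definition $X_0$ is cut out by the ideal $I \subseteq A$ generated by the $1\times 1$ minors of $\pi$, i.e.\ by the elements $\{x_i, x_j\}$ for $1 \le i, j \le n$, while $\Spec(A^\ab)$ is cut out by the ideal $J$ generated by all brackets $\{a, b\}$ with $a, b \in A$. Clearly $I \subseteq J$. The reverse inclusion follows from the Leibniz rule: any bracket $\{f, g\}$ with $f, g$ polynomials in the generators $x_1, \dots, x_n$ can be expanded, using $\{ab, c\} = a\{b, c\} + \{a, c\} b$ in each argument, into an $A$-linear combination of the generating brackets $\{x_i, x_j\}$. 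Hence $J = I$, so $\Spec(A^\ab) = X_0$ as schemes, and in particular $\Spec(A^\ab)_\red = (X_0)_\red$.

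Second, the lemma invokes the description of rank strata as unions of symplectic leaves of fixed dimension, which is the content of \cite[Proposition~3.6]{BG03} as recalled above. Taking $k = 0$ (and the convention $X_{-1} = \emptyset$), the subset $X_0 \setminus X_{-1} = X_0$ is precisely the union of all symplectic leaves of $\Spec(A)$ of dimension zero. As a set, this coincides with the closed points of $(X_0)_\red$.

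Combining these two identifications, the reduced subscheme $\Spec(A^\ab)_\red$ agrees with the union of zero dimensional symplectic leaves of $\Spec(A)$, proving the lemma. There is no real obstacle here: the statement is essentially a bookkeeping consequence of the identification of abelianisation with the vanishing locus of brackets of generators, together with the dimension interpretation of the rank stratification already quoted from \cite{BG03}.
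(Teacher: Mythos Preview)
Your proof is correct and follows exactly the approach the paper sketches in the paragraph preceding the lemma. In fact you are more explicit than the paper: where the paper simply asserts ``$X_0 = \Spec(A^\ab)$ by definition,'' you supply the Leibniz-rule argument showing that the ideal generated by the $1\times 1$ minors $\{x_i,x_j\}$ coincides with the ideal generated by all brackets $\{a,b\}$, which is a useful clarification.
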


 If $A$ is a Poisson algebra and $I$ is a Poisson ideal then the completion $A_I^\c$ can be equipped with a Poisson structure in a unique way such that $A \to A_I^\c$ is Poisson: each $A/I^i$ is a Poisson algebra and $A_I^\c$ is an inverse limit in the category of Poisson algebras. 

The next lemma states that Poisson abelianisation and completion commute. 

\begin{Lemma}
\label{L:abelianisationvscompletion}
Let $A$ be finitely generated and Poisson, and pick $\m \in \mSpec(A^\ab)$. Then there is a natural isomorphism
$$(A_\m^{\wedge})^\ab \isoto (A^\ab)_\m^\wedge.$$
\end{Lemma}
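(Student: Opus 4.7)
The plan is to identify both $(A_\m^\wedge)^\ab$ and $(A^\ab)_\m^\wedge$ with the common quotient $A_\m^\wedge/JA_\m^\wedge$, where $J\subseteq A$ is the associative ideal generated by $\{A,A\}$. Note first that $J$ is automatically a Poisson ideal of $A$: if $x = \sum a_i\{b_i,c_i\} \in J$ and $a \in A$, then the Jacobi identity gives $\{a,\{b_i,c_i\}\} \in \{A,A\}$, and combining with the Leibniz rule yields $\{a,x\} \in J$. Hence $A^\ab = A/J$ as defined in Subsection~\ref{ss:poissonschemesandabeliansations}.

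Let $\tilde\m \in \mSpec(A)$ denote the preimage of $\m$, and write $A_\m^\wedge := A_{\tilde\m}^\wedge$. The inclusion $\tilde\m \supseteq J \supseteq \{A,A\}$ gives $\{A,\tilde\m\} \subseteq \{A,A\} \subseteq \tilde\m$, so $\tilde\m$ is itself a Poisson ideal of $A$; consequently the inverse-limit Poisson bracket on $A_\m^\wedge$ from Subsection~\ref{ss:poissonschemesandabeliansations} is well-defined. Since $A$ is noetherian, completion at $\tilde\m$ is flat and commutes with quotients by ideals contained in $\tilde\m$, giving a canonical commutative-algebra isomorphism $(A^\ab)_\m^\wedge \cong A_\m^\wedge / JA_\m^\wedge$.

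The core task is then to show that the Poisson ideal $K$ of $A_\m^\wedge$ generated by $\{A_\m^\wedge, A_\m^\wedge\}$ coincides with $JA_\m^\wedge$, since this forces $(A_\m^\wedge)^\ab = A_\m^\wedge/K = A_\m^\wedge/JA_\m^\wedge$. The inclusion $JA_\m^\wedge \subseteq K$ is immediate: $K$ contains $\{A,A\}$, hence the associative ideal $J$ it generates, and is itself an ideal of $A_\m^\wedge$. For the reverse inclusion I would combine three facts: (i) $A$ is dense in $A_\m^\wedge$ in the $\tilde\m$-adic topology; (ii) every ideal of the noetherian local ring $A_\m^\wedge$ is $\tilde\m$-adically closed, by Krull's intersection theorem; and (iii) the bracket on $A_\m^\wedge$ is separately continuous, as visible from its inverse-limit construction. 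Together (i)--(iii) show that $JA_\m^\wedge$ is itself a Poisson ideal of $A_\m^\wedge$ — indeed $\{A_\m^\wedge, J\} \subseteq \overline{\{A,J\}} \subseteq \overline{J} = JA_\m^\wedge$, and Leibniz extends — and then that $\{A_\m^\wedge, A_\m^\wedge\} \subseteq \overline{\{A,A\}} \subseteq JA_\m^\wedge$, whence $K \subseteq JA_\m^\wedge$.

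Combining the two identifications yields the desired natural Poisson isomorphism. The most delicate ingredient is fact (iii); the rest is standard commutative algebra (flat base change, Krull) plus the book-keeping above. One also checks routinely that the isomorphism obtained coincides with the map induced by the universal properties of completion and abelianisation.
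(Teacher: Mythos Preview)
Your proof is correct and follows the same overall scaffold as the paper: both arguments reduce each side to $A_\m^\wedge / J A_\m^\wedge$ (with $J$ the ideal generated by $\{A,A\}$), using that $\tilde\m$ is a Poisson ideal and that completion is exact over the noetherian ring $A$. The only real divergence is in how you establish the hard inclusion $K \subseteq J A_\m^\wedge$.

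You argue topologically, via separate continuity of the bracket, density of $A$, and closedness of ideals in $A_\m^\wedge$. This works, but the paper's route is shorter and sidesteps what you flag as ``the most delicate ingredient'': having already identified $A_\m^\wedge / J A_\m^\wedge \cong (A^\ab)_\m^\wedge$ as commutative rings, the paper simply observes that $(A^\ab)_\m^\wedge$ is an inverse limit of the Poisson-abelian algebras $A^\ab/\m^i$ (here $\m$ is Poisson in $A^\ab$), hence is itself Poisson-abelian. The universal property of abelianisation then forces $K \subseteq J A_\m^\wedge$ immediately, with no continuity argument needed. Your approach has the mild advantage of being more self-contained about the topology on $A_\m^\wedge$; the paper's has the advantage of making the inclusion conceptually transparent and avoiding any delicacy about joint versus separate continuity.
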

\begin{proof}
According to Lemma~\ref{L:0dleavesandabelianspec} the point $\m$ is a zero dimensional symplectic leaf, and it follows that $\m$ is a Poisson ideal. Hence $A_\m^\c$ is a Poisson algebra. We let $B$ denote the ideal of $A$ generated by the Poisson brackets $\{A, A\}$, let $\hB_\m$ be the ideal of $A^\c_\m$ generated by $\{A^\c_\m, A^\c_\m\}$ and let $B_\m^\c$ be the ideal $A^\c_\m \otimes_A B$, which we identify with an ideal of $A_\m^\c$ using \cite[Proposition~10.13]{AM69}. Considering the exact sequence of $A$-modules $B\to A \to A^\ab$, we deduce $(A^\ab)_\m^\c \cong A_\m^\c / B_\m^\c$ from \cite[Proposition~10.12]{AM69}.

We must show that $B_\m^\c = \hB_\m$. We certainly have an inclusion $B_\m^\c \subseteq \hB_\m$ and so the claim will follow from the universal property of $(A_\m^\c)^\ab$ if we can show that $A_\m^\c / B_\m^\c$ is an abelian Poisson algebra. Since this is isomorphic to $(A^\ab)_\m^\c$, which is a projective limit of abelian Poisson algebras, the claim follows.
\end{proof}

\section{Sheets and induction}
\label{s:sheetsandinduction}
\subsection{Lusztig--Spaltenstein induction and sheets}
\label{ss:LSinduction}

Let $G$ be a complex connected reductive group and $P \subseteq G$ a parabolic subgroup with Levi decomposition $P = LR$, where $R$ denotes the unipotent radical, and write $\l = \Lie(L)$, $\r = \Lie(R)$. If $\v \subseteq \g$ is any quasi-affine subvariety then we write $\v_\reg$ for the set of elements $v\in \v$ such that $\dim \Ad(G)v$ attains the maximal value.

For any choice of nilpotent orbit $\O_0 \subseteq \l$ and $z\in \z(\l)$ it is not hard to see that $\Ad(G) (z + \O_0 + \r)$ contains a dense $G$-orbit. This orbit is {\it (Lusztig--Spaltenstein) induced from $(\l, \O_0, z)$} and is denoted $\Ind_{\l,z}^\g(\O_0)$. Remarkably the induced orbit depends only on the $G$-conjugacy class of $(\l, \O_0, z)$, not on the choice of $P$ admitting $L$ as a Levi factor \cite[Lemma~4.1]{Lo22}. We call this $G$-orbit an {\it induction datum}, and when $z = 0$ we say that the orbit of $(\l, \O_0)$ is the induction datum with induced orbit $\Ind_\l^\g(\O_0)$, suppressing $z$.

If an orbit cannot be obtained by induction from a proper Levi subalgebra then it is called {\it rigid}, otherwise it is called {\it induced}. Rigid orbits are necessarily nilpotent. We say that an induction datum $(\l, \O_0)$ is {\it rigid} if $\O_0$ is a rigid orbit in $\l$. When $G$ is almost simple of classical type the conjugacy classes of Levi subalgebras and nilpotent orbits can be described combinatorially, and induction can be totally understood in terms of partitions \cite[\textsection 7]{CM93}. 

The sheets of a Lie algebra are the irreducible components of the rank varieties which consist of points whose orbit has a fixed dimension. These were classified in terms of rigid induction data by Borho \cite{Bo81}. For each induction datum $(\l, \O_0)$ define
\begin{eqnarray*}
& & \D(\l, \O_0):= \Ad(G)(\z(\l)_\reg + \O_0 + \r)\\
& & \Sc(\l,\O_0) := \overline{\D(\l,\O_0)}_\reg.
\end{eqnarray*}
%Since the classification of sheets is well-documented in many places we simply record the main result here.
\begin{Theorem} \cite[Satz~4.3, Satz~5.6]{Bo81}
\label{T:Borhostheorem}
The sheets of $\g$ are the sets $\Sc(\l, \O_0)$ where $(\l, \O_0)$ varies over conjugacy classes of rigid data. Furthermore $\dim \Sc(\l, \O_0) = \dim \z(\l) + \dim \Ind_{\l}^\g(\O_0)$.
\end{Theorem}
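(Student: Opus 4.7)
I would prove both assertions by combining the Jordan decomposition of generic sheet elements with the transitivity of Lusztig--Spaltenstein induction.

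First I would establish the dimension formula. For a generic element $e = z + e_0$ of $\z(\l)_\reg + \O_0$, regularity of $z$ in $\z(\l)$ forces $G^{z} = L$, hence $G^{e} = L^{e_0}$ and $\dim \Ad(G) e = \dim G - \dim L^{e_0}$. Since $\Sc(\l,\O_0)$ is swept out by an irreducible family of $G$-orbits of this constant dimension as $z$ varies over $\z(\l)_\reg$ (modulo the finite stabiliser of $\O_0$ in the normaliser of $\l$), a standard fibre-dimension count gives $\dim \Sc(\l,\O_0) = \dim \z(\l) + \dim \Ad(G) e$. In particular every $\Sc(\l,\O_0)$ is contained in a unique sheet of $\g$, and equals it.

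Next I would show every sheet arises this way. Given a sheet $\Sc$, pick a generic $x \in \Sc$, let $x = x_s + x_n$ be its Jordan decomposition, and set $L := (G^{x_s})^\circ$, $\l := \Lie(L)$, $\O_0 := \Ad(L)\, x_n$. Then $x_s \in \z(\l)$, and genericity of $x$ in $\Sc$ forces $x_s \in \z(\l)_\reg$, for if $x_s$ lay on a wall then a small deformation of $x$ would enlarge $L$, drop the centraliser dimension, and produce orbits of strictly larger dimension than the constant dimension attained on $\Sc$. Hence $\Ad(G)(\z(\l)_\reg + \O_0) \subseteq \Sc$, and comparing dimensions via the previous paragraph gives $\Sc = \Sc(\l,\O_0)$.

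If $(\l,\O_0)$ is not rigid, write $\O_0 = \Ind_{\l'}^{\l}(\O_0')$ for some proper Levi $\l' \subsetneq \l$ (which necessarily contains $\z(\l)$). Transitivity of Lusztig--Spaltenstein induction gives $\Ind_{\l}^{\g}(z + \O_0) = \Ind_{\l'}^{\g}(z + \O_0')$ for $z \in \z(\l) \subseteq \z(\l')$; letting $z$ also vary through $\z(\l')_\reg$ and taking closures one obtains $\Sc(\l,\O_0) \subseteq \Sc(\l',\O_0')$, and by the dimension formula both define sheets, so they are equal. Iterating the de-induction terminates in finitely many steps at a rigid datum. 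Uniqueness of the rigid datum follows from the first step: for a generic $x \in \Sc$ the pair $(L,\Ad(L)x_n)$ is determined up to $G$-conjugacy by $x$, and rigidity prevents any further de-induction, so distinct rigid data produce distinct sheets.

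The main obstacle I expect is the rigidification step: verifying the equality $\Sc(\l,\O_0) = \Sc(\l',\O_0')$ when $\O_0$ is induced from $\O_0'$ requires careful control of how the irreducible families $\Ad(G)(\z(\l)_\reg + \O_0)$ and $\Ad(G)(\z(\l')_\reg + \O_0')$ degenerate into one another as the parameter moves between $\z(\l)$ and the larger space $\z(\l')$, and in particular that limits of induced orbits in $\l$ are recovered as specialisations of induced orbits from $\l'$.
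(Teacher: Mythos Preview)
The paper does not prove this theorem; it is stated with attribution to Borho \cite{Bo81} and no argument is given. Your approach---decomposition classes via Jordan decomposition followed by de-induction to rigid data---is essentially Borho's original strategy, so in spirit you are reproducing the literature proof rather than offering an alternative.

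That said, there is a genuine error in the write-up. The sentence ``every $\Sc(\l,\O_0)$ is contained in a unique sheet of $\g$, and equals it'' at the end of your first paragraph is false for non-rigid data: take $\l=\g$ and $\O_0$ the regular nilpotent orbit in $\sl_3$; then $\Sc(\g,\O_0)=\O_0$ is properly contained in the regular sheet. The dimension formula alone cannot establish maximality. This error propagates to your third paragraph, where you write ``by the dimension formula both define sheets, so they are equal''---but $\Sc(\l,\O_0)$ and $\Sc(\l',\O_0')$ have \emph{different} dimensions when $\l'\subsetneq\l$ (since $\dim\z(\l')>\dim\z(\l)$), so they are not both sheets and the argument as written collapses.

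The repair is to run the logic through maximality rather than dimension. From your second paragraph you have $\Sc=\Sc(\l,\O_0)$ for the datum read off a generic $x\in\Sc$ (here you should invoke that decomposition classes partition $\g$, so the class through a generic point of an irreducible $\Sc$ is dense in it---``comparing dimensions'' is not quite the right phrase). Then in the de-induction step, $\Sc=\Sc(\l,\O_0)\subseteq\Sc(\l',\O_0')$, and since $\Sc(\l',\O_0')$ is irreducible with constant orbit dimension it lies in some sheet, which must be $\Sc$ by maximality; hence $\Sc(\l',\O_0')=\Sc$. You also do not quite close the loop on the converse direction (that every rigid datum yields a sheet, not merely that every sheet comes from some rigid datum); this requires the closure order on decomposition classes to rule out a rigid $\D(\l,\O_0)$ lying in the closure of a strictly larger $\D(\l',\O_0')$ of the same orbit dimension.
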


%They are classified using induction and decomposition classes, as we now explain. For an induction datum $(\l, \O_0)/G$ we can consider the set

\subsection{Classical finite $W$-algebras and Slodowy slices}
\label{ss:classicalandslices}
Resume the notation $\kappa, \{e,h,f\}, \chi$ used in Section~\ref{ss:classicalfiniteWalg}, so $\g = \bigoplus_{i\in \Z} \g(i)$ is the $\ad(h)$-grading.

The {\it Slodowy slice} is the affine variety $e+ \g^f$. Using $\kappa$ we can identify $\g$ with $\g^*$ as $G$-modules, so that $S(\g)$ is identified with the coordinate ring $\C[\g]$. Then $S(\g) \g(\lemw)_\chi$ is identified with the defining ideal of
$$e + \g(\lemw)^\perp = \{x\in \g \mid \kappa(x - e, \g(\lemw)) = 0 \} = e + \g(\le1).$$
It follows from \cite[Lemma~2.1]{GG02} that the adjoint action map of $G$ restricts to an isomorphism
$$G(\lez) \times e + \g^f \isoto e + \g(\le1).$$
Via this isomorphism we identify $S(\g,e) = \C[e+\g^f]$, and thus we equip $e+\g^f$ with a Poisson structure.

Let $\gamma : \C^\times \to G$ be the cocharacter with differential $d_1\gamma(1) = h$. There is a $\C^\times$-action on $\g$ given by $t\mapsto t^{2} \Ad(\gamma(t)^{-1})$ which restricts to a contracting $\C^\times$-action on $e+\g^f$. The grading on $\C[e+\g^f]$ induced by this action coincides with the Kazhdan grading on $S(\g,e)$.

Thanks to \cite[\textsection 3.2]{GG02} we know that the symplectic leaves of $e+\g^f$ coincide with the irreducible components of the intersection of adjoint orbits with the slice. Since $S(\g,e)$ is Poisson graded in degree $-2$ this shows that the $\C^\times$-action permutes the symplectic leaves of $e + \g^f$.

\subsection{The Katsylo variety and the tangent cone}

The Slodowy slice $e + \g^f$ reflects the local geometry of $\g$ in a neighbourhood of the adjoint orbit of $e$. Let $\Sc_1,...,\Sc_l$ be the sheets of $\g$ containing $e$. Recall from the introduction that we define the {\it Katsylo variety} to be
\begin{eqnarray}
\label{e:Katsylovariety}
e + X := (e + \g^f) \cap \bigcup_{i=1} \Sc_i.
\end{eqnarray}
Let $\m_0$ denote the maximal ideal of $\C[e+X]$ corresponding to $e$.
One of the main tools in this paper is the tangent cone $\TC_e(e+X) = \Spec (\gr_{\m_0} \C[e+X])$, which we equip with the reduced scheme structure.

\begin{Proposition}
\label{P:Katsyloproperties}
\begin{enumerate}
\setlength{\itemsep}{4pt}
\item $\Spec S(\g,e)^\ab_\red = e + X$ as reduced schemes.
\item Let $\g$ be a classical Lie algebra. We have bijections
$$\Comp(\! \ \bigcup_{i=1}^l \Sc_i) \onetoone \Comp(e+X) \onetoone \Comp\TC_e(e+X) .$$
The first bijection reduces dimension by $\dim \Ad(G) e$ whilst the second is dimension preserving.
\end{enumerate}
\end{Proposition}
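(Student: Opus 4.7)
The plan is to prove (1) by identifying both sides via the symplectic leaf decomposition of the Slodowy slice, obtain the first bijection of (2) from Katsylo's geometric quotient construction for sheets, and establish the tangent-cone bijection using Bulois' transversality theorem in classical type.

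For (1), Lemma~\ref{L:0dleavesandabelianspec} reduces the assertion to identifying the union of zero-dimensional symplectic leaves of $e+\g^f$ with $e+X$. The leaves of the transverse Poisson structure on the slice are the intersections $(e+\g^f)\cap \O$ with adjoint orbits meeting the slice, and transversality yields $\dim((e+\g^f)\cap \O) = \dim \O - \dim \Ad(G) e$. Hence the zero-dimensional leaves are precisely the points $x \in e+\g^f$ with $\dim \Ad(G)x = \dim \Ad(G)e$, i.e.\ $(e+\g^f)\cap \g_{(d)}$ for $d = \dim \Ad(G) e$. Using the contracting $\C^\times$-action from Section~\ref{ss:classicalandslices}, any such $x$ is connected to $e$ inside $(e+\g^f)\cap\g_{(d)}$ by the irreducible curve $\{t\cdot x\}\cup\{e\}$; passing to $\g$-orbits gives an irreducible subvariety of $\g_{(d)}$ containing both $\Ad(G) x$ and $\Ad(G)e$, so by the definition of sheets as irreducible components of $\g_{(d)}$ the orbit $\Ad(G)x$ lies in a sheet through $e$. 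This identifies $(e+\g^f)\cap\g_{(d)}$ with $\bigcup_{i=1}^l (e+\g^f)\cap \Sc_i = e+X$, proving (1).

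For the first bijection in (2), Katsylo's theorem exhibits $e+X_i := \Sc_i\cap(e+\g^f)$ as a geometric quotient of $\Sc_i$ under $G$-conjugation, so $\Comp(\Sc_i)\onetoone \Comp(e+X_i)$ with dimensions differing by $\dim \Ad(G) e$ in accordance with Theorem~\ref{T:Borhostheorem}; assembling these over $i$ delivers the claimed dimension-shifting bijection. For the second bijection, I appeal to Bulois' result that in classical type the sheets $\Sc_1,\ldots,\Sc_l$ through $e$ meet transversally at $e$. Passing to the slice, this means the tangent cones $\TC_e(e+X_i)$ sit in complementary positions inside $T_e(e+\g^f)$, so that $\TC_e(e+X) = \bigcup_i \TC_e(e+X_i)$ is an irredundant decomposition into irreducible components with $\dim \TC_e(e+X_i) = \dim(e+X_i)$, yielding the required dimension-preserving bijection.

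The main obstacle is the tangent-cone bijection in (2): in general, passage from a scheme to its tangent cone can both merge components and lower their dimensions, so neither the injectivity nor the dimension preservation is automatic. The restriction to classical type enters precisely through Bulois' transversality theorem (work in preparation \cite{Bu}), which rules out both pathologies by placing the $\TC_e(e+X_i)$ in general position; the analogous transversality for exceptional types is not presently known, which is why our argument is limited to the classical setting.
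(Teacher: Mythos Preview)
Your argument for (1) is correct and essentially matches the paper's.

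For (2) there are two genuine gaps, both filled in the paper by appealing to Im~Hof~\cite{Im05} rather than Katsylo. First, Katsylo's theorem does not exhibit $e+X_i := \Sc_i \cap (e+\g^f)$ itself as a geometric quotient of $\Sc_i$; it realises $(e+X_i)/A$ as such a quotient for a finite group $A$ related to the component group $\Gamma$, and irreducibility of $(e+X_i)/A$ does not imply irreducibility of $e+X_i$ since $A$ could permute components. The paper instead cites \cite[Theorem~6.2]{Im05}, which in classical type expresses $e+X_i$ as the image of a morphism from an irreducible variety.

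Second, and more seriously, your tangent-cone argument tacitly assumes that each $\TC_e(e+X_i)$ is irreducible when you present $\TC_e(e+X) = \bigcup_i \TC_e(e+X_i)$ as a decomposition into irreducible components. This is only guaranteed once $e+X_i$ is \emph{smooth} at $e$, so that $\TC_e(e+X_i) = \T_e(e+X_i)$ is a linear subspace; for a singular $e+X_i$ the tangent cone could itself be reducible, and Bulois' transversality of the $\Sc_i$ would not by itself bound the number of components of $\TC_e(e+X)$. The paper obtains smoothness from the main result of \cite{Im05} (smoothness of sheets in classical type) together with the smooth equivalence of $\Sc_i$ and $\Sc_i \cap (e+\g^f)$, and then uses transversality only to rule out containments among the linear spaces $\T_e(e+X_i)$.
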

\begin{proof}
By Lemma~\ref{L:0dleavesandabelianspec} we know that the reduced scheme  associated to $\Spec S(\g,e)^\ab$ is the union of zero dimensional symplectic leaves of $e + \g^f$. By \cite[\textsection 3.2]{GG02} we know that the symplectic leaves of $e + \g^f$ are the irreducible components of the non-empty intersections $\O \cap (e + \g^f)$ where $\O \subseteq \g$ is an adjoint orbit. Since $e + \g^f$ intersects orbits transversally it follows that the zero dimensional leaves are precisely the components of $\O \cap (e+\g^f)$ where $\dim \O = \dim \Ad(G)e$. Since the contracting $\C^\times$-action permutes the leaves of $e + \g^f$, these are precisely the orbits lying in the sheets containing $e$. This proves (1).

To prove the first bijection in (2) it suffices to show that for every sheet $\Sc_i$ the intersection $\Sc_i \cap (e + \g^f)$ is irreducible. It was proven in \cite[Theorem~6.2]{Im05} that $\Sc_i \cap (e + \g^f)$ can be expressed as the image of a morphism from an irreducible variety, which gives the first bijection.  The main result of {\it loc. cit.} shows that the varieties $\Sc_i$ are smooth. Since they are smoothly equivalent to $\Sc_i \cap (e+\g^f)$ the latter are also smooth, and so $$\TC_e(e + X) = \bigcup_{i=1}^l \T_e(\Sc_i \cap e + \g^f).$$ Now the second bijection will follow if we can show that $\T_e(\Sc_i \cap e + \g^f)$ is never contained in $\T_e(\Sc_j \cap e + \g^f)$ for $i \ne j$ which is a consequence of the fact that the sheets $\Sc_1,...,\Sc_l$ intersect transversally at $e$ \cite{Bu}. Finally the claims about dimensions follow from the fact that $e + \g^f$ is transversal to adjoint orbits and the sheets are smooth.
\end{proof}

\section{Abelian quotients of finite $W$-algebras }
\label{s:quantumabelianquotients}

\subsection{The finite $W$-algebra}
\label{ss:finiteWalgebras}
We now recall the definition of the (quantum) finite $W$-algebra. Once again $G$ is a connected complex reductive group with simply connected derived subgroup. A nondegenerate trace form on $\g$ is denoted $\kappa$. We pick an $\sl_2$-triple $(e,h,f)$ in $\g = \Lie(G)$ and consider the grading $\g = \bigoplus_{i\in \Z} \g(i)$ given by $\ad(h)$-eigenspaces. Write $\chi := \kappa(e, \cdot) \in \g^*$. Recall the notation $\g(\lemw)$ and $G(<0)$.

The generalised Gelfand--Graev module is defined to be
$$Q := U(\g) / U(\g) \g(\lemw)_\chi = U(\g) \otimes_{U(\g(<\!-1))} \C_\chi$$
where $\C_\chi$ is the one dimensional representation of $\g(\lemw)$ afforded by $\chi$. The finite $W$-algebra is defined to be
$$U(\g,e) := Q^{G(< 0)}.$$
It inherits a natural algebra structure from $U(\g)$ via $\bar{u}_1 \bar{u}_2 = \overline{u_1 u_2}$ where $\bar{u}$ denotes the projection of $u \in U(\g)$ to $Q$, and $\bar u_1, \bar u_2 \in Q^{G(<0)}$.
\begin{Remark}
The finite $W$-algebra is more commonly defined as a quantization of a Hamiltonian reduction with respect to a certain subgroup of $G$ lying between $G(\!<\!\!-1)$ and $G(\lez)$. In \cite[Theorem~4.1]{GG02} Gan--Ginzburg show that this definition is equivalent to the one given here: in their notation our definition corresponds to the isotropic space $\ell = 0$.
\end{Remark}

The Kazdan filtration on $U(\g)$ is defined by placing $\g(i)$ in degree $i + 2$. This descends to a non-negative filtration on $Q$ and $U(\g,e)$ and we write $\gr U(\g,e)$ for the associated graded algebra. The graded algebra $\gr U(\g,e)$ naturally identifies with a subalgebra of $\gr Q = S(\g) / S(\g) \g(\lemw)_\chi$, and we have the following fundamental fact (see \cite[Proposition~6.3]{Pr02}, \cite[Theorem~4.1]{GG02}).
\begin{Lemma}
\label{L:quantizes}
$\gr U(\g,e) = S(\g,e)$ as Kazhdan graded algebras.
\end{Lemma}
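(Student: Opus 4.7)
The plan is to proceed in three steps following the standard template for quantum Hamiltonian reduction.

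First I would verify that the adjoint action of $G(\lez)$ on $U(\g)$ preserves the Kazhdan filtration, so that $U(\g,e) = Q^{G(\lez)}$ acquires a well-defined Kazhdan filtration from $Q$ and the natural map descends to an inclusion $\gr U(\g,e) \hookrightarrow (\gr Q)^{G(\lez)}$. This is a direct check: for $x \in \g(j)$ with $j < 0$ and $y \in \g(i)$, the automorphism $\Ad(\exp(x))$ sends $y$ to $\sum_{k \ge 0} \ad(x)^k y / k!$, and each summand $\ad(x)^k y \in \g(i+jk)$ lies in Kazhdan degree $i+jk+2 \le i+2$.

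Next I would identify $(\gr Q)^{G(\lez)}$ with $S(\g,e)$. The PBW theorem yields $\gr Q \cong S(\g)/S(\g)\g(\lemw)_\chi$, and under the trace form $\kappa$ this is the coordinate ring of the affine subspace $e + \g(\le 1) \subseteq \g$. The Gan--Ginzburg slice theorem \cite[Lemma~2.1]{GG02} asserts that the adjoint action map $G(\lez) \times (e + \g^f) \to e + \g(\le 1)$ is an isomorphism of affine varieties, so taking $G(\lez)$-invariant functions on both sides gives $(\gr Q)^{G(\lez)} = \C[e + \g^f] = S(\g,e)$ by definition.

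The third and principal step is the equality $\gr U(\g,e) = (\gr Q)^{G(\lez)}$, which amounts to exchanging the associated-graded functor with $G(\lez)$-invariants. Here I would use the Chevalley--Eilenberg cochain complex $C^\bullet(\g(\lez), Q)$: since $G(\lez)$ is connected unipotent with Lie algebra $\g(\lez)$, its degree-zero cohomology recovers $U(\g,e)$. Endowing this complex with the Kazhdan filtration induced from $Q$, the associated graded complex computes $H^\bullet(\g(\lez), \gr Q)$. Under the slice isomorphism above, $\gr Q \cong \C[G(\lez)] \otimes \C[e+\g^f]$ as $\g(\lez)$-modules, so the K\"unneth formula together with the vanishing $H^{>0}(\g(\lez), \C[G(\lez)]) = 0$ shows that the cohomology of the graded complex is concentrated in degree zero, where it equals $S(\g,e)$. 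The resulting spectral sequence degenerates, yielding $\gr U(\g,e) = S(\g,e)$.

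The main obstacle is this cohomological interchange in the third step; the first two steps merely unpack the definitions together with the Gan--Ginzburg slice theorem. An alternative route, avoiding any cohomology, is Premet's explicit Kostant-type construction of a PBW basis for $U(\g,e)$ as in \cite[Proposition~6.3]{Pr02}, whose top Kazhdan symbols directly generate $S(\g,e)$.
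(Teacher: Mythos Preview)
The paper does not give its own proof of this lemma; it simply cites \cite[Proposition~6.3]{Pr02} and \cite[Theorem~4.1]{GG02}. Your proposal is precisely a sketch of the Gan--Ginzburg argument from \cite[Theorem~4.1]{GG02} (the cohomological degeneration via the slice isomorphism), and you correctly flag Premet's explicit PBW construction \cite[Proposition~6.3]{Pr02} as the alternative route --- these are exactly the two references the paper invokes, so your approach is aligned with the paper's.
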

One of the main objects of study in this paper is the maximal abelian quotient $U(\g,e)^\ab$, which is defined to be the quotient by the derived ideal which is generated by all commutators $[u,v]$ with $u,v \in U(\g,e)$. 

The Kazhdan filtration descends to $U(\g,e)^\ab$ and we write $\gr U(\g,e)^\ab$ for the associated Kazhdan graded algebra. Our main results describe the structure of the following affine schemes
\begin{eqnarray*}
& & \E(\g,e) := \Spec U(\g,e)^\ab,\\
& & \C\E(\g,e) := \Spec \gr (U(\g,e)^\ab).
\end{eqnarray*}
The closed points of the first of these parameterises the one dimensional representations of $U(\g,e)$, whilst the second is the asymptotic cone of the first (cf. Section~\ref{ss:degenerationsofschemes}).

\subsection{Bounding the asymptotic cone}
The next result is equivalent to \eqref{e:coneimmersedinKatsylo}, and we view it as a version of Premet's theorem \cite[Theorem~1.2]{Pr10} for the asymptotic cone of $\E(\g,e)$. Our methods are adapted from his work. 
\begin{Theorem}
\label{T:semiclassicalcomponents}
If $\g = \Lie(G)$ is the Lie algebra of a complex reductive group then there is a surjective homomorphism $$S(\g,e)^\ab \onto \gr U(\g,e)^\ab$$ which induces an isomorphism of reduced algebras.
%There is a dimension preserving bijection
%$$\Comp \C\E(\g,e) \onto \Comp (e + X).$$
\end{Theorem}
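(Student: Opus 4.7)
The plan splits into two steps: establishing the surjection, and then showing it induces an isomorphism of reduced algebras.

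\textbf{The surjection.} The Kazhdan filtration on $U(\g,e)$ descends to an almost-commutative filtration on the abelian quotient $U(\g,e)^\ab$, giving a surjective graded map $S(\g,e) = \gr U(\g,e) \twoheadrightarrow \gr U(\g,e)^\ab$ by Lemma~\ref{L:quantizes}. Since the induced Poisson bracket on $S(\g,e)$ has Kazhdan degree $-2$, the principal symbol of any commutator $[u,v] \in U(\g,e)$ is the Poisson bracket $\{\sigma(u), \sigma(v)\}$ of symbols. Hence the associated graded of the commutator ideal of $U(\g,e)$ contains the Poisson derived ideal of $S(\g,e)$, and the surjection factors through the Poisson abelianization to yield $S(\g,e)^\ab \twoheadrightarrow \gr U(\g,e)^\ab$.

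\textbf{Bijection on closed points.} Since the map is surjective, the induced map on reductions is too, and it suffices to prove injectivity. By Proposition~\ref{P:Katsyloproperties}(1) we have $\Spec(S(\g,e)^\ab)_\red = e+X$, so this amounts to showing that every closed point of $e+X$ lies in the underlying set of $\C\E(\g,e) \subseteq e+\g^f$. Both $\C\E(\g,e)$ and each irreducible component $S_i$ of $e+X$ are closed and stable under the contracting Kazhdan $\C^\times$-action, so it is enough to exhibit a Zariski-dense subset of each $S_i$ in $\C\E(\g,e)$.

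For this we mimic Premet's reduction-mod-$p$ construction from \cite{Pr10} which produces \eqref{e:Premetsmap}. After fixing a $\Z$-form of the setup and reducing modulo $p \gg 0$, the modular finite $W$-algebra $U_p(\g,e)$ is a finite module over its center, which contains a large $p$-center arising from the $p$-th power map and identified with a Frobenius twist of $\C[e+\g^f] = S(\g,e)$. Premet's analysis of the central character map shows that its image on closed points meets every component of the mod $p$ reduction of $e+X$ in a dense subset. Because the $p$-center embeds compatibly with the Kazhdan filtration, the same statement transfers to the Kazhdan-graded picture, identifying the closed points of $\Spec \gr U_p(\g,e)^\ab$ with the modular Katsylo variety. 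A standard spreading-out and generic-fiber specialization argument then lifts the inclusion $e+X \subseteq \C\E(\g,e)$ back to characteristic zero, completing the proof.

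\textbf{Main obstacle.} The delicate part is organizing the characteristic-$p$ bookkeeping: one must arrange $\Z$-forms so that the Kazhdan filtration, the $p$-center, and the abelianization all specialize compatibly, and verify that the central characters produced by the $p$-center argument sweep out a Zariski-dense subset of each sheet component. The crucial novelty relative to \cite{Pr10} is that we are targeting the asymptotic cone $\C\E(\g,e)$ rather than $\E(\g,e)$ itself, so the $p$-center identification must be arranged to respect the Kazhdan grading, rather than merely the Kazhdan filtration.
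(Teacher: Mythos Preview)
Your surjection step is correct and matches the paper's Lemma~\ref{L:abquotsurjection}. The overall strategy of using reduction modulo $p$ in the spirit of \cite{Pr10} is also what the paper does. However, there is a genuine gap in your mod $p$ step.

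The key issue is that the $p$-center produces a \emph{different} map from the one you need. The $p$-center $Z_p(\g_\k,e_\k)$ is identified with functions on the \emph{Frobenius twist} $(\chi_\k + \v^\vee)^{(1)}$, and the resulting morphism $\k\E(\g_\k,e_\k) \to X_\k^{(1)}$ is finite and dominant, hence surjective on closed points. But this is \emph{not} the natural inclusion of $\k\E(\g_\k,e_\k)$ into the Slodowy slice coming from $S(\g_\k,e_\k) \twoheadrightarrow \gr U(\g_\k,e_\k)^\ab$. So knowing the $p$-center map hits every component of $X_\k^{(1)}$ does not tell you that $\k\E(\g_\k,e_\k)$, viewed as a subscheme of the slice, contains (or equals) $X_\k$. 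Your sentence ``identifying the closed points of $\Spec \gr U_p(\g,e)^\ab$ with the modular Katsylo variety'' is precisely the statement to be proven, not something the $p$-center delivers directly.

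The paper closes this gap with an additional ingredient: the dimension-vector comparison of Lemma~\ref{L:closedembeddings}. From the finite dominant morphism one extracts only the inequality $d(\k\E(\g_\k,e_\k)) \ge d(X_\k)$ on dimension vectors; this is transferred to characteristic zero via Lemma~\ref{L:dimensionpreservingbijections}, giving $d(\C\E(\g,e)) \ge d(e+X)$. Combined with the closed embedding $\C\E(\g,e) \hookrightarrow e+X$ (which you have from the surjection step together with Proposition~\ref{P:Katsyloproperties}(1)), Lemma~\ref{L:closedembeddings} then forces equality of reduced schemes. Your sketch could be repaired by inserting this dimension-vector step, but as written the passage from ``finite dominant via the $p$-center'' to ``set-theoretic equality in the slice'' is not justified.
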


First of all we prove the existence of the surjection
\begin{Lemma}
\label{L:abquotsurjection}
There is a surjective homomorphism $S(\g,e)^\ab \onto \gr U(\g,e)^\ab$.
\end{Lemma}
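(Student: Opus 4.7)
The plan is to realise the map as the associated graded of the abelianisation. Let $I \subseteq U(\g,e)$ denote the two-sided ideal generated by all commutators, so that $U(\g,e)^{\ab} = U(\g,e)/I$, and equip the quotient with the induced Kazhdan filtration. Then
$$\gr U(\g,e)^{\ab} \;=\; \gr U(\g,e)\big/\gr I \;=\; S(\g,e)\big/\gr I,$$
using Lemma~\ref{L:quantizes} for the second equality. Writing $K \subseteq S(\g,e)$ for the ideal generated by all Poisson brackets $\{f,g\}$, we have $S(\g,e)^{\ab} = S(\g,e)/K$, and the lemma is reduced to the inclusion $K \subseteq \gr I$.

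To verify this inclusion I would inspect homogeneous generators. Given $f \in S(\g,e)$ of Kazhdan degree $m$ and $g \in S(\g,e)$ of Kazhdan degree $n$, lift them to $u \in \F_m U(\g,e)$ and $v \in \F_n U(\g,e)$ with principal symbols $f$ and $g$ respectively. The Kazhdan filtration on $U(\g,e)$ is almost commutative, and the induced Poisson bracket on $S(\g,e)$ has degree $-2$, so that $[u,v] \in \F_{m+n-2} U(\g,e) \cap I$, and by the very definition of the induced Poisson structure its symbol in Kazhdan degree $m+n-2$ coincides with $\{f,g\}$ (possibly zero, in which case the claim is immediate). In either case $\{f,g\} \in \gr I$, and since $\gr I$ is an ideal of $S(\g,e)$ we conclude $K \subseteq \gr I$, giving the required surjection.

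I do not anticipate any genuine obstacle: the argument is routine PBW-style bookkeeping whose only subtlety is tracking the $-2$ shift in the Kazhdan degree under commutation, together with the observation that $K$ is simply the ordinary ideal generated by brackets (since brackets of elements of $S(\g,e)$ with elements of $K$ are themselves brackets, so $K$ is automatically Poisson). The harder content of Theorem~\ref{T:semiclassicalcomponents} — that this surjection is in fact an isomorphism on reduced schemes — is deeper and is to be handled separately via the reduction modulo $p$ argument adapted from \cite{Pr10}.
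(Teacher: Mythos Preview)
Your proof is correct and follows essentially the same approach as the paper: both identify $\gr U(\g,e)^{\ab}$ with $S(\g,e)/\gr D$ (where $D$ is the commutator ideal) and then verify the inclusion $K \subseteq \gr D$ by lifting and taking symbols of commutators. The only cosmetic difference is that the paper first reduces to a PBW generating set $\theta(\g^e)$ and checks the inclusion on brackets of generators, whereas you work directly with arbitrary homogeneous elements; your version is arguably cleaner.
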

\begin{proof}
Let $\theta : \g^e \to S(\g,e)$ be a PBW map coming from Theorem~\ref{T:PBW} and let $\Theta : \g^e \to U(\g,e)$ be a filtered lift of $\theta$. Using the Leibniz rule for the biderivation $[\cdot, \cdot]$ we see that the derived ideal $D$ of $U(\g,e)$ is generated by the set $\{[\Theta(u), \Theta(v)] \mid u,v \in \g^e\}$. Similarly the bracket ideal $B$ in $S(\g,e)$ is generated by $\{\{\theta(u),\theta(v)\} \mid u,v\in \g^e\}$. If $u, v$ are homogeneous for the $\ad(h)$-grading on $\g$ then the image of $[\Theta(u), \Theta(v)]$ in the graded algebra $\gr U(\g,e) = S(\g,e)$ is $\{\theta(u), \theta(v)\}$, using the identification of Lemma~\ref{L:quantizes}. This shows that $B \subseteq \gr D$. Hence we have $\gr U(\g,e)^\ab = S(\g,e) / \gr D \onto S(\g,e) / B = S(\g,e)^\ab$.
\end{proof}

In order to show that this surjection gives an isomorphism of reduced algebras we use reduction modulo $p$. This process was pioneered by Premet in the theory of $W$-algebras and used systematically to great effect in subsequent work \cite{Pr07, Pr10, To17, PT21}. Rather than repeating the details in full we state the important properties of the modular reduction procedure which we need for our proof.

Let $h$ denote the greatest Coxeter number as we vary over simple factors of the derived subgroup $(G, G)$. 
Pick a Chevalley $\Z$-form $\g_\Z \subseteq \g$ and write $\g_\Z = \mathfrak{h}_\Z \oplus \bigoplus_{\alpha \in \Phi} \Z x_\alpha$. Let $T\subseteq G$ denote the complex torus with Lie algebra $\mathfrak{h}_\Z \otimes_\Z \C$. When $\k$ is an algebraically closed field we let $G_\k$ denote the reductive algebraic group with Lie algebra $\g_\k := \g_\Z \otimes_\Z \k$ and let $T_\k$ denote the algebraic torus in $G_\k$ with Lie algebra $\mathfrak{h}_\Z \otimes_\Z \k$.

It follows from the observations of \cite[\textsection 3.3]{PT21} that for every nilpotent orbit $\O \subseteq \g$ there is an element $e\in \g_\Z \cap \O$ and a cocharacter $\lambda \in X_*(T)$ such that for all algebraically closed fields $\k$ of characteristic $p > h$:
\begin{enumerate}
\item The Bala--Carter labels of the adjoint orbits of $e$ and $e_{\k} := e\otimes 1 \in \g_{\k}$ coincide. 
\item The differential $d_1\lambda(\C)$ is a semisimple element of $\g$ lying in an $\sl_2$-triple containing $e$.
\item After canonically identifying to cocharacter lattices $X_*(T) = X_*(T_\k)$ the grading on $\g_\k$ induced by $\lambda$ is good for $e$, meaning $e_\k\in \g_\k(2)$ and $\g_\k^e \subseteq \g_\k(\ge 0)$.
\end{enumerate}
We mention that slightly weaker assumptions are sufficient for (1), (2), (3). One may work under the standard hypotheses of \cite[\textsection 6.3]{Ja04}, however we avoid these technicalities for the sake of simplicity.

Fix an orbit $\O \subseteq \g$ and a choice of $e, \lambda$ as above. When the characteristic of $\k$ is $p > h$ there is a $G$-equivariant trace form $\kappa_\k$, for example the Killing form is non-degenerate on $[\g, \g]$ and $\g = [\g, \g] \oplus \z(\g)$ under out hypothesis. Write $\chi_\k$ for the element of $\g_\k^\ast$ corresponding to $e_\k$ via $\kappa_\k$. Using the grading $\g_\k = \bigoplus_{i\in \Z} \g_\k(i)$ coming from $\lambda$ we let $\v$ denote a homogeneous complement to $[\g_\k,e_\k]$ in $\g_\k$. If $\Sc_1,...,\Sc_l$ denote the sheets of $\g_\k$ containing $e_\k$ then we define
$$e_\k + X_\k = (e_\k + \v) \cap \bigcup_{i=1}^l \Sc_i.$$
In the following we write $e + X_\C$ for the complex Katsylo variety \eqref{e:Katsylovariety}, and write $X_\k^\vee\subseteq \g^*_\k$ for the image of $X_\k$ under the $G$-equivariant isomorphism $\g_\k \to \g_\k^*$ coming from the trace form.
\begin{Lemma}
\label{L:dimensionpreservingbijections}
For $\Char(\k) \gg 0$ there are dimension preserving bijections
\begin{eqnarray}
\label{e:firstbij}
\Comp \C\E(\g,e) & \onetoone & \Comp \k \E(\g_{\k},e_\k) \\
\label{e:secondbij}
\Comp X_\C & \onetoone & \Comp X_{\k}.
\end{eqnarray}
\end{Lemma}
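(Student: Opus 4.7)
The plan is to handle the two bijections by separate techniques. Bijection~(\ref{e:secondbij}) is essentially combinatorial and comes down to Borho's classification of sheets, while (\ref{e:firstbij}) is a constructibility/spreading-out argument in algebraic geometry.

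For (\ref{e:secondbij}), I would apply Theorem~\ref{T:Borhostheorem} over both $\C$ and $\k$. The sheets through $e$ (resp.\ $e_\k$) correspond bijectively to $G$-conjugacy classes (resp.\ $G_\k$-conjugacy classes) of rigid induction data $(\l, \O_0)$ with $\Ind_{\l}^{\g}(\O_0) = \Ad(G) e$. For $\Char(\k)\gg 0$, the classifications of Levi subalgebras, of rigid nilpotent orbits via Bala--Carter labels, and the Lusztig--Spaltenstein induction operation are all independent of characteristic; coupled with hypothesis~(1) in the paragraph above the lemma, this gives a natural bijection between rigid data inducing $\Ad(G)e$ and rigid data inducing $\Ad(G_\k)e_\k$. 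Dimensions match because $\dim \Sc(\l, \O_0) = \dim \z(\l) + \dim \Ad(G)e$ is a combinatorial invariant. Since the grading determined by $\lambda$ is good for $e_\k$, the Slodowy-type complement $e_\k + \v$ is transversal to orbits, so $\dim(\Sc_i\cap (e_\k+\v)) = \dim \Sc_i - \dim \Ad(G_\k)e_\k$; and irreducibility of each intersection holds over $\C$ by \cite{Im05} and extends to characteristic $p\gg 0$ by spreading this proof out over a suitable integral base. This gives (\ref{e:secondbij}) with matching dimensions.

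For (\ref{e:firstbij}), I would choose a finitely generated $\Z$-subalgebra $A\subseteq \C$ over which a Chevalley form of $\g$, the $\sl_2$-triple $\{e,h,f\}$, the trace form $\kappa$ and the cocharacter $\lambda$ are all defined, and perform the quantum Hamiltonian reduction over $A$ to obtain an $A$-form $U(\g_A, e_A)$ with Kazhdan filtration. The abelianization and its Kazhdan associated graded $\gr U(\g_A, e_A)^\ab$ are finitely generated $A$-algebras, the latter being a quotient of $S(\g_A, e_A)$ by the $A$-form of Lemma~\ref{L:abquotsurjection}. After inverting finitely many primes in $A$, generic flatness makes the family $\Spec \gr U(\g_A, e_A)^\ab \to \Spec A$ flat, and constructibility of Hilbert-polynomial data on the fibres allows us to further shrink $\Spec A$ so that the number of irreducible components of the geometric fibres, together with their dimensions, is constant. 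The generic fibre recovers $\C\E(\g,e)$, while a closed point with residue characteristic $p\notin \Pi$ large recovers $\k\E(\g_\k, e_\k)$ after base change to $\k$; this yields (\ref{e:firstbij}) with matching dimensions.

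The hard part will be aligning the modular finite $W$-algebra $U(\g_\k, e_\k)$ appearing in the statement of the lemma with the specialization $\k \otimes_A U(\g_A, e_A)$. The delicate point is that in positive characteristic the modular $W$-algebra carries a large central subalgebra, the $p$-centre, which is invisible in the generic picture; however this does not affect the Kazhdan associated graded, since the analogue of Lemma~\ref{L:quantizes} and the abelianization surjection of Lemma~\ref{L:abquotsurjection} both hold integrally over $A$, and the $p$-centre contributes only to deeper filtration terms. Once $A$ is localized sufficiently so that quantum Hamiltonian reduction, abelianization and passage to the associated graded all commute with base change, the two bijections follow from the ingredients described.
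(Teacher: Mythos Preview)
Your outline is essentially what the paper invokes: its proof consists of citations to \cite[Lemma~3.1]{Pr10} and \cite[Theorem~3.2]{Pr10}, and Premet's arguments there are spreading-out arguments of the type you describe for~\eqref{e:firstbij}.

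For~\eqref{e:secondbij} there is a difference worth flagging. Your route passes through Borho's sheet classification and then invokes Im Hof's irreducibility theorem \cite{Im05} for $\Sc_i\cap(e+\g^f)$. That result is specific to classical type, whereas Lemma~\ref{L:dimensionpreservingbijections} is stated (and used in Theorem~\ref{T:semiclassicalcomponents}) for arbitrary reductive $G$; in general one does not know that components of $e+X$ are indexed by sheets, so your bijection on sheets does not immediately yield a bijection on $\Comp(e+X)$. The argument in \cite[Theorem~3.2]{Pr10} avoids this by spreading out $e+X$ itself: the sheets, and hence $e+X$, are cut out of the slice by rank conditions on matrices with entries in a Chevalley $\Z$-form, so $e+X$ has an $A$-model over a suitable finitely generated subring $A\subset\C$, and generic constancy of the component structure of the geometric fibres gives~\eqref{e:secondbij} directly, without ever matching components to individual sheets. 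The same constructibility machinery you already set up for~\eqref{e:firstbij} applies verbatim.

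On your final paragraph: the $p$-centre is not really the obstruction. The relevant input is the modular analogue of Lemma~\ref{L:quantizes}, namely $\gr U(\g_\k,e_\k)\cong S(\g_\k,e_\k)$, which is established in \cite{GT19a}; once that is in place the surjection $S(\g_\k,e_\k)^{\ab}\twoheadrightarrow \gr U(\g_\k,e_\k)^{\ab}$ follows as in Lemma~\ref{L:abquotsurjection}, and the identification of $U(\g_\k,e_\k)$ with the base change of the $A$-form holds for $p$ outside a finite set. After that the spreading-out argument for $\gr U(\g_A,e_A)^{\ab}$ runs without further complication.
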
 
\begin{proof}
The first bijection \eqref{e:firstbij} can be obtained by reciting the proof of \cite[Lemma~3.1]{Pr10} verbatim, whilst the second \eqref{e:secondbij} was constructed in the proof of \cite[Theorem~3.2]{Pr10}.
\end{proof}

Since $\g_\k = \Lie(G_\k)$ is algebraic there is a $G_\k$-equivariant restricted structure on $\g_\k$. The $p$-centre $Z_p(\g_\k)$ is a central subalgebra of $U(\g_\k)$ which identifies with $\k[(\g^*_\k)^{(1)}]$, the coordinate ring on the Frobenius twist of the dual space of $\g_\k$, as $G_\k$-algebras.

The modular finite $W$-algebra is defined in precisely the same manner as the complex analogue: $U(\g_\k, e_\k) = Q_\k^{G_\k(< 0)}$ where $Q_\k = U(\g_\k) / U(\g_\k) \g_\k(\lemw)_\chi$ and $\g_\k(\lemw)_\chi = \{ x - \chi_\k(x) \mid x\in \g_\k(\lemw)\}$ \cite{Pr10, GT19a}. If $\phi : U(\g_\k) \to Q_\k$ is the natural projection then the $p$-centre of $U(\g_\k, e_\k)$ is defined to be $(\phi Z_p(\g_\k))^{G_\k(<0)}$; see \cite[\textsection 8]{GT19a} for more detail. It follows from Lemma~8.2 of {\it loc. cit.} that $Z_p(\g_\k,e_\k) = \k[(\chi_\k + \v^\vee)^{(1)}]$ where $\v^\vee = \kappa_\k(\v, \cdot) \subseteq \g^*$, and the maximal ideals of the $p$-centre will be referred to as $p$-characters. For $\eta\in \chi_\k + \v^\vee$ we define the reduced finite $W$-algebra $U_\eta(\g_\k,e_\k)$ to be the quotient of $U(\g_\k,e_\k)$ by the ideal generated by the corresponding maximal ideal of $Z_p(\g_\k,e_\k)$ (identifying $\chi_\k + \v^\vee$ with its Frobenius twist, as sets).
\begin{Lemma}
\label{L:finitedominantcharp}
There is a finite, dominant morphism $$\k \E(\g_{\k}, e_{\k}) \to X_{\k}.$$
\end{Lemma}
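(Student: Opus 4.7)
The plan is to construct the morphism from the central character afforded by the $p$-centre, then identify its scheme-theoretic image with the Katsylo variety. A fundamental property of the modular finite $W$-algebra is that $U(\g_\k,e_\k)$ is a finitely generated module over its $p$-centre $Z_p(\g_\k, e_\k) \cong \k[(\chi_\k + \v^\vee)^{(1)}]$; see \cite[\textsection 8]{GT19a}. Passing to the abelian quotient $U(\g_\k, e_\k)^\ab$ preserves this finiteness, so dualising the natural map $Z_p(\g_\k, e_\k) \to U(\g_\k, e_\k)^\ab$ yields a finite morphism
$$
\varrho : \k\E(\g_\k, e_\k) \longrightarrow (\chi_\k + \v^\vee)^{(1)}.
$$
The $G_\k$-equivariant trace form identifies $\chi_\k + \v^\vee$ with $e_\k + \v$, and the Frobenius twist is a bijective morphism identifying $(e_\k + \v)^{(1)}$ with $e_\k + \v$ as varieties; composing produces a finite morphism $\k\E(\g_\k, e_\k) \to e_\k + \v$.

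The essential step is to show that the scheme-theoretic image of $\varrho$ lies in $X_\k$. A closed point of this image corresponds to a one-dimensional representation of $U(\g_\k, e_\k)$ with central character $\eta^{(1)} \in (\chi_\k + \v^\vee)^{(1)}$, equivalently a one-dimensional representation of the reduced algebra $U_\eta(\g_\k, e_\k)$. By Skryabin's equivalence (valid in characteristic $p \gg 0$) this corresponds to an irreducible $U_\eta(\g_\k)$-module of dimension $p^{d(\chi_\k)}$, where $d(\chi_\k) = \tfrac{1}{2} \dim G_\k \cdot \chi_\k$. Premet's proof of the Kac--Weisfeiler conjecture forces $\dim G_\k \cdot \eta \le \dim G_\k \cdot \chi_\k$, and combined with the constraint $\eta \in \chi_\k + \v^\vee$ this places the coadjoint orbit of $\eta$ into one of the sheets of $\g_\k^\ast$ containing $\chi_\k$, following \cite[Theorem~3.2]{Pr10}. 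Under the identifications above, this says precisely that the image lies inside $X_\k$, giving the sought finite morphism $\k\E(\g_\k, e_\k) \to X_\k$.

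For dominance we reverse this logic on a dense open subset of every component of $X_\k$: when $\eta$ lies in the intersection of $\chi_\k + \v^\vee$ with the regular locus of a sheet $\Sc_i$ through $\chi_\k$, a one-dimensional module for $U_\eta(\g_\k, e_\k)$ can be produced by modular parabolic induction from a one-dimensional representation of the $W$-algebra associated with the rigid induction datum for $\Sc_i$, following the constructions of \cite[\textsection 3]{Pr10} and \cite{Pr14}. This ensures $\varrho$ hits a dense open subset of every irreducible component of $X_\k$. The main obstacle is promoting Premet's closed-point computations into a scheme-theoretic statement about the image and guaranteeing dominance onto every component; this is where we rely most heavily on the modular production of one-dimensional representations sheet by sheet, together with the characterisation of the image via the Kac--Weisfeiler bound.
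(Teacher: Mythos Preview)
Your argument targets the wrong source scheme. In the paper's conventions $\k\E(\g_\k, e_\k)$ is the asymptotic cone $\Spec \gr U(\g_\k, e_\k)^\ab$, with the prefix playing the same role as in $\C\E(\g,e)$ from Section~\ref{ss:degenerationsofschemes}; indeed the paper's proof concludes by exhibiting a finite injection into $\gr U(\g_\k, e_\k)^\ab$, not into $U(\g_\k, e_\k)^\ab$. Your map $\varrho$ has source $\Spec U(\g_\k, e_\k)^\ab$, so you have established a different (and easier) statement. The missing step is the passage to the Kazhdan-associated graded. The paper handles this by observing that $X_\k$ is stable under the contracting $\k^\times$-action on $e_\k + \v$, so the reduced image algebra $R = \k[X_\k]$ is already graded and satisfies $R \cong \gr R$; this is what allows the filtered inclusion into $U(\g_\k, e_\k)^\ab$ to descend to an inclusion into the graded algebra.

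Your treatment of dominance also differs from the paper's. You propose to produce one-dimensional $U_\eta(\g_\k, e_\k)$-modules on a dense locus of each component of $X_\k$ via modular parabolic induction from rigid data, following Premet's original route in \cite{Pr10}; this works in principle but depends on existence results for rigid modular $W$-algebras and does not obviously survive the passage to $\gr$. The paper instead invokes the main theorem of \cite{PT20}, which characterises the $p$-characters $\eta$ for which $U_\eta(\g_\k)$ admits a module of the minimal Kac--Weisfeiler dimension as \emph{exactly} those with $\dim \Ad^*(G_\k)\eta = \dim \Ad^*(G_\k)\chi_\k$. Together with the contracting $\k^\times$-action this identifies the closed points of the image with $X_\k$ in a single stroke, giving containment and surjectivity simultaneously. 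By contrast your Kac--Weisfeiler argument only yields the inequality $\dim G_\k\cdot\eta \le \dim G_\k\cdot\chi_\k$; you need the contracting action (via lower semicontinuity of orbit dimension) to obtain the reverse inequality, and you do not invoke it.
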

\begin{proof}
Consider the homomorphism $Z_p(\g_\k,e_\k) \to U(\g_\k,e_\k)^\ab$ and denote the kernel by $K$. Write $V(K) \subseteq \mSpec Z_p(\g_\k,e_\k)$ for the algebraic subvariety defined by $K$. By definition the points of $V(K)$ correspond to the $p$-characters $\eta \in \chi_\k + \v^\vee$ such that $U_\eta(\g_\k, e_\k)$ admits a one dimensional module. In \cite[\textsection 2.6]{Pr10} Premet demonstrated the following relationship between the reduced enveloping algebra and reduced finite $W$-algebra
\begin{eqnarray}
\label{e:Premetsthm}
U_\eta(\g_\k) \cong \Mat_{p^{d_{\chi_\k}}} U_\eta(\g_\k, e_\k)
\end{eqnarray}
where $d_{\chi_\k} := \frac{1}{2} \dim \Ad^*(G_\k) \chi_\k$, under the assumption $p \gg 0$. In \cite[Remark~9.4]{GT19a} the isomorphism \eqref{e:Premetsthm} was recovered under weaker hypotheses ($p > h$ suffices). It follows that the points of $V(K)$ correspond to $p$-characters $\eta \in \chi_\k + \v^\vee\subseteq \g_\k^*$ such that $U_\eta(\g_\k)$ admits a module of dimension $p^{d_{\chi_\k}}$.

In \cite{Pr95} Premet provided his first proof of the second Kac--Weisfeiler conjecture (he later refined his argument using $W$-algebras, see Theorem~2.3(ii) and \textsection 2.6 of \cite{Pr02}). From this, we know that every $U_\eta(\g_\k)$-module has dimension divisible by $p^{d_\eta}$. On the other hand, \cite[Theorem~1.1]{PT21} states that every $U_\eta(\g_\k)$ has a module of dimension $p^{d_\eta}$. Therefore
$$V(K) = \{\eta \in \chi_\k + \v^\vee \mid \dim \Ad^*(G_\k) \chi_\k = \dim \Ad^*(G_\k) \eta\}.$$
We claim that this set is equal to $(\chi_\k + X^\vee_\k)^{(1)}$. In the current setting, $\chi_\k + \v^\vee$ admits a contracting $\k^\times$-action defined in the same manner as the one parameter group of automorphisms appearing in Section~\ref{ss:classicalandslices}, using the cocharacter determined by the grading in (3) above. This contracting action preserves the sheets of $\g_\k^*$ and this implies the claim.

Let $R$ denote the reduced algebra corresponding to $Z_p(\g,e)/K$. Since $Z_p(\g,e) = \k[(\chi_\k + \v^\vee)^{(1)}]$ as Kazhdan filtered algebras we see that $R = \k[(\chi_\k + X_\k^\vee)^{(1)}]$ is also an identification of Kazhdan filtered algebras. Since $\chi_\k + X_\k^\vee$ is stable under the contracting $\k^\times$-action inducing the Kazhdan grading, it follows that $R \cong \gr R$. We conclude that there is a finite injective algebra homomorphism $R \hookrightarrow \gr U(\g,e)^\ab$, which completes the proof.
\end{proof}

\begin{proofofsemiclassicalcomponentmap}
Recall that finite morphisms are closed, and so a finite dominant morphism is surjective.  Now it follows from Lemma~\ref{L:finitedominantcharp} that there is a surjective map $\Comp \k\E(\g_\k, e_\k) \to \Comp X_\k$ such that for every $Z\in \Comp X_\k$ there exists an element of $\Comp \k\E(\g_\k, e_\k)$ mapping to $Z$ of the same dimension. Combining with Lemma~\ref{L:dimensionpreservingbijections} we deduce that there is surjection $\Comp \C\E(\g,e) \onto \Comp(e + X_\C)$ which respects dimensions in the same manner. In particular we have $d(\C\E(\g,e)) \ge d(e + X_\C)$, where $d$ denotes the dimension vector defined in Section~\ref{ss:degenerationsofschemes}. Thanks to Lemma~\ref{L:closedembeddings}, Proposition~\ref{P:Katsyloproperties}(1) and Lemma~\ref{L:abquotsurjection} the embedding $\C\E(\g,e) \into \Spec S(\g,e)^\ab = e + X_\C$ is an isomorphism on the underlying reduced schemes. $\hfill \qed$
\end{proofofsemiclassicalcomponentmap}

The next result follows immediately from Theorem~\ref{T:semiclassicalcomponents}.
\begin{Corollary}
\label{P:relatingabelianisations}
If $S(\g,e)^\ab$ is reduced then so are $\gr U(\g,e)^\ab$ and $U(\g,e)^\ab$. $\hfill \qed$
\end{Corollary}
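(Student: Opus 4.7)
The plan is to use Theorem~\ref{T:semiclassicalcomponents} to transfer reducedness first from $S(\g,e)^\ab$ to $\gr U(\g,e)^\ab$, and then from the graded algebra to the filtered one via a standard symbol argument.

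First I would observe that by Theorem~\ref{T:semiclassicalcomponents} we have a surjective homomorphism $\pi : S(\g,e)^\ab \onto \gr U(\g,e)^\ab$ which induces an isomorphism of reduced algebras $S(\g,e)^\ab_\red \isoto (\gr U(\g,e)^\ab)_\red$. Factoring $\pi$ as $S(\g,e)^\ab \overset{\pi}{\onto} \gr U(\g,e)^\ab \onto (\gr U(\g,e)^\ab)_\red$, the composition coincides (via the induced isomorphism) with the quotient $S(\g,e)^\ab \onto S(\g,e)^\ab_\red$. Hence $\ker \pi$ is contained in the nilradical of $S(\g,e)^\ab$. Under the hypothesis that $S(\g,e)^\ab$ is reduced, this nilradical vanishes, so $\pi$ is an isomorphism and in particular $\gr U(\g,e)^\ab$ is reduced.

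Next I would pass from $\gr U(\g,e)^\ab$ to $U(\g,e)^\ab$ by the usual leading-term argument. The Kazhdan filtration on $U(\g,e)$ is non-negative and separated, and it descends to a non-negative, separated filtration on $U(\g,e)^\ab$ whose associated graded is $\gr U(\g,e)^\ab$. Suppose $u \in U(\g,e)^\ab$ is a nonzero nilpotent element, and let $d \ge 0$ be the smallest integer with $u \in \F_d U(\g,e)^\ab$. Then the symbol $\bar u := u + \F_{d-1} U(\g,e)^\ab \in \gr U(\g,e)^\ab$ is nonzero, and since passing to symbols respects products in top filtered degree, $\bar u^{\,n}$ equals the symbol of $u^n = 0$, so $\bar u^{\,n}=0$. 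This contradicts reducedness of $\gr U(\g,e)^\ab$ established in the previous step; hence $U(\g,e)^\ab$ is reduced.

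There is essentially no obstacle here: the argument is purely formal once Theorem~\ref{T:semiclassicalcomponents} is in hand. The only point requiring mild care is the identification of $\ker\pi$ with a subset of the nilradical, but this is immediate from the statement that $\pi$ induces an isomorphism on reduced algebras.
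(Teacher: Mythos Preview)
Your proof is correct and is exactly the argument the paper has in mind: the corollary is stated with a $\qed$ and the remark that it ``follows immediately from the theorem,'' and you have simply unpacked that immediacy. The two steps you give---that the surjection of Theorem~\ref{T:semiclassicalcomponents} has kernel contained in the nilradical (hence is an isomorphism when $S(\g,e)^\ab$ is reduced), and that reducedness passes from $\gr$ to the filtered algebra via symbols---are precisely what is intended.
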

%\begin{proof}
%If $S(\g,e)^\ab$ is reduced then so is $\gr U(\g,e)^\ab$, by Theorem~\ref{T:semiclassicalcomponents}. In this case $U(\g,e)^\ab$ is reduced by a standard argument.
%\end{proof}

\section{The abelian quotient of the classical $W$-algebra}
\label{s:abquotsection}

In this section we study the finite $W$-algebras associated to classical Lie algebras, and so we refresh the notation once again. Let $N > 0$ is an integer and $\ve \in \{\pm 1\}$ such that $\ve^N = 1$. We let $G \subseteq \GL_N$ be a classical simple algebraic group and $\g = \Lie G$ such that
$$\g \cong \left\{ \begin{array}{rc} \so_N & \text{ if } \ve = 1, \\ \sp_N & \text{ if } \ve = -1.\end{array}\right.$$

\subsection{Partitions for nilpotent orbits in classical Lie algebras}
\label{ss:partitionsfornilpotentorbits}

We refer to the $G$-orbits in $\g$ consisting of nilpotent elements as {\it nilpotent orbits}, and similarly for $\GL_N$-orbits in $\gl_N$. Nilpotent $\GL_N$-orbits in $\gl_N$ are classified by partitions $\mathcal{P}(N) := \{\lambda \vdash N\}$, where the parts of a partition $\lambda$ correspond to the sizes of the Jordan blocks of elements in the orbit.

For each nilpotent $\GL_N$-orbit $\O_\lambda \subseteq\g$ the intersection $\O_\lambda \cap \g$ is either empty or it is a union of either one or two nilpotent $G$-orbits. Therefore an approximate classification of nilpotent $G$-orbits is achieved by describing the partitions $\lambda \vdash N$ for which $\O_\lambda \cap \g\ne \emptyset$. The set of such partitions is denoted $\P_\ve(N)$, and they are characterised as follows.
\begin{Lemma}
\label{L:involutionforlambda}
Let $\lambda = (\lambda_1,...,\lambda_n) \vdash N$. Then $\lambda \in \PeN$ if and only if there exists an involution $i \mapsto i'$ on the set $\{1,...,n\}$ such that:
\begin{enumerate}
\setlength{\itemsep}{4pt}
\item $\lambda_i = \lambda_{i'}$ for all $i = 1,...,n$;\smallskip
\item $i = i'$ if and only if $\varepsilon(-1)^{\lambda_i} = 1$;\smallskip
\item $i' \in \{i-1, i, i+1\}$. $\hfill \qed$
\end{enumerate}
\end{Lemma}
Whenever we choose $\lambda \in \PeN$ we will assume that a choice of involution has been fixed in accordance with Lemma~\ref{L:involutionforlambda}. We also adopt the convention that our partitions are non-decreasing $\lambda_1 \le \cdots \le \lambda_n$. For completeness we mention that $\lambda \in \PeN$ corresponds to one orbit unless $\ve = 1$ and all parts of $\lambda$ are even, which case there are two $G$-orbits of nilpotent elements of $\g$ with Jordan blocks given by $\lambda$ and these are permuted by the outer automorphism group of $\g$, see \cite[\textsection 5.1]{CM93} for example.

\subsection{Rigid, singular and distinguished partitions}
We introduce three classes of partition which correspond to important families of nilpotent orbit in classical Lie algebras of type {\sf B, C, D}.

Let $\lambda = (\lambda_1,...,\lambda_n) \in \PeN$ and adopt the convention that $\lambda_0 = 0$ and $\lambda_{n+1} = \infty$. A {\it 2-step} is a pair of indexes $(i, i+1)$ such that:
\begin{itemize}
\item $\ve(-1)^{\lambda_i} = \ve(-1)^{\lambda_{i+1}} = -1$;
\item $\lambda_{i-1} < \lambda_i \le \lambda_{i+1} < \lambda_{i+2}$.
\end{itemize}
We note that $\lambda_0 < \lambda_1$ and $\lambda_n < \lambda_{n+1}$ always hold due to our conventions. We write $\Delta(\lambda)$ for the set of 2-steps for $\lambda$. A 2-step $(i,i+1) \in \Delta(\lambda)$ is called {\it bad} if one of the following conditions holds:
\begin{itemize}
\item $\lambda_i - \lambda_{i-1}  > 0$ is even;
\item $\lambda_{i+2} - \lambda_{i+1} > 0$ is even.
\end{itemize}
We note that the second condition never holds for $i = n-1$, whilst if the first condition holds for $i = 1$ then we necessarily have $\ve = -1$. If the first of these two conditions holds then we refer to $i-1$ as the {\it bad boundary} of the 2-step whilst if the second condition holds then $i+2$ is the bad boundary. We say that a partition is {\it singular} if it admits a bad 2-step, and non-singular otherwise.

We say that a partition is {\it rigid} if the following two conditions hold:
\begin{itemize}
\item $\Delta(\lambda) = \emptyset$;
\item $\lambda_i - \lambda_{i-1} < 2$ for all $i = 1,...,n$.
\end{itemize}

We say that a partition is {\it distinguished} if the following two conditions hold:
\begin{itemize}
\item $\lambda_i < \lambda_{i+1}$ for all $i=1,...,n$;
\item $i = i'$ for all $i$.
\end{itemize}

The following result may be gathered from \cite[Theorem~3]{PT14}, \cite[Lemma~4.2]{Ja04} and \cite[Corollary~7.3.9]{CM93}.
\begin{Lemma} 
\label{L:partitionmeaning}
Let $e$ be a nilpotent element with Jordan block sizes given by the partition $\lambda \in \PeN$ in a classical Lie algebra preserving a non-degenerate form $\Psi$ on $\C^N$ such that $\Psi(u,v) = \ve \Psi(v,u)$ for $u,v\in \C^N$. The following hold:
\begin{enumerate}
\setlength{\itemsep}{4pt}
\item[(i)]  $e$ lies in a unique sheet of $\g$ if and only if $\lambda$ is non-singular.
\item[(ii)] $e$ is distinguished in the sense of Bala--Carter theory if and only if $\lambda$ is distinguished.
\item[(iii)] $e$ is rigid in the sense of Lusztig--Spaltenstein induction if and only if $\lambda$ is rigid. $\hfill \qed$
\end{enumerate}
\end{Lemma}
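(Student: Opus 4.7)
The plan is to deduce each of the three statements from the classical partition-theoretic classification of nilpotent orbits in $\so_N$ and $\sp_N$ recorded in \cite{CM93}, together with the sheet analysis of \cite{PT14}.

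For (ii), I would use Bala--Carter theory: a nilpotent element is distinguished precisely when its reductive centraliser is trivial, and a direct dimension computation for classical groups (see \cite[Chapter~8]{CM93}) shows this is equivalent to $\lambda$ having pairwise distinct parts, all of the parity dictated by $\ve$. Under the bookkeeping of Lemma~\ref{L:involutionforlambda}, the parity condition translates exactly to $i = i'$ for every $i$, whilst distinctness of the parts is $\lambda_i < \lambda_{i+1}$; this matches our combinatorial definition of a distinguished partition.

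For (iii), I would invoke the classification of rigid nilpotent orbits in classical Lie algebras due to Kempken (types {\sf B}, {\sf C}) and Spaltenstein (type {\sf D}). A nilpotent orbit is rigid iff it cannot be obtained by Lusztig--Spaltenstein induction from a nilpotent orbit in a proper Levi subalgebra, and translating this condition combinatorially via the partition description of induction in \cite[Chapter~7]{CM93} produces exactly the two conditions $\Delta(\lambda) = \emptyset$ and $\lambda_i - \lambda_{i-1} < 2$, as already recorded in \cite[\textsection 3]{PT14}.

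For (i), the main tool is the Kempken--Spaltenstein (KS) algorithm as developed in \cite{PT14}. That algorithm produces, starting from $\lambda$, the rigid induction data of all sheets of $\g$ containing the orbit of $e$, and the sheets through $e$ are in bijection with the terminal outputs of the algorithm. The algorithm proceeds deterministically at each step \emph{except} when it meets a bad 2-step, where there is a genuine choice producing two distinct rigid induction data and hence two distinct sheets of $\g$ through $e$. Thus $e$ lies in a unique sheet precisely when the algorithm is forced at every stage, which is exactly the non-singularity of $\lambda$. The only non-trivial point is checking that the two choices at a bad 2-step really do produce distinct sheets rather than coinciding accidentally, and this non-degeneracy was verified in \cite[Theorem~1]{PT14}.
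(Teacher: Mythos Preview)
Your proposal is correct and aligns with the paper's treatment: the paper gives no proof at all, simply citing \cite{CM93, PT14} and ending with a $\qed$, so your sketch is precisely an elaboration of what those citations contain. One minor point: the reference you give for (i) to \cite[Theorem~1]{PT14} is slightly indirect (that theorem characterises when $\E(\g,e)$ is an affine space); the direct combinatorial equivalence between non-singularity and uniqueness of the sheet is established in the development of the KS algorithm itself in \cite{PT14}, prior to Theorem~1.
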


\subsection{The Kempken--Spaltenstein algorithm}

Keep fixed a choice of $\lambda = (\lambda_1,...,\lambda_n) \in \PeN$. We say that $i$ is an {\it admissible index for $\lambda$} if Case 1 or Case 2 occurs:
\begin{eqnarray*}
\textbf{Case 1:} & & \lambda_i - \lambda_{i-1} >1;\\
\textbf{Case 2:} & & (i-1, i) \in \Delta(\lambda) \text{ and } \lambda_{i-1} = \lambda_{i}.
%\textbf{Case 1:} & \lambda_i - \lambda_{i-1} >1; &\ \ \ \ \ \ \ \ \ \ \ \ \ \ \ \ \ \ \ \ \ \ \ \ \ \ \ \ \ \ \ \ \ \  \\
%\textbf{Case 2:} & (i-1, i) \in \Delta(\lambda) \text{ and } \lambda_{i-1} = \lambda_{i}. &
\end{eqnarray*}
When $i$ is an admissible index we define $\lambda^{(i)} \in \P_\epsilon(N - 2(n-i+1))$ as follows:
\begin{eqnarray*}
\textbf{Case 1:} & &  \lambda^{(i)} = (\lambda_1, \lambda_2,...,\lambda_{i-1}, \lambda_i - 2, \lambda_{i+1}-2, ..., \lambda_{n}-2); \\
\textbf{Case 2:} & & \lambda^{(i)} = (\lambda_1, \lambda_2,..., \lambda_{i-2}, \lambda_{i-1} - 1, \lambda_{i}-1, \lambda_{i+1}-2,..., \lambda_{n}-2)
\end{eqnarray*}
Now we extend this definition inductively from indexes to sequences. We say that $(i)$ is an admissible sequence for $\lambda$ if $i$ is an admissible index. We inductively extend these definitions to sequences of indexes by saying that $\i = (i_1,...,i_l)$ is an {\it admissible sequence for $\lambda$} and define $\lambda^\i = (\lambda^{(i_1,...,i_{l-1})})^{(i_l)}$ if $\i' = (i_1,...,i_{l-1})$ is an admissible sequence for $\lambda$ and $i_l$ is an admissible index for $\lambda^{\i'}$. A sequence $\i$ is called {\it maximal admissible} if $\lambda^\i$ does not admit any admissible indexes. These definitions first appeared in \cite{PT14}, where the opposite ordering on the parts of $\lambda$ was used.

We define {\it admissible multisets} to be the multisets taking values in $\{1,...,n\}$ obtained from admissible sequences by forgetting the ordering. If $\i$ is an admissible sequence then write $[\i]$ for the corresponding admissible multiset.
%\begin{Lemma}
%\label{L:rigidadmissible}
%A partition $\lambda \in \PeN$ is rigid if an only if there are no admissible indexes. $\hfill\qed$
%\end{Lemma}

For $\ve = \pm 1$ let $\g$ be a classical Lie algebra in accordance with Lemma~\ref{L:partitionmeaning}, and let $\O \subseteq \g$ be a nilpotent orbit with partition $\lambda$. According to Proposition~8 and Corollary~8 of \cite{PT14}, for each addmisible multiset $[\i]$, there is a procedure for choosing:
\begin{enumerate}
\item a Levi subalgebra $\l_\i \cong \gl_{i_1} \times \cdots \gl_{i_l} \times \g_+$, where $\g_+$ is a classical Lie algebra of the same Dynkin type as $\g$ and natural representation of dimension $N - 2\sum i_j$.
\item a nilpotent orbit $\O_\i = \{0\} \times \cdots \{0\} \times \O_{\lambda^\i}$ where $\O_{\lambda^\i}\subseteq \g_+$ has partition $\lambda^\i$.
\end{enumerate}
These choices satisfy $\O = \Ind_{\l}^\g(\O_\i)$. Furthermore $[\i] \mapsto (\l_\i, \O_\i)$ sets up a one-to-one correspondence between maximal admissible multisets and rigid induction data for $\O$. Combining with Theorem~\ref{T:Borhostheorem} this proves.
\begin{Proposition}
\label{P:KSclassifiessheets}
The sheets of $\g$ containing $e$ are in bijection with maximal admissible multisets for $\lambda$. The dimension of sheet corresponding to $\i$ is $|\i| + \dim (\Ad(G)e)$. $\hfill \qed$
\end{Proposition}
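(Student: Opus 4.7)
The strategy is to combine two existing classification results: Borho's theorem for sheets in terms of rigid induction data, and the reformulation from \cite{PT14} of rigid induction data for $\mathcal{O}=\Ad(G)e$ in terms of the Kempken--Spaltenstein admissible multisets for $\lambda$. Both inputs are already in place in the text preceding the proposition, so the proof should amount to composing these two bijections and reading off the dimension.

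More precisely, by Theorem~\ref{T:Borhostheorem} each sheet of $\g$ has the form $\Sc(\l,\O_0)$ for a unique $G$-conjugacy class of rigid induction data $(\l,\O_0)$, and the sheet contains $e$ if and only if $\Ind_{\l}^{\g}(\O_0)=\Ad(G)e$. Now the discussion of the KS algorithm immediately above the proposition produces, for each admissible multiset $[\i]$, a pair $(\l_{\i},\O_{\i})$ with $\l_{\i}\cong\gl_{i_1}\times\cdots\times\gl_{i_l}\times\g_0$ and $\O_{\i}=\{0\}\times\cdots\{0\}\times\O_{\lambda^{\i}}$, satisfying $\Ind_{\l_{\i}}^{\g}(\O_{\i})=\Ad(G)e$; and the cited Proposition~8 and Corollary~8 of \cite{PT14} assert that $[\i]\mapsto(\l_{\i},\O_{\i})$ restricts to a bijection between maximal admissible multisets and $G$-conjugacy classes of rigid induction data inducing to $\mathcal{O}$. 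Composing these two bijections gives the first assertion of the proposition.

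For the dimension statement, Theorem~\ref{T:Borhostheorem} gives $\dim\Sc(\l_{\i},\O_{\i})=\dim\z(\l_{\i})+\dim\Ad(G)e$. Because $\g_0$ is semisimple of type {\sf B}, {\sf C} or {\sf D} (hence has trivial centre) and each factor $\gl_{i_j}$ contributes a one-dimensional centre, we have $\dim\z(\l_{\i})=l$, where $l$ is the number of entries of $\i$, i.e.\ $|\i|$. This yields $\dim\Sc(\l_{\i},\O_{\i})=|\i|+\dim\Ad(G)e$ as desired.

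There is no real obstacle; the only point worth flagging is that the arithmetic of centres depends on our conventions for $\l_{\i}$ (in particular that $\g_0$ is taken to be the simple factor, not the full reductive Levi including possible additional scalars), and that our $|\i|$ denotes the length of the multiset rather than the sum of its entries. Both are settled by the explicit form of $\l_{\i}$ recalled from \cite{PT14}, so the proposition is essentially a bookkeeping consequence of the two cited classifications.
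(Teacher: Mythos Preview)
Your proposal is correct and matches the paper's own approach exactly: the paper explicitly writes ``Combining with Theorem~\ref{T:Borhostheorem} this proves'' immediately before stating the proposition, meaning precisely the composition of Borho's classification with the bijection from \cite[Proposition~8, Corollary~8]{PT14} that you spell out. Your additional remark computing $\dim\z(\l_{\i})=|\i|$ from the explicit form $\l_{\i}\cong\gl_{i_1}\times\cdots\times\gl_{i_l}\times\g_0$ is the only detail left implicit in the paper, and you handle it correctly.
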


\subsection{Distinguished elements and induction}

The proof of the first main theorem will be reduced to the distinguished case, and the following result is one of the crucial steps.

\begin{Lemma}
\label{L:distinguishedinduction}
Let $\O\subseteq \g$ be a nilpotent orbit with partition $\lambda \in \PeN$ in a classical Lie algebra of rank $r$. There exists a sequence of indexes $i_1,...,i_l$ and a classical Lie algebra $\tilde \g$ of rank $r + \sum_j (n- i_j + 1)$ of the same Dynkin type as $\g$ such that:
\begin{enumerate}
\setlength{\itemsep}{4pt}
\item $\gl_{n-i_1+1} \times \cdots \times \gl_{n-i_l+1} \times \g$ embeds as a Levi subalgebra $\l \subseteq \tilde\g$;
\item Identifying $\O$ with the as a nilpotent orbit $\{0\} \times \cdots  \times \{0\} \times \O \subseteq \l$ we have that $\Ind_\l^{\tilde\g}(\O)$ is a distinguished orbit in $\tilde\g$.
\end{enumerate}
Furthermore we can assume that the first part of the partition of $\Ind_\g^{\tilde\g}(\O)$ is arbitrarily large.
\end{Lemma}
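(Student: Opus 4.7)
The plan is to rephrase the claim in combinatorial terms and solve it by reversing the Kempken--Spaltenstein algorithm. By Proposition~\ref{P:KSclassifiessheets}, fixing a distinguished partition $\tilde\lambda \in \P_\ve(\tilde N)$ in a classical Lie algebra $\tilde\g$ of the same Dynkin type as $\g$, any admissible sequence $\tilde\imath = (\tilde\imath_1,\ldots,\tilde\imath_l)$ for $\tilde\lambda$ determines a Levi subalgebra
\[
\l \;\cong\; \gl_{\tilde n - \tilde\imath_1 + 1} \times \cdots \times \gl_{\tilde n - \tilde\imath_l + 1} \times \g \;\subseteq\; \tilde\g
\]
(where $\tilde n$ is the number of parts of $\tilde\lambda$) and an orbit in $\g$ with partition $\tilde\lambda^{\tilde\imath}$ whose induction to $\tilde\g$ recovers $\O_{\tilde\lambda}$. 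It therefore suffices to prove the following purely combinatorial claim: for every $\lambda \in \P_\ve(N)$ there exists a distinguished $\tilde\lambda$ and an admissible sequence $\tilde\imath$ for $\tilde\lambda$ with $\tilde\lambda^{\tilde\imath} = \lambda$, and furthermore $\tilde\lambda$ can be chosen so that its largest part is arbitrarily large.

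My approach is to construct $\tilde\lambda$ by growing $\lambda$ through a sequence of inverse admissible moves. The inverse of Case~1 at index $i$ replaces the partition by $(\lambda_1,\ldots,\lambda_{i-1},\lambda_i+2,\ldots,\lambda_n+2)$; the resulting difference $\lambda_i+2-\lambda_{i-1}$ always exceeds $1$, so the forward Case~1 move on the new partition returns to the original, and all parities (hence the involution) are preserved. The inverse of Case~2 at index $i$ can be applied whenever $\lambda_{i-1} = \lambda_i$ and the involution of $\lambda$ pairs these two indices with each other: it produces $(\lambda_1,\ldots,\lambda_{i-1}+1,\lambda_i+1,\lambda_{i+1}+2,\ldots,\lambda_n+2)$, which flips the parity of the pair and turns it into a pair of self-paired indices of equal value.

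The construction proceeds in three stages. First, scanning $\lambda$ from right to left, apply inverse Case~2 to each paired pair in turn; each such move affects only the values at positions $\ge i-1$ and adds $2$ (preserving parity) to positions $> i$, so it does not disturb paired pairs still awaiting treatment. After this pass, the partition has trivial involution: all parts are of correct parity. Second, apply inverse Case~1 at each index where the partition still has equal adjacent parts; iterating yields a strictly increasing partition $\tilde\lambda$ with all parts self-paired, i.e.\ a distinguished partition. Third, apply inverse Case~1 at index $1$ any number of times; each application adds $2$ to every part of $\tilde\lambda$, preserving its distinguished character and enlarging every part (including the largest) by as much as desired.

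The principal obstacle is to verify that the sequence of inverse moves remains admissible at each intermediate partition, in the sense that the corresponding forward Case~1 or Case~2 move on the new partition recovers the previous one. For inverse Case~1 this is automatic. For inverse Case~2 one must check that after the move the pair $(i-1,i)$ genuinely forms a $2$-step in the new partition (which follows from the parity flip together with the observation that adding $2$ to later parts strictly preserves the strict inequalities $\mu_i < \mu_{i+1}$), and that neighbouring indices are not disturbed. Processing paired pairs from right to left guarantees that each pair, once rendered self-paired, is only later modified by $+2$ additions which preserve self-pairing. Assembling the inverse moves in reverse order then yields the desired admissible sequence $\tilde\imath$ for $\tilde\lambda$ satisfying $\tilde\lambda^{\tilde\imath} = \lambda$.
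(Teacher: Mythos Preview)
Your proposal is correct and follows essentially the same approach as the paper's proof: both construct $\tilde\lambda$ from $\lambda$ by first applying inverse Case~2 moves to eliminate all paired indices $(i-1,i)$ with $i'=i-1$, then inverse Case~1 moves to make the partition strictly increasing, and finally further inverse Case~1 moves at index $1$ to enlarge the smallest part arbitrarily. The paper's argument is somewhat terser (it does not specify the right-to-left order or carry out the admissibility checks you sketch), but the construction and the appeal to the Kempken--Spaltenstein correspondence with induction data are identical.
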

\begin{proof}
If $i = 2,...,n$ is an index such that $i' = i-1$ then we replace $\lambda$ by
$$(\lambda_1,...,\lambda_{i-2}, \lambda_{i-1}+1, \lambda_{i} + 1, \lambda_{i+1} + 2, ..., \lambda_n + 2).$$
By iterating this procedure we can eventually obtain a partition with $i = i'$ for all $i$. If $i = 1,2,...,n$ is an index such that $\lambda_i = \lambda_{i-1}$ then we replace our new partition by
$$(\lambda_1,...,\lambda_{i-1}, \lambda_i + 2, \lambda_{i+1}+2,...,\lambda_n + 2) .$$
Iterating this procedure we eventually replace $\lambda$ with a distinguished partition $\tilde\lambda \in \P_\ve(N + \sum_{i\in \i} 2(n-i+1))$ where $\i \subseteq \{1,...,n\}$ is some multiset consisting of indexes which were chosen in the above iterations. By applying the latter procedure at index $i = 1$ we can make the first part of $\tilde\lambda$ as large as we choose.

Let $\tilde \g$ be the classical Lie algebra of the same type as $\g$ with natural representation of dimension $N + \sum_{j\in \i} 2(n-j+1)$. By Lemma~\ref{L:partitionmeaning} every orbit with partition $\tilde\lambda$ is distinguished. Since $\tilde\lambda^\i = \lambda$ the remarks of the previous section show that there is a unique orbit $\widetilde{\O} \subseteq \tilde\g$ with partition $\lambda$ and a unique induction datum $(\l, \O)$ such that $\l$ and $\O$ satisfy the properties described in the current proposition, and $\widetilde{\O} = \Ind_{\l}^{\tilde\g}(\O)$.
%The situation when $\tilde\lambda$ is very even, so that there are two orbits in $\tilde{\g}$ with partition $\tilde\lambda$, is very similar.
%If $\tilde\lambda$ is very even, so that there are two orbits in $\tilde{\g}$ with partition $\tilde\lambda$ then {\it loc. cit.} shows that there are precisely two conjugacy classes of induction data such that $\l$ has the isomorphism type described and 
 \end{proof}

\subsection{The combinatorial Katsylo variety}
\label{ss:combKat}

In this section we introduce an affine algebraic variety $X_\lambda$, determined by a choice of partition, which we call the {\it combinatorial Katsylo variety.} In Theorem~\ref{T:distinguishedreduced} we shall use this variety to relate the $\m_0$-adic graded algebra of $S(\g, e)^\ab$ with the tangent cone $\TC_e(e+X)$ of the Katsylo variety, where $\m_0 \subseteq S(\g, e)$ denotes the maximal Kazhdan graded ideal and the orbit of $e\in \g$ has partition $\lambda$.% In fact it will follow from the proof of Theorem~\ref{T:distinguishedreduced} that $X_\lambda \cong \TC_e(e+X)$.

Let $\lambda = (\lambda_1,...,\lambda_n) \in \PeN$ such that $i = i'$ for all $i$, all parts of $\lambda$ are distinct, and $\lambda_1 > 1$. In particular $\lambda$ is distinguished. This condition ensures that $\lambda_i - \lambda_{i-1} > 0$ is even for all $i =2,...,n$ and that $(i,i+1) \in \Delta(\lambda)$ for $i=1,...,n-1$. We set
\begin{eqnarray}
\label{e:partdifferences}
\begin{array}{lcc}
s_1 := \lfloor \frac{\lambda_1}{2}\rfloor; & & \vspace{3pt}\\
s_i := \frac{\lambda_{i} - \lambda_{i-1}}{2} & \text{ for } & i=2,...,n.
\end{array}
\end{eqnarray}
Introduce a set of variables
\begin{eqnarray}
\label{e:kXlambdagenerators}
S_\lambda := \left\{x_{i, r}\mid 1 \leq i \leq n, \ 1 \leq r \leq s_i \right\} \cup \left\{y_j \mid j=1,...,n-1\right\}
\end{eqnarray}
and we consider the following collection of quadratic elements $Q_\lambda$ in the polynomial ring $\C[S_\lambda]$
\begin{eqnarray}
\label{e:kXlambdarelations}
\begin{array}{rcl} Q_\lambda & := & \{x_{i,1} y_i \mid i=1,...,n-1, \ \lambda_{i} - \lambda_{i-1}  > 0 \text{ is even}\}  \\ & & \ \ \cup \ \{x_{i+2,1} y_i, y_i y_{i+1} \mid i=1,...,n-2\}.\end{array}
\end{eqnarray}
Finally we define $X_\lambda$ to be the algebraic variety with coordinate ring
\begin{eqnarray}
&\C[X_\lambda] := \C[S_\lambda] / (Q_\lambda).
\end{eqnarray}

The main goal of this section is to describe the irreducible components of $X_\lambda$, and calculate their dimensions. For this purpose it suffices to consider a special partition: define $\alpha = (\alpha_1,...,\alpha_n)$ by
\begin{eqnarray}
\alpha = \left\{\begin{array}{cl} (2, 4,..., 2n) & \text{ if }\ve = -1; \\ (3,5,...,2n+1) & \text{ if }\ve = 1.
\end{array} \right.
\end{eqnarray}
It follows directly from the definitions that $\C[X_\lambda] \cong \C[X_\alpha] \otimes \C[x_{i,r} \mid 1\le i\le n, \ 1 < r \le s_i]$, which implies that 
\begin{eqnarray}
\label{e:lambdareducestoalpha}
X_\lambda \cong X_\alpha \times \mathbb{A}^{\lfloor \lambda_n/2\rfloor - n}.
\end{eqnarray}
It is now sufficient to study $X_\alpha$ in order to understand the structure of $X_\lambda$ for any $\lambda$.

Let $P_n$ be the set of subsets $S\subseteq \{1,...,n-1\}$ such that for all $i = 1,...,n-2$ either $i \in S$ or $i+1\in S$ or both. For $S \in P_n$ we let $S^c := \{1,...,n-1\} \setminus S$ be the complementary set. We define a map from $P_n$ to the set of subsets of $\{x_{1,1},...,x_{n,1}, y_1,...,y_{n-1}\} \subseteq \C[X_\alpha]$ by 
\begin{eqnarray}
\label{e:subsetstoideals}
\iota : S \mapsto \{y_i \mid i \in S\} \cup \{x_i \mid i \in S^c \text{ or } i-2 \in S^c\}.
\end{eqnarray}
\begin{Lemma}
\label{L:iotaminimalprimes}
$S \mapsto (\iota S)$ is a bijection from $P_n$ to the set of minimal prime ideals of $\C[X_\alpha]$ and
\begin{eqnarray}
\dim \C[X_\alpha] / (\iota S) = |\{x_1,...,x_n, y_1,...,y_{n-1}\} \setminus \iota S|
\end{eqnarray}
\end{Lemma}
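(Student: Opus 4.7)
The plan is to exploit the fact that $Q_\alpha$ consists entirely of squarefree quadratic monomials in the variables $S_\alpha$, so $(Q_\alpha) \subseteq \C[S_\alpha]$ is a radical monomial ideal (a Stanley--Reisner ideal). For such ideals it is standard that every minimal prime is generated by a subset of the variables $T \subseteq S_\alpha$, namely by a minimal \emph{transversal} of the generating monomials: a minimal subset $T$ such that every monomial in $Q_\alpha$ contains some element of $T$. Since every monomial in $Q_\alpha$ has exactly two distinct variables, these are precisely the minimal vertex covers of the graph $G_\alpha$ on vertex set $S_\alpha$ whose edges are the elements of $Q_\alpha$. The quotient $\C[X_\alpha]/(T) \cong \C[S_\alpha \setminus T]$ is then a polynomial ring, immediately giving the dimension formula $\dim \C[X_\alpha]/(T) = |S_\alpha \setminus T|$.

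The proof then reduces to the combinatorial claim that $S \mapsto \iota S$ is a bijection from $P_n$ to the minimal vertex covers of $G_\alpha$. First I would verify that $\iota S$ is a vertex cover by case analysis on the three families of edges: the edge $y_i y_{i+1}$ is covered because $S \in P_n$ forces $i \in S$ or $i+1 \in S$; the edge $x_{i+2}y_i$ is covered because if $i \notin S$ then $i \in S^c$ puts $x_{i+2}$ into $\iota S$; and the edges of the form $x_iy_i$ are covered because if $i \notin S$ then $i \in S^c$ forces $x_i \in \iota S$.

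Next I would establish minimality of $\iota S$. The key point is that for $i \in S$ the only vertices available to cover the edge $x_i y_i$ (when that edge is present in $Q_\alpha$) are $x_i$ and $y_i$, and $x_i \notin \iota S$ by the definition of $\iota$, so $y_i$ cannot be removed. A parallel argument, using the edge $x_{i+2} y_i$ or $x_i y_{i-2}$ together with the path edges $y_{i-1} y_i$, $y_i y_{i+1}$, rules out removing any $x_j \in \iota S$. The only mildly delicate case is the boundary behaviour at $x_1, x_2, x_{n-1}, x_n$, which has to be handled separately depending on $\epsilon$, but in each case either an $x_iy_i$ edge or an $x_{i+2}y_i$ edge serves to pin the relevant $x_j$ in the cover.

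Finally, to obtain the inverse map, given a minimal vertex cover $T$ I would set $S(T) := \{i : y_i \in T\}$. Covering $y_iy_{i+1}$ forces $S(T) \in P_n$, and then the minimality of $T$ combined with the preceding analysis forces the $x_j$ in $T$ to be exactly those with $j \in S(T)^c$ or $j-2 \in S(T)^c$, so $T = \iota(S(T))$. The main obstacle I anticipate is the boundary book-keeping — making sure $x_1, x_2, x_{n-1}, x_n$ are correctly included or excluded from $\iota S$ according to $\epsilon$ and matching this with which edges $x_iy_i$ actually appear in $Q_\alpha$; once that is handled, bijectivity and the dimension count follow formally.
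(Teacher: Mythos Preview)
Your approach is essentially the paper's own. The paper argues that every minimal prime is generated by a subset of the variables by using the torus $(\C^\times)^{2n-1}$ that rescales the generators: connectedness forces it to preserve each minimal prime, and one-dimensionality of the weight spaces forces each such prime to be generated by its intersection with $\{x_{1,1},\dots,x_{n,1},y_1,\dots,y_{n-1}\}$. This is exactly the content of the Stanley--Reisner fact you invoke, phrased in different language. Both proofs then take the same inverse map $T \mapsto \{i : y_i \in T\}$, use the edges $y_iy_{i+1}$ to place it in $P_n$, and finish by minimality; the dimension formula is immediate from $\C[X_\alpha]/(\iota S)$ being a polynomial ring in both accounts.

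One inaccuracy in your minimality sketch: you claim that for $i \in S$ one has $x_i \notin \iota S$, so the edge $x_iy_i$ pins $y_i$. But by definition $x_i \in \iota S$ as soon as $i-2 \in S^c$, which is perfectly compatible with $i \in S$; in that case $x_iy_i$ does \emph{not} witness that $y_i$ is essential, and you must appeal to a different incident edge. The paper glosses over this same step with the phrase ``by inspection'', so both arguments require equal additional care at exactly this point; your awareness of boundary bookkeeping is the right instinct, but the issue is not confined to the boundary indices.
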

\begin{proof}
The torus $T:= (\C^\times)^{2n-1}$ acts on $\C[x_{1,1},...,x_{n,1}, y_1,...,y_{n-1}]$ by automorphisms rescaling generators, and this action descends to $\C[X_\alpha]$. Since $T$ is connected it preserves the irreducible components of $X_\alpha$, hence preserves the minimal primes of $\C[X_\alpha]$. Since the $T$-weight spaces on $\C[x_{1,1},...,x_{n,1}, y_1,...,y_{n-1}]$ are one dimensional, it follows that each minimal prime $\p$ is generated by $\p \cap \{x_{1,1},...,x_{n,1}, y_1,...,y_{n-1}\}$.

By inspection we see that for each $S \in P_n$ the set $\iota S$ contains at least one of the factors of each quadratic relation \eqref{e:kXlambdarelations} in $\C[X_\alpha]$ and is minimal with respect to this property. This implies that $\iota$ maps $P_n$ to minimal primes.

The injectivity of $S \mapsto (\iota S)$ is clear and it remains to show that every minimal prime $\p$ is in the image. Suppose that $\p$ is generated by some subset $S \subseteq \{x_{1,1},...,x_{n,1},y_1,...,y_{n-1}\}$. Using the primality of $\p$ it is straightforward to check that $S' := \{1\le i \le n-1 \mid y_i \in S\} \in P_n$ and $(\iota S') \subseteq \p$. Now by minimality we have $\p = (\iota S')$.
\end{proof}

\begin{Proposition}
\label{P:quadfactors}
There is a bijection $\i \mapsto X_\lambda^\i$ from maximal admissible multisets for $\lambda$ to the set of irreducible components of $X_\lambda$. Furthermore $\dim X_\lambda^\i = |\i|.$
\end{Proposition}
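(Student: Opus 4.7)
My plan is to leverage the reduction \eqref{e:lambdareducestoalpha} to pass from the general distinguished case to the minimal partition $\alpha$, and then to match $P_n$ (which indexes components of $X_\alpha$ by Lemma~\ref{L:iotaminimalprimes}) with the set of maximal admissible multisets for $\lambda$ by constructing an explicit combinatorial encoding.

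First, applying \eqref{e:lambdareducestoalpha}, every irreducible component of $X_\lambda$ has the form $Z \times \mathbb{A}^{\lfloor \lambda_n/2\rfloor - n}$ for a unique irreducible component $Z$ of $X_\alpha$, and Lemma~\ref{L:iotaminimalprimes} provides a bijection $P_n \onetoone \Comp(X_\alpha)$ together with the dimension formula $\dim (\C[X_\alpha]/(\iota S)) = |\{x_{1,1},\ldots,x_{n,1},y_1,\ldots,y_{n-1}\} \setminus \iota S|$. Combining these we obtain $\hash \Comp(X_\lambda) = |P_n|$ and a ready-made formula for the dimensions of the components of $X_\lambda$.

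Second, I would construct the bijection $P_n \onetoone \{\text{maximal admissible multisets for } \lambda\}$ directly. The key observation is that because $\lambda$ is distinguished every gap $\lambda_i-\lambda_{i-1}$ is an even positive integer and every consecutive pair $(i,i+1)$ is a bad $2$-step, so the only initial admissible moves are Case~1 moves and they remain available at index $i$ until $\lambda_i$ has been reduced to $\lambda_{i-1}$. From such a maximal admissible sequence $\i$ one reads off a subset $S(\i) \subseteq \{1,\ldots,n-1\}$ by recording those positions $i$ at which, after the Case~1 moves have exhausted the $i$-gap, a Case~2 move is applied before moving on. Admissibility then forces $S(\i) \in P_n$: two consecutive absences $i, i+1\notin S(\i)$ would produce a Case~1 move at an index which was no longer admissible because its gap had already vanished. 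Conversely, given $S \in P_n$ one constructs a canonical maximal admissible multiset $\i_S$ by greedily applying Case~1 moves at each index $i=n,n-1,\ldots,2$ to close its gap, inserting a Case~2 at index $i$ precisely when $i\in S$, and finally applying $\lfloor\lambda_1/2\rfloor$ Case~1 moves at index $1$.

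The main technical obstacle will be verifying the dimension identity
\[
|\i_S| \;=\; |\{x_{1,1},\ldots,x_{n,1},y_1,\ldots,y_{n-1}\}\setminus \iota S| + (\lfloor \lambda_n/2\rfloor - n).
\]
I would handle this by counting contributions position by position: each $y_i \notin \iota S$ records an index at which we perform a Case~2 move, and each $x_{i,1}\notin \iota S$ records a Case~1 move made after the collapse at an adjacent bad boundary; the remaining $\sum_i (s_i-1) = \lfloor \lambda_n/2\rfloor - n$ contributions arise from the iterated Case~1 moves needed to reduce the sizes of $\lambda_1,\ldots,\lambda_n$ down to those of $\alpha$, matching the $\mathbb{A}^{\lfloor\lambda_n/2\rfloor - n}$ factor. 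A short induction on $n$, splitting according to whether $1\in S$ and whether $2\in S$, closes the computation and establishes both the bijection and the equality $\dim X_\lambda^\i = |\i|$.
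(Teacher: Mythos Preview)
Your overall strategy matches the paper's: reduce to the minimal partition $\alpha$ via \eqref{e:lambdareducestoalpha}, invoke Lemma~\ref{L:iotaminimalprimes} to parameterise $\Comp(X_\alpha)$ by $P_n$, and then build a bijection from $P_n$ to maximal admissible multisets for $\alpha$, checking dimensions at the end. That is exactly what the paper does.

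However, your explicit bijection is set up the wrong way round. You define $S(\i)\subseteq\{1,\dots,n-1\}$ to record the positions at which a Case~2 move is applied and then assert $S(\i)\in P_n$. This is false. The constraint coming from the KS algorithm is that Case~2 cannot be applied at two adjacent indexes (if $i$ has multiplicity~2 then $i\pm 1$ do not), so the set of Case~2 positions is an \emph{independent} set in the path, not a set satisfying the $P_n$-condition. Concretely, for $\alpha=(2,4,6)$ the multiset $\{1,2,3\}$ is maximal admissible with no Case~2 move at all, giving $S(\i)=\emptyset\notin P_3$. Your justification (``two consecutive absences $i,i+1\notin S(\i)$ would produce a Case~1 move at an index which was no longer admissible'') is therefore incorrect, and the inverse construction (``insert a Case~2 at index $i$ precisely when $i\in S$'') cannot work either, since $S\in P_n$ may contain adjacent elements. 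Your dimension paragraph in fact silently uses the opposite convention (``each $y_i\notin\iota S$ records an index at which we perform a Case~2 move'' means Case~2 positions correspond to $S^c$, not $S$), so the proposal is internally inconsistent.

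The fix is exactly what the paper does: a maximal admissible multiset for $\alpha$ is determined by the set of indexes of multiplicity~2, and this set is the \emph{complement} (after a shift by~1) of an element of $P_n$. In the paper's formulation, $S\in P_n$ corresponds to the maximal admissible multiset whose multiplicity-2 indexes are $\{2,\dots,n\}\setminus(S+1)$. Once the bijection is corrected in this way, your dimension-counting plan (matching $|\i|$ against $|\{x_{1,1},\dots,x_{n,1},y_1,\dots,y_{n-1}\}\setminus\iota S|$ plus the affine factor) goes through along the same lines as the paper's comparison of \eqref{e:lengthformula} and \eqref{e:dimcompformula}.
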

\begin{proof}
Applying Case 1 of the Kempken--Spaltenstein algorithm exactly $(\lambda_i - \lambda_{i-1})/2 - 1$ times at each admissible index $i$ we reduce the partition $\lambda$ to $\alpha$. This shows that the maximal admissible multisets for $\lambda$ are of the form $\i \cup \j$ where $\i$ is a maximal admissible multiset for $\alpha$ and $\j$ is the multiset taking values in $\{1,...,n\}$ in which $j$ occurs with multiplicity $(\lambda_j - \lambda_{j-1})/2 - 1$. Combining this with \eqref{e:lambdareducestoalpha} it suffices to prove the current proposition in the case $\lambda = \alpha$.

By Lemma~\ref{L:iotaminimalprimes} the irreducible components of $X_\alpha$ are parameterised by $P_n$. In order to prove the first claim of the proposition we construct a bijection from $P_n$ to the set of maximal admissible multisets.

The admissible multisets for $\alpha$ have multiplicities $\le 2$. Thanks to \cite[Lemma~6]{PT14} we know that a maximal admissible multiset is totally determined by the collection of indexes in $\{2,...,n\}$ which occur with multiplicity 2: this is the set of indexes such that Case 2 occurs at some point in the KS algorithm. Therefore the collection of indexes of multiplicity 2 is only constrained by the fact that if $i$ has multiplicity 2 then neither $i-1$ nor $i+1$ have multiplicity 2. It follows that if $S \in P_n$ then there is a unique maximal admissible sequence $\i$ for $\alpha$ such that the indexes in $\{2,...,n\} \setminus (S+1)$ occur with multiplicity 2. This gives the desired bijection.

It remains to prove the claim regarding dimensions. Let $S \subseteq P_n$ and enumerate $\{1,...,n-1\} \setminus S = (i_1,...,i_l)$. By the remarks of the previous paragraph the maximal admissible sequence associated to $S \in P_n$ is determined as follows: first we let $\i' = (i_1+1, i_1+1, i_2+1, i_2+1,...,i_l+1, i_l+1)$ and then we let $\i$ be the maximal admissible sequence obtained from $\i'$ by applying Case 1 of the KS algorithm to $\alpha^{\i'}$ as many times as possible. The set of indexes where Case 1 can be applied to $\alpha^{\i'}$ are precisely
\begin{eqnarray}
\{1\leq i \leq n \mid \alpha^{\i'}_i - \alpha_{i-1}^{\i'} = 2\} = \{1\le i \le n \mid i \ne i_t \text{ and } i-2 \ne i_t \text{ for all } t\}
\end{eqnarray}
and it follows that the length of the maximal admissible sequence associated to $S$ is
\begin{eqnarray}
\label{e:lengthformula}
n-1 - |S| +  \hash \{1\le i \le n \mid i, i-2 \in S\}.
\end{eqnarray}
According to \eqref{e:subsetstoideals} and Lemma~\ref{L:iotaminimalprimes} the dimension of the component of $X_\alpha$ corresponding to $S$ is
\begin{eqnarray}
\label{e:dimcompformula}
2n-1 - |S| - \hash \{1\le i \le n \mid i\in S^c \text{ or } i-2 \in S^c\}
\end{eqnarray}
It is straightforward to see that \eqref{e:lengthformula} and \eqref{e:dimcompformula} are equal.
\end{proof}

\subsection{The semiclassical abelianisation I: the distinguished case}

\label{ss:scabelian:distinguishedcase}

Let $\lambda = (\lambda_1,...,\lambda_n) \in \PeN$ be a distinguished partition satisfying $\lambda_1 > 1$ and let $e\in \g$ be a nilpotent element with partition $\lambda$. The indexes $s_1,...,s_{n-1}$ defined in \eqref{e:partdifferences} allow us to construct an $n\times n$ symmetric shift  matrix $\sigma = (s_{i,j})_{1\le i,j\le n}$ via 
\begin{eqnarray}
s_{i+1, i} = s_{i,i+1}:= s_{i+1} \text{ for } i=1,...,n-1.
\end{eqnarray}
By \eqref{e:shiftmatrixdefn} these values determine the entire matrix, which coincides with the matrix constructed from $\lambda$ in \eqref{e:symmetricalshiftmatrix}. Next we describe generators of $\gr_{\m_0} S(\g,e)^\ab$ and certain relations between them.
\begin{Lemma}
\label{L:gensrelstangentconelemma}
Let $\m_0 \subseteq S(\g,e)^\ab$ be unique maximal Kazhdan graded ideal. Then $\gr_{\m_0} S(\g,e)^\ab$ is generated by elements
\begin{eqnarray}
\label{e:distinguishedabelianquotientgenerators}
\{\otheta_{i}^{(s_{i+1}+1)} \mid i = 1,...,n-1\} \cup \{\oeta_i^{(2r)} \mid i=1,...,n, \ r=1,..., s_{i}\}
\end{eqnarray}
These elements satisfy the following relations
\begin{eqnarray}
\label{e:quadinaq1}
\otheta_i^{(s_{i+1}+1)} \otheta_{i+1}^{(s_{i+2}+1)} = 0 & \text{ for } & i=1,...,n-2;\\
\label{e:quadinaq1.5}
\otheta_1^{(s_{2}+1)} \eta_{1}^{(2s_{1})} = 0 & \text{ for } & \ve = -1;\\
\label{e:quadinaq2}
\otheta_i^{(s_{i+1}+1)} (\oeta_i^{(2s_i)} - \oeta_{i-1}^{(2s_i)}) = 0 & \text{ for } & i=2,...,n-1 ;\\
\label{e:quadinaq3}
\otheta_i^{(s_{i+1}+1)} (\oeta_{i+2}^{(2s_{i+2})} - \oeta_{i+1}^{(2s_{i+2})}) = 0 & \text{ for } & i=1,...,n-2
\end{eqnarray}
where the elements $\eta_i^{(r)}$ are taken to be zero if they lie outside the range prescribed by \eqref{e:distinguishedabelianquotientgenerators}.
\end{Lemma}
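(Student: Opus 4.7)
The plan is to exploit the Poisson presentation $S(\g,e) \cong R(y_n(\sigma), \tau)/K$ from Theorem~\ref{T:Walgpresentation}, together with Corollary~\ref{C:madic} which identifies $\gr_{\m_0} S(\g,e) \cong S(\g^e)$ as a polynomial algebra. Abelianization sends every Poisson bracket in $R(y_n(\sigma), \tau)$ to zero, so each of \eqref{e:dyrel2}--\eqref{e:dyrel8} becomes a commutative identity whose right-hand side equals zero in $S(\g,e)^\ab$; the stated quadratic relations will then arise by taking leading terms in the $\m_0$-adic filtration.

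First I would determine which of the generators of $R(y_n(\sigma), \tau)$ descend to nonzero classes in $\m_0/\m_0^2$. The elements $\theta_{i,j}^{(r)}$ with $j > i+1$ vanish in the abelianization because they are Poisson brackets by \eqref{e:higherthetas}. Applying \eqref{e:dyrel2} with $r = 1$ forces $\theta_j^{(s+1)} = 0$ for every $s > s_{j,j+1}$, so only $\theta_i^{(s_{i+1}+1)}$ survives in each family. For the $\eta$-generators, the abelianization of \eqref{e:dyrel4} with $r + s - 1 = 2m$ and $m > s_{i,i+1}+1$ yields $\sum_{t=0}^{m} \eta_{i+1}^{(2m-2t)}\teta_i^{(2t)} = 0$, whose leading linear part in $\m_0/\m_0^2$ is $\eta_{i+1}^{(2m)} - \eta_i^{(2m)}$. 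Combined with the kernel constraint $\eta_1^{(2r)} = 0$ for $r > s_1$, iterating in $i$ eliminates all $\eta_i^{(2r)}$ with $r > s_i$ as independent generators, leaving precisely the set \eqref{e:distinguishedabelianquotientgenerators}.

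Next I would verify each quadratic relation in turn. Relation \eqref{e:quadinaq1} is the direct abelianization of \eqref{e:dyrel5}. For \eqref{e:quadinaq1.5}, the extra kernel element $\sum_{t=0}^{\lambda_1/2}\eta_1^{(2t)}\theta_1^{(\lambda_1 - 2t + s_{1,2}+1)}$ (present only for $\ve = -1$) collapses, under the constraint $\theta_1^{(r)} = 0$ for $r > s_{1,2}+1$, to the single term $\eta_1^{(2s_1)}\theta_1^{(s_2+1)} = 0$ at $t = s_1 = \lambda_1/2$; for $i \ge 2$ the analogous kernel element becomes trivial in $\gr_{\m_0}$ because $\eta_i^{(\lambda_i)}$ is congruent modulo $\m_0^2$ to $\eta_1^{(\lambda_i)} = 0$. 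For \eqref{e:quadinaq2} and \eqref{e:quadinaq3}, I would apply the Serre-type relation \eqref{e:dyrel8} in the abelianization with $j = i+1$ and $j = i-1$ respectively; in each case the constraint that $\theta_j^{(r)}$ is supported only at $r = s_{j,j+1}+1$ forces the outer parameter $t$ to equal $s_{j,j+1}+1$, collapsing the triple sum to $\theta_j^{(s_{j,j+1}+1)} \cdot c_{m-1}$, where $c_k := \sum_{k_1+k_2=k}\eta_{i+1}^{(2k_1)}\teta_i^{(2k_2)}$ is the coefficient of $u^{-2k}$ in $\eta_{i+1}(u)/\eta_i(u)$. At the unique admissible value $m = s_{i,i+1}+1$ the leading linear part of $c_{m-1}$ is $\eta_{i+1}^{(2s_{i,i+1})} - \eta_i^{(2s_{i,i+1})}$, which after reindexing produces exactly \eqref{e:quadinaq2} and \eqref{e:quadinaq3}.

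The main obstacle will be careful bookkeeping: one must check that the iterated consequences of \eqref{e:dyrel4} modulo $\m_0^2$ really do eliminate all the non-listed $\eta$-generators (and no more), and must verify that the signs and indices arising from \eqref{e:dyrel8} are consistent with \eqref{e:quadinaq2}--\eqref{e:quadinaq3} after the leading terms are extracted.
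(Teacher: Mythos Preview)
Your proposal is correct and follows essentially the same route as the paper's proof. Both arguments use the presentation of $S(\g,e)$ from Theorem~\ref{T:Walgpresentation} together with Corollary~\ref{C:madic}, and both extract \eqref{e:quadinaq1} from \eqref{e:dyrel5}, \eqref{e:quadinaq1.5} from the extra kernel element \eqref{e:extrakernel}, and \eqref{e:quadinaq2}--\eqref{e:quadinaq3} from the Serre-type relation \eqref{e:dyrel8} by isolating the quadratic part modulo $\m_0^3$.

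The one noteworthy difference is in how generation is established. The paper invokes \cite[Theorem~6]{PT14}, which provides a basis of a complement to $[\g^e,\g^e]$ in $\g^e$; since $\gr_{\m_0} S(\g,e)\cong S(\g^e)$ by Corollary~\ref{C:madic}, such a complement generates $\gr_{\m_0} S(\g,e)^\ab$. Your approach instead derives this directly from the Poisson relations: the $\theta_{i,j}^{(r)}$ with $j>i+1$ and the $\theta_i^{(r)}$ with $r>s_{i+1}+1$ are brackets, while the higher $\eta_i^{(2m)}$ are reduced inductively via $\eta_{j+1}^{(2m)}\equiv \eta_j^{(2m)}\pmod{\m_0^2}$ (from \eqref{e:dyrel4}) and the kernel constraint $\eta_1^{(2m)}=0$ for $2m>\lambda_1$. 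Your generating-series bookkeeping $c_k=\sum_{k_1+k_2=k}\eta_{I+1}^{(2k_1)}\teta_I^{(2k_2)}$ is a tidy way to package the collapse of \eqref{e:dyrel8} that the paper does term by term. A few of your index conventions are slightly off (for \eqref{e:quadinaq3} the repeated index in \eqref{e:dyrel8} is $i+1$, so the relevant value is $m=s_{i+2}+1$, not $s_{i,i+1}+1$), but this does not affect the argument. Also note that the discussion of the $i\ge 2$ kernel elements is unnecessary here: the lemma only asserts that the stated relations hold, not that they are complete.
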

\begin{proof}
%We have $S(\g,e) \onto S(\g,e)^\ab$ and we denote the preimage of $\m_0$ in $S(\g,e)$ also by $\m_0$. By Lemma~\ref{C:madic} we have $\gr_{\m_0} S(\g,e) = S(\cc_{n,l}(\sigma)^\tau) \onto \gr_{\m_0} S(\g,e)^\ab$. This map factors through 
As explained in Section~\ref{ss:symplecticrelations} there is a surjective Poisson homomorphism $\varphi: R(y_n(\sigma), \tau) \onto S(\g,e)$. In the following proof we abuse notation and identify $S(\g,e)$ with the corresponding quotient of $R(y_n(\sigma), \tau)$. This justifies the use of generators and relations appearing in Theorem~\ref{T:PDyangian}.

We let $\theta_{i,j}^{(r)}$ and $\eta_i^{(r)}$ be the generators of $S(\g,e)$ described in \eqref{e:theRgenerators}, and write $\otheta_{i,j}^{(r)} := \theta_{i,j}^{(r)} + \m_0^2$ and $\oeta_i^{(r)} := \eta_i^{(r)} + \m_0^2 \in\gr_{\m_0} S(\g,e)$. 

It follows from the proofs of Propositions~\ref{P:Walgpresentation1} and \ref{P:Walgpresentation-1} that $\gr_{\m_0} S(\g,e)$ is generated as a commutative algebra by elements
\begin{eqnarray}
\label{e:orthtruncatedgenslater}
%\begin{array}{rcl}
\{\otheta_{i,j}^{(r)} \mid 1\le i < j \le n, \ s_{i,j} < r \le s_{i,j} + \lambda_i \} \cup \{\oeta_i^{(2r)} \mid 1\le i \le n, \ 0 < r \le \lambda_{i} \}.
%\end{array}
\end{eqnarray}
Since natural map $\pi : S(\g,e) \onto S(\g,e)^\ab$ is a Kazhdan graded homomorphism the unique maximal graded ideal of $S(\g,e)$ maps to the unique maximal graded ideal of $S(\g,e)^\ab$, and we indulge in another abuse of notation by writing $\m_0$ for either ideal. Thus $\pi$ induces a homomorphism $\pi_0 : \gr_{\m_0} S(\g,e) \onto \gr_{\m_0} S(\g,e)^\ab$. We shall show that the kernel of $\pi_0$ contains the left-hand sides of \eqref{e:quadinaq1}--\eqref{e:quadinaq3}, as well as the following elements
\begin{eqnarray}
\label{e:gensingradedmap}
\{\otheta_{i,j}^{(r)} \mid j \ne i + 1 \text{ or } r > s_{i+1}+1\} \cup \{\oeta_i^{(2r)} \mid r > s_i\}.
\end{eqnarray}
Since \eqref{e:orthtruncatedgenslater} is the union of \eqref{e:distinguishedabelianquotientgenerators} and \eqref{e:gensingradedmap}, this will complete the proof.

Let $B \subseteq S(\g,e)$ be the defining ideal of $S(\g,e)^\ab$, which is generated by the Poisson brackets in $S(\g,e)$. Then the kernel of $\pi_0$ is equal to $\gr_{\m_0} B$. Corollary~\ref{C:madic} allows us to identify $\gr_{\m_0} S(\g,e)$ with $S(\g^e)$. Under this identification the vector space $\theta(\g^e) \subseteq S(\g,e)$ spanned by elements \eqref{e:orthtruncatedgenslater} identifies with the subspace $\g^e \subseteq S(\g^e)$. Furthermore the same Corollary also implies that $\{\theta(\g^e), \theta(\g^e)\} + \m_0^2$ identifies with $[\g^e, \g^e] \subseteq \g^e\subseteq S(\g^e)$. We can express this more informally by saying that the linear terms of Poisson brackets in $S(\g,e)$ are just the Lie brackets in $\g^e$, which is well-known in general (see \cite[Theorem~2.11]{DSKV16}).

Now \cite[Theorem~6]{PT14} implies that the elements \eqref{e:gensingradedmap} identify with a spanning set for $[\g^e, \g^e]$, although we warn the reader that the notation there is different. It follows that every element of \eqref{e:gensingradedmap} can be expressed as $b + \m_0^2$ for some $b\in B$, and so these elements lie in $\gr_{\m_0} B = \ker\pi_0$ as required.

In order to complete the proof we show that the left-hand sides of \eqref{e:quadinaq1}--\eqref{e:quadinaq3} lie in $\gr_{\m_0} B$. Using \eqref{e:dyrel5} we see that $\theta_i^{(s_{i+1}+1)} \theta_{i+1}^{(s_{i+2}+1)} \in B$ and it follows that the left-hand side of \eqref{e:quadinaq1} lies in $\gr_{\m_0} B$.

Now suppose that $i = 1$ and that $\ve = -1$, so that $\lambda_1$ is even. Thanks to Proposition~\ref{P:Walgpresentation-1} we know that $\ttheta_1^{(\lambda_1+s_{1}+1)} = 0$ in $S(\g,e)$. Using Example~\ref{Ex:firstbadrel} and \eqref{e:dyrel2} we see that 
$$\sum_{t=0}^{\lambda_1/2} \eta_1^{(2t)} \theta_1^{(\lambda_1 - 2t + s_{1,2} + 1)} - \sum_{t=0}^{\lambda_1/2-1} \eta_1^{(2t)}\{\eta_1^{(2)}, \theta_{1}^{(\lambda_1 - 2t + s_{1,2})}\} = \eta_1^{(\lambda_1)} \theta_1^{(s_{2}+1)} \in B.$$
Note that $\lambda_1 = 2s_1$ in this case, since $\lambda_1$ is even.  Projecting $\eta_1^{(\lambda_1)} \theta_1^{(s_{1}+1)}$ into $\gr_{\m_0} S(\g,e)$ we see that the left-hand side of \eqref{e:quadinaq1.5} lies in $\gr_{\m_0} B$.

The relations \eqref{e:quadinaq2}, \eqref{e:quadinaq3} can be deduced similarly by considering the quadratic terms appearing in \eqref{e:dyrel2}, \eqref{e:dyrel8} and projecting into $\gr_{\m_0} S(\g,e)$. Since the arguments for these two cases are almost identical we just provide the details for \eqref{e:quadinaq3}.

Before we proceed we record a single relation which is a special case of \eqref{e:dyrel2}. For $r > s_{i+1} + 1$ we have
\begin{eqnarray}
\label{e:specialbracket}
\{\eta_i^{(2)}, \theta_i^{(r-1)}\} = \theta_i^{(r)}
\end{eqnarray}

Fix $i=1,...,n-2$. Using \eqref{e:dyrel8} we examine the expression $\big\{\theta_{i+1}^{(s_{i+2}+1)}, \{\theta_{i+1}^{(s_{i+2}+1)}, \theta_{i}^{(s_{i+1}+1)}\}\big\}$. There is a unique linear term $\theta_{i}^{(2s_{i+2}+s_{i+1} + 1)}$ corresponding to $m = m_1-1$ and $m_2 = 0$. Thanks to \eqref{e:specialbracket} we can subtract an element of $B$ to eliminate this linear term. Now consider the quadratic terms. If we choose any term such that $m_2 < m_1$ then this term has a factor of $\theta_{i}^{(2(m_1-m_2) + s_{i+1} + 1)}$ and so once again we can eliminate these particular quadratic terms using \eqref{e:specialbracket}. The the only remaining quadratic terms are $\eta_{i+2}^{(2s_{i+1})} \theta_{i}^{(s_{i+1}+1)}$ and $\tilde\eta_{i+1}^{(2s_{i+1})} \theta_{i}^{(s_{i+1}+1)}$ which correspond to $m_1 = m_2 = 0$ and $m_1 = m_2 = m-1$ respectively. Finally one checks using \eqref{e:tetatwisteddefinition} that $\tilde\eta_{i+1}^{(2s_{i+1})} = - \eta_{i+1}^{(2s_{i+1})}$ modulo $\m_0^2$, and so $$\eta_{i+2}^{(2s_{i+1})} \theta_{i}^{(s_{i+1}+1)} + \tilde\eta_{i+1}^{(2s_{i+1})} \theta_{i}^{(s_{i+1}+1)} + \m_0^3 = \otheta_i^{(s_{i+1}+1)} (\oeta_{i+2}^{(2s_{i+2})} - \oeta_{i+1}^{(2s_{i+2})}) \in \gr_{\m_0} B.$$
This completes the proof of relation \eqref{e:quadinaq3}.

The proof of \eqref{e:quadinaq2} is almost identical, instead examining $\big\{\theta_{i}^{(s_{i+1}+1)}, \{\theta_{i}^{(s_{i+1}+1)}, \theta_{i+1}^{(s_{i+2}+1)}\}\big\}$, and using \eqref{e:dyrel8} again.
\end{proof}

\begin{Theorem}
\label{T:distinguishedreduced}
Let $\g$ be a classical Lie algebra and $e \in \g$ a distinguished nilpotent element. Let $\m_0\subseteq S(\g,e)^\ab$ be the maximal Kazhdan graded ideal. There is an isomorphism $$\gr_{\m_0} S(\g,e)^\ab \isoto \C[\TC_e(e+X)].$$ In particular $S(\g,e)^\ab$ is reduced.
\end{Theorem}
\begin{proof}
Let $A$ denote the algebra with generators \eqref{e:distinguishedabelianquotientgenerators} and relations
\begin{eqnarray}
\label{e:yetanotherlabel}
\begin{array}{rcl}
%\label{e:quadinaq1}
\otheta_i^{(s_{i+1}+1)} \otheta_{i+1}^{(s_{i+2}+1)} = 0 & \text{ for } & i=1,...,n-2;\\
%\label{e:quadinaq1.5}
\otheta_1^{(s_2+1)} \oeta_{1}^{(2s_{1})} = 0 & \text{ for } & \ve = -1;\\
%\label{e:quadinaq2}
\otheta_i^{(s_{i+1}+1)} \oeta_i^{(2s_i)}  = 0 & \text{ for } & i=2,...,n-1 ;\\
%\label{e:quadinaq3}
\otheta_i^{(s_{i+1}+1)} \oeta_{i+2}^{(2s_{i+2})} = 0 & \text{ for } & i=1,...,n-3.
\end{array}
\end{eqnarray}

Before we proceed, we outline the argument. First of all we will show that $A\cong \C[X_\lambda]$ where $X_\lambda$ is the combinatorial Katsylo variety from Section~\ref{ss:combKat}. By Lemma~\ref{L:gensrelstangentconelemma} we have that $A \onto \gr_{\m_0} S(\g,e)^\ab$ and since the reduced algebra of $S(\g,e)^\ab$ is $\C[e + X]$ by Proposition~\ref{P:Katsyloproperties}(1), we see that $\C[X_\lambda] \onto \C[\TC_e(e + X)]$. We then combine deductions of the previous sections to see that the components of $X_\lambda$ and $\TC_e(e+X)$ have the same dimensions. Since $X_\lambda$ is reduced we can apply Lemma~\ref{L:closedembeddings} to deduce that we have isomorphisms $\C[X_\lambda] \cong \gr_{\m_0} S(\g,e)^\ab \cong \C[\TC_e(e+X)]$, from which the current theorem follows.\vspace{3pt}

{\it Step (i):} Comparing \eqref{e:yetanotherlabel} with \eqref{e:kXlambdarelations} we see there is an isomorphism $A \isoto \C[X_\lambda]$ defined by 
\begin{eqnarray*}
& & \oeta_i^{(2r)} \longmapsto x_{i,s_i+1-r}; \\
& & \otheta_i^{(s_{i+1}+1)} \longmapsto y_i.
\end{eqnarray*}

{\it Step (ii):} Now we show that $A \onto \gr_{\m_0} S(\g,e)^\ab$. We adopt the convention $\oeta_i^{2r} = 0$ when $r > s_i$ or $i = 0$. The algebra endomorphism of $A$ defined by $\oeta_i^{(2s_{i})} \mapsto \oeta_i^{(2s_{i})} + \oeta_{i-1}^{(2s_i)}$ for all $i=1,...,n$ and acting identically on the other generators, is a unimodular subsitution. Therefore it induces an automorphism. By slight abuse of notation we denote the images of the generators by the same symbols. With this new set of generators the relations are precisely \eqref{e:quadinaq1}-\eqref{e:quadinaq3}. By Lemma~\ref{L:gensrelstangentconelemma} and part (i) we have a surjection $A = \C[X_\lambda] \onto \gr_{\m_0} S(\g,e)^\ab$.

{\it Step (iii):} Recall that we equip$\TC_e(e+X)$ with the reduced scheme structure. By Corollary~\ref{C:madic} we have 
\begin{eqnarray}
\label{e:somesurjs}
\C[X_\lambda] \onto \gr_{\m_0} S(\g,e)^\ab \onto \C[\TC_e(e+X)]
\end{eqnarray}
 Applying Proposition~\ref{P:Katsyloproperties}(2), Proposition~\ref{P:KSclassifiessheets} and Proposition~\ref{P:quadfactors} we have
$$d(X_\lambda) = d(e + X) = d(\TC_e(e+X)).$$
Now by Lemma~\ref{L:closedembeddings} we see that the surjections \eqref{e:somesurjs} are isomorphisms on the underlying varieties of closed points. Since $\C[X_\lambda]$ is reduced these maps are algebra isomorphisms and $\gr_{\m_0} S(\g,e)^\ab$ is reduced. Applying Lemma~\ref{L:reducedlemma} we see that $S(\g,e)^\ab$ is reduced, as claimed.
\end{proof}

\subsection{The semiclassical abelianisation II: the general case}

The following result uses the local geometry of Poisson manifolds to infer reducedness of $S(\g,e)^\ab$ from the case where $e$ is distinguished, where we can apply Theorem~\ref{T:distinguishedreduced}.

\begin{Theorem}
\label{T:generalreduced}
Let $\g$ be a classical simple Lie algebra and let $e \in \g$ be any nilpotent element. Then $S(\g,e)^\ab$ is reduced.
\end{Theorem}
\begin{proof}
{\it Part (i):} In type {\sf A} the fact that $S(\g,e)^\ab$ is reduced can be deduced from \cite[Theorem~3.3]{Pr10}, and so we let $\g$ be simple of type {\sf B}, {\sf C} or {\sf D} throughout the proof. We let $\O \subseteq \g$ be the adjoint orbit of $e$ in $\g$, and let $\tilde \g$ and $\widetilde\O$ be the Lie algebra and distinguished nilpotent orbit introduced in Lemma~\ref{L:distinguishedinduction}. Pick $\tilde e \in \tilde\g$ and an $\sl_2$-triple $(\tilde e, \tilde h, \tilde f)$. Write $\widetilde{G}$ for the simply connected, connected complex algebraic group with $\tilde{\g} = \Lie(\widetilde{G})$. Recall that $\l =\gl_{n-i_1+1} \times \cdots \times \gl_{n-i_l+1} \times \g$ embeds as a Levi subalgebra of $\tilde{\g}$. If we choose $e\in \O \subseteq \g\subseteq \l$ then $\O$ identifies with the adjoint $L$-orbit and we have the following Poisson isomorphism
$$S(\l, e) \cong S(\gl_{i_1}) \otimes \cdots \otimes S(\gl_{i_l}) \otimes S(\g, e)$$
Therefore $S(\g,e)^\ab$ is reduced if and only if $S(\l,e)^\ab$ is reduced. This holds if and only only if $(S(\l,e)_{\m_0}^\c)^\ab$ is reduced, where $\m_0$ is the maximal ideal of $e$, thanks to Lemma~\ref{L:reducedlemma} and Lemma~\ref{L:abelianisationvscompletion}.\vspace{3pt}

{\it Part (ii):} Now pick a regular element $s\in \z(\l)$ so that $\tg^s \cong \l$. Write $x = s + e \in \tilde{\g}$. We will explain below that $x$ can be viewed as a point of $\tilde e + \tilde\g^{\tilde f} = \Spec S(\tilde\g, \tilde e)$, and by slight abuse of notation we will also write $\m_0$ for the maximal ideal of $x$ in $S(\tilde \g, \tilde e)$. We claim that $S(\l, e)_{\m_0}^\c \cong S(\tilde\g,\tilde e)_{\m_0}^\c$ as Poisson algebras. Note that $(S(\tilde{\g}, \tilde e)_{\m_0}^\c)^\ab$ is reduced by Lemma~\ref{L:reducedlemma} and Theorem~\ref{T:distinguishedreduced}. Thanks to part (i) the current proof will follow from the claim.\vspace{3pt}

{\it Part (iii):} Pick an $\sl_2$-triple $(e,h,f)$ for $e$ inside $\l$, then define $y = s + f \in \tilde{\g}$. By $\sl_2$-theory the affine variety $x + \tilde{\g}^y$ is a transverse slice to the adjoint orbit $\Ad(\widetilde G) x$ at the point $x$. Also pick an $\sl_2$-triple $(\tilde{e}, \tilde{h}, \tilde{f})$ in $\tilde{\g}$. After replacing the latter triple by some conjugate we can actually assume that $x\in \tilde{e} + \tilde{\g}^{\tilde{f}}$. By construction we have $\Ind_\l^{\tilde{\g}}(\O) = \Ad(\widetilde{G}) \tilde{e}$. It follows that $x$ lies in a sheet of $\tilde{\g}$ containing $\tilde{e}$, and so we have $x \in \tilde{e} + \widetilde{X}$, where the latter variety is defined in the same way as \eqref{e:Katsylovariety}.

Now we choose small neighbourhoods $U_x \subseteq \tilde e + \tilde\g^{\tilde f}$ and $V_x \subseteq x + \tilde{\g}^y$ of $x$ in the complex topologies. Similarly let $W_e \subseteq e+ \l^f$ be a small neighbourhood of $e$. It is well-known that $U_x, V_x, W_e$ all carry transverse Poisson structures on the ring of analytic functions, see \cite[\textsection 5.3.3]{LPV13} for example. Write $\C^\an(M)$ for the ring of analytic functions on a complex manifold $M$. There is a natural restriction map $\C[e + \l^f] \to \C^\ab(W_e)$ which is a Poisson homomorphism. Thanks to \cite[Proposition~3]{Ser55} this induces an isomorphism $S(\l, e)_{\m_0}^\c \cong \C^\an(W_e)_e^\c$ of complete Poisson algebras. Similarly we have a Poisson isomorphism $S(\tilde{\g}, \tilde{e})_{\m_0}^\c \cong \C^\an(U_x)_x^\c$.

Now the claim will follow from the existence of an isomorphism $\C^\an(U_x)_x^\c \cong \C^\an(W_e)_e^\c$ of complete Poisson algebras. First of all we observe that $x\in \tilde{e} + \widetilde{X}$ implies that both $\tilde{e} + \tilde{\g}^{\tilde f}$ and $x + \tilde{\g}^y$ are transverse slices to $\Ad(\widetilde{G})x$ at $x$. Therefore by \cite[Proposition~5.29]{LPV13} we have an isomorphism $U_x \to V_x$ of analytic Poisson manifolds sending $x$ to $x$. Furthermore by \cite[Proposition~2.1]{DSV07} we see that there is a similar isomorphism $V_x \to W_e$ sending $x$ to $e$. This completes the proof.
\end{proof}

\subsection{The abelianisation of the finite $W$-algebra via deformation theory}
\label{ss:abelianisationviadeformation}

Here we prove the first main theorem (Theorem~\ref{T:main}), which states that Premet's component map \eqref{e:Premetsmap} is a bijection. As we explained after the statement of that theorem, the result is know in type {\sf A} and so we must complete the proof for the other classical types.

First of all we apply Proposition~\ref{P:relatingabelianisations} and Theorem~\ref{T:generalreduced} to see that both $\gr U(\g,e)^\ab$ and $U(\g,e)^\ab$ are reduced. Now apply Lemma~\ref{L:reduceddichotomy}, Proposition~\ref{P:Katsyloproperties}(1) and Theorem~\ref{T:semiclassicalcomponents} to see that
$$\hash \Comp \E(\g,e) \le \hash \Comp \C\E(\g,e) = \hash \Comp(e + X).$$
Since Premet's map \eqref{e:Premetsmap} is surjective and restricts to a dimension preserving bijection from some subset, the theorem follows.

\begin{Remark}
Premet asked whether the abelian quotient $U(\g,e)^\ab$ of a finite $W$-algebra is reduced \cite[Question 3.1]{Pr10}. Combining Proposition~\ref{P:relatingabelianisations} with Theorem~\ref{T:generalreduced} we have given an affirmative answer in the case of classical Lie algebras.

The problem of understanding whether $U(\g,e)^\ab$ is reduced for exceptional Lie algebra is rather subtle. The methods of this paper will certainly not work in general: in the introduction to \cite{Pr14} it is explained that there are four orbits in exceptional Lie algebra such that the associated finite $W$-algebra is known to admit precisely two one dimensional representations (these correspond to the first four columns of \cite[Table~1]{PT21}). For these $W$-algebras it is not hard to see that the reduced algebra associated to $U(\g,e)^\ab$ is isomorphic to $\C[x]/(x^2-1)$ as filtered algebras, with the generator $x$ in some positive degree. This ensures that $\gr U(\g,e)^\ab$ is not reduced, and Proposition~\ref{P:relatingabelianisations} implies that $S(\g,e)^\ab$ admits nilpotent elements.
\end{Remark}

\section{The orbit method}

\subsection{Quantizations and deformations of symplectic singularities} 
\label{ss:quantsanddefs}

We say that an affine Poisson algebra $A$ is {\it graded of degree $d > 0$} if $A = \bigoplus_{i \ge 0} A_i$ is a graded commutative algebra such that $A_0 = \C$ and the Poisson bracket is in degree $-d$, meaning $\{A_i, A_j\} \subseteq A_{i+j-d}$. In this case the variety $X := \Spec(A)$ is {\it conical} since the grading induces a contacting $\C^\times$-action with unique fixed point $\bigoplus_{i>0} A_i \in X$, and the contracting action rescales the Poisson bivector. The smooth locus of $X$ will be denoted $X^\reg$. From now on we fix such a Poisson algebra $A$.

A {\it filtered Poisson deformation} of $A$ is a pair $(\A, \iota)$ consisting of a filtered Poisson algebra $\A = \bigcup_{i\ge 0} \A_i$ such that the Poisson bracket satisfies $\{\A_i, \A_j\} \subseteq \A_{i+j-d}$ and an isomorphism of graded Poisson algebras $\iota : \gr \A \to A$. 

A {\it filtered quantization} of $A$ is pair $(\A, \iota)$ where $\A$ is a not necessarily commutative, filtered algebra satisfying $[ \A_i, \A_j] \subseteq \A_{i+j - d}$ such that $\iota : \gr \A \to A$ is a Poisson isomorphism (see \cite[2.2]{ACET20} for example).

Let $B$ be a non-negatively graded connected algebra. By a {\it graded Poisson $B$-algebra} we shall mean a graded $B$-algebra $\A$ with Poisson structure such that the structure map $B\to \A$ is graded and factors through the Poisson centre. A {\it graded Poisson deformation of $A$ over base $B$} is a pair $(\A, \iota)$ consisting of a flat graded Poisson $B$-algebra $\A$ and an isomorphism $\iota : \A \otimes_B \C \to A$.

Now let $B$ be a connected graded algebra concentrated in non-negative degrees. We can then equip $B$ with the obvious filtration whose $j$th filtered piece is the sum of the first $j$ graded pieces, and $\gr B \cong B$ as algebras. A {\it filtered $B$-algebra} is a filtered algebra $\A$ equipped with a filtered map $B \to \A$ which factors through the centre. We say that $(\A, \iota)$ is a {\it filtered quantization (of a Poisson deformation) of $A$ over base $B$} if $\A$ is a flat filtered $B$-algebra and $(\gr\A, \iota)$ is a graded Poisson deformation of $A$ over $\gr B = B$, and the map $\iota$ is an isomorphism $\iota : \gr\A \otimes_{\gr B} \C \to A$ of Poisson algebras.

For more detail on the above definitions, and a precise description of morphisms, we refer the reader to \cite[\textsection 2.2, 2.3, 3.1]{Lo22} or \cite[\textsection 2]{ACET20}.

\begin{Remark}
\label{R:filtvsgraded}
Filtered Poisson deformations of $A$ correspond bijectively to graded Poisson deformations of $A$ over base $\C[h]$ with $h$ in degree 1, by the Rees algebra construction \cite[\textsection 2.3]{Lo22}.
\end{Remark}

Whilst surveying the construction of graded Poisson deformations of Poisson algebras, and their quantizations, below we shall occasionally refer to deformations and quantizations of a Poisson scheme $X$ (the distinction between a deformation of an algebra and a scheme is important when $X$ is not affine). We refer the reader to \cite[\textsection 2.2]{Lo22} for a brief introduction to these notions.

Following Beauville \cite{Be00}, we say that $X = \Spec(A)$ has {\it symplectic singularities} if:
\begin{itemize}
\item  $X$ is normal;
\item  the Poisson bivector $\omega$ on $X$ has full rank on $X^\reg$, which is therefore a smooth symplectic variety;
\item there exists a projective resolution $\widehat X \to X$ such that the pullback of $\omega|_{X^\reg}$ extends to a regular (not necessarily symplectic) bivector on $\widehat X$.
\end{itemize}

From now on we also assume that $A = \C[X]$ has symplectic singularities and is graded in degree $d > 0$. In this case the variety $X$ is known as a {\it conical symplectic singularity}. %There is a large family of example which will be especially important in what follows.

\begin{Example}
\label{Ex:orbitCSS}
Let $G$ be a complex reductive group and $\g = \Lie(G)$. If $\O\subseteq \g$ is a nilpotent orbit then the affinisation $\Spec \C[\O]$ is a conical symplectic singularity. It is well-known that the affinisation is isomorphic to the normalisation of the closure (see \cite[Proposition~8.3]{Ja04}). The conical structure on $\O$ arises from the vector space structure on $\g$ (see \cite[Lemma~2.10]{Ja04}), whilst the presence of symplectic singularities follows from the work of Panyushev \cite{Pa91}.

 Now suppose that $\widetilde \O \to \O$ is a $G$-equivariant finite cover of a nilpotent orbit. Then the affinisation $\Spec \C[\widetilde\O]$ is also a conical symplectic singularity (see \cite[Lemma~2.5]{Lo18}). We remark that the conical structure comes from lifting the $\C^\times$-action, and possibly replacing $\C^\times$ with a cover.
\end{Example}

A conical symplectic singularity $X = \Spec(A)$ admits a very nice deformation theory: there is a classification of filtered Poisson deformations and filtered quantizations of $A$, due to Namikawa \cite{Na10, Na11} and Losev \cite{Lo22}, which we now recall.

Recall that a normal variety $\wt X$ is said to be $\Q$-factorial if every Weil divisor admits a non-zero integer multiple which is Cartier. We refer the reader to \cite[Proposition~2.3]{Lo22} and \cite{BCHM} for the defining properties of a $\Q$-factorial terminalisation.

Let $\wt X \to X$ be a $\Q$-factorial terminalisation and define $\Pbb := H^2(\wt X^\reg, \C)$. This is known as the {\it Namikawa--Cartan space} of $X$. Then according to \cite[Proposition~2.6]{Lo22} there exists a universal graded Poisson deformation of $\wt X$ over the base $\Pbb$, which should be understood as scheme-theoretic deformation, in the sense of \cite[\textsection 2.2]{Lo22}. This means the following: if $\wt X_B$ is a graded Poisson deformation of $\wt X$ over $\Spec(B)$ there exists a unique $\C^\times$-equivariant morphism $\Spec(B)\to \Pbb$ and a (non-unique) isomorphism $\Spec(B) \times_\Pbb \wt X_{\Pbb} \isoto \wt X_B$. In other words every graded Poisson deformation of $\wt X$ can be obtained from $\wt X_\Pbb$ by base change.

A universal graded Poisson deformation of $A$ can be produced from $\wt X_\Pbb$. First of all we describe an important finite group $W_X$ associated to $X$ which controls isomorphisms of deformations. The following facts are explained in more detail in \cite[\textsection 2.3]{Lo22}. A result of Kaledin \cite{Ka06} states that $X$ has only finitely many symplectic leaves. Let $\L_1,...,\L_k$ be the collection of leaves of codimension 2. Another of the main results of {\it op. cit.} states that there exists a formal slice $\Sigma_i$ to each leaf $\L_i$. By \cite[Proposition~1.3]{Be00} we know that $X$ has rational singularities, and so the formal neigbourhood of $0 \in \Sigma_i$ is a Du Val singularity, i.e. the formal completion of a rational double point of type ADE. Let $\wt W_i$ be the Weyl group associated to the Dynkin diagram of the singularity $\Sigma_i$. The fundamental group of $\L_i$ acts on $\wt W_i$ by diagram automorphisms and we let $W_i$ denote the fixed points. The {\it Namikawa--Weyl group} of $X$ is $W_X := \prod_{i=1}^k W_i$.

We are now ready to describe a Poisson deformation of $X$ with a remarkable property. Let $X_\Pbb := \Spec \C[\wt X_\Pbb]$ and $W = W_X$.
\begin{Proposition}
\label{P:universalPoissondef}
\cite[Proposition~2.9, Proposition~2.12, Corollary~2.13]{Lo22}\\
The algebras $A_\Pbb := \C[X_\Pbb]^W$ and $B := \C[\Pbb]^W$ satisfy the following properties:
\begin{enumerate}
\item $A_\Pbb$ is a finitely generated algebra and a free $B$-module.
%\item $W$ acts on $B$ and $A_\Pbb$ by graded Poisson automorphisms and $B \to A_\Pbb$ is $W$-equivariant.
\item $A_\Pbb \otimes_B \C = A$ where $\C$ denotes the unique one dimensional graded $B$-module. 
\item For every finitely generated positively graded, connected algebra $B'$ and graded Poisson $B'$ algebra $A'$ such that $A' \otimes_{B'} \C \cong \C[X]$ there exists a unique graded algebra homomorphism $B \to B'$ and a (not necessarily unique) $B'$-linear graded Poisson isomorphism
\begin{eqnarray}
\label{e:Poissdefuniversalprop}
A \otimes_{B} B' \isoto A'
\end{eqnarray}
which intertwines the isomorphisms $A \otimes_B \C \cong \C[X] \cong A' \otimes_{B'} \C$. %As a consequence, the filtered Poisson deformations of $A$ are classified by $\Pbb/W$.
\end{enumerate}
We call $A_\Pbb$ {\it the universal graded Poisson deformation of $A$}.
\end{Proposition}

Similar to the situation for Poisson deformations, the smooth locus of the $\Q$-factorial terminalisation $\widetilde X$ of $X$ admits a ``universal quantization'' (Cf. \cite[Proposition~3.1]{Lo22}). This is a sheaf of filtered, flat $\C[\Pbb]$-algebras in the conical topology on $\widetilde X$. The pushforward of this sheaf of algebras via $\widetilde X^\reg \to \widetilde X$ is denoted $\D_\Pbb$ \cite[Corollary~3.2]{Lo22}.

It follows that there is a map from $\C[\Pbb]$ to the global sections of $\D_\Pbb$. The algebra $\C[\Pbb]$ is graded whilst $\Gamma(\D_\Pbb)$ is filtered. With respect to the natural filtration induced on $\C[\Pbb]$ the map $\C[\Pbb] \to \Gamma(\D_\Pbb)$ is strictly filtered in the sense of \cite[\textsection 2.2]{ACET20}.

\begin{Lemma}
\label{L:DPbbLemma}
\cite[Proposition~3.3]{Lo22}\\
Write $W = W_X$. We have the following: %$\Gamma(\D_\Pbb)$ and $\C[\Pbb]$
\begin{enumerate}
\item $\Gamma(\D_\Pbb)$ is a filtered quantization of $\C[X_\Pbb]$ and the associated graded homomorphism of $\C[\Pbb] \to \Gamma(\D_\Pbb)$ is $\C[\Pbb] \to \C[X_\Pbb]$, appearing in Proposition~\ref{P:universalPoissondef}. 
\item $W$ acts on $\Gamma(\D_\Pbb)$ by filtered automorphisms preserving $\C[\Pbb]$. The induced action on $\gr (\C[X_\Pbb]) = \C[X_\Pbb]$ coincides with the action on $\C[X_\Pbb]$.
\end{enumerate}
\end{Lemma}

Crucially the filtered quantizations of Poisson deformations of $A$ are classified by the same space as the graded Poisson deformations of $A$.
\begin{Proposition}
\label{P:universalQuant}
\cite[Proposition~3.5]{Lo22}
Let $\A_\Pbb := \Gamma(\D_\Pbb)^W$ and $B:= \C[\Pbb]^W$. Let $B'$ be any finitely generated, positively graded commutative algebra and let $A'$ be a graded, flat Poisson $B'$-algebra such that $A' \otimes_{B'} \C = A$. Also let $\A'$ be a $B'$-algebra which is filtered quantization of $A'$, such that the associated graded map of $B' \to \A'$ is $B' \to A'$. Then there is a unique filtered algebra homomorphism $B \to B'$ and a (not necessarily unique) isomorphism
\begin{eqnarray}
\label{e:quantuniversalprop}
\A_\Pbb \otimes_{B} B' \isoto A'
\end{eqnarray}
Furthermore the associated graded homomorphism of \eqref{e:quantuniversalprop} is \eqref{e:Poissdefuniversalprop}. %As a consequence, the filtered quantizations of $A$ are classified by $\Pbb/W$. 
\end{Proposition}
We call $\A_\Pbb$ the {\it universal filtered quantization of Poisson deformations of $A$}.

For clarity we remark that $\A_\Pbb$ is not a filtered quantization of $A$, rather it is a filtered quantization of $\C[X_\Pbb]$. However filtered quantizations of $A$ can be obtained by specialising $\A_\Pbb$ over points of $\Pbb$ (Cf. Remark~\ref{R:filtvsgraded}).

\subsection{Birational induction and the orbit method}
\label{ss:birationalandinduction}
Let $G$ be a connected complex reductive algebraic group and write $\g = \Lie(G)$. In this section we recap some ideas from \cite[\textsection 4 \& \textsection 5]{Lo22} and explain the construction of the orbit method map.

We identify $\g$ with $\g^*$ in what follows, and so $\C[\g]$ is equipped with a natural Poisson structure. Furthermore the action of $G$ on $\g$ is Hamiltonian.

Recall from Section~\ref{ss:LSinduction} that an {\it induction datum} is a triple $(\l, \O_0, z)$ consisting of a Levi subalgebra, a nilpotent orbit therein and $z\in \z(\l)$. When $z=0$ we usually omit this element, and say that $(\l, \O_0)$ is an induction datum.  After fixing a datum we can choose a parabolic subalgebra $\p = \Lie(P)$ admitting $\l$ as a Levi factor, with $\r =\Rad(\p)$, and construct a homogeneous bundle $G \times^P (z + \overline{\O}_0 + \r)$ over $G/P$. This bundle admits a map to $\g$ via the adjoint representation of $G$, which we denote $\pi$. The image of $\pi$ is the closure of an adjoint orbit, and this orbit depends only on the induction datum, not on the choice of $P$ \cite[Lemma~4.1]{Lo22}. This is the {\it (Lusztig--Spaltenstein) induced orbit}, denoted $\Ind_{\l, z}^\g(\O_0)$.
\begin{itemize}
\item When $\pi$ is generically finite onto its image we say that the orbit datum is {birational}. We say that $\Ind_{\l, z}^\g(\O_0)$ is {\it birationally induced} from $(z, \l, \O_0)$
\item  Furthermore we say that $\O \subseteq \g$ is birationally rigid if it cannot be birationally induced from a proper Levi subalgebra.
\item If $\pi$ is birational and $\O_0$ is birationally rigid then we say that the datum is {\it birationally minimal}.
\end{itemize}

One of the key properties of birational induction is the following.
\begin{Theorem}
\cite[Theorem~4.4]{Lo22}
\label{T:biratind}
For every orbit $\O \subseteq \g$ there is a unique birationally minimal induction datum $(\l, \O_0, z)$ such that $\O = \Ind_{\l, z}^\g(\O_0)$.
\end{Theorem}

For background we refer the reader to \cite{Am20}, where the theory of birational induction was studied in the setting of conjugacy classes in groups.

Now pick an induction datum such that $\O_0 \subseteq \l$ is birationally rigid. Also choose a parabolic subgroup $P$ such that $\p = \Lie(P)$ admits $\l$ as a Levi factor and nilradical $\r$. Let $X_0 := \Spec \C[\O_0]$ be the affinisation of $\O_0$ (this coincides with the normalisation of $\overline\O$, by \cite[Proposition~8.3]{Ja04}) and consider the bundle $G \times^P (X_0 \times \r)$. This contains a dense open orbit, say $G/H$ for some subgroup $H$, and this is a finite $G$-equivariant cover of $\Ind_{\l}^\g(\O_0)$. Thanks to Example~\ref{Ex:orbitCSS} the affinisation $X = \Spec \C[G/H]$ is a conical symplectic singularity. In this setting one can describe the Namikawa--Cartan space, the Namikawa--Weyl group of $X$ and the universal graded Poisson deformation of $X$.

\begin{Lemma}
\label{L:Namikawainvariants}
\cite[Proposition~2.9 \& 4.7]{Lo22}
If $G$ is semisimple then:
\begin{enumerate}
\item $\Pbb = \z(\l)$;
\item $W_X$ is a normal subgroup of $W(\l, \O_0) := Z_N(\O_0)/L$ where $N = N_G(L)$ is the normaliser of $L$, and $Z_N(\O_0)$ denotes the set of elements of $N$ which stabilise the orbit $\O_0$.
\item The scheme $\widetilde X_\Pbb$ is isomorphic to 
$\check X := G \times^P (\z(\l) \times X_0 \times \r)$ as a Poisson scheme over $\Pbb$ with $\C^\times$-action.
\end{enumerate}
\end{Lemma}

We note that the $\C^\times$-action on $\check X$ arises from the fact that $\check X$ is a graded deformation of $G \times^P (X_0 \times \r)$, in the sense of \cite[\textsection 2.2]{Lo22} (see the proof of \cite[Proposition~4.7]{Lo22}).

Now pick an orbit $\O \subseteq \g$, and denote the corresponding birational induction datum by $(\l, \O_0, z)$. Let $G/H$ be the dense orbit in $G \times^P (X_0 \times \r)$, as above. It follows from \cite[Proposition~4.7(2)]{Lo22} that the affinisation of the dense orbit in $G\times^P (\{z\} \times X_0 \times \r)$ is isomorphic to the fibre of the universal graded Poisson deformation of $\C[G/H]$ corresponding to the parameter $z \in \Pbb$ (terminology of Proposition~\ref{P:universalPoissondef}). We denote the regular functions on the fibre by $\C[X_z]$.

Thanks to Proposition~\ref{P:universalQuant} the above construction has a quantum analogue: let $\A_z$ be the fibre of the universal filtered quantization of $\C[G/H]$ corresponding to parameter $z \in \Pbb$.

The action of $G$ on $G/H$ is Hamiltonian and the comoment map $\g \to \C[X_0]$ lifts to a quantum comoment map $\g \to \A_z$. This gives rise to an algebra homomorphism $U(\g) \to \A_z$ and the kernel is denoted $\J(\O)$.

\begin{Lemma}
\label{L:CPP}
The ideal $\J(\O)$ is a completely prime, primitive ideal.
\end{Lemma}
\begin{proof}
Since $\A_z$ is a filtered quantization of $\C[G/H]$ and the latter is an integral domain it follows that $\J(\O)$ is completely prime.

Again let $(\l, \O_0, z)$ be the birationally minimal orbit datum inducing to $\O$. The comoment map $\C[\g] \to \C[G/H]$ factors through the comoment map $\C[\g] \to \C[\overline{\Ind_\l^\g(\O_0)}]$, and since the induced orbit is contained in the nilpotent cone, which is the vanishing locus of the non-constant homogeneous invariant polynomials $\C[\g]^G_+$,  it follows that the kernel of the map $\C[\g]^G \to \C[G/H]$ has codimension 1. The associated graded map of quantum comoment $U(\g) \to \A_z$ is $\C[\g] \to \C[G/H]$, and it follows that the kernel of the map $Z(\g) \to \A_z$ has codimension 1. This means that the prime ideal $\J(\O)$ admits a central character. Now by \cite[\textsection 8.5.7]{Di96} it follows that $\J(\O)$ is a primitive ideal.
\end{proof}

Hence this construction yields a map $$\J : \g/ G \to \Prim U(\g),$$ which is injective for classical Lie algebras \cite[Theorem~5.3]{Lo22}.

Recall the notation $\Prim_\O U(\g)$ from the introduction. The following refinement of Theorem~\ref{T:Losevsmap} uses the notation of Section~\ref{s:quantumabelianquotients}. It is the main result of this section.
\begin{Theorem}
\label{T:Losevrefined}
Let $e \in \O \subseteq \g$ be an element of a nilpotent orbit in a classical Lie algebra. The following sets coincide:
\begin{enumerate}
\item The image of $\J$ intersected with $\Prim_\O U(\g)$.
\item The ideals $\Ann_{U(\g)}(Q\otimes_{U(\g,e)} \C_\eta)$ where $\C_\eta\in U(\g,e)\lmod$ is one dimensional.
\end{enumerate}
\end{Theorem}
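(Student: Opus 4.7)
The plan is to apply quantum Hamiltonian reduction to the universal quantizations $\A_{\O'}$ of orbit covers, producing a family of one-dimensional $U(\g,e)$-modules whose associated primitive ideals match Losev's map $\J$, and then to match this family with the variety $\E(\g,e)$ via Theorem~\ref{T:main}.

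First, for $(1) \subseteq (2)$, fix an orbit $\O'$ with $\J(\O') \in \Prim_\O U(\g)$ and let $(\l, \O_0, \xi)$ be its birationally minimal datum, so $\Ind_\l^\g(\O_0) = \O$ and $\A_{\O'}$ arises by specialising $\A_\Pbb$ at $\xi$. I would form the Whittaker reduction $M_{\O'} := (\A_{\O'}/\A_{\O'}\g(\lemw)_\chi)^{G(<0)}$, with $\chi = \kappa(e,\cdot)$, and use Skryabin's equivalence to identify $Q \otimes_{U(\g,e)} M_{\O'}$ with $\A_{\O'}/\A_{\O'}\g(\lemw)_\chi$ as a left $U(\g)$-module. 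Since $\A_{\O'}$ is the quantization of an integral variety and hence prime, the annihilator of this module in $U(\g)$ coincides with $\J(\O')$. It remains to show that $M_{\O'}$ is one-dimensional: the Kazhdan associated graded $\gr M_{\O'}$ identifies with the coordinate ring of the scheme-theoretic intersection of the orbit cover with $\chi + \g^f$, and birationality of the cover together with transversality of the Slodowy slice to $\O$ reduces this ring to $\C$.

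For $(2) \subseteq (1)$, I would invoke Theorem~\ref{T:main}: the components of $\E(\g,e)$ are in dimension-preserving bijection with the sheets $\Sc_1,\dots,\Sc_l$ of $\g$ containing $e$, which by Section~\ref{s:sheetsandinduction} are parametrised by rigid induction data $(\l_i,\O_{0,i})$ with $\Ind_{\l_i}^\g(\O_{0,i}) = \O$. In classical types one further checks that these data are birationally rigid, so the construction above gives, for each $i$, a morphism $\psi_i : \z(\l_i)^\reg \to \E(\g,e)$ sending $z$ to $M_{\Ind_{\l_i}^\g(z+\O_{0,i})}$. The image of $\psi_i$ lies in a single irreducible component of $\E(\g,e)$; comparing associated varieties shows it is the component $\E_i$ corresponding to $\Sc_i$. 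Since $\dim \E_i = \dim \Sc_i - \dim \Ad(G)e = \dim \z(\l_i)$, the map $\psi_i$ is dominant. Combined with the conical $\C^\times$-action this forces the image of $\psi_i$ to fill $\E_i$, so every one-dimensional $U(\g,e)$-module $\C_\eta \in \E_i$ arises as $M_{\O'}$ for some $\O' = \Ind_{\l_i}^\g(z+\O_{0,i})$, whence $\Ann_{U(\g)}(Q \otimes_{U(\g,e)} \C_\eta) = \J(\O')$ by the first part of the argument.

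The main obstacle will be the one-dimensionality of $M_{\O'}$, which rests on a careful transversality argument for the intersection of the orbit cover with the Slodowy slice; birational minimality of $(\l,\O_0,\xi)$ is essential, as it ensures the cover is the correct normalised variety and the intersection is zero-dimensional and reduced. A secondary subtlety is the identification of rigid and birationally rigid induction data in classical types, needed so that all sheets in the image of Premet's map actually parametrise valid inputs for $\J$. Finally, controlling the image of the family $\psi_i$ so that its closure fills the entire component $\E_i$ rather than a proper subvariety requires closedness of $\psi_i(\z(\l_i)^\reg)$ under the contracting $\C^\times$-action, together with the dominance obtained from the dimension count.
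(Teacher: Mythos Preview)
Your proposal has the right overall architecture --- apply Hamiltonian reduction to the universal quantizations, then match the resulting family with $\E(\g,e)$ via Theorem~\ref{T:main} --- but the central step breaks.

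The claim that $M_{\O'}$ is one-dimensional is false in general, and the reason is exactly the filtration degeneration you invoke. The algebra $\A_{\O'}$ is a filtered quantization of $\C[G/H]$ where $G/H$ is the cover of the \emph{nilpotent} orbit $\O = \Ind_\l^\g(\O_0)$, i.e.\ the cover arising from the generalised Springer map at parameter $z=0$. Passing to $\gr M_{\O'}$ therefore computes the fibre of $G/H \to \O$ over $e$, which has cardinality equal to the degree of that cover. Birational minimality of $(\l,\O_0,\xi)$ tells you the generalised Springer map is birational at $z=\xi$, not at $z=0$; the cover of $\O$ itself can be nontrivial, so $M_{\O'}$ can have dimension $>1$. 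This is not a technicality: it is precisely the phenomenon that forces the paper to pass through Proposition~\ref{P:imageofJ}, which shows (using that $\Gamma$ is abelian in classical type, so intermediate covers are Galois) that the image of $\J$ coincides with the set $\mathcal R$ of ideals coming from \emph{rigid} induction data.

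The paper's replacement for your one-dimensionality argument is Lemma~\ref{L:daggerlemma}: rather than claiming each $(\A_z)_\dagger$ is one-dimensional, it shows that the $G^e(0)$-invariants $\A_\dagger^{G^e(0)}$ form exactly $\C[\Pbb]$, and that the map $U(\g,e) \to \A_\dagger$ factors through these invariants (this is \cite[Lemma~5.1]{Lo16}). Thus the image of $U(\g,e)$ in each fibre is one-dimensional even when the fibre itself is not, yielding the desired points of $\E(\g,e)$. The passage back to primitive ideals of $U(\g)$ then uses the adjoint pair $(\bullet_\dagger,\bullet^\dagger)$ and \cite[Lemma~5.2(3)]{Lo16}, not Skryabin's equivalence directly.

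Your $(2)\subseteq(1)$ direction also has a gap: defining $\psi_i$ only on $\z(\l_i)^\reg$ and arguing by dominance plus the $\C^\times$-action does not give surjectivity, since $\z(\l_i)^\reg$ need not be $\C^\times$-stable through the origin. The paper instead uses Lemma~\ref{L:finitelemma} to see that the map $\phi_1:\bigcup_i \Pbb_i \to \E(\g,e)$ is \emph{finite}, hence closed; a Rees-algebra degeneration argument then rules out any collapse of components, and the dimension match from Theorem~\ref{T:main} forces surjectivity.
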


The proof of the theorem will occupy the rest of the paper. 

The first step is to describe the associated variety of the primitive ideal $\J(\O)$. This was stated in \cite[\textsection 5.4]{Lo22}, but we provide a proof for completeness. Another proof has appeared recently in \cite[Proposition~6.1.2(1)]{LMM21}.
\begin{Lemma}
\label{L:associatedvariety}
For $\O \in \g/G$ we let $\Sc$ be a sheet of $\g$ containing $\O$. Then
$$\VA(\J(\O)) = \overline{\Sc \cap \N(\g)}.$$
\end{Lemma}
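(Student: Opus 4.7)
The plan is to compute $\gr \J(\O)$ by passing through the filtered quantization $\A_\O$ and its associated graded Poisson algebra. Let $(\l, \O_0, z)$ be the birationally minimal induction datum for $\O$ and choose a parabolic $P \subseteq G$ with Levi factor $L$. The first step is to identify $\gr \A_\O$ with the fiber of $X_\Pbb \to \Pbb$ at $z$: the algebra $\A_\O$ is obtained by specialising $\A_\Pbb$ at the maximal ideal of $z$ and, by the flatness of $\A_\Pbb$ over $\C[\Pbb]$ implicit in Losev's construction, its associated graded is $\C[X_\Pbb] \otimes_{\C[\Pbb]} \C_z$. By construction this coincides with the coordinate ring of the affinisation of $G \times_P ((z + X_0) \times \n)$. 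Under this identification the associated graded of the quantum comoment $U(\g) \to \A_\O$ is the classical comoment $\phi : \C[\g^*] \to \gr \A_\O$ coming from the parameterised generalised Springer map \eqref{e:generalisedSpringer}, and the image of that Springer map is $\overline{\Sc^{\mathrm{bir}}(\O)}$, the closure of the birational sheet containing $\O$.

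The containment $\VA(\J(\O)) \subseteq \Sc \cap \N(\g^*)$ will then follow quickly. From the automatic inclusion $\ker \phi \subseteq \gr \J(\O)$ we deduce $\VA(\J(\O)) \subseteq \overline{\Sc^{\mathrm{bir}}(\O)}$. By Joseph's irreducibility theorem, combined with Kostant's theorem and Dixmier's lemma (recalled in the introduction), $\VA(\J(\O))$ is the closure of a single nilpotent orbit $\O'$. Since every birational sheet contains a unique nilpotent orbit, which coincides with the unique nilpotent orbit $\O_{\mathrm{nil}}$ of the Katsylo sheet $\Sc$, we conclude $\overline{\O'} \subseteq \overline{\Sc^{\mathrm{bir}}(\O)} \cap \N(\g^*) = \overline{\O_{\mathrm{nil}}} = \Sc \cap \N(\g^*)$.

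For the reverse inclusion I would use a Gelfand--Kirillov dimension count. The parameterised generalised Springer map is finite onto its image, so $\gr \A_\O$ is a finite module over $\phi(\C[\g^*])$, and the latter has Krull dimension $\dim \O$. A standard filtration argument then lifts this to the statement that $\A_\O$ is a finitely generated module over the image of the quantum comoment $U(\g) \to \A_\O$; in particular $\operatorname{GKdim}(U(\g)/\J(\O)) = \operatorname{GKdim}(\A_\O) = \dim \O = \dim \O_{\mathrm{nil}}$. Thus $\dim \VA(\J(\O)) = \dim \O_{\mathrm{nil}}$, which combined with the previous paragraph forces $\O' = \O_{\mathrm{nil}}$ and completes the proof. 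The main technical obstacle will be the flatness of $\A_\Pbb$ over $\C[\Pbb]$ and the correct identification of $\gr \A_\O$ with the geometric fiber, both of which must be extracted from Losev's construction of universal quantizations; Lemma~\ref{L:finitelemma} enters in the background, ensuring that the central parameter $\Pbb$ does not contribute any extra Gelfand--Kirillov dimension beyond that of $Z(\g)$.
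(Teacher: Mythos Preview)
Your argument has two concrete errors, both in the first step.

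\textbf{Wrong identification of $\gr \A_\O$.} By definition $\A_\O$ is a filtered quantization of $\C[X]$, where $X$ is the affinization of the cover $G/H$ of $\Ind_\l^\g(\O_0)$, so $\gr\A_\O = \C[X]$ --- the \emph{central} fibre of $X_\Pbb\to\Pbb$, not the fibre over $z$. Concretely, the linear coordinates on $\Pbb=\z(\l)$ sit in filtration degree $2$, so $\gr(x_i-z_i)=x_i$ and specialising at any $z$ followed by $\gr$ lands you at the fibre over $0$. Consequently the classical comoment $\phi:\C[\g^*]\to\gr\A_\O$ has $V(\ker\phi)=\overline{\Ind_\l^\g(\O_0)}$, not the closure of a birational sheet; at a single parameter $z$ the generalised Springer map cannot see the whole sheet.

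\textbf{The automatic inclusion goes the other way.} Since $\J(\O)$ maps to $0$ in $\A_\O$, one always has $\gr\J(\O)\subseteq\ker\phi$, hence $\VA(\J(\O))\supseteq V(\ker\phi)$, the reverse of what you wrote. To obtain equality one needs the quantum comoment map $U(\g)\to\A_\O$ to be strictly filtered, so that $\gr$ preserves injectivity of $U(\g)/\J(\O)\hookrightarrow\A_\O$.

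Once both points are corrected your strategy does go through: the automatic inclusion gives $\VA(\J(\O))\supseteq\overline{\Ind_\l^\g(\O_0)}$, and your GK-dimension bound $\operatorname{GKdim}(U(\g)/\J(\O))\le\operatorname{GKdim}\A_\O=\dim\Ind_\l^\g(\O_0)$ forces equality. The paper bypasses this two-step argument altogether: it invokes strictness to obtain $\gr\J(\O)=\ker\phi$ directly, then observes that $\phi$ factors through $\C[\Ind_\l^\g(\O_0)]$, so that $\gr\J(\O)$ is exactly the defining ideal of $\overline{\Ind_\l^\g(\O_0)}$. Finally, the identification of $\Ind_\l^\g(\O_0)$ with $\Sc\cap\N(\g^*)$ is handled there by a simple $\C^\times$-scaling argument $t\mapsto\Ind_\l^\g(tz+\O_0)$, showing that $\O$ and $\Ind_\l^\g(\O_0)$ lie in a common sheet.
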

\begin{proof}
Let $(\l, \O_0, z)$ be the birationally minimal orbit datum inducing to $\O$. For $t\in \C$  we can consider the orbit $\O(t)$ which is dense in the image of the generalised Springer map $G \times^P (tz + \O_0 + \n) \to \g$. By definition we have $\O(1) = \O$ and $\O(0) = \Ind_\l^\g(\O_0)$, whilst $\dim \O(t)$ is constant. It follows that $\O$ and $\Ind_\l^\g(\O_0)$ lie in a sheet. Since $\O_0$ is nilpotent and every sheet contains a unique nilpotent orbit, $\Sc \cap \N(\g) = \Ind_\l^\g(\O_0)$ for every sheet containing $\O$.

Now write $G/H$ for the dense orbit in $G\times^P (X_0 \times \n)$. Since $\gr$ is exact on strictly filtered vector spaces, the graded ideal $\gr \J(\O)$ is the kernel of the comoment map $\C[\g] \to \C[G/H]$. This factors through the comoment map for $\C[\g] \to \C[\overline{\Ind_\l^\g(\O_0)}]$, which shows that $\sqrt{\gr \J(\O)}$ is the defining ideal of $\overline{\Ind_\l^{\g}(\O_0)}$. This completes the proof.
\end{proof}

\subsection{Harish-Chandra bimodules and Losev's dagger functors}
\label{ss:quantumHam}

Now  we keep fixed $e\in \N(\g)$, the adjoint orbit $\O := \Ad(G)e$, an $\sl_2$-triple $\{e,h,f\}$ for $e$ and write $G^e(0)$ for the pointwise stabiliser of the triple. Note that the notation $\g^e(0) = \Lie G^e(0)$ is consistent with that of Section~\ref{ss:classicalfiniteWalg}.

In \cite{Lo10a} Losev constructed a map $\mathcal{I} \mapsto \mathcal{I}^\dagger$ from two-sided ideals of $U(\g,e)$ to two-sided ideals of $U(\g)$, with remarkable properties. In his subsequent work \cite{Lo11} this construction was upgraded to a pair of adjoint functors between the categories of Harish--Chadra $U(\g)$-bimodules and $G^e(0)$-equivariant Harish--Chandra $U(\g,e)$-bimodules, as we now recall.

The category $\HC U(\g)$ of Harish-Chandra $U(\g)$-bimodules consists of $U(\g)$-bimodules, finitely generated on both sides, which are locally finite for $\ad(\g)$. This implies that there is a $G$-action which differentiates to $\ad(\g)$.
If $\O := G\cdot e$ then we denote by $\HC_{\overline{\O}} U(\g)$ the full subcategory of bimodules supported on $\overline{\O}$, see \cite[\textsection 1.3]{Lo11} for the precise definition.

It follows from \cite{GG02} that $U(\g,e)$ admits $G^e(0)$-action by filtered automorphisms, and a quantum comoment map
\begin{eqnarray}
\label{e:centmaps}
\g^e(0) \hookrightarrow U(\g,e)
\end{eqnarray}
This map is injective, thanks to \cite[Lemma~2.4]{PrJI}. Therefore one can define the category $\HC^{G^e(0)}U(\g,e)$ of $G^e(0)$-equivariant Harish-Chandra $U(\g,e)$-bimodules to be the category of $U(\g,e)$-bimodules, finitely generated on both sides, with a compatible $G^e(0)$-action which differentiates to the $\ad \g^e(0)$-action (see \cite[\textsection 1.3]{Lo11}). We denote the subcategory of finite dimensional objects by $\HC^{G^e(0)}_{\text{fin}} U(\g,e)$.

In \cite{Lo11} Losev constructed a pair  $(\bullet_\dagger, \bullet^\dagger)$ of adjoint functors $$\bullet_\dagger : \HC_{\overline \O} U(\g) \leftrightarrows \HC^{G^e(0)}_{\text{fin}} U(\g,e) : \bullet^\dagger$$ which behave nicely when applied to quantizations of nilpotent orbit covers (see \cite[\textsection 5]{Lo22}). The definition of these functors depends upon the decomposition theorem from \cite{Lo10a} which we will not describe here. Instead we recall from \cite[\textsection 3.5]{Lo11} that there is an isomorphism %$G^e(0)$-equivariant isomorphism
\begin{eqnarray}
\label{e:Whittakerdagger}
M_\dagger \cong (M/M\g(\lemw)_\chi)^{\ad(\g(<0))}
\end{eqnarray}
for any $M \in \HC(\g)$, which is in the spirit of the definitions used throughout this paper. One immediate consequence which we use often is that $U(\g)_\dagger \cong U(\g,e)$.

Let $(\l_0, \O_0)$ be an induction datum for $\O$ such that $\O_0$ is birationally rigid in $\l_0$. This leads to to a finite cover $G/H \onto \O$ discussed in Section~\ref{ss:birationalandinduction}. Let $\A$ denote the universal filtered quantization of $\C[G/H]$ described in Proposition~\ref{P:universalQuant}. We can assume without loss of generality that $(G^e)^\circ \subseteq H \subseteq G^e$. By Lemma~\ref{L:Namikawainvariants} this is a $\C[\z(\l_0)]^{W}$-algebra, where $X$ denotes the affinisation of $G/H$ and $W$ is a subgroup of the relative Weyl group $N_G(L)/L$.

The image of $\C[\z(\l_0)]^W \to \A$ is central and so for every ideal $I \subseteq \C[\z(\l_0)]^W$ we see that $\A/I\A \in \HC U(\g)$. Thus there is a natural $G^e(0)$-action on $(\A/I\A)_\dagger$.

The following fact will be used a few times in the sequel.

\begin{Lemma} \cite[Lemma~5.2(1)]{Lo22}
\label{L:quantdagger}
For any maximal ideal $I \subseteq \C[\z(\l_0)]^W$ we have $(\A / I\A)_\dagger \cong \C[G/H]$ as $G^e(0)$-algebras.
\end{Lemma}

For brevity we write $B = \C[\z(\l_0)]^{W}$ and for a closed point $z\in \Spec B$ we write $I_z \subseteq B$ for the corresponding maximal ideal, and $\C_z = B/I_z$ for the one dimensional $B$-module, so that $\A_z = \A/I_z \A$.

Since $B \subseteq \A$ is central, the image of the map $B \to \A/\A\g(\lemw)_\chi$ lies in the space of $\ad(\g(\lez))$-invariants, giving an algebra homomorphism $B \to \A_\dagger$. The multiplication on $\A_\dagger$ is induced from that on $\A$ it follows that the image of the map is central.
\begin{Lemma}
\label{L:daggerandspecialisation}
$\A_\dagger \otimes_{B} \C_z \cong  (\A\otimes_{B} \C_z)_\dagger$. 
\end{Lemma}
\begin{proof}
By \eqref{e:Whittakerdagger} we have
$\A_\dagger \otimes_{B} \C_z \cong (\A / \A \g(\lemw)_\chi))^{\ad \g(<0)} / (I_z \A / \A \g(\lemw)_\chi))^{\ad \g(<0)}.$
Since $\ad\g(\lez)$ is exact on the category of Whittaker modules \cite[Theorem~6.1]{GG02} it will suffice to show that
$$(\A/\A\g(\lemw)_\chi) / (I_z\A / \A\g(\lemw)_\chi) \cong (\A / I_z \A) / (\A\g(\lemw)_\chi / I_z \A)$$
This is easy to see: both sides are isomorphic to $\A / \A(\g(\lemw)_\chi + I_z)$.
\end{proof}
By the lemma we can (and shall) identify $(\A_z)_\dagger$ with $(\A_\dagger)/I_z \A_\dagger$ in what follows.

\begin{Corollary}
\label{C:Adaggercommutative}
The algebra $\A_\dagger$ is commutative.
\end{Corollary}
\begin{proof}
Pick $a\in \A_\dagger$. Let $\ad(a) : \A_\dagger \to \A_\dagger$ be the map $b \mapsto [a,b]$. For any closed point $z\in \Spec B$ we let $\ad_z(a) : (\A_\dagger)_z \to (\A_\dagger)_z$ be the induced map on the quotient. Since $\A_z$ is a filtered quantization of $\C[G/H]$ it follows from Lemma~\ref{L:quantdagger} that $(\A_z)_\dagger = (\A_\dagger)_z$ is a commutative algebra, and so $\ad_z(a) = 0$.

We conclude $\ad(a) \A_\dagger \subseteq I_z \A_\dagger$ for all maximal ideals $z\in \Spec B$.

By Proposition~\ref{P:universalPoissondef}(1) we see that $\gr \A$ is a free $\gr B$-module, and it follows by a standard filtration argument that $\A$ is a free $B$-module. We deduce that $\bigcap_z (I_z \A) = 0$, and this implies that $\bigcap_z (I_z \A_\dagger) = (\bigcap_z I_z \A)_\dagger = 0$ by \eqref{e:Whittakerdagger} and Lemma~\ref{L:daggerandspecialisation}. Finally we have $\ad(a) \A_\dagger \subseteq \bigcap_z (I_z \A_\dagger) = 0$, and the proof is complete.
\end{proof}

\begin{Proposition}
\label{P:daggerprop}
The $G^e(0)$-action on $\A_\dagger$ factors through the component group $\Gamma := G^e(0) / G^e(0)^\circ$ and
\begin{eqnarray}
\label{e:ogpogg}
\A_\dagger^{\Gamma} := (\A_\dagger)^{G^e(0)} \cong \C[\z(\l_0)]^{W}.
\end{eqnarray}
\end{Proposition}
\begin{proof}
As $B$ is central in $\A$, it follows that $B$ is $\ad(\g)$-invariant, hence $G$-invariant. Therefore the image of $B \to \A_\dagger$ is $G^e(0)$-invariant. Now consider the homomorphism $\varphi : B \to \A_\dagger^{G^e(0)}$. We shall show that this is an isomorphism.

Since $G^e(0)$ is a reductive group the functor of $G^e(0)$-invariants is exact, and so by Lemma~\ref{L:daggerandspecialisation} we have $\A_\dagger^{G^e(0)} \otimes_B \C_z \cong (\A\otimes_B \C_z)_\dagger^{G^e(0)}$. The latter is nonzero by Lemma~\ref{L:quantdagger}. If $0\ne b\in \Ker \varphi$ then, since $B$ is reduced, we can choose a maximal ideal $I_z \subseteq B$ with $b \notin I_z$ and we find $\A_\dagger^{G^e(0)} \otimes_B \C_z = 0$. This contradiction shows that $\varphi$ is injective.

Now consider the cokernel $C$ of $\varphi$ in the category of $B$-modules. We claim that $\varphi$ is surjective after specialisation at any point $z \in \Spec(B)$. By Lemma~\ref{L:quantdagger} we have $(\A/I_z\A)^{G^e(0)} = \C$. Therefore the specialisation is just $\C = B/I_z \to \C$. If this map is zero then we can choose $b\in B$ such that $\C b + I_z = B$ and $\varphi(b) \subseteq I_z \A_\dagger^{G^e(0)}$. This would force $\varphi(B) \subseteq I_z \A_\dagger^{G^e(0)} \subsetneq \A_\dagger^{G^e(0)}$ which would contradict the fact that $\varphi(1) =  1$. Therefore $\varphi$ is surjective after specialisation, as claimed.

Since $(\bullet)\otimes_B \C_z$ preserves cokernels, we see that $C/ I_z C = 0$ for all $z\in \mSpec(B)$. We shall show that this implies $C= 0$.

It follows from Proposition~\ref{P:universalPoissondef}(1), Lemma~\ref{L:DPbbLemma} and Proposition~\ref{P:universalQuant} that $\A$ is a free $B$-module, by a standard filtration argument. Hence $\A$ is locally finite over $B$. It follows from \eqref{e:Whittakerdagger} that $(\A_\dagger)^{G^e(0)}$ is a subquotient of $\A$ as a $B$-module, and so it is locally finite over $B$, hence $C$ is also a locally finite $B$-module.

Now every finite dimensional $B$-submodule decomposes as a direct sum of its generalised eigenspaces for a set of generators for $B$. Since $C$ is the direct limit of its finite dimensional $B$-submodules, the same holds for $C$. To be precise, we let $C_z := \{c \in C \mid I_z^k c = 0 \text{ for some } k\in \Nbb\}.$
Then we have
$$C = \bigoplus_{z\in \Spec(B)} C_z.$$
Now we aim to show that $C_z = 0$ for each $z\in \mSpec(B)$.

Since $C$ is locally finite the same is true for $C_z$. Also $I_z C = C$ implies $I_z C_{z'} = C_{z'}$ for all $z,z' \in \mSpec(B)$. In particular, $I_z C_z = C_z$ for all such $z$.

The finite dimensional indecomposable modules of $C_z$ are each of the form $B/I_z^k$ for some $k \ge 0$, thus every finite dimensional submodule of $C_z$ is a direct sum of modules of this form. It is easy to see that the limit of a directed system of modules of Krull dimension zero has Krull dimension zero. Therefore $C_z$ is necessarily of the form $\bigoplus_{i \in I} B/I_z^{k_i}$ for some index set $I$ and non-negative integers $(k_i)_{i\in I}$. Finally $I_z C_z = C_z$ implies that $k_i = 0$ for all $i$ and that $C_z = 0$ as required. We have now shown that $\varphi$ is surjective.

It remains to show that the $G^e(0)$-action on $\A_\dagger$ factors through $\Gamma$. To this end we demonstrate that the differential of the $G^e(0)$-action is trivial. For $x\in \g^e(0)$ this action coincides with the map $\ad(x) : \A_\dagger \to \A_\dagger$ given by $a \mapsto xa - ax$, where we identify $x$ with its image in $\A_\dagger$ under the composition of \eqref{e:centmaps} with $\mu_\dagger$. By Corollary~\ref{C:Adaggercommutative} this map is zero, and the proof is complete.
\end{proof}

\subsection{The commutative quotient of the $W$-algebra and the orbit method}
\label{ss:commquotandorbitmethod}
Now we relate $\Spec U(\g,e)^\ab$ to the orbit method map. Let $(\l_1,\O_1),...,(\l_s, \O_s)$ be the collection of induction data for $\O := G \cdot e$ such that $\O_i$ is birationally rigid in $\l_i$. Let $P_i$ be a parabolic subgroup of $G$ with Levi factor $L_i$ such that $\Lie(L_i) = \l_i$  and $\r_i := \Rad\Lie P_i$. We let   $G/H_i$ be the dense $G$-orbit in $G \times^{P_i} (\O_i \times \r_i)$. Also write $X_i$ for the normalisation of $\overline{\O}_i$.

The universal quantization of $\C[G/H_i]$ described in Proposition~\ref{P:universalPoissondef} is denoted $\A^i$, and let $W_i$ be the Namikawa--Weyl group of $\Spec \C[G/H_i]$ (Cf. Example~\ref{Ex:orbitCSS}), so that $\A^i$ is a $B_i := \C[\z(\l_i)]^{W_i}$-algebra by Lemma~\ref{L:Namikawainvariants}. Each $\A^i$ comes equipped with a quantum comoment map $\mu_i : U(\g) \to \A^i$ \cite[\textsection 5.2]{Lo22}.

Let $B:= \prod_{i=1}^s B_i$ and consider the $B$-algebra $\A := \prod_{i=1}^s \A_i$. There is a homomorphism $\mu := \prod_{i} \mu_i : U(\g) \to \A$ which gives
\begin{eqnarray}
\label{e:quantumcom}
\mu_\dagger : U(\g,e) \longrightarrow \A_\dagger
\end{eqnarray}

\begin{Lemma}
\label{L:losevreduction}
$\mu_\dagger$ factors through $U(\g,e) \to U(\g,e)^\ab$ and the image lies in $\A_\dagger^{G^e(0)}$. Furthermore the following sets coincide:
\begin{enumerate}
\setlength{\itemsep}{4pt}
\item $\J(\g/ G) \cap \Prim_\O U(\g)$,
\item The set of ideals $\Ann_{U(\g)} Q\otimes_{U(\g,e)} \C_\eta \subseteq \Prim U(\g)$ where $\C_\eta$ is a one dimensional $U(\g,e)$-module appearing in the image of the map of maximal spectra $$\mu_\dagger^* : \Spec \A_\dagger^{G^e(0)} = \Spec B \to \Spec U(\g,e)^\ab.$$
\end{enumerate}
\end{Lemma}
\begin{proof}
The first claim follows directly from Corollary~\ref{C:Adaggercommutative}.

We now prove that (1) $=$ (2). The set (1) is equal to the collection of kernels of the maps $U(\g) \to \A\otimes_B \C_z$ as we vary over all closed points $z\in \Spec B$. For any fixed $z$ (using Lemma~\ref{L:daggerandspecialisation}) the kernel of the map $\mu_\dagger : U(\g,e) \to \A_\dagger \otimes_B \C_z$ is the annihilator of a one dimensional $U(\g,e)$-module, thanks to our previous observations. We denote it $\C_\eta$. Our chosen $z$ lies in $\Spec B_i$ for some $i$ and we claim that the kernel $K = \Ker(U(\g) \to \A \otimes_{B} \C_z)$ is equal to the annihilator of $Q\otimes_{U(\g,e)} \C_\eta$. This will complete the proof.

As we mentioned above, the functor $\bullet^\dagger$ on Harish--Chandra bimodules was preceded by Losev's construction of a map $\mathcal{I} \mapsto \mathcal{I}^\dagger$ from ideals of $U(\g,e)$ to ideals of $U(\g)$ \cite{Lo10a}. Also note that two sided ideals of $U(\g)$ can be regarded as Harish--Chadra $U(\g)$-bimodules, and that the map $\mathcal{J} \mapsto \mathcal{J}_\dagger$ sends ideals of $U(\g)$ to ideals of $U(\g,e)$ \cite[Theorem~1.3.1]{Lo11}.

By {\it loc. cit.} we know that $K_\dagger = \Ann_{U(\g,e)} \C_\eta$. By Lemmas~\ref{L:CPP} and \ref{L:associatedvariety}, we know that $K$ is a primitive ideal with associated variety $G\cdot e$, so by \cite[Theorem~1.2.2(viii)]{Lo10a} the ideals $I \subseteq U(\g,e)$ which satisfy $I^\dagger = K$ are precisely minimal primes over $K_\dagger$. Since $K_\dagger$ is primitive it is prime, and in particular $(K_\dagger)^\dagger = K$. Now \cite[Theorem~1.2.2(ii)]{Lo10a} implies that $$K = (\Ann_{U(\g,e)} \C_\eta)^\dagger = \Ann_{U(\g)} (Q \otimes_{U(\g,e)} \C_\eta).$$
This concludes the proof.
\end{proof}

Finally we prove the main theorem of this section.

\begin{proofoforbitmethod}
Thanks to Lemma~\ref{L:losevreduction} it suffices to show that $\mu_\dagger^* : \Spec \A_\dagger^{G^e(0)} \to \Spec U(\g,e)^\ab$ is surjective. We will show that the map is finite and dominant. Since finite morphisms are closed this will complete the proof. Since $\gr U(\g,e)^\ab$ is reduced (Corollary~\ref{P:relatingabelianisations} and Theorem~\ref{T:generalreduced}), and since the property of being a finite module or an injective homomorphism can both be lifted through the associated graded construction, it is enough to show that the map $\gr \mu_\dagger : \C[e+\g^f]^\ab \to \gr \A_\dagger^{G^e(0)}$ is injective and gives a finite extension of algebras. The $G^e(0)$-action on $\A_\dagger$ factors through a finite group (Proposition~\ref{P:daggerprop}) and so it will suffice to show that $\mu_\dagger : S(\g,e)^\ab \to \gr \A_\dagger$ is injective and finite. 

By Lemma~\ref{L:Namikawainvariants} we know that $\gr \A$ is isomorphic to $\C[\check X]^W$ where $\check X = \prod_{i=1}^s G\times^{P_i} (\z(\l_i) \times X_i \times \r_i)$ and $W = \prod_{i=1}^s W_i$. It will be convenient to work with a slightly larger algebra: let $\check \A$ denote the algebra $\prod_{i=1}^s \Gamma(\D^i)$ where $\D^i$ is the sheaf associated to the conic symplectic singularity $\Spec\C[G/H_i]$ in Proposition~\ref{P:universalQuant}. Then we have $\gr \check \A =\C[\check X]$ and it will suffice to show that $S(\g,e)^\ab\to \gr (\check \A_\dagger)$ is injective and finite, since $\check \A^W = \A$ and $W$ is a finite group. 

The map $S(\g,e) \to \gr (\check\A_\dagger)$ coincides with the pullback to $e+ \g^f$ of $\C[\g] \to \C[\check X]$ (see \cite[Lemma~5.1(4)]{Lo22} or \cite[Lemma~3.3.2(3)]{Lo11}). This means that if we take the preimage $Y$ of $e+\g^f$ inside $\check X$ then $\gr (\check \A_\dagger) = \C[Y]$. The algebra $S(\g,e)^\ab$ is reduced (Theorem~\ref{T:generalreduced}) and it follows that the map is injective if and only if the corresponding morphism of spectra is dominant. Now it suffices to show that $Y \to \Spec S(\g,e)^\ab$ is finite and dominant.

The morphism $\check X \to \g$ is proper, since it is the collapsing map of a bundle over a projective variety $\prod_i G/P_i$. Properness is stable under base change and it follows that $Y \to e+\g^f$ is proper as well.

We now show that $Y \to e + \g^f$ is quasi-finite. The map $G \times^{P_i} (\z(\l_i) \times X_i \times \r_i) \to \g$ factors through $\xi : G \times^{P_i} (\z(\l_i) + \overline{\O_i} + \r_i) \to \g$ and, since $G \times^{P_i} (\z(\l_i) \times X_i \times \r_i)$ is the normalisation of $G \times^{P_i} (\z(\l_i) + \overline{\O_i} + \r_i)$, it suffices to show that $\xi$ map is quasi-finite, when restricted to the preimage of $e+\g^f$. The image of $\xi$ is the closure of the decomposition class $\overline{\D}_i$ associated to $(\l_i, \O_i)$ (see \cite[Lemma~2.5]{Bo81}). 
The variety $Y$ inherits a $\C^\times$-action from the contracting Kazhdan action on $e+\g^f$ described in Section~\ref{ss:classicalandslices}. The map $Y \to e+\g^f$ is $\C^\times$-equivariant, and so to show that it is quasi-finite it suffices to show that the fibre over $e$ is finite. By Lemma~\ref{L:Namikawainvariants}(3) the fibre over $e$ is contained in $G \times^{P_i} (\overline{\O_i} + \r_i)$. The image of the map $G \times^{P_i} (\overline{\O_i} + \r_i) \to \g$ is contained in $\overline{\Ind_{\l_i}^\g(\O_i)}$, and so by dimension comparison we see that $G \times^{P_i} (\overline{\O_i} + \r_i) \to \overline{\Ind_{\l_i}^\g(\O_i)}$ is generically finite-to-one. There is an open subset of $\overline{\Ind_{\l_i}^\g(\O_i)}$ over which the fibres are finite and, since the collapsing map of the fibre bundle is $G$-equivariant, we see that the fibre is finite over $e \in \Ind_{\l_i}^\g(\O_i)$. Thus $Y\to e+\g^f$ is quasi-finite.

We have shown that $Y \to e+\g^f$ is quasi-finite and proper, hence finite.

It remains to show that $Y \to \Spec S(\g,e)^\ab$ is dominant. Let $\Sc_1,...,\Sc_k$ be the sheets of $\g$ containing $\O$. By Proposition~\ref{P:Katsyloproperties}(1) we know that $\Spec S(\g,e)^\ab = (e+\g^f) \cap \bigcup_i \Sc_i$. Since the sheets are classified by the rigid induction data (Theorem~\ref{T:Borhostheorem}) and the $\{(\l_i, \O_i) \mid i=1,...,s\}$ denote all induction data for $G\cdot e$, we might as well assume that $(\l_i, \O_i)$ is the induction datum corresponding to $\Sc_i$ for $i=1,...,k$. By the remarks of the previous paragraph we know that the image of $G \times^{P_i} (\z(\l_i) \times X_i \times \r_i) \to \g$ is dense in $\Sc_i$ and so $Y \to \Spec S(\g,e)^\ab$ is dominant. This completes the proof.
\hfill\qed
\end{proofoforbitmethod}

{\tt Email:} \\ { lt803@bath.ac.uk}\\
\noindent {\tt Address:} \\ Department of Mathematical Sciences, University of Bath, North Road, Claverton Down, Bath BA2 7AY, United Kingdom.

%\vspace{10pt}

%\noindent {Contact details:}\vspace{5pt}

%Lewis Topley {\sf L.Topley$@$kent.ac.uk};\\
%School of Mathematics, Statistics and Actuarial Science, University of Kent, Canterbury, Kent, CT2 7FS, United Kingdom.

\end{document}